\newtheorem{theorem}{Theorem}[section]
\newtheorem{lemma}[theorem]{Lemma}
\newtheorem{proposition}[theorem]{Proposition}
\theoremstyle{definition}
\newtheorem{definition}[theorem]{Definition}
\newtheorem{convention}[theorem]{Convention}
\theoremstyle{remark}
\newtheorem{remark}[theorem]{Remark}
\newtheorem{example}[theorem]{Example}
\newtheoremstyle{dotless}{}{}{}{}{\bfseries}{}{ }{\thmname{#1}\thmnumber{ #2}\thmnote{#3}}
\theoremstyle{dotless}
\newtheorem{setting}{Setup}
\numberwithin{equation}{section}
\newcommand{\st}{\mathrm{st}}
\newcommand{\sign}{\operatorname{sign}}
\newcommand{\g}{\mathbf g}
\newcommand{\N}{\mathcal N}
\newcommand{\C}{\mathcal C}
\newcommand{\D}{\mathcal D}
\newcommand{\M}{\mathcal M}
\newcommand{\1}{\mathbf 1}
\newcommand{\CC}{\mathbb C}
\newcommand{\PPP}{\mathbb P}
\newcommand{\ZZ}{\mathbb Z}
\newcommand{\J}{\mathcal J}
\newcommand{\F}{\mathcal F}
\newcommand{\G}{\mathcal G}
\newcommand{\HH}{\mathcal H}
\newcommand{\X}{\mathbf X}
\newcommand{\cC}{\check C}
\newcommand{\cH}{\check H}
\newcommand{\RR}{\mathbb R}
\newcommand{\SSS}{\mathcal S}
\newcommand{\TTT}{\mathcal T}
\newcommand{\ttt}{\mathfrak t}
\newcommand{\sL}{\mathcal L}
\newcommand{\QQ}{\mathbb Q}
\newcommand{\s}{\mathfrak s}
\newcommand{\op}{\mathrm{op}}
\newcommand{\vir}{\mathrm{vir}}
\newcommand{\CZ}{\operatorname{CZ}}
\newcommand{\ind}{\operatorname{ind}}
\newcommand{\im}{\operatorname{im}}
\newcommand{\colim}{\operatornamewithlimits{colim}}
\newcommand{\End}{\operatorname{End}}
\newcommand{\Hom}{\operatorname{Hom}}
\newcommand{\Diff}{\operatorname{Diff}}
\newcommand{\Cont}{\operatorname{Cont}}
\newcommand{\cyl}{\mathrm{cyl}}
\newcommand{\contr}{\mathrm{contr}}
\newcommand{\glue}{\operatorname{glue}}
\newcommand{\brk}{\operatorname{break}}
\newcommand{\calib}{\operatorname{calib}}
\newcommand{\Sym}{\operatorname{Sym}}
\newcommand{\coker}{\operatorname{coker}}
\newcommand{\id}{\operatorname{id}}
\newcommand{\vdim}{\operatorname{vdim}}
\newcommand{\eext}{\mathrm{ext}}
\newcommand{\eint}{\mathrm{int}}
\newcommand{\ttop}{\mathrm{max}}
\newcommand{\bad}{\mathrm{bad}}
\newcommand{\good}{\mathrm{good}}
\newcommand{\PT}{\mathsf{PT}}
\newcommand{\Mbar}{\overline{\mathcal M}}
\newcommand{\Cbar}{\overline{\mathcal C}}
\newcommand{\I}{\mathrm I}
\newcommand{\II}{\mathrm{II}}
\newcommand{\III}{\mathrm{III}}
\newcommand{\IV}{\mathrm{IV}}
\newcommand{\cof}{\mathrm{cof}}
\newcommand{\nodes}{\mathrm{nodes}}
\newcommand{\Ch}{\mathsf{Ch}}
\newcommand{\Sp}{\operatorname{Sp}}
\newcommand{\gl}{\operatorname{\mathfrak{gl}}}
\newcommand{\Aut}{\operatorname{Aut}}
\newcommand{\aut}{\operatorname{\mathfrak{aut}}}
\newcommand{\Maps}{\operatorname{Maps}}
\newcommand{\codim}{\operatorname{codim}}
\newcommand{\rel}{\operatorname{rel}}
\newcommand{\reg}{\mathrm{reg}}
\newcommand{\PP}{\mathcal P}
\newcommand{\oo}{\mathfrak o}
\newcommand{\hocolim}{\operatornamewithlimits{hocolim}}
\newcommand{\Contact}{\mathfrak{Contact}}
\newcommand{\Exact}{\mathfrak{Exact}}
\newcommand{\Ring}{\mathfrak{Ring}}
\begin{document}

\title{Contact homology and virtual fundamental cycles}

\author{John Pardon\footnote{This research was partially conducted during the period the author served as a Clay Research Fellow.  The author was also partially supported by a National Science Foundation Graduate Research Fellowship under grant number DGE--1147470.}}

\date{28 October 2015\\(revised 4 February 2019)}

\maketitle

\vspace{-1.9\baselineskip}

\begin{abstract}
We give a construction of contact homology in the sense of Eliashberg--Givental--Hofer.
Specifically, we construct coherent virtual fundamental cycles on the relevant compactified moduli spaces of pseudo-holomorphic curves.
\end{abstract}


The aim of this work is to provide a rigorous construction of \emph{contact homology}, an invariant of contact manifolds and symplectic cobordisms due to Eliashberg--Givental--Hofer \cite{eliashbergicm,sftintro}.
The contact homology of a contact manifold $(Y,\xi)$ is defined by counting pseudo-holomorphic curves in the sense of Gromov \cite{gromov} in its symplectization $\RR\times Y$.
The main problem we solve in this paper is simply to give a rigorous definition of these curve counts, the essential difficulty being that the moduli spaces of such curves are usually not cut out transversally.
It is therefore necessary to construct the virtual fundamental cycles of these moduli spaces (which play the same enumerative role that the ordinary fundamental cycles do for transversally cut out moduli spaces).
For this construction, we use the framework developed in \cite{pardonimplicitatlas}.
Our methods are quite general, and apply equally well to many other moduli spaces of interest.

We use a compactification of the relevant moduli spaces which is smaller than the compactification considered in \cite{sftintro,sftcompactness}.  Roughly speaking, for curves in symplectizations $\RR\times Y$, we do not keep track of the relative vertical positions of different components (in particular, no trivial cylinders appear).
Our compactification is more convenient for proving the master equations of contact homology: the codimension one boundary strata in our compactification correspond bijectively with the desired terms in the ``master equations'', whereas the compactification from \cite{sftintro,sftcompactness} contains additional codimension one boundary strata.  If we were to use the compactification from \cite{sftintro,sftcompactness}, we would need to additionally argue that the contribution of each such extra codimension one boundary stratum vanishes.

\begin{remark}[Historical discussion]
The theory of pseudo-holomorphic curves in closed symplectic manifolds was founded by Gromov \cite{gromov}.  Hofer's breakthrough work on the three-dimensional Weinstein conjecture \cite{hoferweinstein} introduced pseudo-holomorphic curves in symplectizations and their relationship with Reeb dynamics.  The analytic theory of such curves was then further developed by Hofer--Wysocki--Zehnder \cite{hwzsymplectizationsI,hwzsymplectizationsIcorr,hwzsymplectizationsII,hwzsymplectizationsIII,hwzsmallarea}.  On the algebraic side, Eliashberg--Givental--Hofer \cite{sftintro} introduced the theories of contact homology and symplectic field theory, based on counts of pseudo-holomorphic curves in symplectizations and symplectic cobordisms (assuming such counts can be defined).  The key compactness results for such curves were established by Bourgeois--Eliashberg--Hofer--Wyzocki--Zehnder \cite{sftcompactness}.  Gluing techniques applicable to such pseudo-holomorphic curves have been developed by many authors, notably Taubes, Donaldson, Floer, Fukaya--Oh--Ohta--Ono, and Hofer--Wysocki--Zehnder.

A number of important special cases of contact homology and closely related invariants have been constructed rigorously using generic and/or automatic transversality techniques.  Legendrian contact homology in $\RR^{2n+1}$ was constructed by Ekholm--Etnyre--Sullivan \cite{ekholmetnyresullivan}.  Eliashberg--Kim--Polterovich \cite{eliashbergkimpolterovich} defined cylindrical contact homology for fiberwise star-shaped open subsets of certain pre-quantization spaces.  Embedded contact homology was introduced and constructed by Hutchings and Hutchings--Taubes \cite{hutchingsindex,hutchingsindexrevisited,hutchingstaubesI,hutchingstaubesII}.  Cylindrical contact homology was constructed by Hutchings--Nelson \cite{hutchingsnelson} for dynamically convex contact three-manifolds and by Bao--Honda \cite{baohonda} for hypertight contact three-manifolds.
Soon after the present paper was released, work of Bao--Honda \cite{baohondaII} appeared, as well as work of Ishikawa \cite{ishikawa}.
\end{remark}

\begin{remark}[Virtual moduli cycle techniques]
The technique of patching together local finite-dimensional reductions to construct virtual fundamental cycles has been applied to moduli spaces of pseudo-holomorphic curves by many authors, including Fukaya--Ono \cite{fukayaono} (Kuranishi structures), Li--Tian \cite{litianII}, Liu--Tian \cite{liutian}, Ruan \cite{ruan}, Fukaya--Oh--Ohta--Ono \cite{FOOOI,FOOOII,foootechnicaldetails,foooshrinking,fooonewI,foooexpdecay,fukayaliegroupoid,fooonewII,foooconstructionI,foooconstructionII}, McDuff--Wehrheim \cite{mcduffwehrheimI,mcduffwehrheimII,mcduffwehrheimIII,mcduffIV} (Kuranishi atlases), Joyce \cite{joycedefinition,joyceI,joyceII} (Kuranishi spaces and d-orbifolds), and \cite{pardonimplicitatlas} (implicit atlases).  Features of the framework from \cite{pardonimplicitatlas} include that (1) it requires only topological (as opposed to smooth) gluing theorems as input and (2) it applies to moduli spaces with rather general corner structures which we term ``cell-like'' (more general than manifold-with-corners).

More recently, the theory of polyfolds developed by Hofer--Wysocki--Zehnder \cite{polyfoldI,polyfoldII,polyfoldIII,polyfoldint,polyfoldscnew,polyfoldGW,polyfoldftI,polyfoldbook,polyfoldnotesI,polyfoldnotesII,polyfoldlectures} promises to provide a robust new infinite-dimensional context in which all reasonable moduli spaces of pseudo-holomorphic curves may be perturbed ``abstractly'' to obtain transversality.

Any one of the above theories, once sufficiently developed, could be used to prove the main results of this paper.  Although these theories vary in their approach to the myriad of technical issues involved, they are expected to give rise to completely equivalent virtual fundamental cycles.
\end{remark}

\textbf{Acknowledgements:} The author thanks Yasha Eliashberg for introducing the author to this problem and for many useful conversations.  The author thanks the referee for their careful reading and extensive comments which have improved this paper significantly.  The author is also grateful for useful discussions with Mohammed Abouzaid, Fr\'ed\'eric Bourgeois, Roger Casals, Vincent Colin, Tobias Ekholm, Kenji Fukaya, Eleny Ionel, Patrick Massot, Rafe Mazzeo, Peter Ozsv\'ath, Mohan Swaminathan, and Chris Wendl.  A preliminary version of this paper appeared as part of the author's 2015 Ph.D.\ thesis at Stanford University supervised by Ya.\ Eliashberg.

\section{Statement of results}

We now state our main results while simultaneously reviewing the definition of contact homology as given in \cite{eliashbergicm,sftintro,sftappl}.
The definition of contact homology involves counting pseudo-holomorphic curves in various different (but closely related) settings, which we distinguish with roman numerals (I), (II), (III), (IV).
\emph{We strongly recommend that the reader restrict their attention to (I) on a first reading.}
The generalization from (I) to (II), (III), (IV) is mostly straightforward, and may be saved for a second reading.

\subsection{Conventions}\label{conventionsintrosec}

Everything is in the smooth category unless stated otherwise.

To keep track of signs, everything is $\ZZ/2$-graded, and $\otimes$ always denotes the \emph{super tensor product}, meaning that the isomorphism $A\otimes B\xrightarrow\sim B\otimes A$ is given by $a\otimes b\mapsto(-1)^{\left|a\right|\left|b\right|}b\otimes a$ and that $(f\otimes g)(a\otimes b):=(-1)^{\left|g\right|\left|a\right|}f(a)\otimes g(b)$.
Complexes are $(\ZZ,\ZZ/2)$-bigraded, and differentials are always odd.
For specificity, let us declare that $\Hom(A,B)\otimes A\to B$ be given by $f\otimes a\mapsto f(a)$, though it won't really matter for our arguments.

An \emph{orientation line} is a $\ZZ/2$-graded free $\ZZ$-module of rank one.
Note that for an odd orientation line $\oo$, there is no nonzero symmetric pairing $\oo\otimes\oo\to\ZZ$, so we must be careful to distinguish between $\oo$ and its dual $\oo^\vee$.

\subsection{(I) The differential}\label{introI}

Let $Y^{2n-1}$ be a closed manifold, and let $\lambda$ be a \emph{contact form} on $Y$, namely a $1$-form such that $\lambda\wedge(d\lambda)^{n-1}$ is non-vanishing, denoting by $\xi:=\ker\lambda$ the induced co-oriented contact structure.
Contact homology can be regarded as an attempt to define the $S^1$-equivariant Morse--Floer homology of the loop space $C^\infty(S^1,Y)$ with respect to the action functional $a(\gamma):=\int_{S^1}\gamma^\ast\lambda$.

Denote by $R_\lambda$ the \emph{Reeb vector field} associated to $\lambda$, defined by the properties $\lambda(R_\lambda)=1$ and $d\lambda(R_\lambda,\cdot)=0$.  Denote by $\PP=\PP(Y,\lambda)$ the collection of (unparameterized) \emph{Reeb orbits}, namely closed trajectories of $R_\lambda$, not necessarily embedded.  A Reeb orbit is called \emph{non-degenerate} iff the associated linearized Poincar\'e return map $\xi_p\to\xi_p$ has no fixed vector.  A contact form is called non-degenerate iff all its Reeb orbits are non-degenerate, which implies that there are at most finitely many Reeb orbits below any given action threshold.
Suppose now that $\lambda$ is non-degenerate.

There is a natural partition $\PP=\PP_\good\sqcup\PP_\bad$, and for each good Reeb orbit $\gamma\in\PP_\good$, there is an associated orientation line $\oo_\gamma$ with parity $\left|\gamma\right|=\sign(\det(I-A_\gamma))\in\{\pm 1\}=\ZZ/2$, where $A_\gamma$ denotes the linearized Poincar\'e return map of $\gamma$ acting on $\xi$ (see \S\ref{thickorsec}).  We set $\oo_\Gamma:=\bigotimes_{\gamma\in\Gamma}\oo_\gamma$ and $\left|\Gamma\right|:=\sum_{\gamma\in\Gamma}\left|\gamma\right|$ for any finite set of good Reeb orbits $\Gamma\to\PP_\good$.  For a given Reeb orbit $\gamma\in\PP$, let $d_\gamma\in\ZZ_{\geq 1}$ denote its covering multiplicity.

Let
\begin{equation}\label{freesupercommutative}
CC_\bullet(Y,\xi)_\lambda:=\bigoplus_{n\geq 0}\Sym^n_\QQ\Bigl(\bigoplus_{\gamma\in\PP_\good}\oo_\gamma\Bigr)
\end{equation}
denote the free supercommutative (meaning $ab=(-1)^{\left|a\right|\left|b\right|}ba$) unital $\ZZ/2$-graded $\QQ$-algebra generated by $\oo_\gamma$ for $\gamma\in\PP_\good$.
This is the contact homology chain complex, which we now equip with a differential.
To define this differential, fix an almost complex structure $J:\xi\to\xi$ which is \emph{compatible} with $d\lambda$, meaning that $d\lambda(\cdot,J\cdot)$ is a positive definite symmetric pairing on $\xi$.

Let $\hat Y:=\RR\times Y$ (with coordinate $s\in\RR$) denote the \emph{symplectization}\footnote{More intrinsically, the symplectization of a co-oriented contact manifold $(Y,\xi)$ is defined as the total space of the bundle of $1$-forms with kernel $\xi$, namely $\hat Y:=\ker(T^\ast Y\to\xi^\ast)_+$.  The restriction of the tautological Liouville $1$-form on $T^\ast Y$ is a Liouville $1$-form $\hat\lambda$ on $\hat Y$; the associated Liouville vector field on $\hat Y$ generates an $\RR$-action on $\hat Y$ which is simply scaling by $e^s$.  A choice of contact form $\lambda$ for $\xi$ induces an identification of $(\hat Y,\hat\lambda)$ with $(\RR\times Y,e^s\lambda)$.} of $Y$, which is equipped with the Liouville form $\hat\lambda:=e^s\lambda$.  Now $J$ induces an $\RR$-invariant almost complex structure $\hat J$ on $\hat Y$ defined by the property that $\hat J(\partial_s)=R_\lambda$ and $\hat J|_\xi=J$.  Given a Reeb orbit $\gamma^+\in\PP$ and a finite set of Reeb orbits $\Gamma^-\to\PP$, let
\begin{equation}
\pi_2(Y,\gamma^+\sqcup\Gamma^-):=[(S,\partial S),(Y,\gamma^+\sqcup\Gamma^-)]/\!\Diff(S,\partial S),
\end{equation}
where $S$ is any compact connected oriented surface of genus zero with boundary, equipped with a homeomorphism between $\partial S$ and $\gamma^+\sqcup\Gamma^-$ (preserving orientation on $\gamma^+$ and reversing orientation on $\Gamma^-$), and $\Diff(S,\partial S)$ denotes diffeomorphisms of $S$ fixing $\partial S$ pointwise.

Let $\Mbar_\I(\gamma^+,\Gamma^-;\beta)_J$ denote the compactified moduli space of connected $\hat J$-holomorphic curves of genus zero in $\hat Y$ modulo $\RR$-translation, with one positive puncture asymptotic to $\gamma^+$ and negative punctures asymptotic to $\Gamma^-$, in the homotopy class $\beta$, along with asymptotic markers on the domain mapping to fixed basepoints on $\gamma^+$ and $\Gamma^-$ (see \S\S\ref{Tcatdefsec}--\ref{gromovtopology}).  We denote by $\mu(\gamma^+,\Gamma^-;\beta)\in\ZZ$ the \emph{index} of this moduli problem; we have $\mu(\gamma^+,\Gamma^-;\beta)\equiv\left|\gamma^+\right|-\left|\Gamma^-\right|\in\ZZ/2$ (see \S\ref{indexsec}).

It is shown in \cite{sftcompactness} that $\bigsqcup_{(\Gamma^-,\beta)}\Mbar_\I(\gamma^+,\Gamma^-;\beta)_J$ is compact for any fixed $\gamma^+$ (see \S\ref{compactness}).

We say that $\Mbar_\I(\gamma^+,\Gamma^-;\beta)_J$ is \emph{regular} (or \emph{cut out transversally}, or just \emph{transverse}) iff the relevant linearized operator (which takes into account both variations in the map and variations in the almost complex structure of the domain) is everywhere surjective (see \S\ref{linearizedopsec}).
In this case, $\Mbar_\I(\gamma^+,\Gamma^-;\beta)_J$ is a manifold with corners of dimension $\mu(\gamma^+,\Gamma^-;\beta)-1$, whose orientation local system is naturally isomorphic to $\oo_{\gamma^+}\otimes\oo_{\Gamma^-}^\vee\otimes\oo_\RR^\vee$ (see \S\ref{firstgluingstatements}); there are no orbifold points due to our use of asymptotic markers and to the fact that the symplectization is exact so there can be no nodes.
In particular, if $\mu(\gamma^+,\Gamma^-;\beta)=1$ and $\Mbar_\I(\gamma^+,\Gamma^-;\beta)_J$ is cut out transversally, the moduli count
\begin{equation}
\#\Mbar_\I(\gamma^+,\Gamma^-;\beta)_J\in\oo_{\gamma^+}^\vee\otimes\oo_{\Gamma^-}\otimes\oo_\RR
\end{equation}
is well-defined.

We would now like to define a differential
\begin{equation}
d_J:CC_\bullet(Y,\xi)_\lambda\to CC_{\bullet-1}(Y,\xi)_\lambda
\end{equation}
which satisfies the Leibniz rule ($d(1)=0$ and $d(ab)=da\cdot b+(-1)^{\left|a\right|}a\cdot db$) and is defined by the property that it acts on $\oo_{\gamma^+}$ by pairing with
\begin{equation}
\sum_{\begin{smallmatrix}\Gamma^-\to\PP_\good\\\mu(\gamma^+,\Gamma^-;\beta)=1\end{smallmatrix}}\frac 1{\left|\Aut\right|}\cdot\#\Mbar_\I(\gamma^+,\Gamma^-;\beta)_J\cdot\Gamma^-
\end{equation}
and contracting on the left with a chosen orientation of $\RR$ (where $\Aut$ denotes a certain group of automorphisms of the triple $(\gamma^+,\Gamma^-;\beta)$ acting as the identity on $\gamma^+$).
When the moduli spaces $\Mbar_\I(\gamma^+,\Gamma^-;\beta)_J$ of index $\leq 2$ are cut out transversally, this definition of $d_J$ makes sense, and consideration of the fact that $\#\partial\Mbar_\I(\gamma^+,\Gamma^-;\beta)_J=0$ for index two moduli spaces (since they are compact $1$-manifolds) shows that $d_J$ squares to zero.

Our main result, Theorem \ref{main}, allows the above construction of the differential to go through without any transversality assumptions.
More precisely, it provides a non-empty set $\Theta_\I=\Theta_\I(Y,\lambda,J)$, an element of which may be thought of as a specification of ``perturbation data'' for the moduli spaces $\Mbar_\I(\gamma^+,\Gamma^-;\beta)_J$, and for each $\theta\in\Theta_\I$ it defines \emph{rational} ``virtual moduli counts''
\begin{equation}
\#\Mbar_\I(\gamma^+,\Gamma^-;\beta)_{J,\theta}^\vir\in\oo_{\gamma^+}^\vee\otimes\oo_{\Gamma^-}\otimes\oo_\RR\otimes\QQ.
\end{equation}
These virtual moduli counts furthermore satisfy $\#\partial\Mbar_\I(\gamma^+,\Gamma^-;\beta)_{J,\theta}^\vir=0$ and coincide with the ordinary moduli counts when the moduli spaces are transverse.
We thus obtain a differential
\begin{equation}
d_{J,\theta}:CC_\bullet(Y,\xi)_\lambda\to CC_{\bullet-1}(Y,\xi)_\lambda,
\end{equation}
and we denote the resulting homology by $CH_\bullet(Y,\xi)_{\lambda,J,\theta}$.

\subsection{(II) The cobordism map}

Let $\hat X^{2n}$ be a manifold and let $\hat\omega$ be a symplectic form on $\hat X$, namely a closed $2$-form such that $\hat\omega^n$ is non-vanishing.
Let $(Y^\pm,\lambda^\pm)$ be closed manifolds equipped with contact forms, and let
\begin{align}
\label{endmarkingsI}([N,\infty)\times Y^+,d\hat\lambda^+)&\to(\hat X,\hat\omega),\\
\label{endmarkingsII}((-\infty,-N]\times Y^-,d\hat\lambda^-)&\to(\hat X,\hat\omega),
\end{align}
(any large $N<\infty$) be diffeomorphisms onto their (disjoint) images, proper, and such that together they cover a neighborhood of infinity.
A \emph{symplectic cobordism} is a symplectic manifold $(\hat X,\hat\omega)$ equipped with such markings \eqref{endmarkingsI}--\eqref{endmarkingsII}, and it is called \emph{exact} when there is a globally defined primitive $\hat\lambda$ for $\hat\omega$ coinciding with $\hat\lambda^\pm$ in the ends.

Fix an almost complex structure $\hat J:T\hat X\to T\hat X$ which is \emph{tamed} by $\hat\omega$, meaning $\hat\omega(v,\hat Jv)>0$ for nonzero $v\in T\hat X$, such that outside a compact set, $\hat J$ coincides with some $\hat J^\pm$ for $J^\pm:\xi^\pm\to\xi^\pm$.
Also suppose that $\lambda^\pm$ are non-degenerate.

Let $\Mbar_\II(\gamma^+,\Gamma^-;\beta)_{\hat J}$ denote the compactified moduli space of connected $\hat J$-holomorphic curves of genus zero in $\hat X$ from $\gamma^+\in\PP^+=\PP(Y^+,\lambda^+)$ to $\Gamma^-\to\PP^-=\PP(Y^-,\lambda^-)$ in the homotopy class $\beta\in\pi_2(\hat X,\gamma^+\sqcup\Gamma^-)$.

It is shown in \cite{sftcompactness} that $\bigsqcup_{\langle\hat\omega,\beta\rangle<N}\Mbar_\II(\gamma^+,\Gamma^-;\beta)_{\hat J}$ is compact for any fixed $\gamma^+$, $\Gamma^-$, and $N<\infty$.
The condition $\langle\hat\omega,\beta\rangle<N$ may be interpreted with respect to any fixed null-homology of $[\gamma^+]-[\Gamma^-]$.
Note that when the symplectic cobordism is exact, it follows that $\bigsqcup_{(\Gamma^-,\beta)}\Mbar_\II(\gamma^+,\Gamma^-;\beta)_{\hat J}$ is compact for any fixed $\gamma^+$.

Theorem \ref{main} provides virtual moduli counts for the moduli spaces $\Mbar_\II$ in the following form.
There is a set $\Theta_\II=\Theta_\II(\hat X,\hat\omega,\hat J,\eqref{endmarkingsI},\eqref{endmarkingsII})$ together with a surjective forgetful map $\Theta_\II\twoheadrightarrow\Theta_\I^+\times\Theta_\I^-$, where $\Theta_\I^\pm=\Theta_\I(Y^\pm,\lambda^\pm,J^\pm)$, along with virtual moduli counts
\begin{equation}
\#\Mbar_\II(\gamma^+,\Gamma^-,\beta)_{\hat J,\theta}^\vir\in\oo_{\gamma^+}^\vee\otimes\oo_{\Gamma^-}\otimes\QQ
\end{equation}
for $\theta\in\Theta_\II$ and $\mu(\gamma^+,\Gamma^-,\beta)=0$.

Now for exact symplectic cobordisms, we may define a unital $\QQ$-algebra map
\begin{equation}
\Phi(\hat X,\hat\lambda)_{\hat J,\theta}:CC_\bullet(Y^+,\xi^+)_{\lambda^+,J^+,\theta^+}\to CC_\bullet(Y^-,\xi^-)_{\lambda^-,J^-,\theta^-}
\end{equation}
for any $\theta\in\Theta_\II$ mapping to $(\theta^+,\theta^-)\in\Theta_\I^+\times\Theta_\I^-$, by pairing with the virtual moduli counts as before (exactness ensures the relevant sum is finite).
This is a chain map by virtue of the fact that $\#\partial\Mbar_\II=0$.

\subsection{(III) The deformation homotopy}

Let $(\hat X,\hat\omega^t)_{t\in[0,1]}$ be a one-parameter family of symplectic cobordisms, fixed near infinity (meaning \eqref{endmarkingsI}--\eqref{endmarkingsII} are independent of $t$).
Note that the \emph{a priori} more general setup where \eqref{endmarkingsI}--\eqref{endmarkingsII} are allowed to vary with $t$ is easily reduced to the case of being fixed near infinity, simply by conjugating by an appropriate family of diffeomorphisms $\varphi^t:\hat X\to\hat X$.

Fix a family $\hat J^t$ of almost complex structures as above, agreeing with $\hat J^\pm$ outside of a compact subset independent of $t$.

We consider moduli spaces $\Mbar_\III(\{\gamma^+_i,\Gamma^-_i;\beta_i\}_{i\in I})_{\hat J^t}$ parameterizing a choice of $t\in[0,1]$ together with a $\hat J^t$-holomorphic curve from $\gamma^+_i$ to $\Gamma^-_i$ in homotopy class $\beta_i$ for every $i\in I$.
For such moduli spaces, there are two natural notions of transversality: a point is called \emph{regular} iff the associated linearized operator is surjective as before, and a point is called \emph{weakly regular} iff the associated linearized operator becomes surjective once we also take into account variations in $t\in(0,1)$ (but \emph{not} at the endpoints of this interval).

It is shown in \cite{sftcompactness} that $\bigsqcup_{\sup_t\langle\hat\omega^t,\beta\rangle<N}\Mbar_\III(\gamma^+,\Gamma^-;\beta)_{\hat J}$ is compact for any fixed $\gamma^+$, $\Gamma^-$, and $N<\infty$.

Theorem \ref{main} provides virtual moduli counts
\begin{equation}
\#\Mbar_\III(\{\gamma^+_i,\Gamma^-_i;\beta_i\}_{i\in I})_{\hat J^t,\theta}^\vir\in\bigotimes_{i\in I}\oo_{\gamma^+_i}^\vee\otimes\oo_{\Gamma^-_i}\otimes\oo_{[0,1]}^\vee\otimes\QQ
\end{equation}
for $\theta\in\Theta_\III:=\Theta_\III(\hat X,(\hat\omega^t,\hat J^t)_{t\in[0,1]})$ with a surjective map $\Theta_\III\twoheadrightarrow\Theta_\II^{t=0}\times_{\Theta^+\times\Theta^-}\Theta_\II^{t=1}$.

For families of exact symplectic cobordisms, pairing with these virtual moduli counts (and an orientation of $[0,1]$) defines a $\QQ$-linear map
\begin{equation}
K(\hat X,\hat\lambda^t)_{\hat J^t,\theta}:CC_\bullet(Y^+,\xi^+)_{\lambda^+,J^+,\theta^+}\to CC_{\bullet+1}(Y^-,\xi^-)_{\lambda^-,J^-,\theta^-}.
\end{equation}
Note that, whereas in cases (I) and (II), we defined the differential and the cobordism map on algebra generators and then extended using the multiplicative structure, here in case (III) we define the homotopy on each monomial separately.

Consideration of $\#\partial\Mbar_\III=0$ shows that $K(\hat X,\hat\lambda^t)_{\hat J^t,\theta}$ is a chain homotopy between $\Phi(\hat X,\hat\lambda^0)_{\hat J^0,\theta^0}$ and $\Phi(\hat X,\hat\lambda^1)_{\hat J^1,\theta^1}$, implying that the induced maps on homology
\begin{equation}
\begin{tikzcd}[column sep = large]
CH_\bullet(Y^+,\xi^+)_{\lambda^+,J^+,\theta^+}\ar[shift left=0.6ex]{r}{\Phi(\hat X,\hat\lambda^0)_{\hat J^0,\theta^0}}\ar[shift left=-0.6ex]{r}[swap]{\Phi(\hat X,\hat\lambda^1)_{\hat J^1,\theta^1}}&CH_\bullet(Y^-,\xi^-)_{\lambda^-,J^-,\theta^-}
\end{tikzcd}
\end{equation}
coincide.

\subsection{(IV) The composition homotopy}\label{introIV}

Let $(\hat X^{01},\hat\omega^{01})$ be a symplectic cobordism with positive end $(Y^0,\lambda^0)$ and negative end $(Y^1,\lambda^1)$, and let $(\hat X^{12},\hat\omega^{12})$ be a symplectic cobordism with positive end $(Y^1,\lambda^1)$ and negative end $(Y^2,\lambda^2)$.
Given any sufficiently large $t<\infty$, we can form a symplectic cobordism $(\hat X^{02,t},\hat\omega^{02,t})$ by truncating the negative end of $\hat X^{01}$ to $(-t,0]\times Y_1$, truncating the positive end of $\hat X^{12}$ to $[0,t)\times Y_1$, and identifying these truncated ends by translation by $t$.
Under this identification, the forms $\hat\omega^{01}$ and $\hat\omega^{12}$ only agree up to a scaling factor of $e^t$, so their descent $\hat\omega^{02,t}$ to $\hat X^{02,t}$ is only well-defined up to scale; similarly the natural end markings \eqref{endmarkingsI}--\eqref{endmarkingsII} of $\hat X^{02,t}$ only respect symplectic forms up to scale (these ambiguities are never problematic, however, so they will be ignored from now on).
Let $(\hat X^{02,t},\hat\omega^{02,t})_{t\in[0,\infty)}$ denote this one-parameter family of symplectic cobordisms with positive end $(Y^0,\lambda^0)$ and negative end $(Y^2,\lambda^2)$.

Fix almost complex structures $\hat J^{01}$ and $\hat J^{12}$ on $\hat X^{01}$ and $\hat X^{12}$ as before, agreeing with fixed $\hat J^0$, $\hat J^1$, $\hat J^2$ near infinity.
These descend naturally to $\hat J^{02,t}$ on $\hat X^{02,t}$ for sufficiently large $t<\infty$, and we fix an extension $\hat J^{02,t}$ to all $t\in[0,\infty)$, agreeing with $\hat J^0$ and $\hat J^2$ near infinity.

We consider moduli spaces $\Mbar_\IV(\{\gamma^+_i,\Gamma^-_i;\beta_i\}_{i\in I})_{\hat J^{02,t}}$ parameterizing a choice of $t\in[0,\infty]$ together with a $\hat J^{02,t}$-holomorphic curve in $\hat X^{02,t}$ from $\gamma^+_i$ to $\Gamma^-_i$ in homotopy class $\beta_i$ for every $i\in I$.

Theorem \ref{main} provides virtual moduli counts
\begin{equation}
\#\Mbar_\IV(\{\gamma^+_i,\Gamma^-_i;\beta_i\}_{i\in I})_{\hat J^t,\theta}^\vir\in\bigotimes_{i\in I}\oo_{\gamma^+_i}^\vee\otimes\oo_{\Gamma^-_i}\otimes\oo_{[0,\infty]}^\vee\otimes\QQ
\end{equation}
for $\theta\in\Theta_\IV$ with a surjective map $\Theta_\IV\twoheadrightarrow\Theta_\II^{02}\times_{\Theta_\I^0\times\Theta_\I^2}(\Theta_\II^{01}\times_{\Theta_\I^1}\Theta_\II^{12})$.

In the exact setting, pairing with these virtual moduli counts and using $\#\partial\Mbar_\IV=0$ produces a chain homotopy which shows that the following diagram commutes:
\begin{equation}
\begin{tikzcd}[row sep = small]
&CH_\bullet(Y^1,\xi^1)_{\lambda^1,J^1,\theta^1}\ar{rd}{\Phi(\hat X^{12},\hat\lambda^{12})_{\hat J^{12},\theta^{12}}}\\
CH_\bullet(Y^0,\xi^0)_{\lambda^0,J^0,\theta^0}\ar{rr}[swap]{\Phi(\hat X^{02},\hat\lambda^{02})_{\hat J^{02},\theta^{02}}}\ar{ru}{\Phi(\hat X^{01},\hat\lambda^{01})_{\hat J^{01},\theta^{01}}}&&CH_\bullet(Y^2,\xi^2)_{\lambda^2,J^2,\theta^2}
\end{tikzcd}
\end{equation}
for any $(\theta^{0,1,2},\theta^{01,12,02})\in\Theta_\II^{02}\times_{\Theta_\I^0\times\Theta_\I^2}(\Theta_\II^{01}\times_{\Theta_\I^1}\Theta_\II^{12})$.

\subsection{Main result}

We now give a precise statement of our main result, Theorem \ref{main}, which has already been alluded to in \S\S\ref{introI}--\ref{introIV} above.
This result takes as input a datum $\D$ (as in any of Setups \ref{setupI}--\ref{setupIV} below) and produces a set $\Theta(\D)$ together with virtual moduli counts $\#\Mbar{\vphantom{\M}}_\theta^\vir$ for $\theta\in\Theta(\D)$ satisfying certain properties.

\begin{setting}\label{setupI}
consists of a closed manifold $Y$ equipped with a non-degenerate contact form $\lambda$ and a $d\lambda$-compatible almost complex structure $J:\xi\to\xi$.
\end{setting}

\begin{setting}\label{setupII}
consists of a symplectic cobordism $(\hat X,\hat\omega)$ with positive end $(Y^+,\lambda^+)$ and negative end $(Y^-,\lambda^-)$, together with an $\hat\omega$-tame almost complex structure $\hat J:T\hat X\to T\hat X$ agreeing with $\hat J^\pm$ outside a compact set, where $(Y^\pm,\lambda^\pm,J^\pm)$ are as in Setup \ref{setupI}.
\end{setting}

\begin{setting}\label{setupIII}
consists of a one-parameter family of symplectic cobordisms $(\hat X,\hat\omega^t)_{t\in[0,1]}$ (fixed near infinity) with positive/negative ends $(Y^\pm,\lambda^\pm)$, together with $\hat\omega^t$-tame almost complex structures $\hat J^t$ agreeing with $\hat J^\pm$ outside a compact set, where $(Y^\pm,\lambda^\pm,J^\pm)$ are as in Setup \ref{setupI}.
\end{setting}

\begin{setting}\label{setupIV}
consists of a one-parameter family of symplectic cobordisms $(\hat X^{02,t},\hat\omega^{02,t})_{t\in[0,\infty)}$ with ends $(Y^0,\lambda^0)$ and $(Y^2,\lambda^2)$, which for sufficiently large $t$ agrees with the $t$-gluing of a symplectic cobordism $(\hat X^{01},\hat\omega^{01})$ with ends $(Y^0,\lambda^0)$ and $(Y^1,\lambda^1)$ and a symplectic cobordism $(\hat X^{12},\hat\omega^{12})$ with ends $(Y^1,\lambda^1)$ and $(Y^2,\lambda^2)$, together with an appropriately compatible family of almost complex structures $\hat J^{02,t}$, with $(Y^i,\lambda^i,J^i)$ as in Setup \ref{setupI}.
\end{setting}

\begin{theorem}\label{main}
Let $\D$ be a datum as in any of Setups \ref{setupI}--\ref{setupIV}.
There exists a set $\Theta(\D)$ along with \emph{rational} virtual moduli counts $\#\Mbar{\vphantom{\M}}_\theta^\vir$ satisfying the following properties:
\begin{enumerate}
\item$\Theta(\D)$ and $\#\Mbar{\vphantom{\M}}_\theta^\vir$ are functorial in $\D$, meaning that an isomorphism $i:\D\to\D'$ induces an isomorphism $i_\ast:\Theta(\D)\xrightarrow\sim\Theta(\D')$ such that $\id_\ast=\id$, $(i\circ j)_\ast=i_\ast\circ j_\ast$, and $\#\Mbar{\vphantom{\M}}_\theta^\vir=\#\Mbar{\vphantom{\M}}_{i_\ast\theta}^\vir$.
\item There are natural surjective forgetful maps
\begin{align}
\Theta_\I&\twoheadrightarrow\ast,\\
\Theta_\II&\twoheadrightarrow\Theta_\I^+\times\Theta_\I^-,\\
\Theta_\III&\twoheadrightarrow\Theta_\II^0\times_{\Theta_\I^+\times\Theta_\I^-}\Theta_\II^1,\\
\Theta_\IV&\twoheadrightarrow\Theta_\II^{02}\times_{\Theta_\I^0\times\Theta_\I^2}(\Theta_\II^{01}\times_{\Theta_\I^1}\Theta_\II^{12}),
\end{align}
covering the natural forgetful maps of data $\D$.
\item\label{mainindex}$\#\Mbar{\vphantom{\M}}_\theta^\vir$ is nonzero only for moduli spaces of virtual dimension zero.
\item\label{maintransverse}$\#\Mbar{\vphantom{\M}}_\theta^\vir=\#\Mbar$ for (weakly) regular moduli spaces of dimension zero.
In particular, if a moduli space is empty then its virtual count is zero.
\item The virtual moduli counts satisfy the ``master equation''
\begin{equation}\label{master}
\#\partial\Mbar{\vphantom{\M}}^\vir_\theta=0,
\end{equation}
where the left hand side denotes the sum over all codimension one boundary strata of the relevant products of virtual moduli counts and inverse covering multiplicities of intermediate orbits (this sum is finite by compactness).
\end{enumerate}
\end{theorem}

\subsection{The contact homology functor}\label{wrapup}

Contact homology can be viewed as a functor \eqref{contacthomologyfunctor}, whose (mostly formal) construction from Theorem \ref{main} we now describe.

Let $(\Contact,\Exact)_n$ denote the category whose objects are closed co-oriented contact manifolds $(Y^{2n-1},\xi)$ and whose morphisms are deformation classes of exact symplectic cobordisms $(\hat X^{2n},\hat\lambda)$ between contact manifolds.  Let $\Ring^{\ZZ/2}_\QQ$ denote the category whose objects are supercommutative $\ZZ/2$-graded unital $\QQ$-algebras and whose morphisms are graded unital $\QQ$-algebra homomorphisms.  Contact homology is a symmetric monoidal functor
\begin{equation}\label{contacthomologyfunctor}
CH_\bullet:(\Contact,\Exact)_n^\sqcup\to(\Ring^{\ZZ/2}_\QQ)^\otimes.
\end{equation}
The symmetric monoidal structure on $(\Contact,\Exact)_n$ is disjoint union $\sqcup$, and the symmetric monoidal structure on $\Ring^{\ZZ/2}_\QQ$ is the super tensor product $\otimes$ ($A\otimes B$ is endowed with the multiplication $(a\otimes b)(a'\otimes b'):=(-1)^{\left|a'\right|\left|b\right|}aa'\otimes bb'$, and the isomorphism $A\otimes B\xrightarrow\sim B\otimes A$ is given by $a\otimes b\mapsto(-1)^{\left|a\right|\left|b\right|}b\otimes a$).

Concretely, to define \eqref{contacthomologyfunctor} we should specify:
\begin{itemize}
\item For every co-oriented contact manifold $(Y,\xi)$, a supercommutative $\ZZ/2$-graded unital $\QQ$-algebra $CH_\bullet(Y,\xi)$.
\item For every exact symplectic cobordism $(\hat X,\hat\lambda)$ from $(Y^+,\xi^+)$ to $(Y^-,\xi^-)$, a graded unital $\QQ$-algebra map $\Phi(\hat X,\hat\lambda):CH_\bullet(Y^+,\xi^+)\to CH_\bullet(Y^-,\xi^-)$.
\item Isomorphisms $CH_\bullet(Y,\xi)\otimes CH_\bullet(Y',\xi')=CH_\bullet(Y\sqcup Y',\xi\sqcup\xi')$.
\end{itemize}
such that:
\begin{itemize}
\item The morphism $\Phi(\hat X,\hat\lambda)$ depends only on the deformation class of $(\hat X,\hat\lambda)$.
\item The morphism associated to the identity cobordism is the identity map.
\item The morphism $\Phi(\hat X^{02},\hat\lambda^{02})$ associated to a composition of exact symplectic cobordisms $\hat X^{02}=\hat X^{01}\#\hat X^{12}$ coincides with the composition $\Phi(\hat X^{12},\hat\lambda^{12})\circ\Phi(\hat X^{01},\hat\lambda^{01})$.
\item The isomorphisms $CH_\bullet(Y,\xi)\otimes CH_\bullet(Y',\xi')=CH_\bullet(Y\sqcup Y',\xi\sqcup\xi')$ are commutative, associative, and compatible with the cobordism maps.
\end{itemize}
The construction is as follows.

Theorem \ref{main} applied to Setup \ref{setupI} provides a supercommutative $\ZZ/2$-graded unital $\QQ$-algebra
\begin{equation}\label{invariantI}
CH_\bullet(Y,\xi)_{\lambda,J,\theta}
\end{equation}
for any co-oriented contact manifold $(Y,\xi)$ with non-degenerate contact form $\lambda$, admissible almost complex structure $J$, and $\theta\in\Theta_\I(Y,\lambda,J)$.

Theorem \ref{main} applied to Setup \ref{setupII} provides a graded unital $\QQ$-algebra map
\begin{equation}\label{invariantII}
CH_\bullet(Y^+,\xi^+)_{\lambda^+,J^+,\theta^+}\xrightarrow{\Phi(\hat X,\hat\lambda)_{\hat J,\theta}}CH_\bullet(Y^-,\xi^-)_{\lambda^-,J^-,\theta^-}
\end{equation}
for any exact symplectic cobordism $(\hat X,\hat\lambda)$ with $\lambda^\pm$ non-degenerate, admissible almost complex structure $\hat J$ coinciding with $\hat J^\pm$ near infinity, and $\theta\in\Theta_\II(\hat X,\hat\lambda,\hat J)$ mapping to $\theta^\pm\in\Theta_\I^\pm$.

Theorem \ref{main} applied to Setup \ref{setupIII} shows that the following two maps coincide:
\begin{equation}\label{invariantIII}
\begin{tikzcd}[column sep = large]
CH_\bullet(Y^+,\xi^+)_{\lambda^+,J^+,\theta^+}\ar[shift left=0.6ex]{r}{\Phi(\hat X,\hat\lambda^0)_{\hat J^0,\theta^0}}\ar[shift left=-0.6ex]{r}[swap]{\Phi(\hat X,\hat\lambda^1)_{\hat J^1,\theta^1}}&CH_\bullet(Y^-,\xi^-)_{\lambda^-,J^-,\theta^-}.
\end{tikzcd}
\end{equation}
Note that this immediately implies that $\Phi(\hat X,\hat\lambda)_{\hat J,\theta}$ is independent of $\hat J$ and $\theta$, and depends only on the deformation class of $(\hat X,\hat\lambda)$.  Thus we may rewrite \eqref{invariantII} as
\begin{equation}\label{invariantIIa}
CH_\bullet(Y^+,\xi^+)_{\lambda^+,J^+,\theta^+}\xrightarrow{\Phi(\hat X,\hat\lambda)}CH_\bullet(Y^-,\xi^-)_{\lambda^-,J^-,\theta^-}.
\end{equation}

Theorem \ref{main} applied to Setup \ref{setupIV} shows that the following diagram commutes:
\begin{equation}\label{invariantIV}
\begin{tikzcd}[row sep = small]
&CH_\bullet(Y^1,\xi^1)_{\lambda^1,J^1,\theta^1}\ar{rd}{\Phi(\hat X^{12},\hat\lambda^{12})}\\
CH_\bullet(Y^0,\xi^0)_{\lambda^0,J^0,\theta^0}\ar{rr}[swap]{\Phi(\hat X^{02},\hat\lambda^{02})}\ar{ru}{\Phi(\hat X^{01},\hat\lambda^{01})}&&CH_\bullet(Y^2,\xi^2)_{\lambda^2,J^2,\theta^2}.
\end{tikzcd}
\end{equation}

\begin{lemma}\label{isolemma}
Let $(Y,\xi)$ be a co-oriented contact manifold with two non-degenerate contact forms $\lambda^+,\lambda^-$.
Let $(\hat X,\hat\lambda)$ denote the trivial exact symplectic cobordism with positive end $(Y,\lambda^+)$ and negative end $(Y,\lambda^-)$ (namely, $\hat X$ is simply the symplectization $\hat Y$ marked appropriately).
The map
\begin{equation}
CH_\bullet(Y,\xi)_{\lambda^+,J^+,\theta^+}\xrightarrow{\Phi(\hat X,\hat\lambda)}CH_\bullet(Y,\xi)_{\lambda^-,J^-,\theta^-}
\end{equation}
is an isomorphism for any $J^\pm$ and $\theta^\pm$.
\end{lemma}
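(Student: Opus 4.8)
The plan is to exhibit an explicit two-sided inverse for $\Phi(\hat X,\hat\lambda)$ by concatenating trivial cobordisms and invoking Theorems \ref{mainIII} and \ref{mainIV}. Write $\hat X^{+-}:=\hat X$, the trivial cobordism from $(Y,\lambda^+)$ to $(Y,\lambda^-)$, and let $\hat X^{-+}$ be the trivial cobordism from $(Y,\lambda^-)$ to $(Y,\lambda^+)$ (both exist since $\lambda^-/\lambda^+$ is a positive function). The concatenation $\hat X^{+-}\#\hat X^{-+}$ is an exact symplectic cobordism from $(Y,\lambda^+)$ to itself, built from the loop of contact forms $\lambda^+,\lambda^-,\lambda^+$; since the space of contact forms for $\xi$ is convex, this loop is contractible, so $\hat X^{+-}\#\hat X^{-+}$ is deformation equivalent rel ends (Setup \ref{setupIII}) to the symplectization $(\RR\times Y,e^s\lambda^+,\hat J^+)$, and likewise $\hat X^{-+}\#\hat X^{+-}$ to $(\RR\times Y,e^s\lambda^-,\hat J^-)$. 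Choosing compatible auxiliary data via the surjections of Theorems \ref{mainII}, \ref{mainIII}, \ref{mainIV}, Theorem \ref{mainIV} gives $\Phi(\hat X^{-+})\circ\Phi(\hat X^{+-})=\Phi(\hat X^{+-}\#\hat X^{-+})$ and Theorem \ref{mainIII} gives $\Phi(\hat X^{+-}\#\hat X^{-+})=\Phi(\RR\times Y,e^s\lambda^+)$, and symmetrically with $\pm$ exchanged. So it suffices to prove that the cobordism map of a symplectization is the identity; granting this, $\Phi(\hat X^{+-})$ is an isomorphism with inverse $\Phi(\hat X^{-+})$, for any $J^\pm,\theta^\pm$.

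Thus the crux is: for $(Y,\lambda,J)$ as in Setup \ref{setupI}, the cobordism map $\Phi(\RR\times Y,e^s\lambda)_{\hat J,\theta}$ of the symplectization (as a cobordism from $(Y,\lambda)$ to itself) equals $\id$ on $CC_\bullet(Y,\xi)_\lambda$ for every $\theta$. The mechanism is the $\RR$-translation symmetry. For the diagonal term: by the energy/action identity, any $\hat J$-holomorphic curve in $\RR\times Y$ asymptotic to $\gamma^+$ at its positive puncture and to $\gamma^+$ at its single negative puncture, in the trivial homotopy class, has zero $\hat\omega$-energy, hence is a trivial cylinder over $\gamma^+$; conversely the trivial cylinders over $\gamma^+$, with the asymptotic-marker data, form a transversally cut-out zero-dimensional manifold $\M_\II(\gamma^+,\{\gamma^+\};\beta_0)$ with $d_{\gamma^+}$ points (domain rotation leaves only the difference of the two marker positions along the $d_{\gamma^+}$-fold orbit), all of positive sign, so this space contributes $d_{\gamma^+}^{-1}\cdot d_{\gamma^+}=1$ times $\oo_{\gamma^+}\mapsto\oo_{\gamma^+}$, using $|\Aut(\{\gamma^+\},\beta_0)|=1$. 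Since $\Phi$ is a unital algebra map, establishing $\Phi(\oo_{\gamma^+})=\oo_{\gamma^+}$ for all $\gamma^+$ will give $\Phi=\id$.

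It remains to show that every other index-$0$ moduli space $\Mbar_\II(\gamma^+,\Gamma^-;\beta)$ of the symplectization, with $(\Gamma^-,\beta)\ne(\{\gamma^+\},\beta_0)$, has vanishing virtual count. The $\RR$-action acts on such a space, and on its top stratum it acts freely: a curve fixed up to reparametrization by a continuous family of translations is an honest trivial cylinder (a branched multiple cover of one is moved, since the $s$-coordinate of its critical values moves), forcing $(\Gamma^-,\beta)=(\{\gamma^+\},\beta_0)$; and a nontrivial discrete stabilizer would force $s$-periodicity, incompatible with fixed asymptotics. One then chooses the implicit atlas and perturbation $\RR$-equivariantly, so that the perturbed zero-set fibers over $\RR$ with base a compact manifold of dimension $\mu(\gamma^+,\Gamma^-;\beta)-1=-1$, hence empty; so the virtual count vanishes. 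Assembling the two paragraphs, $\Phi(\RR\times Y,e^s\lambda)_{\hat J,\theta}=\id$, and the lemma follows from the first paragraph.

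The concatenation argument is routine. The main obstacle is the vanishing claim: the boundary strata of $\Mbar_\II(\gamma^+,\Gamma^-;\beta)$ where the $\RR$-action degenerates---buildings whose cobordism level contains a trivial cylinder---factor through $\RR$-quotiented symplectization moduli spaces of negative virtual dimension, which are themselves virtually empty, so the $\RR$-equivariant perturbation must be chosen compatibly with the entire (Deligne--Mumford-type) boundary structure. Carrying this out within the implicit-atlas formalism of \cite{pardonimplicitatlas} is where the real work lies.
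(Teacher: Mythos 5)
Your first paragraph is essentially the paper's own first step: using the composition property (Theorem \ref{mainIV}, diagram \eqref{invariantIV}) to reduce to the self-cobordism case, where the composite of the two trivial cobordisms is deformation equivalent to the symplectization. The gap is in the crux claim of your second and third paragraphs, namely that $\Phi(\RR\times Y,e^s\lambda)_{\hat J,\theta}=\id$ on $CC_\bullet$ \emph{for every} $\theta$. Nothing in Theorems \ref{mainI}--\ref{mainIV} forces the virtual counts of the nontrivial index-$0$ spaces $\Mbar_\II(\gamma^+,\Gamma^-;\beta)$ in the symplectization-viewed-as-cobordism to vanish for a given $\theta$: Theorem \ref{mainII} only guarantees properties (i)--(iii), and for a general $\theta$ the chain map can have nonzero off-diagonal (action-decreasing) entries, so it is not the identity. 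To get the vanishing you would have to produce some special $\theta$ built from $\RR$-equivariant coherent perturbation data and prove that virtual counts of spaces carrying a free $\RR$-action (compatibly with all boundary strata involving trivial-cylinder components) vanish --- an equivariant-transversality statement inside the implicit-atlas formalism that the paper's machinery does not provide and that you yourself defer (``where the real work lies''). As written, that deferred step is the entire content of the proof.

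The identity statement is also stronger than necessary. After the same reduction, the paper chooses the $\RR$-invariant $\hat J$ on the trivial self-cobordism and shows only that the chain map is an \emph{isomorphism}, via the action filtration: $\int u^\ast d\lambda\geq 0$ for every $\hat J$-holomorphic curve, with equality exactly for branched covers of trivial cylinders, and every such cover has at least one negative end, so the map is filtered; on the associated graded only covers with exactly one negative end contribute, which by Riemann--Hurwitz are trivial cylinders, and these are cut out transversally (Lemma \ref{trivialtransverse}), so Theorem \ref{mainII}(ii) identifies the associated-graded map with the identity. Well-orderedness of the filtration then gives the isomorphism, with no vanishing of off-diagonal counts and no equivariance. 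Your diagonal-term computation is essentially this associated-graded step; if you replace ``identity via $\RR$-equivariant vanishing'' by ``isomorphism via the action filtration,'' your outline closes up and coincides with the paper's proof.
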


\begin{proof}
In view of the commutativity of \eqref{invariantIV}, it suffices to treat the case $\lambda^+=\lambda^-=\lambda$ and $J^+=J^-=J$.

Choose the $\RR$-invariant almost complex structure $\hat J=\hat J^\pm$ on $\hat X$, and choose any $\theta\in\Theta_\II$ mapping to $\theta^\pm$.  We will show that the map on chains
\begin{equation}\label{chainisomorphism}
CC_\bullet(Y,\xi)_{\lambda,J,\theta^+}\xrightarrow{\Phi(\hat X,\hat\lambda)_{\hat J,\theta}}CC_\bullet(Y,\xi)_{\lambda,J,\theta^-}
\end{equation}
is an isomorphism, which is clearly sufficient.

We consider the ascending filtration on both sides of \eqref{chainisomorphism} whose $^{\leq(a,k)}$ filtered piece is the $\QQ$-subspace generated by all monomials of Reeb orbits with total action $<a$ or total action $=a$ and degree $\geq k$.  We claim that the map \eqref{chainisomorphism} as well as the differentials on both its domain and codomain all respect this filtration.  Indeed, the integral of $d\lambda$ over any $\hat J$-holomorphic curve is $\geq 0$, with equality iff the curve is a branched cover of a trivial cylinder (note the difference between $d\lambda$ and the symplectic form $d\hat\lambda=d(e^s\lambda)=e^s(d\lambda+ds\wedge\lambda)$).  Every branched cover of a trivial cylinder has at least one negative end, which proves the claim.  Since the filtration is well-ordered, to show that \eqref{chainisomorphism} is an isomorphism, it suffices to show that the induced map on associated gradeds is an isomorphism.

The curves contributing to the action of \eqref{chainisomorphism} on associated gradeds are the branched covers of trivial cylinders with exactly one negative end, and such curves are themselves necessarily trivial cylinders by Riemann--Hurwitz.  Since there is exactly one such trivial cylinder for every Reeb orbit, it suffices to show that trivial cylinders are cut out transversally.  This is a standard fact, whose proof we recall in Lemma \ref{trivialtransverse}.
\end{proof}

Now for a contact manifold $(Y,\xi)$, all groups $CH_\bullet(Y,\xi)_{\lambda,J,\theta}$ are canonically isomorphic via the morphisms $\Phi(\hat X,\hat\lambda)$ associated to the trivial cobordisms (by Lemma \ref{isolemma} and the commutativity of \eqref{invariantIV}).  Thus we get a well-defined object
\begin{equation}
CH_\bullet(Y,\xi),
\end{equation}
independent of $(\lambda,J,\theta)$.  Formally speaking, $CH_\bullet(Y,\xi)$ is the limit (and the colimit) of $\{CH_\bullet(Y,\xi)_{\lambda,J,\theta}\}_{\lambda,J,\theta}$, which is attained at any particular triple $(\lambda,J,\theta)$.  Note that for any contact structure, the set of non-degenerate contact forms is generic (and in particular non-empty).

The commutativity of \eqref{invariantIV} also implies that a deformation class of exact symplectic coboordism $(\hat X,\hat\lambda)$ from $(Y^+,\xi^+)$ to $(Y^-,\xi^-)$ induces a well-defined graded unital $\QQ$-algebra map
\begin{equation}\label{defII}
\Phi(\hat X,\hat\lambda):CH_\bullet(Y^+,\xi^+)\to CH_\bullet(Y^-,\xi^-),
\end{equation}
and that $\Phi(\hat X^{02},\hat\lambda^{02})=\Phi(\hat X^{12},\hat\lambda^{12})\circ\Phi(\hat X^{01},\hat\lambda^{01})$ for $\hat X^{02}=\hat X^{01}\#\hat X^{12}$.

To construct the symmetric monoidal structure on $CH_\bullet$, it suffices to observe that (as we shall prove in Proposition \ref{symmetricmonoidal}) the sets $\Theta_\I$, $\Theta_\II$ are themselves weakly symmetric monoidal in a manner which preserves the virtual moduli counts.  This completes the construction of the contact homology functor \eqref{contacthomologyfunctor} in terms of Theorem \ref{main}.

\subsection{Applications and extensions}\label{applextend}

There is quite some literature devoted to properties and applications of contact homology and symplectic field theory which relies, explicitly or implicitly, on the existence of ``suitable'' virtual curve counting foundations.
An exhaustive investigation of the applicability and extendibility of our results to these myriad of settings is beyond the scope of this paper.
We nevertheless include some brief indications in this direction.

\begin{itemize}
\item(Grading by $H_1(Y)$) Contact homology $CH_\bullet(Y,\xi)$ has a grading by $H_1(Y)$ (the grading of a given monomial in Reeb orbits equals its total homology class).

\item(Refinement of $\ZZ/2$-grading) Contact homology $CH_\bullet(Y,\xi)$ has a relative grading by $\ZZ/2c_1(\xi)\cdot H_2(Y)$, which is absolute over the $0\in H_1(Y)$ graded piece.  The homological grading of $\gamma$ is given by $\left|\gamma\right|=\CZ(\gamma)+n-3$.

\item(Action filtration) If we equip $(Y,\xi)$ with a contact form $\lambda$, then for $a\in\RR$, there is an invariant $CH_\bullet(Y,\lambda)^{<a}$ which is equipped with functorial maps $CH_\bullet(Y,\lambda)^{<a}\to CH_\bullet(Y,\lambda')^{<a'}$ for $\frac\lambda a\geq\frac{\lambda'}{a'}$ (pointwise) such that
\begin{equation}\label{actionfiltrationlimit}
CH_\bullet(Y,\xi)=\varinjlim CH_\bullet(Y,\lambda)^{<a}.
\end{equation}
Namely, for $\lambda$ non-degenerate, $CH_\bullet(Y,\lambda)^{<a}$ is defined as the homology of the subcomplex $CC_\bullet(Y,\xi)^{<a}_{\lambda,J,\theta}\subseteq CC_\bullet(Y,\xi)_{\lambda,J,\theta}$ spanned by those monomials of total action $<a$, and for general $\lambda$ we define $CH_\bullet(Y,\lambda)^{<a}$ as the direct limit of $CH_\bullet(Y,\lambda')^{<a}$ over non-degenerate $\lambda'>\lambda$.  This invariant $CH_\bullet^{<a}$ may be constructed out of Theorem \ref{main} as in \S\ref{wrapup}.

\item(Coefficients in $\QQ[H_2(Y)]$) Contact homology $CH_\bullet(Y,\xi)$ has a natural lift $\overline{CH}_\bullet(Y,\xi)$ to the group ring $\QQ[H_2(Y;\ZZ)]$.  More intrinsically, $\overline{CH}_\bullet$ may be thought of as a local system over the space of $1$-cycles in $Y$, namely $\tau_{\geq 0}C_{\bullet+1}(Y)$.  Contact homology with group ring coefficients $\overline{CH}_\bullet(Y,\xi)$ has a relative $\ZZ$-grading, where $\QQ[H_2(Y)]$ is $\ZZ$-graded by $2c_1(\xi):H_2(Y)\to\ZZ$.  This invariant $\overline{CH}_\bullet$ may be constructed out of Theorem \ref{main} as in \S\ref{wrapup}.

\item(Contact homology of contractible orbits) An invariant $CH_\bullet^\contr(Y,\xi)$ is obtained from the chain complex $CC_\bullet^\contr(Y,\lambda)$ generated as an algebra by contractible Reeb orbits (with a differential which counts curves whose asymptotic orbits are all contractible).  There is also an invariant $CH_\bullet^\alpha(Y,\xi)$ obtained from the chain complex $CC_\bullet^\alpha(Y,\lambda)$ generated as a module over $CC_\bullet^\contr(Y,\lambda)$ by Reeb orbits in a fixed nontrivial homotopy class $\alpha$ (with differential counting curves whose asymptotic orbits are either all contractible or all contractible except for the positive end and one negative end both in class $\alpha$).  These invariants $CH_\bullet^\contr$ and $CH_\bullet^\alpha$ may be constructed out of Theorem \ref{main} as in \S\ref{wrapup}.

\item(Cylindrical contact homology) If $(Y,\xi)$ is \emph{hypertight} (admits a contact form with no contractible Reeb orbits) then there is an invariant $CH_\bullet^\cyl(Y,\xi)$ defined as follows.  If $(Y,\xi)$ admits a \emph{non-degenerate} contact form with no contractible Reeb orbits, then $CH_\bullet^\cyl(Y,\xi)$ is defined as the homology of the complex $CC_\bullet^\cyl(Y,\lambda):=\bigoplus_{\gamma\in\PP_\good}\oo_\gamma$ with the differential which counts pseudo-holomorphic cylinders.  If this is not the case, then one must first define $CH_\bullet^\cyl(Y,\lambda)^{<a}$ for non-degenerate contact forms $\lambda$ with no contractible Reeb orbits of action $<a$, and then let $CH_\bullet^\cyl(Y,\xi):=\varinjlim CH_\bullet^\cyl(Y,\lambda)^{<a}$.  As this definition makes clear, to define $CH_\bullet^\cyl(Y,\xi)$ we actually only need to assume that $(Y,\xi)$ is \emph{asymptotically hypertight}, namely that it admits a sequence of contact forms $\lambda_i$ converging uniformly to zero, each with no contractible Reeb orbits of action $<1$.  This invariant $CH_\bullet^\cyl$ may be constructed out of Theorem \ref{main} as in \S\ref{wrapup}.

Eliashberg--Givental--Hofer \cite{sftintro} assert that $CH_\bullet^\cyl(Y,\xi)$ can be defined assuming only that $(Y,\xi)$ admits a non-degenerate contact form with no contractible Reeb orbits of index $1$, $0$, or $-1$.  We have nothing positive to say about the applicability of our methods to this question.

\item(Even contact forms)
It is trivial to calculate contact homology given a non-degenerate contact form all of whose Reeb orbits are even, since the differential then vanishes for index reasons.  For examples of such situations, we refer the reader to Ustilovsky \cite{ustilovsky} and Abreu--Macarini \cite{abreumacarini}.

\item(Overtwisted contact manifolds)
A given (connected, non-empty) contact manifold is either \emph{tight} or \emph{overtwisted}.  Overtwisted contact structures are classified completely by an $h$-principle due to Eliashberg \cite{eliashbergovertwisted} in dimension three and Borman--Eliashberg--Murphy \cite{bormaneliashbergmurphy} in general.

Contact homology (even with group ring coefficients) vanishes on any overtwisted contact manifold.  In dimension three, this is a result of Eliashberg \cite[p334, Theorem 3.5(2)]{eliashbergicm} (a proof is given in Yau \cite{mlyauovertwisted} and the appendix by Eliashberg).  In all dimensions, this follows from the result of Bourgeois--van Koert \cite[Theorem 1.3]{bourgeoisvankoert} that contact homology vanishes for any contact manifold admitting a negatively stabilized open book, together with the result of Casals--Murphy--Presas \cite[Theorem 1.1]{casalsmurphypresas} that a contact manifold admits a negatively stabilized open book iff it is overtwisted.  These vanishing results are proved by exhibiting a contact form with a non-degenerate Reeb orbit bounding exactly one pseudo-holomorphic plane in the symplectization (which is cut out transversally); in particular, they are valid for the contact homology we construct here.

An \emph{a priori} weaker notion of a contact manifold being \emph{PS-overtwisted} was introduced by Niederkr\"uger \cite{niederkruger} and Massot--Niederkr\"uger--Wendl \cite{massotniederkrugerwendl}.
An argument, due to Bourgeois--Niederkr\"uger, that contact homology vanishes for PS-overtwisted contact manifolds is sketched in \cite{bourgeoissurvey}; the point is to count pseudo-holomorphic disks with boundary on the plastikstufe/bLob, one boundary marked point constrained to map to a fixed curve on the plastikstufe/bLob from its core/binding to its boundary, and an arbitrary number of negative punctures.
The proof of Theorem \ref{main} generalizes easily to provide sufficient virtual moduli counts for this argument.

\item(Filling and cobordism obstructions)
A contact manifold with vanishing contact homology is not symplectically fillable, and more generally if the positive end of a symplectic cobordism has vanishing contact homology then so does the negative end.
Indeed, the existence of a unital ring map $0\to R$ implies $R=0$, and the cobordism in question induces a unital map $CH_\bullet(Y^+,\xi^+)\to CH_\bullet(Y^-,\xi^-)$ (to obtain such a map in the non-exact case, one can count curves which represent zero in $H_2(\hat X,Y^+\sqcup Y^-)$).
The curve counts produced by Theorem \ref{main} are sufficient for this argument.

It is an observation of Niederkr\"uger--Wendl \cite{niederkrugerwendl} and Latschev--Wendl \cite{latschevwendl} that a similar implication holds for ``stable symplectic cobordisms'' and contact homology with twisted coefficients.
(What we call a symplectic cobordism is often called a ``strong symplectic cobordism'', and the notion of a stable symplectic cobordism is more general; see also Massot--Niederkr\"uger--Wendl \cite{massotniederkrugerwendl}.)
The literal statement of Theorem \ref{main} does not cover stable symplectic cobordisms, however its proof applies to them without modification, as the differences between the two settings are simply not relevant to any part of the argument (except for compactness, which is proved in the requisite generality in \cite{sftcompactness}).

\item(Invariants of contactomorphisms) There is a natural homomorphism
\begin{equation}
\label{pifamilycontact}\pi_0\Cont(Y,\xi)\to\Aut_\QQ(CH_\bullet(Y,\xi)),
\end{equation}
namely the tautological action of $\Cont(Y,\xi)$ on $CH_\bullet(Y,\xi)$.  This action admits the following description in terms of cobordism maps which shows that it descends to $\pi_0$.  For any $\varphi\in\Cont(Y,\xi)$, denote by $X_\varphi$ the exact symplectic cobordism from $Y$ to itself obtained from the trivial cobordism by changing the marking on the negative end by $\varphi$.  The action of $\varphi$ on $CH_\bullet(Y,\xi)$ clearly coincides with the cobordism map $\Phi(X_\varphi)$.  On the other hand, $\Phi(X_\varphi)$ only depends on the class of $\varphi$ in $\pi_0$ since cobordism maps are invariant under deformation.  It is also clear that $\varphi\mapsto\Phi(X_\varphi)$ is a group homomorphism since $X_\varphi\# X_\psi=X_{\varphi\psi}$.

As pointed out by P.\ Massot, any contactomorphism which is \emph{symplectically pseudo-isotopic} to the identity (a notion due to Cieliebak--Eliashberg \cite[\S 14.5]{cieliebakeliashberg}) lies in the kernel of \eqref{pifamilycontact}; indeed, $\varphi$ is symplectically pseudo-isotopic to the identity iff $X_\varphi$ is isomorphic to $X_{\id}$ as symplectic cobordisms from $Y$ to itself.  Thus a contactomorphism which acts nontrivially on contact homology cannot be symplectically pseudo-isotopic to the identity.

\item(Invariants of families of contactomorphisms)
The above construction generalizes to give a natural homomorphism
\begin{equation}
\label{Hkfamilycontact}H_k(\Cont(Y,\xi))\to\Hom_\QQ(CH_\bullet(Y,\xi),CH_{\bullet+k}(Y,\xi)),
\end{equation}
as introduced by Bourgeois \cite{bourgeoishomotopy} (more precisely, Bourgeois introduced the pre-compositions of \eqref{pifamilycontact}--\eqref{Hkfamilycontact} with the map $\Omega_\xi\Xi(Y)\to\Cont(Y,\xi)$ coming from Gray's fibration sequence $\Cont(Y,\xi)\to\Diff(Y,\xi)\to\Xi(Y)$, where $\Xi(Y)$ denotes the space of contact structures on $Y$ and $\Omega_\xi$ denotes the space of loops based at $\xi\in\Xi(Y)$).
Namely, a family of $\varphi\in\Cont(Y,\xi)$ gives rise to a family of cobordisms $X_\varphi$, and case (III) of Theorem \ref{main} (and its proof) generalize immediately to such higher-dimensional families of cobordisms, in the following form.
For any family of cobordisms $(\hat X,(\hat\omega^t,\hat J^t)_{t\in\Delta^n})$, there is a set $\Theta_{\III(n)}=\Theta_{\III(n)}(\hat X,(\hat\omega^t,\hat J^t)_{t\in\Delta^n})$ (specializing to $\Theta_{\III(0)}=\Theta_\II$ and $\Theta_{\III(1)}=\Theta_\III$) along with surjective maps
\begin{equation}
\Theta_{\III(n)}(\hat X,(\hat\omega^t,\hat J^t)_{t\in\Delta^n})\twoheadrightarrow\lim_{\Delta^k\subsetneq\Delta^n}\bigl[\Theta_{\III(k)}(\hat X,(\hat\omega^t,\hat J^t)_{t\in\Delta^k})\to(\Theta_\I^+\times\Theta_\I^-)\bigr]
\end{equation}
(where the limit is in the category of sets over $\Theta_\I^+\times\Theta_\I^-$), and there are associated virtual moduli counts satisfying the natural master equation, thus giving rise (in the exact case) to ``higher homotopies'' $K(\hat X,\hat\lambda^t)_{\hat J^t,\theta}$ satisfying
\begin{multline}
d_{J^-,\theta^-}K(\hat X,\hat\lambda^t)_{\hat J^t,\theta}-(-1)^nK(\hat X,\hat\lambda^t)_{\hat J^t,\theta}d_{J^+,\theta^+}\\
=\sum_{\begin{smallmatrix}k=0\\i:\Delta^{[0\ldots\hat k\ldots n]}\hookrightarrow\Delta^{[0\ldots n]}\end{smallmatrix}}^n(-1)^kK(\hat X,\hat\lambda^{i(t)})_{\hat J^{i(t)},i^\ast\theta}.
\end{multline}
This defines \eqref{Hkfamilycontact}.

We expect that it is possible to upgrade \eqref{pifamilycontact}--\eqref{Hkfamilycontact} into the statement that $CC_\bullet(Y,\xi)$ is an $A_\infty$-module over $C_\bullet(\Cont(Y,\xi))$ (i.e.\ that contact chains of $(Y,\xi)$ is a ``derived local system'' over $B\Cont(Y,\xi)$), by proving a more intricate higher-dimensional version of case (IV) of Theorem \ref{main}.
To realize this rigorously, it would be sufficient to specify the correct compactified moduli spaces to consider and to show that their stratifications are locally cell-like as in \S\S\ref{localmodelsection},\ref{celllikeproofsec} (these are essentially combinatorial questions).

\item(Chain homotopy vs DGA homotopy)
Given a one-parameter family of exact symplectic cobordisms, case (III) of Theorem \ref{main} shows that the two cobordism maps
\begin{equation}
\begin{tikzcd}[column sep = large]
CC_\bullet(Y^+,\xi^+)_{\lambda^+,J^+,\theta^+}\ar[shift left=0.6ex]{r}{\Phi(\hat X,\hat\lambda^0)_{\hat J^0,\theta^0}}\ar[shift left=-0.6ex]{r}[swap]{\Phi(\hat X,\hat\lambda^1)_{\hat J^1,\theta^1}}&CC_\bullet(Y^-,\xi^-)_{\lambda^-,J^-,\theta^-}
\end{tikzcd}
\end{equation}
are \emph{chain homotopic}, by constructing virtual moduli counts for moduli spaces of \emph{disconnected curves} (and analogously for case (IV)).
For certain purposes (such as defining linearized contact homology), one may need to know that such cobordism maps are homotopic in a stronger sense, namely one which takes into account the algebra structure.
The construction of such homotopies is sketched in Eliashberg--Givental--Hofer \cite[\S 2.4]{sftintro} and in Ekholm--Oancea \cite[\S\S 5.4--5.5]{ekholmoancea} in more detail (also see Ekholm--Honda--K\'alm\'an \cite[Lemma 3.13]{ekholmhondakalman} for the related, but easier, case of Legendrian contact homology).
We have nothing positive to say about the applicability of our methods to the rigorous construction of such homotopies.
One can generalize case (III) of Theorem \ref{main} (and its proof) a little by counting curves occurring at times $0\leq t_1\leq\cdots\leq t_k\leq 1$ and obtain an $A_\infty$-homotopy, though it is not clear if this is any improvement.

\item(Symplectic field theory)
To construct the symplectic field theory ``homology groups'' as described in Cieliebak--Latschev \cite{cieliebaklatschev} and Latschev--Wendl \cite{latschevwendl} (specializing work of Eliashberg--Givental--Hofer \cite{sftintro}) using the methods of this paper, it would be sufficient to specify the correct compactified moduli spaces to consider and to show that their stratifications are locally cell-like as in \S\S\ref{localmodelsection},\ref{celllikeproofsec} (these are essentially combinatorial questions).

\item(Morse--Bott contact forms)
It should be possible to define contact homology using Morse--Bott contact forms, by counting appropriate pseudo-holomorphic cascades as in Bourgeois \cite{bourgeoisthesis,bourgeoisthesissurvey}.
To realize this rigorously, the main tasks would be to specify the correct compactified moduli spaces to consider, to show that their stratifications are locally cell-like as in \S\S\ref{localmodelsection},\ref{celllikeproofsec} (these are essentially combinatorial questions), and to establish the relevant gluing result as in \S\ref{gluingsec}.

\item(Integer coefficients) It is an interesting open question (promoted by Abouzaid) how to naturally lift contact homology from $\QQ$ to $\ZZ$.

\item(Non-equivariant contact homology) We conjecture that contact homology can be lifted to a framed $E_2$-algebra, namely an algebra over the operad whose space of $n$-ary operations is the space of embeddings $(D^2)^{\sqcup n}\hookrightarrow D^2$.  More precisely, the free symmetric $\QQ$-algebra on $\oo_\gamma$ for $\gamma\in\PP_\good$ in \eqref{freesupercommutative} should be replaced with the free framed $E_2$-algebra on $\oo_\gamma$ for $\gamma\in\PP$ subject to the relations that rotating any $d$-fold multiply covered orbit by $2\pi/d$ acts on $\oo_\gamma$ by the corresponding sign ($\pm 1$ according to whether the Reeb orbit is good or bad).  The differential of a given orbit $\gamma^+$ should lift naturally to an element of this (almost) free framed $E_2$-algebra by taking into account the conformal types of the domains of the pseudo-holomorphic curves in question.  This lift can be viewed as the ``non-equivariant'' version of contact homology (which should itself be viewed as an $S^1$-equivariant theory).
\end{itemize}

\section{Moduli spaces of pseudo-holomorphic curves}\label{modulisec}

In this section, we define the moduli spaces of pseudo-holomorphic curves which we will use to define contact homology.

\subsection{Categories of strata \texorpdfstring{$\SSS$}{S}}\label{Tcatdefsec}

We begin by introducing, for any datum $\D$ from any of Setups \ref{setupI}--\ref{setupIV}, a collection $\SSS=\SSS(\D)$ of labeled trees  which index the strata of the compactified moduli spaces of pseudo-holomorphic curves (which of Setups \ref{setupI}--\ref{setupIV} is being considered is often indicated with a subscript, as in $\SSS_\I$, $\SSS_\II$, etc.).
A labeled tree describes the ``combinatorial type'' of a pseudo-holomorphic curve: the tree is (almost) the dual graph of the domain, and it is labeled with the homotopy class and asymptotics of the map.

Gluing pseudo-holomorphic curves corresponds to contracting a subset of the edges of a tree and updating the labels accordingly.  We regard these collections $\SSS$ as categories, with such edge contractions as morphisms.  Morphisms of labeled trees will induce maps of compactified moduli spaces.

The categories $\SSS$ carry some additional (vaguely monoidal) structure, corresponding to the fact that boundary strata in compactified moduli spaces can be expressed as products of other ``smaller'' moduli spaces.  The relevant operation on trees is that of ``concatenation'', where we take some collection of trees and identify some pairs of input/output edges with matching labels to produce a new tree.

\begin{figure}[ht]
\centering
\includegraphics{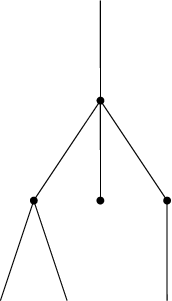}
\caption{A tree.  The edges are all directed downwards.}\label{treefig}
\end{figure}

\begin{figure}[ht]
\centering
\includegraphics{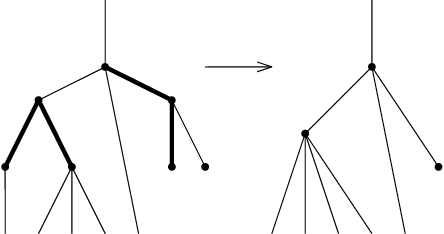}
\caption{A contraction of trees.  The edges which have been contracted are marked bold.}\label{treecontractionfig}
\end{figure}

The category $\SSS_\I$ is defined as follows, with respect to some fixed datum of Setup \ref{setupI}.
An object of $\SSS_\I$ consists first of a finite directed tree $T$ (for us, the word ``tree'' entails being non-empty) in which every vertex has a unique incoming edge (see Figure \ref{treefig}).
Edges with missing source (called input edges) and edges with missing sink (called output edges) are allowed, together these are called external edges, and the remaining edges will be called interior edges.
The tree $T$ is further equipped with decorations as follows:
\begin{enumerate}
\item For each edge $e\in E(T)$, a Reeb orbit $\gamma_e\in\PP$.
\item For each vertex $v\in V(T)$, a homotopy class $\beta_v\in\pi_2(Y,\gamma_{e^+(v)}\sqcup\{\gamma_{e^-}\}_{e^-\in E^-(v)})$, where we denote by $e^+(v)\in E(T)$ the unique incoming edge at $v$, and by $E^-(v)$ the set of outgoing edges at $v$.
\item For each external edge $e\in E^\eext(T)$, a basepoint $b_e\in\left|\gamma_e\right|$, where $\left|\gamma_e\right|$ denotes the underlying simple orbit of $\gamma_e$.
\end{enumerate}
A morphism $\pi:T\to T'$ in $\SSS_\I$ consists first of a contraction of underlying trees.
By this we mean that some collection of interior edges of $T$ is specified, and $T'$ is identified with the result of contracting these edges (see Figure \ref{treecontractionfig}).
This identification must be compatible with the decorations in the following sense:
\begin{enumerate}
\item For each non-contracted edge $e\in E(T)$, we have $\gamma_{\pi(e)}=\gamma_e$.
\item For each vertex $v'\in V(T')$, we have $\beta_{v'}=\#_{\pi(v)=v'}\beta_v$.
\end{enumerate}
Finally, we must specify for each external edge $e\in E^\eext(T)=E^\eext(T')$ a path along $\left|\gamma_e\right|$ between the basepoints $b_e$ and $b_e'$, modulo the relation that identifies two such paths iff their ``difference'' lifts to $\gamma_e$ (i.e.\ iff it has degree divisible by the covering multiplicity of $\gamma_e$).
Such paths will correspond, in the realm of pseudo-holomorphic curves, to ways of moving asymptotic markers.
Conceptually, we should really regard the input/output edges of $T$ as being labeled by objects of the category $\tilde\PP$ whose objects are Reeb orbits $\gamma\in\PP$ together with a basepoint $b\in\left|\gamma\right|$ and whose morphisms are paths as just described (the set of isomorphism classes in this category is $|\tilde\PP|=\PP$, and the automorphism group of an object in the isomorphism class of $\gamma\in\PP$ is (canonically) $\ZZ/d_\gamma$).

We now introduce the notion of a concatenation in $\SSS_\I$.
A concatenation in $\SSS_\I$ consists of a finite non-empty collection of objects $T_i\in\SSS_\I$ along with a matching between some pairs of external edges of the $T_i$'s (with matching $\gamma_e$) such that the resulting gluing is a tree, along with a choice of paths between the basepoints for each pair of matched edges.
Given a concatenation $\{T_i\}_i$ in $\SSS_\I$, there is a resulting object $\#_iT_i\in\SSS_\I$.
A morphism of concatenations $\{T_i\}_i\to\{T_i'\}_i$ shall mean a collection of morphisms $T_i\to T_i'$ covering a bijection of index sets; a morphism of concatenations $\{T_i\}_i\to\{T_i'\}_i$ induces a morphism $\#_iT_i\to\#_iT_i'$.
If $\{T_i\}_i$ is a concatenation in $\SSS_\I$ and $T_i=\#_jT_{ij}$ for some concatenations $\{T_{ij}\}_j$, there is a resulting composite concatenation $\{T_{ij}\}_{ij}$ with natural isomorphisms $\#_{ij}T_{ij}=\#_i\#_jT_{ij}=\#_iT_i$.

\begin{figure}[p]
\centering
\includegraphics{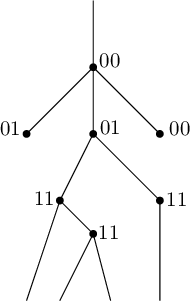}
\caption{An object of $\SSS_\II$, with vertex labels as shown.  Note that the vertex labels determine the edge labels uniquely (and conversely except for vertices with no outgoing edges).}\label{treeIIfig}
\end{figure}

\begin{figure}[p]
\centering
\includegraphics{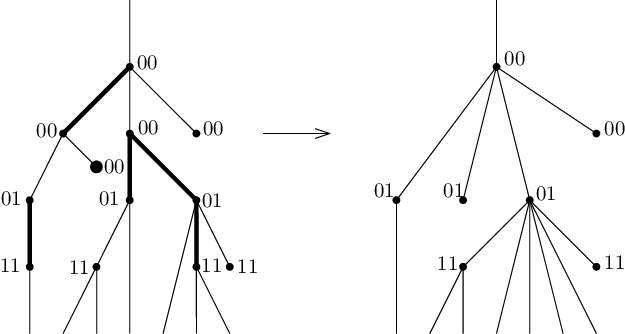}
\caption{A morphism in $\SSS_\II$.  A morphism in $\SSS_\II$ is determined uniquely by the set of contracted edges and the set of vertices whose label changes from $00$ to $01$ (only needed for vertices with no outgoing edges).}\label{treeIIcontractionfig}
\end{figure}

\begin{figure}[p]
\centering
\includegraphics{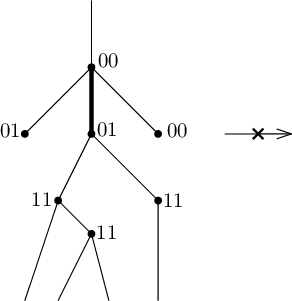}
\caption{This object of $\SSS_\II$ cannot be contracted along (only) the marked edge (there is no way to consistently label the result).}\label{treeIIcontractionnotfig}
\end{figure}

Continuing on to Setup \ref{setupII}, the category $\SSS_\II$ (again for a particular choice of datum of Setup \ref{setupII}) is defined as follows.
An object of $\SSS_\II$ is a decorated tree $T$ as before, but with additional labels which specify the target of the pseudo-holomorphic curve corresponding to each vertex.
For ease of notation, we set $\hat X^{00}=\hat Y^0:=\hat Y^+$, $\hat X^{01}:=\hat X$, and $\hat X^{11}=\hat Y^1:=\hat Y^-$.
The additional labels (which in turn determine the allowable decorations) are as follows:
\begin{enumerate}
\item For each edge $e\in E(T)$, a symbol $\ast(e)\in\{0,1\}$ such that input edges are labeled with $0$ and output edges are labeled with $1$.
\item For each vertex $v\in V(T)$, a pair of symbols $\ast^\pm(v)\in\{0,1\}$ such that $\ast^+(v)\leq\ast^-(v)$ and $\ast(e^\pm(v))=\ast^\pm(v)$.
If $\ast^+(v)=\ast^-(v)$, we call $v$ a symplectization vertex.
\end{enumerate}
See Figure \ref{treeIIfig}.
These labels determine the allowable decorations: $\gamma_e\in\PP(Y^{\ast(e)},\lambda^{\ast(e)})$ and $\beta_v\in\pi_2(\hat X^{\ast(v)},\gamma_{e^+(v)}\sqcup\{\gamma_{e^-}\}_{e^-\in E^-(v)})$.
A morphism $\pi:T\to T'$ in $\SSS_\II$ consists first of a contraction of underlying trees, required to satisfy $\ast^+(\pi(v))\leq\ast^+(v)$, $\ast^-(\pi(v))\geq\ast^-(v)$, and $\ast(\pi(e))=\ast(e)$ for non-contracted edges $e$.
Given these, it makes sense to require compatibility with the decorations and to specify paths between basepoints as before.
See Figures \ref{treeIIcontractionfig} and \ref{treeIIcontractionnotfig}.

A concatenation $\{T_i\}_i$ in $\SSS_\II$ is defined as before, except that each $T_i$ may be an object of either $\SSS_\II$, $\SSS_\I^+$, or $\SSS_\I^-$ (where $\SSS_\I^\pm=\SSS_\I(Y^\pm,\lambda^\pm)$).  Given a concatenation $\{T_i\}_i$ in $\SSS_\II$, there is a resulting object $\#_iT_i\in\SSS_\II$.  If $\{T_i\}_i$ is a concatenation in $\SSS_\II$ and $T_i=\#_jT_{ij}$ for some concatenations $\{T_{ij}\}_j$ (in whichever of $\SSS_\II$, $\SSS_\I^+$, $\SSS_\I^-$ contains $T_i$), there is a resulting \emph{composite concatenation} $\{T_{ij}\}_{ij}$ with natural isomorphisms $\#_{ij}T_{ij}=\#_i\#_jT_{ij}=\#_iT_i$.

Continuing on to Setup \ref{setupIII}, the category $\SSS_\III$ is defined as follows.
An object of $\SSS_\III$ is a forest (meaning connected and non-empty assumptions are dropped) with labels and decorations as in the case of $\SSS_\II$, together with a choice of set $\s\in\{\{0\},\{1\},(0,1)\}$.
A morphism $T\to T'$ in $\SSS_\III$ is defined as for $\SSS_\II$, with the additional requirement that $\s(T)\subseteq\overline{\s(T')}$.

There are three types of concatenations in $\SSS_\III$.  The first type consists of objects $T_i\in\SSS_\I^+\sqcup\SSS_\II^{t=0}\sqcup\SSS_\I^-$ with the usual matching data, producing an object $\#_iT_i\in\SSS_\III$ with $\s(\#_iT_i)=\{0\}$.  The second type consists of objects $T_i\in\SSS_\I^+\sqcup\SSS_\II^{t=1}\sqcup\SSS_\I^-$ with matching data, producing an object $\#_iT_i\in\SSS_\III$ with $\s(\#_iT_i)=\{1\}$.  The third type consists of objects $T_i\in\SSS_\I^+\sqcup\SSS_\III\sqcup\SSS_\I^-$ (exactly one of which lies in $\SSS_\III$) with matching data, producing an object $\#_iT_i\in\SSS_\III$ (where $\s(\#_iT_i)=\s(T_i)$ for the unique $T_i\in\SSS_\III$).
A composition of concatenations is defined as before.

The reason for the restriction that concatenations in $\SSS_\III$ of the third type must contain exactly one object of $\SSS_\III$ is explained as follows.
As mentioned earlier, a concatenation $\{T_i\}_i$ is supposed to induce an identification of $\Mbar(\#_iT_i)$ with $\prod_i\Mbar(T_i)$ (modulo the symmetry coming from multiply covered orbits).
The moduli spaces $\Mbar_\III$ parameterize pseudo-holomorphic curves at varying times $t\in[0,1]$, and thus they come with a forgetful map to $[0,1]$.
Corresponding to a concatenation in $\SSS_\III$ of the third type without the requirement of containing exactly one object of $\SSS_\III$, there is not an identification of $\Mbar(\#_iT_i)$ with $\prod_i\Mbar(T_i)$, but rather with the corresponding \emph{fiber product} over $[0,1]$.
We do not discuss such identifications in this paper because we do not know how to prove (or even formulate precisely) the corresponding expected compatibility of virtual fundamental cycles with such fiber products (this is related to the problem of showing compatibility of the homotopy with the algebra structure on contact homology, see \S\ref{applextend}).
The same remark will apply to concatenations in $\SSS_\IV$.

The category $\SSS_\IV$ is defined as follows.
An object in $\SSS_\IV$ is a forest $T$ with the following labels and decorations.  Each edge shall be labeled with a symbol $\ast(e)\in\{0,1,2\}$, such that input edges are labeled with $0$ and output edges are labeled with $2$.  Each vertex shall be labeled with a pair of symbols $\ast^\pm(v)\in\{0,1,2\}$ such that $\ast^+(v)\leq\ast^-(v)$ and $\ast(e^\pm(v))=\ast^\pm(v)$.  There shall also be decorations $\gamma_e\in\PP(Y^{\ast(e)})$ and $\beta_v\in\pi_2(X^{\ast(v)},\gamma_{e^+(v)}\sqcup\{\gamma_{e^-}\}_{e^-\in E^-(v)})$ with basepoints $b_e\in\left|\gamma_e\right|$ for input/output edges as before.  Finally, we specify a set $\s\in\{\{0\},\{\infty\},(0,\infty)\}$ and we require that if $\s\in\{\{0\},(0,\infty)\}$ then $\ast(v)\in\{00,02,22\}$ for all $v$, and if $\s=\{\infty\}$ then $\ast(v)\in\{00,01,11,12,22\}$ for all $v$.
A morphism $T\to T'$ in $\SSS_\IV$ is defined as for $\SSS_\II$, with the additional requirement that $\s(T)\subseteq\overline{\s(T')}$.

There are three types of concatenations in $\SSS_\IV$.  The first type consists of objects $T_i\in\SSS_\I^0\sqcup\SSS_\II^{02}\sqcup\SSS_\I^2$ with matching data, producing an object $\#_iT_i\in\SSS_\IV$ with $\s(\#_iT_i)=\{0\}$.  The second type consists of objects $T_i\in\SSS_\I^0\sqcup\SSS_\II^{01}\sqcup\SSS_\I^1\sqcup\SSS_\II^{12}\sqcup\SSS_\I^2$ with matching data, producing an object $\#_iT_i\in\SSS_\IV$ with $\s(\#_iT_i)=\{\infty\}$.  The third type consists of objects $T_i\in\SSS_\I^0\sqcup\SSS_\IV\sqcup\SSS_\I^2$ (exactly one of which lies in $\SSS_\IV$) with matching data, producing an object $\#_iT_i\in\SSS_\IV$ (where $\s(\#_iT_i)=\s(T_i)$ for the unique $T_i\in\SSS_\IV$).  A composition of concatenations is defined as before.  

\subsection{Properties of categories \texorpdfstring{$\SSS$}{S}}

The categories $\SSS$ together with their concatenation structure satisfy some basic properties which will be used frequently.
We begin with some basic definitions.

\begin{definition}[Slice categories]
For any category $\TTT$ and an object $T\in\TTT$, denote by $\TTT_{/T}$ the ``over-category'' whose objects are morphisms $T'\to T$.
Similarly define the ``under-category'' $\TTT_{T/}$ (objects are morphisms $T\to T'$) and $\TTT_{T//T'}$ (objects are factorizations $T\to T''\to T'$ of a fixed morphism $T\to T'$).
\end{definition}

\begin{definition}[Poset]\label{posetdef}
A \emph{poset} is a category $\TTT$ for which for every pair of objects $x,y\in\TTT$, there is at most one morphism $x\to y$.
The \emph{cardinality} of a poset is its number of isomorphism classes of objects.
\end{definition}

Both $\SSS_{T/}$ and $\SSS_{T//T'}$ are finite posets for any $T$ or $T\to T'$, respectively.
On the other hand, $\SSS_{/T}$ is almost never finite nor a poset (see Figure \ref{overautfigure}).

\begin{figure}[ht]
\centering
\includegraphics{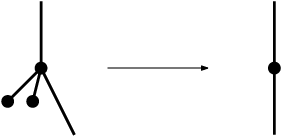}
\caption{A map of trees $T'\to T$ for which $\Aut(T'/T)=\ZZ/2$, corresponding to exchanging the lower two vertices of $T'$.}\label{overautfigure}
\end{figure}

\begin{definition}[Automorphism groups]
Given a map $T'\to T$, we denote by $\Aut(T'/T)$ the subgroup of $\Aut(T')$ compatible with the map $T'\to T$ (i.e.\ the automorphism group of $(T'\to T)\in\SSS_{/T}$).
Given a concatenation $\{T_i\}_i$, we denote by $\Aut(\{T_i\}_i/\#_iT_i)$ the group of automorphisms of $\{T_i\}_i$ acting trivially on $\#_iT_i$ (i.e.\ the product $\prod_e\ZZ/d_{\gamma_e}$ over junction edges, acting diagonally).
\end{definition}

\begin{definition}
An object $T\in\SSS$ is called \emph{maximal} iff the only morphism out of $T$ is the identity map (i.e.\ $\SSS_{T/}$ consists of a single object with trivial endomorphism monoid).
\end{definition}

Maximality can be characterized concretely as follows:
\begin{enumerate}
\item$T\in\SSS_\I$ is maximal iff it has exactly one vertex.
\item$T\in\SSS_\II$ is maximal iff it has exactly one vertex and this vertex has $\ast(v)=01$.
\item$T\in\SSS_\III$ is maximal iff every component of $T$ has exactly one vertex, all these vertices have $\ast(v)=01$, and $\s(T)=(0,1)$.
\item$T\in\SSS_\IV$ is maximal iff every component of $T$ has exactly one vertex, all these vertices have $\ast(v)=02$, and $\s(T)=(0,\infty)$.
\end{enumerate}

\begin{lemma}\label{concatenationmaximal}
Every object $T\in\SSS$ can be expressed as a concatenation of maximal $T_i$, uniquely up to isomorphism.
In particular, $T\in\SSS$ is maximal iff it cannot be expressed nontrivially as a concatenation.
\qed
\end{lemma}

\subsection{Moduli spaces \texorpdfstring{$\Mbar(T)$}{Mbar(T)}}\label{mbardefsec}

We now define the compactified moduli spaces of pseudo-holomorphic curves $\Mbar(T)$ relevant for contact homology.  We first define the strata $\M(T)$ for each $T\in\SSS$, and then we define the compactified moduli spaces $\Mbar(T)$ as unions of strata $\Mbar(T')$ for $T'\to T$.

Equip $\RR\times S^1$ with coordinates $(s,t)$ and with the standard almost complex structure $j(\partial_s)=\partial_t$, i.e.\ $z=e^{s+it}$.

\begin{definition}\label{asymptoticdef}
Fix $(Y,\lambda,J)$ as in Setup \ref{setupI}, and let $u:[0,\infty)\times S^1\to\hat Y$ be a smooth map.  We say that $u$ is \emph{$C^0$-convergent} to (the positive trivial cylinder over) a Reeb orbit $\gamma\in\PP$ iff
\begin{equation}\label{asymptoticend}
u(s,t)=(Ls+b,\tilde\gamma(t))+o(1)\quad\text{as }s\to\infty,
\end{equation}
for some $b\in\RR$ and some $\tilde\gamma:S^1\to Y$ with $\partial_t\tilde\gamma=L\cdot R_\lambda(\tilde\gamma)$ parameterizing $\gamma$, where $o(1)$ is meant with respect to any $\RR$-invariant Riemannian metric.
When the error $o(1)$ decays like $O(e^{-\delta\left|s\right|})$ in all derivatives for some $\delta>0$, we say $u$ is \emph{$C^\infty$-convergent with weight $\delta$}.
Similarly, we define the notion of $u:(-\infty,0]\times S^1\to\hat Y$ being convergent to a negative trivial cylinder by considering \eqref{asymptoticend} in the limit $s\to-\infty$.
\end{definition}

It is straightforward to check that for any Riemann surface $C$ and $p\in C$, it makes sense to say that $u:C\setminus p\to\hat Y$ is $C^0$- or $C^\infty$-convergent of weight $\delta\in(0,1)$ to a trivial cylinder at $p$, i.e.\ that this notion is independent of the choice of coordinates $[0,\infty)\times S^1\to C\setminus p$ near $p$ (though note the importance of $\delta<1$ in this statement).
This notion also generalizes immediately to maps to any $\hat X$ from Setups \ref{setupII}, \ref{setupIII}, \ref{setupIV}, by virtue of the markings \eqref{endmarkingsI}--\eqref{endmarkingsII}.

If $u$ is $\hat J$-holomorphic, then $C^0$-convergence to a trivial cylinder implies $C^\infty$-convergence of every weight $\delta<\delta_\gamma$, where $\delta_\gamma>0$ denotes the smallest nonzero absolute value among eigenvalues of the linearized operator of $\gamma$ as in Definition \ref{smalleigenvalue} (by Hofer--Wysocki--Zehnder \cite[Theorems 1.1, 1.2, and 1.3]{hwzsmallarea} which we restate as Proposition \ref{hwzcylinderestimate}).

If $u$ converges to a trivial cylinder over $\gamma\in\PP$ at $p\in C$, then it induces a well-defined constant speed parameterization of $\gamma$ denoted $u_p:S_pC\to Y$, where $S_pC:=(T_pC\setminus 0)/\RR_{>0}$ is the tangent sphere at $p$, which is a $U(1)$-torsor due to the complex structure on $C$.

\begin{figure}[ht]
\centering
\includegraphics{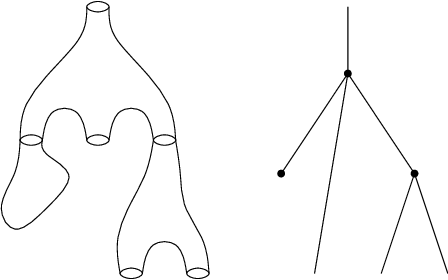}
\caption{A stable pseudo-holomorphic building and the corresponding tree.}\label{MIfig}
\end{figure}

\begin{definition}[Moduli space $\M_\I(T)$]\label{MIdef}
A \emph{pseudo-holomorphic building of type $T\in\SSS_\I$} consists of the following data (see Figure \ref{MIfig}):
\begin{enumerate}
\item\label{MIdefC}For every vertex $v$, a closed, connected, nodal Riemann surface of genus zero $C_v$, along with distinct points $\{p_{v,e}\in C_v\}_e$ indexed by the edges $e$ incident at $v$.
\item\label{MIdefmap}For every vertex $v$, a smooth map $u_v:C_v\setminus\{p_{v,e}\}_e\to\hat Y$.
\item\label{MIdefEA}We require that $u_v$ be $C^0$-convergent to the positively trivial cylinder over $\gamma_{e^+(v)}$ at $p_{v,e^+(v)}$ and to the negative trivial cylinders over $\gamma_{e^-}$ at $p_{v,e^-}$ for $e^-\in E^-(v)$, and be in the homotopy class $\beta_v$.
\item\label{MIdefbase}For every input/output edge $e$, an ``asymptotic marker'' $\tilde b_e\in S_{p_{v,e}}C_v$ which is mapped to the basepoint $b_e\in\left|\gamma_e\right|$ by $(u_v)_{p_{v,e}}$.
\item\label{MIdefM}For every interior edge $v\xrightarrow ev'$, a ``matching isomorphism'' $m_e:S_{p_{v,e}}C_v\to S_{p_{v',e}}C_{v'}$ intertwining $(u_v)_{p_{v,e}}$ and $(u_{v'})_{p_{v',e}}$.
\item\label{MIdefH}We require that $u_v$ be $\hat J$-holomorphic, i.e.\ $(du)^{0,1}_{\hat J}=0$.
\end{enumerate}
An \emph{isomorphism} $(\{C_v\},\{p_{v,e}\},\{u_v\},\{\tilde b_e\},\{m_e\})\to(\{C_v'\},\{p_{v,e}'\},\{u_v'\},\{\tilde b_e'\},\{m_e'\})$ between pseudo-holomorphic buildings of type $T$ consists of isomorphisms $\{i_v:C_v\to C_v'\}$ and real numbers $\{s_v\in\RR\}$ such that $u_v=\tau_{s_v}\circ u_v'\circ i_v$ ($\tau_s:\hat Y\to\hat Y$ denotes translation by $s$), $i_v(p_{v,e})=p_{v,e}'$, $i(\tilde b_e)=\tilde b_e'$, and $i_{v'}\circ m_e=m'_e\circ i_v$.  A pseudo-holomorphic building is called \emph{stable} iff its automorphism group is finite.  We denote by $\M_\I(T)$ the set of isomorphism classes of stable pseudo-holomorphic buildings of type $T$.
Note the tautological action of $\Aut(T)$ on $\M(T)$ by changing the marking.
\end{definition}

\begin{remark}
For topological reasons (exactness of $\hat Y$), the domains $C_v$ of every pseudo-holomorphic building as in Definition \ref{MIdef} are in fact smooth (that is, are without nodes).
We find it convenient to set up all the definitions in the generality of nodal domains (which do occur in non-exact cobordisms).
\end{remark}

\begin{remark}
Concretely, a pseudo-holomorphic building is stable iff it does not contain either (1) a trivial cylinder at a symplectization vertex of $T$, or (2) an unstable irreducible component of $C_v$ over which the map $u_v$ is constant.
\end{remark}

\begin{definition}[Moduli space $\M_\II(T)$]\label{MIIdef}
A \emph{pseudo-holomorphic building of type $T\in\SSS_\II$} consists of the following data:
\begin{enumerate}
\item Domains $C_v$ and punctures $p_{v,e}$ as in Definition \ref{MIdef}\ref{MIdefC}.
\item For every vertex $v$, a smooth map $u_v:C_v\setminus\{p_{v,e}\}_e\to\hat X^{\ast(v)}$.
\item Aymptotic conditions on $u_v$ as Definition \ref{MIdef}\ref{MIdefEA}.
\item Asymptotic markers $\tilde b_e$ as in Definition \ref{MIdef}\ref{MIdefbase}.
\item Matching isomorphisms $m_e$ as in Definition \ref{MIdef}\ref{MIdefM}.
\item $\hat J$-holomorphicity condition on $u_v$ as Definition \ref{MIdef}\ref{MIdefH}.
\end{enumerate}
An \emph{isomorphism} between pseudo-holomorphic buildings of type $T$ is defined as in Definition \ref{MIdef}, except that there is a translation $s_v\in\RR$ only if $v$ is a symplectization vertex.  We denote by $\M_\II(T)$ the set of isomorphism classes of stable pseudo-holomorphic buildings of type $T$.
\end{definition}

\begin{definition}[Moduli space $\M_\III(T)$]\label{MIIIdef}
For $T\in\SSS_\III$, denote by $\M_\III(T)$ the union over $t\in\s(T)$ of the set of isomorphism classes of stable pseudo-holomorphic buildings of type $T$ (as in Definition \ref{MIIdef}) in $(\hat X,\hat J^t)$.
\end{definition}

\begin{definition}[Moduli space $\M_\IV(T)$]\label{MIVdef}
For $T\in\SSS_\IV$, denote by $\M_\IV(T)$ the union over $t\in\s(T)$ of the set of isomorphism classes of stable pseudo-holomorphic buildings of type $T$ (as in Definition \ref{MIIdef}) in $(\hat X^{02,t},\hat J^{02,t})$.
\end{definition}

\begin{definition}[Moduli spaces $\Mbar(T)$]\label{Mbardef}
For $T\in\SSS$, we define
\begin{equation}
\Mbar(T):=\bigsqcup_{T'\to T}\M(T')/\!\Aut(T'/T).
\end{equation}
The union is over the set of isomorphism classes $|\SSS_{/T}|$.
\end{definition}

For a morphism $T'\to T$, there is a natural map
\begin{equation}
\label{Mbarfunct}\Mbar(T')/\!\Aut(T'/T)\to\Mbar(T),
\end{equation}
and for a concatenation $\{T_i\}_i$, there is an induced identification
\begin{equation}
\label{Mbarprod}\prod_i\Mbar(T_i)\Bigm/\!\Aut(\{T_i\}_i/\#_iT_i)\xrightarrow\sim\Mbar(\#_iT_i).
\end{equation}
We wish to warn the reader that \eqref{Mbarfunct} is \emph{not} always an inclusion!  The reason for this is explained in \S\ref{secstratification}, in particular Figure \ref{categorystratificationfigure}.

\subsection{Stratifications of moduli spaces}\label{secstratification}

We now discuss the tautological stratifications of the moduli spaces $\Mbar(T)$ by $\SSS_{/T}$.
This discussion is (slightly) less trivial than one might first expect, since $\SSS_{/T}$ is not a poset (see the discussion following Definition \ref{posetdef}).
A simple nontrivial example of a stratification by a category is given in Figure \ref{categorystratificationfigure}.

\begin{definition}\label{stratificationdef}
A \emph{stratification} of a topological space $X$ by a poset $\TTT$ is a map $X\to\TTT$ for which $X^{\geq\ttt}$ (the inverse image of $\TTT^{\geq\ttt}=\TTT_{\ttt/}$) is open for all $\ttt\in\TTT$.
\end{definition}

\begin{definition}\label{stratificationcategorydef}
A \emph{stratification} of a topological space $X$ by a category $\TTT$ (with the property that $\TTT_{\ttt/}$ is a poset for all $\ttt\in\TTT$) consists of the following data:
\begin{enumerate}
\item For every $x\in X$, we specify an object $\ttt_x\in\TTT$.
\item For every $x\in X$ and every $y$ sufficiently close to $x$, we specify a morphism $f_{yx}:\ttt_x\to\ttt_y$ in $\TTT$ (this data is regarded as a germ near $x$).
\end{enumerate}
subject to the following requirements:
\begin{enumerate}[resume]
\item For every $x\in X$, every $y$ sufficiently close to $x$, and every $z$ sufficiently close to $y$, we have $f_{zy}f_{yx}=f_{zx}$.
\item For every $x\in X$, the induced germ of a map $X\to\TTT_{\ttt_x/}$ near $x$ is a stratification in the sense of Definition \ref{stratificationdef}.
\end{enumerate}
\end{definition}

Given a stratification by a poset $X\to\TTT$, for any element $\ttt\in\TTT$ there is a ``locally closed stratum'' $X^\ttt\subseteq X$ and a ``closed stratum'' $X^{\leq\ttt}\subseteq X$ equipped with a stratification $X^{\leq\ttt}\to\TTT^{\leq\ttt}=\TTT_{/\ttt}$.
Given a stratification by a category $X\to\TTT$, one can similarly define for any $\ttt\in\TTT$ topological spaces $X_\ttt$ and $X_{/\ttt}$, the latter equipped with a stratification by $\TTT_{/\ttt}$.
The points of $X_{/\ttt}$ are pairs $(x\in X,f_x:\ttt_x\to\ttt)$, and a neighborhood of such a pair $(x,f_x)$ consists of those $(y,f_y)$ where $y\in X$ is close to $x$ and $f_yf_{yx}=f_x$; the subspace $X_\ttt\subseteq X_{/\ttt}$ consists of those pairs $(x,f_x)$ for which $f_x$ is an isomorphism.
In general, both natural maps $X_\ttt\to X$ and $X_{/\ttt}\to X$ may fail to be injective.

\begin{figure}[ht]
\centering
\includegraphics{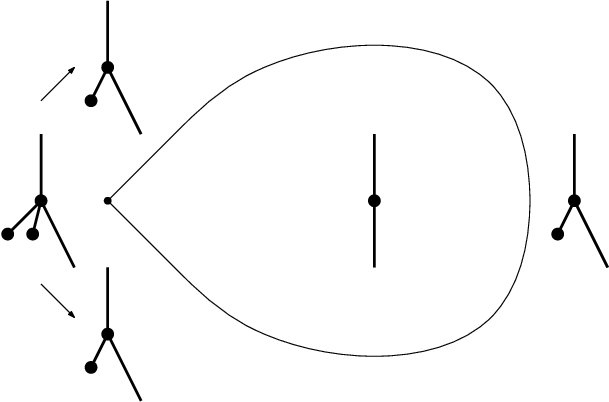}
\caption{Example of a stratification by a category.  The two maps out of the tree with three vertices collapse each of the two interior edges, respectively.  The closed stratum associated to the tree with a single vertex is simply the entire space, homeomorphic to $D^2$.  The closed stratum associated to the tree with two vertices is homeomorphic to the interval $[0,1]$.  The closed stratum associated to the tree with three vertices is two points.  Evidently, the latter two closed strata are ``non-embedded''.}\label{categorystratificationfigure}
\end{figure}

There is a tautological stratification
\begin{equation}
\label{MSstrat}\Mbar(T)\to\SSS_{/T}
\end{equation}
in the sense of Definition \ref{stratificationcategorydef} (this follows immediately from the definition of the Gromov topology given in \S\ref{gromovtopology} below).
The (not necessarily embedded!) closed stratum associated to $T'\to T$ is naturally isomorphic to $\Mbar(T')$.
To understand this stratification, the reader should satisfy themselves that they understand the example in Figure \ref{categorystratificationfigure}.

\subsection{Local models \texorpdfstring{$G$}{G}}\label{localmodelsection}

We introduce spaces $G_{T/}$ for $T\in\SSS$ which parameterize maps $\pi:T\to T'$ together with a collection of maps
\begin{equation}
\{\pi_\ast:\hat X_v\to\hat X_{\pi(v)}\}_{v\in V(T)}
\end{equation}
(often defined only away from a small neighborhood of infinity in $\hat X_v$), where $\hat X_v$ denotes the target of the $v$-component of a pseudo-holomorphic building of type $T$ (despite the notation, $\pi_\ast$ is not determined uniquely by $\pi$).
These spaces have a natural stratification $G_{T/}\to\SSS_{T/}$, and we denote by $G_{T//T'}$ the inverse image of $\SSS_{T//T'}$.
There is a unique basepoint $0\in G_{T/}$ corresponding to the identity map $T\to T$, and we only really care about a neighborhood of this basepoint in $G_{/T}$.

The definition of the Gromov topology on the moduli spaces $\Mbar(T)$ is based on these spaces $G_{T'//T}$, and we will see in Theorem \ref{localgluingnonthickened} that the regular loci of $\Mbar(T)$ are in fact locally modelled on $G_{T'//T}$.
To extract the virtual fundamental cycles of $\Mbar(T)$, a crucial step is to show that the stratifications $G_{T'//T}\to\SSS_{T'//T}$ are \emph{cell-like} (see \S\ref{celllikeproofsec}).

\begin{remark}
The spaces $G_{T/}$ considered here are model real toric singularities, as studied in Joyce \cite{joycegcorners}, Kottke--Melrose \cite{kottkemelrose}, and Gillam--Molcho \cite{gillammlocho} (though we do not appeal to any of their results).
This is most apparent after the change of variables $h=e^{-g}$.
\end{remark}

In Setup \ref{setupI}, all targets $\hat X_v$ are simply $\hat Y$, and we allow the map $\pi_\ast:\hat Y\to\hat Y$ to be any choice of $\RR$-translation.
However, since overall translations of $\hat Y$ for $v\in V(T')$ induce isomorphisms of pseudo-holomorphic buildings, it is only the differences of the translation parameters which really matter.
The result is that for $T\in\SSS_\I$, we define
\begin{equation}
(G_\I)_{T/}:=(0,\infty]^{E^\eint(T)}.
\end{equation}
There is a natural stratification $(G_\I)_{T/}\to(\SSS_\I)_{T/}$, sending $\g=\{\g_e\}_e$ to the map $\pi:T\to T'$ which contracts those edges $e\in E^\eint(T)$ for which $\g_e<\infty$.
We denote by $0\in(G_\I)_{T/}$ the point corresponding to all gluing parameters equal to $\infty$ (i.e.\ corresponding to no gluing at all).

In Setup \ref{setupII}, the targets $\hat X_v$ are either $\hat X$ or one of $\hat Y^\pm$, and the possible maps $\pi_\ast$ are $\hat Y^\pm\to\hat Y^\pm$, $\hat X\to\hat X$, and $\hat Y^\pm\to\hat X$.
Maps of the first type $\hat Y^\pm\to\hat Y^\pm$ are again allowed to be any $\RR$-translation.
Maps of the second type $\hat X\to\hat X$ must be the identity.
Maps of the third type $\hat Y^\pm\to\hat X$ are the pre-composition of the relevant boundary collar \eqref{endmarkingsI}--\eqref{endmarkingsII} with any $\RR$-translation of $\hat Y^\pm$.
For $T\in\SSS_\II$, the space of such collections of maps $\pi_\ast$ is given by
\begin{multline}\label{GIIdefeq}
(G_\II)_{T/}:=\Bigl\{(\{\g_e\}_e,\{\g_v\}_v)\in(0,\infty]^{E^\eint(T)}\times(0,\infty]^{V_{00}(T)}\Bigm|\\
\g_v=\g_e+\g_{v'}\text{ for edges }v\xrightarrow ev'\text{ with }v\in V_{00}(T)\Bigr\}.
\end{multline}
We interpret $\g_{v'}=0$ if $v'\notin V_{00}(T)$.  Here $V_{ij}(T)\subseteq V(T)$ is the subset of $v$ with $(\ast^+(v),\ast^-(v))=(i,j)$.
There is a natural stratification $(G_\II)_{T/}\to(\SSS_\II)_{T/}$, sending $\g=(\{\g_e\}_e,\{\g_v\}_v)$ to the map $\pi:T\to T'$ which contracts those edges $e$ for which $\g_e<\infty$, and changes $\ast(v)={00}$ to $\ast(v)=01$ for those vertices $v$ with $\g_v<\infty$.
We denote by $0\in(G_\II)_{T/}$ the point corresponding to all gluing parameters equal to $\infty$ (i.e.\ corresponding to no gluing at all).

In Setup \ref{setupIII}, the situation is identical to that of Setup \ref{setupII}, except that we also include variation in $t\in[0,1]$.
Namely, for $T\in\SSS_\III$, we set
\begin{multline}\label{GIIIdefeq}
(G_\III)_{T/}:=\Bigl\{(\{\g_e\}_e,\{\g_v\}_v)\in(0,\infty]^{E^\eint(T)}\times(0,\infty]^{V_{00}(T)}\Bigm|\\
\g_v=\g_e+\g_{v'}\text{ for }v\xrightarrow ev'\text{ with }v\in V_{00}(T)\Bigr\}\times\begin{cases}[0,1)&\s(T)=\{0\}\\(0,1)&\s(T)=(0,1)\\(0,1]&\s(T)=\{1\}\end{cases}
\end{multline}
(the first factor is identical to \eqref{GIIdefeq}).
There is a natural stratification $(G_\III)_{T/}\to(\SSS_\III)_{T/}$, sending $\g=(\{\g_e\}_e,\{\g_v\}_v,\g_t)$ to the map $\pi:T\to T'$ which contracts those edges $e$ for which $\g_e<\infty$, changes $\ast(v)=00$ to $\ast(v)=01$ for those vertices $v$ with $\g_v<\infty$, and has $\g_t\in\s(T')$.

In Setup \ref{setupIV}, if $T\in\SSS_\IV$ satisfies $\s(T)=\{0\}$ or $\s(T)=(0,\infty)$, then the situation is identical to that of Setup \ref{setupIII}, so we set
\begin{multline}
(G_\IV)_{T/}:=\Bigl\{(\{\g_e\}_e,\{\g_v\}_v)\in(0,\infty]^{E^\eint(T)}\times(0,\infty]^{V_{00}(T)}\Bigm|\\
\g_v=\g_e+\g_{v'}\text{ for }v\xrightarrow ev'\text{ with }v\in V_{00}(T)\Bigr\}\times\begin{cases}[0,\infty)&\s(T)=\{0\}\\(0,\infty)&\s(T)=(0,\infty)\end{cases}
\end{multline}
(the first factor is identical to \eqref{GIIdefeq}).
The case $\s(T)=\{\infty\}$ (i.e.\ the ``split cobordism'') is more complicated.
The possible targets $\hat X_v$ for a pseudo-holomorphic building of type $T$ with $\s(T)=\{\infty\}$ are $\hat Y^0$, $\hat X^{01}$, $\hat Y^1$, $\hat X^{12}$, $\hat Y^2$.
If $\s(T')=\{\infty\}$ as well, then the allowable maps are the same as in Setup \ref{setupII}.
If $\s(T')=(0,\infty)$, then the targets $\hat X_v$ are $\hat Y^0$, $\hat X^{02,t}$, $\hat Y^2$, for a choice of $t\in(0,\infty)$.
The possible maps $\pi_\ast$ are $\RR$-translations $\hat Y^0\to\hat Y^0$ and $\hat Y^2\to\hat Y^2$, the natural maps $\hat X^{01}\to\hat X^{02,t}$ and $\hat X^{12}\to\hat X^{02,t}$ defined away from a neighborhood of infinity in the negative and positive ends, respectively, and the natural (up to $\RR$-translation) maps $\hat Y^0\to\hat X^{02,t}$, $\hat Y^1\to\hat X^{02,t}$, $\hat Y^2\to\hat X^{02,t}$.
We conclude that for $T\in\SSS_\IV$ with $\s(T)=\{\infty\}$, we should set
\begin{multline}
(G_\IV)_{T/}:=\Bigl\{(\{\g_e\}_e,\{\g_v\}_v,\g_t)\in(0,\infty]^{E^\eint(T)}\times(0,\infty]^{V_{00}(T)\sqcup V_{11}(T)}\times(0,\infty]\Bigm|\\
\left.\begin{matrix}
\hphantom{\g_t}\g_v=\g_e+\g_{v'}\text{ for }v\xrightarrow ev'\text{, }{\ast(e)}=0\text{, }{\ast(v)}=00\hfill\\
\hphantom{\g_t}\g_v=\g_e+\g_{v'}\text{ for }v\xrightarrow ev'\text{, }{\ast(e)}=1\text{, }{\ast(v)}=11\hfill\\
\hphantom{\g_v}\g_t=\g_e+\g_{v'}\text{ for }v\xrightarrow ev'\text{, }{\ast(e)}=1\text{, }{\ast(v)}=01\hfill
\end{matrix}\right\}.
\end{multline}
We interpret $\g_{v'}=0$ if it is undefined.
There is a natural stratification $(G_\IV)_{T/}\to(\SSS_\IV)_{T/}$, which in the case $\s(T)=\{\infty\}$ sends $\g=(\{\g_e\},\{\g_v\},\g_t)$ to the map $\pi:T\to T'$ which contracts those edges $e$ for which $\g_e<\infty$, increments $\ast^-(v)$ for those vertices $v$ with $\g_v<\infty$, and has $\g_t\in\s(T')$.

\subsection{Deformation theory of Riemann surfaces}\label{rsdeformations}

Given a closed Riemann surface $C$ (without nodes), infinitesimal deformations of the almost complex structure are given by $C^\infty(C,\End^{0,1}(TC))$.
An infinitesimal diffeomorphism of $C$, that is a vector field $X$, gives rise to an infinitesimal deformation $\sL_Xj$, where $\sL$ denotes the Lie derivative.
Noting that $\End^{0,1}(TC)=TC\otimes_\CC\Omega_C^{0,1}$, we are thus lead to consider the two-term complex
\begin{equation}
C^\infty(C,TC)\xrightarrow{X\mapsto\sL_Xj}C^\infty(C,TC\otimes\Omega^{0,1}_C),
\end{equation}
which is simply the Dolbeaut complex calculating $H^\bullet(C,TC)$.
Thus $H^0(C,TC)$ classifies infinitesimal automorphisms of $C$ (though this is clear \emph{a priori}) and $H^1(C,TC)$ classifies infinitesimal deformations of $C$.

Now suppose $C$ has (a finite number of distinct) marked points $p_i$.
Infinitesimal deformations of the almost complex structure and the marked points are given by $C^\infty(C,\End^{0,1}(TC))\oplus\bigoplus_iT_{p_i}C$, and the infinitesimal deformation associated to a vector field $X$ is $\sL_Xj\oplus\bigoplus_i(-X(p_i))$.
This gives rise to the two-term complex
\begin{equation}\label{allpointsdefs}
C^\infty(C,TC)\xrightarrow{X\mapsto\sL_Xj\oplus\bigoplus_i(-X(p_i))}C^\infty(C,TC\otimes\Omega^{0,1}_C)\oplus\bigoplus_iT_{p_i}C,
\end{equation}
which calculates $H^\bullet(C,TC(-P))$, where $P=\sum_ip_i$ denotes the divisor of marked points.
It is somewhat more convenient for us to consider just those variations in the complex structure which are supported away from the marked points.
That is, we consider the subcomplex
\begin{equation}\label{cptdef}
\left\{X\in C^\infty(C,TC)\,\middle|\,\begin{matrix}X\text{ holomorphic near }p_i\hfill\\X(p_i)=0\hfill\end{matrix}\right\}\xrightarrow{X\mapsto\sL_Xj}C^\infty_c(C\setminus\{p_i\},TC\otimes\Omega_C^{0,1}).
\end{equation}
The inclusion of this subcomplex into \eqref{allpointsdefs} is a quasi-isomorphism (this is obvious except for surjectivity of the map on cokernels, which follows from elliptic regularity).

Now suppose $C$ has both marked points $p_i$ and ``doubly marked points'' $q_i$, meaning $q_i$ are marked points equipped with markings $\CC\xrightarrow\sim T_{q_i}C$ of their tangent fibers.
The analogue of \eqref{cptdef} is now
\begin{equation}
\left\{X\in C^\infty(C,TC)\,\middle|\,\begin{matrix}X\text{ holomorphic near }p_i,q_i\hfill\\X(p_i)=0\hfill\\X(q_i)=dX(q_i)=0\hfill\end{matrix}\right\}\xrightarrow{X\mapsto\sL_Xj}C^\infty_c(C\setminus\{p_i\}\cup\{q_i\},TC\otimes\Omega_C^{0,1}),
\end{equation}
which calculates $H^\bullet(C,TC(-P-2Q))$, where $P=\sum_ip_i$ and $Q=\sum_iq_i$.

Finally, when $C$ has nodes, we simply treat these as pairs of marked points on its normalization $\tilde C$ and consider the deformation theory of $\tilde C$ instead.

\begin{definition}[Normalization]\label{normalizationdef}
For any nodal Riemann surface $C$, we denote by $\tilde C$ the \emph{normalization} of $C$, i.e.\ the unique smooth Riemann surface equipped with a map $\tilde C\to C$ which identifies points in pairs to form the nodes of $C$.
\end{definition}

\subsection{Moduli of Riemann surfaces}\label{rsmoduli}

We denote by $\Mbar_{0,n}$ the Deligne--Mumford moduli space \cite{delignemumford} of stable genus zero compact nodal Riemann surfaces with $n$ marked points.
We regard $\Mbar_{0,n}$ as a complex manifold (which is compact).
It comes equipped with a universal curve $\Cbar_{0,n}\to\Mbar_{0,n}$ (which is secretly just the map forgetting the last marked point $\Mbar_{0,n+1}\to\Mbar_{0,n}$).
The pair $\Cbar_{0,n}\to\Mbar_{0,n}$ can be defined as the solution to a moduli problem, and it is thus unique up to unique isomorphism.

There is a stratification of $\Mbar_{0,n}$ by locally closed analytic submanifolds $\Mbar{}_{0,n}^{\#\nodes=r}$ indexed by integers $r\geq 0$ consisting of those curves with exactly $r$ nodes.
The open stratum of smooth curves $\Mbar{}_{0,n}^{\#\nodes=0}$ is usually denoted $\M_{0,n}$.
Its complement $\Mbar{}_{0,n}^{\#\nodes\geq 1}$ is a divisor with normal crossings, namely the pair $\Mbar{}_{0,n}^{\#\nodes\geq 1}\subseteq\Mbar_{0,n}$ is locally biholomorphic to $\{z_1\cdots z_k=0\}\subseteq\CC^N$ (any $k\leq N$), and in such local coordinates, the ``number of nodes'' is simply the number of coordinates $z_1,\ldots,z_k$ which vanish.

If $C$ is any point of $\M_{0,n}$ (i.e.\ any stable smooth Riemann surface of genus zero with $n$ marked points), then there is a canonical isomorphism
\begin{equation}
T_C\M_{0,n}=H^1(C,TC(-P)).
\end{equation}
The map $T_C\M_{0,n}\to H^1(C,TC(-P))$ is the Kodaira--Spencer map, which can be defined by (smoothly) trivializing the universal curve in a neighborhood of $C$ and invoking the deformation theory reviewed in \S\ref{rsdeformations}.
This map $T_C\M_{0,n}\to H^1(C,TC(-P))$ is well-known to be an isomorphism (see, for instance, Wendl \cite[Theorem 4.30]{wendljholnotes}).
For $C\in\Mbar_{0,n}$ with $r$ nodes, by considering the normalization $\tilde C$ equipped with the inverse images of the marked points $\tilde P\subseteq\tilde C$ and the inverse images of the nodes $\tilde N\subseteq\tilde C$, we deduce a similar isomorphism
\begin{equation}
T_C\M_{0,n}^{\#\nodes=r}=H^1(\tilde C,T\tilde C(-\tilde P-\tilde N)).
\end{equation}

\subsection{Elliptic \emph{a priori} estimates on pseudo-holomorphic curves}\label{ellipticestimates}

We record here some fundamental \emph{a priori} estimates which guarantee the regularity of pseudo-holomorphic curves.  These estimates play a fundamental role in establishing the basic local properties of moduli spaces of pseudo-holomorphic curves.

The first result implies that for pseudo-holomorphic curves, $C^0$-close implies $C^\infty$-close.

\begin{lemma}[Gromov \cite{gromov}]\label{apriorijhol}
Let $u:D^2(1)\to(B^{2n}(1),J)$ be $J$-holomorphic, where $J$ is tamed by $d\lambda$.  For every $k<\infty$, we have
\begin{equation}
\left\|u\right\|_{C^k(D^2(1-s))}\leq M\cdot(1+s^{-k})
\end{equation}
for some constant $M=M(k,J,\lambda)<\infty$.
\end{lemma}

\begin{proof}
The Gromov--Schwarz Lemma (see Gromov \cite[1.3.A]{gromov} or Muller \cite[Corollary 4.1.4]{muller}) is the case $k=1$.  Standard elliptic bootstrapping allows one to upgrade this to bounds on all higher derivatives (see \cite[Lemma B.11.4]{pardonimplicitatlas}).  Note that it is enough to bound $D^ku$ at $0\in D^2(1)$; the full result is then recovered by restricting to the maximal disk centered at a given point in $D^2(1)$.
\end{proof}

The next two ``long cylinder estimates'' are crucial for proving compactness of moduli spaces (which we do not discuss) and surjectivity of the gluing map (which we discuss in detail).
Note that both of these next results imply corresponding results for semi-infinite cylinders $[0,\infty)\times S^1$, simply by applying the results as stated to $[0,N]\times S^1\subseteq[0,\infty)\times S^1$ and letting $N\to\infty$.

\begin{lemma}[Well-known]\label{apriorinode}
Let $u:[0,N]\times S^1\to(B^{2n}(1),J)$ be $J$-holomorphic, where $J$ is tamed by $d\lambda$.  For every $k<\infty$ and $\varepsilon>0$, we have
\begin{equation}
\left\|u\right\|_{C^k([s,N-s]\times S^1)}\leq M\cdot(1+s^{-k})e^{-(1-\varepsilon)s}
\end{equation}
for some constant $M=M(k,\varepsilon,J,\lambda)<\infty$.
\end{lemma}

\begin{proof}
See \cite{mcduffsalamonJholsymp} or \cite[Proposition B.11.1]{pardonimplicitatlas}.
\end{proof}

\begin{proposition}[{Hofer--Wysocki--Zehnder \cite[Theorems 1.1, 1.2, and 1.3]{hwzsmallarea}}]\label{hwzcylinderestimate}
Let $(Y,\lambda,J)$ be as in Setup \ref{setupI}.
Suppose all periodic orbits of $R_\lambda$ of action $\leq E_0<\infty$ are non-degenerate, let $\sigma>0$ denote the minimum gap between any two elements of $\{0\}\cup a(\PP)\cap[0,E_0]$, and let $\varepsilon>0$.
For any $\hat J$-holomorphic map $u:[0,N]\times S^1\to\hat Y$ satisfying
\begin{align}
\label{hwzcylestlambdaenergy}\sup_{\begin{smallmatrix}\varphi:\RR\to[0,1]\\\varphi'(s)\geq 0\end{smallmatrix}}\int_{[0,N]\times S^1}u^\ast d(\varphi\lambda)&\leq E_0,\\
\label{hwzcylestomegaenergy}\int_{[0,N]\times S^1}u^\ast d\lambda&\leq\sigma-\varepsilon,
\end{align}at least one of the following holds for $H=H(Y,\lambda,J,E_0,\varepsilon)<\infty$:
\begin{itemize}
\item$u([H,N-H]\times S^1)\subseteq B_\varepsilon(u(\frac N2,0))$.
\item There exists $\gamma\in\PP$ with $a(\gamma)=L\leq E_0$ such that for $s\geq H$,
\begin{equation}
\left\|u(s,t)-(Ls+b,\tilde\gamma(t))\right\|_{C^0([s,N-s]\times S^1)}\leq H\cdot e^{-(\delta_\gamma-\varepsilon)\min(s-H,N-H-s)}
\end{equation}
for some $b\in\RR$ and some $\tilde\gamma:S^1\to Y$ with $\partial_t\tilde\gamma=L\cdot R_\lambda(\tilde\gamma)$ parameterizing $\gamma\in\PP$, where $\delta_\gamma>0$ denotes the smallest absolute value of any eigenvalue of the asymptotic operator of $\gamma$ (see Definition \ref{smalleigenvalue}).
\end{itemize}
\end{proposition}

The meaning of the energy bounds \eqref{hwzcylestlambdaenergy}--\eqref{hwzcylestomegaenergy} is discussed further in \S\ref{compactness}.

\begin{remark}
Our statement of Proposition \ref{hwzcylinderestimate} is slightly more precise than that given by Hofer--Wysocki--Zehnder \cite{hwzsmallarea} in that they state it with an unspecified constant $\delta>0$ in place of $\delta_\gamma>0$.
We emphasize, however, that knowing the correct constant is not needed for our work in this paper.
\end{remark}

\subsection{Gromov topology}\label{gromovtopology}

We now define the Gromov topology on the moduli spaces $\Mbar(T)$, and we show that this topology is Hausdorff.

Recall that a \emph{neighborhood} of a point $x$ in a topological space $X$ is a subset $N\subseteq X$ such that there exists an open set $U\subseteq X$ with $x\in U\subseteq N$.
The \emph{neighborhood filter} of $x$ is the collection $\N_x\subseteq 2^X$ of all its neighborhoods.
A \emph{neighborhood base} at $x$ is a cofinal subset $\N_x^\circ\subseteq\N_x$ (meaning, for every $N\in\N_x$ there exists $N'\in\N_x^\circ$ with $N'\subseteq N$).
Given a neighborhood base $\N_x^\circ$, we may recover $\N_x$ as the collection of all subsets of $X$ containing some element of $\N_x^\circ$.

\begin{lemma}\label{nbhdunique}
The system of neighborhood filters $\{\N_x\subseteq 2^X\}_{x\in X}$ at all the points of a topological space $X$ satisfies the following properties:
\begin{enumerate}
\item\label{nbhdI}For every $N\in\N_x$, we have $x\in N$.
\item For every $N\in\N_x$ and every $N'\supseteq N$, we have $N'\in\N_x$.
\item For every $N,N'\in\N_x$, we have $N\cap N'\in\N_x$.
\item\label{nbhdIV}For every $N\in\N_x$, there exists $N'\in\N_x$ such that $N\in\N_y$ for every $y\in N'$.
\end{enumerate}
Conversely, any system of subsets $\{\N_x\subseteq 2^X\}_{x\in X}$ satisfying the above properties is the system of neighborhood filters for a unique topology on $X$.\qed
\end{lemma}

We define the Gromov topology by specifying the neighborhood filter $\N_x$ of any given pseudo-holomorphic building $x$ and appealing to Lemma \ref{nbhdunique}.
More precisely, (1) we define the ``$\varepsilon$-neighborhood of $x$'' for any (sufficiently small) $\varepsilon>0$, given certain choices of auxiliary data (metrics, etc.), (2) we define the neighborhood filter $\N_x$ of $x$ to be that resulting from declaring that the collection of all $\varepsilon$-neighborhoods (with respect to fixed auxiliary data) forms a neighborhood base $\N_x^\circ$ at $x$, and (3) we observe that $\N_x$ is independent of the choice of auxiliary data.

\begin{definition}[Gromov topology]\label{gromovtopologydef}
The $\varepsilon$-neighborhood of a pseudo-holomorphic building $x=(\{C_v\},\{p_{v,e}\},\{u_v\},\{\tilde b_e\},\{m_e\})$ of type $T$ is defined as follows.
Let $\g\in G_{T/}$ be $\varepsilon$-close to the basepoint $0\in G_{T/}$.
Recall from \S\ref{localmodelsection} that $\g$ determines a map $\pi:T\to T'$ together with a collection of (possibly only partially defined) maps
\begin{equation}
\pi_\ast:\hat X_v\to\hat X_{\pi(v)}
\end{equation}
for $v\in V(T)$.
Fix (independently of $\varepsilon>0$) large compact subsets $\hat X_v^c\subseteq\hat X_v$.
Consider the curves
\begin{equation}
C_{v'}:=\bigsqcup_{\pi(v)=v'}C_v\Bigm/\sim
\end{equation}
for $v'\in V(T')$, where $p_{v_1,e}\sim p_{v_2,e}$ for edges $v_1\xrightarrow ev_2$ with $\pi(v_1)=\pi(v_2)=v'$.
These curves $C_{v'}$ are equipped with points $\{p_{v',e'}\}$ for edges $e'\in E(T')$ incident at $v'$, namely the images of the corresponding points $p_{v,e}\in C_v$ for $\pi(e)=e'$.
Let $C_{v'}'$ be any curve obtained from $C_{v'}$ by the following operations:
\begin{enumerate}
\item\label{smallmodificationepsawayfromnodespunctures}An $\varepsilon$-$C^\infty$-small modification of the almost complex structure away from the $\varepsilon$-neighborhood of the points $\{p_{v',e'}\}$ and the nodes of $C_{v'}$.
\item\label{arbnearnodespunctures}An arbitrary modification of the almost complex structure over the $\varepsilon$-neighborhood of the points $\{p_{v',e'}\}$ and the nodes of $C_{v'}$.
\item\label{resolnodespunctures}Resolving the nodes of $C_{v'}$ created by the identifications $p_{v_1,e}\sim p_{v_2,e}$, and possibly resolving the other nodes of $C_{v'}$ (those coming from the nodes of $C_v$ for $\pi(v)=v'$).
(Resolving a node means that its $\varepsilon$-neighborhood $D^2\vee D^2$ gets replaced with a cylinder $[0,1]\times S^1$, equipped here with an arbitrary almost complex structure).
\end{enumerate}
The locus referred to in \ref{smallmodificationepsawayfromnodespunctures} will be called the $\varepsilon$-thick part of $C_{v'}'$, and the locus referred to in \ref{arbnearnodespunctures}--\ref{resolnodespunctures} will be called the $\varepsilon$-thin part of $C_{v'}'$.
The $\varepsilon$-thin part of $C_{v'}'$ is broken down further into the nodal $\varepsilon$-thin part (that coming from nodes of $C_v$) and the Reeb $\varepsilon$-thin part (that coming from the points $\{p_{v,e}\}$ of $C_v$, both those identified to form nodes in $C_{v'}$ and those which become the points $\{p_{v',e'}\}$).
Now let $x'=(\{C_{v'}'\},\{p_{v',e'}\},\{u_{v'}'\},\{\tilde b_{e'}\},\{m_{e'}\})$ be a pseudo-holomorphic building of type $T'$, upon which we impose the following requirements.
Most basically, we require that $u'$ be $\varepsilon$-$C^0$-close to $u$ (measured with respect to fixed metrics on $\hat X_{v'}$ which are $\RR$-invariant in the ends) in the following sense:
\begin{enumerate}[resume]
\item Over the $\varepsilon$-thick part of $C_{v'}'$, we require $u_{v'}'$ to be $\varepsilon$-$C^0$-close to $\pi_\ast\circ\bigsqcup_{\pi(v)=v'}u_v$.
\item Over the nodal $\varepsilon$-thin part of $C_{v'}'$, we require $u_{v'}'$ to map into the $\varepsilon$-ball around the image under $\pi_\ast\circ u_v$ of the corresponding node of $C_v$.
\item\label{reebcloseI}Over the Reeb $\varepsilon$-thin part of $C_{v'}'$ coming from an edge $v_1\xrightarrow ev_2$ with $\pi(v_1)=\pi(v_2)=v'$, we require $u_{v'}'$ to map into the $\varepsilon$-neighborhood of the trivial cylinder over $\gamma_e$ inside the finite cylindrical region in $\hat X_{v'}$ in between $\pi_\ast(\hat X_{v_1}^c)$ and $\pi_\ast(\hat X_{v_2}^c)$ (recall the large compact subsets $\hat X_v^c\subseteq\hat X_v$ fixed above).
\item\label{reebcloseII}Over the Reeb $\varepsilon$-thin part of $C_{v'}'$ near a positive (respectively, negative) puncture $p_{v',e'}$ associated to an edge $v_1\xrightarrow ev_2$ not collapsed by $\pi$ with $\pi(v_2)=v'$ (respectively, $\pi(v_1)=v'$), we require $u_{v'}'$ to map into the $\varepsilon$-neighborhood of the trivial cylinder over $\gamma_e$ inside the half-infinite cylindrical region in $\hat X_{v'}$ above $\pi_\ast(\hat X_{v_2}^c)$ (respectively, below $\pi_\ast(\hat X_{v_1}^c)$).
\end{enumerate}
Note that the \emph{a priori} estimates of Hofer--Wysocki--Zehnder \cite[Theorems 1.1, 1.2, and 1.3]{hwzsmallarea} recalled in Proposition \ref{hwzcylinderestimate} imply that \ref{reebcloseI}--\ref{reebcloseII} imply that over the Reeb $\varepsilon$-thin parts of $C_{v'}'$, the map $u_{v'}'$ actually converges very rapidly to a trivial cylinder (to apply Proposition \ref{hwzcylinderestimate}, we should check that the integral of $d\lambda$ over these $\varepsilon$-thin parts is small, which can be seen by using Lemma \ref{apriorijhol} over the nearby $\varepsilon$-thick part and applying Stokes' theorem).
It follows that our choice of fixed large compact subsets $\hat X_v^c\subseteq\hat X_v$ does not matter.
This convergence to trivial cylinders also allows us to make sense of the following final requirements on the ``discrete data'' of $x'$:
\begin{enumerate}[resume]
\item\label{matchingsameI}The asymptotic markers $\{\tilde b_{e'}\}$ and matching isomorphisms $\{m_{e'}\}$ of $x'$ must agree with those descended from $x$.
\item\label{matchingsameII}Over the glued edges $e\in E(T)$ (meaning those collapsed by $\pi$), the matching isomorphism induced by $u_{v'}'$ must agree with $m_e$.
\end{enumerate}
The $\varepsilon$-neighborhood of $x$ is defined as the set of all possible $x'$ above.
It is straightforward to check that the resulting neighborhood filter $\N_x$ at $x$ is independent of the choices involved.
We leave it as an exercise to check that these neighborhood filters satisfy the conditions in Lemma \ref{nbhdunique} (only \ref{nbhdIV} requires thought).
We thus obtain a well-defined topology on each of the spaces $\Mbar(T)$, which we call the Gromov topology.
\end{definition}

\begin{remark}\label{nearbyinDM}
There are various other ways to define the curve $C_{v'}'$ and its identification with $C_{v'}$ away from their $\varepsilon$-thin parts.
For example, one could add stabilizing marked points to $C_{v'}$ and then take $C_{v'}'$ to be a nearby fiber of the universal curve over Deligne--Mumford space.
It is an exercise to check that this definition of $C_{v'}'$ gives rise to the same neighborhood filter.
\end{remark}

\begin{remark}
A sequence (or net) $x_i$ of pseudo-holomorphic buildings converges in the Gromov topology to a pseudo-holomorphic building $x$ iff for every $\varepsilon>0$, it holds that $x_i$ lies in the $\varepsilon$-neighborhood of $x$ for all sufficiently large $i$.
Indeed, given any neighborhood base $\N_x^\circ$ at a point $x$ of a topological space, a sequence (or net) $x_i$ converges to $x$ iff for every $N\in\N_x^\circ$, we have $x_i\in N$ for all sufficiently large $i$.
\end{remark}

\begin{lemma}
The Gromov topology on $\Mbar(T)$ is Hausdorff.
\end{lemma}

\begin{proof}
Let $x^{(1)}$ and $x^{(2)}$ be two stable pseudo-holomorphic buildings such that for every $\varepsilon>0$, there exists a pseudo-holomorphic building $x^\varepsilon$ lying in the $\varepsilon$-neighborhood of both $x^{(1)}$ and $x^{(2)}$.
Let $C^{(1)}:=\bigsqcup_vC^{(1)}_v/\sim$ (where $p_{v_1,e}\sim p_{v_2,e}$ for edges $v_1\xrightarrow ev_2$ as usual), and similarly define $C^{(2)}$ and $C^\varepsilon$.

Choose finite collections of points $P^{(1)}\subseteq C^{(1)}$ and $P^{(2)}\subseteq C^{(2)}$ (disjoint from the nodes and punctures) which stabilize each.
By Remark \ref{nearbyinDM}, the curve $C^\varepsilon$ may be equipped with marked points $P^{(1),\varepsilon}$ and $P^{(2),\varepsilon}$ (respectively) so that it is $\varepsilon$-close to $(C^{(1)},P^{(1)})$ and $(C^{(2)},P^{(2)})$ (respectively) in Deligne--Mumford space.
We consider the curve $(C^\varepsilon,P^{(1),\varepsilon}\sqcup P^{(2),\varepsilon})$ obtained by adding both $P^{(1),\varepsilon}$ and $P^{(2),\varepsilon}$ as additional marked points (note that, without loss of generality, we may perturb either $P^{(1),\varepsilon}$ or $P^{(2),\varepsilon}$ so that they are disjoint).

Since Deligne--Mumford space is compact, we may assume that $(C^\varepsilon,P^{(1),\varepsilon}\sqcup P^{(2),\varepsilon})$ converges to some stable $(C,P^{(1)}\sqcup P^{(2)})$ as $\varepsilon\to 0$.
Since forgetting marked points and stabilizing is continuous, we obtain canonical identifications
\begin{align}
i_1:(C,P^{(1)})^\st&\xrightarrow\sim(C^{(1)},P^{(1)}),\\
i_2:(C,P^{(2)})^\st&\xrightarrow\sim(C^{(2)},P^{(2)}).
\end{align}
Any irreducible component $C_0\subseteq C$ contracted by $i_1$ but not by $i_2$ would contradict stability of $x^{(2)}$.
Thus every irreducible component $C_0\subseteq C$ contracted by either $i_1$ or $i_2$ is contracted by both.
We conclude that
\begin{equation}
i^{21}:=i_2\circ i_1^{-1}:C^{(1)}\xrightarrow\sim C^{(2)}
\end{equation}
is an isomorphism and that $u^{(2)}\circ i^{21}=u^{(1)}$.

To conclude that $i^{21}$ defines an isomorphism $x^{(1)}\to x^{(2)}$, it remains only to observe that the asymptotic markers and matching isomorphisms also agree as a consequence of \ref{matchingsameI}--\ref{matchingsameII} in Definition \ref{gromovtopologydef}.
\end{proof}

\subsection{Compactness}\label{compactness}

We now recall the compactness results due to Bourgeois--Eliashberg--Hofer--Wysocki--Zehnder \cite{sftcompactness} in the context of the moduli spaces $\Mbar(T)$ which we have defined.

We begin by recalling from \cite{sftcompactness} the notion of \emph{Hofer energy} first introduced in \cite{hoferweinstein}.
Note that the usual notion of energy of a pseudo-holomorphic map (namely the integral of the pullback of the symplectic form) is not well-behaved when the target symplectic manifold is a symplectization $\hat Y$ equipped with the symplectic form $\hat\omega:=d\hat\lambda=d(e^s\lambda)=e^s(d\lambda+ds\wedge\lambda)$, as this energy is not invariant under $\RR$-translation and is almost always infinite.
We therefore remove the $e^s$ factor, leaving $d\lambda+ds\wedge\lambda$, and we further treat each of these terms separately (and differently).
The resulting notions of energy we now discuss are (beyond the case of symplectizations) only well-defined up to an overall multiplicative constant.

The \emph{$\omega$-energy} of a pseudo-holomorphic map $u:C\to\hat Y$ is the integral
\begin{equation}
E_\omega(u):=\int_Cu^\ast\omega\qquad\text{where }\omega:=d\lambda.
\end{equation}
To define the $\omega$-energy of a pseudo-holomorphic map to a symplectic cobordism $u:C\to\hat X$, we integrate the closed $2$-form $\omega$ defined by splicing together $\hat\omega$ with $\omega^\pm=d\lambda^\pm$ in the ends; more explicitly, in the positive end $\omega:=d(\alpha(s)\lambda^+)$ for $\alpha:\RR\to\RR$ satisfying $\alpha(s)=e^s$ for $s\ll 0$, $\alpha'(s)=0$ for $s\gg 0$, and $\alpha'(s)\geq 0$, and similarly in the negative end.
To define the $\omega$-energy of a pseudo-holomorphic map to an almost split symplectic cobordism $\hat X^{02,t}$ with $t\to\infty$ as in case (IV), we integrate the closed $2$-form $\omega^{02,t}$ defined as the descent of $\omega^{01}\sqcup\omega^{12}$ from $\hat X^{01}\sqcup\hat X^{12}$; over the identified regions these $2$-forms agree up to a scaling factor which is fixed as $t\to\infty$ (and hence irrelevant).

The $\omega$-energy of a pseudo-holomorphic building is simply the sum of the $\omega$-energies of each of its components.

The integrand of the $\omega$-energy is pointwise $\geq 0$ for pseudo-holomorphic $u$, with equality iff $du=0$ or $u$ is tangent to the vertical distribution $\RR\partial_s$ at a point of a symplectization region where $\omega$ is (up to a constant) $d\lambda$.
If $u$ is asymptotic to Reeb orbits at positive/negative infinity, then the $\omega$-energy is the pairing between the relative homology class of $u$ and the relative cohomology class of $\omega$ (both relative to the positive/negative asymptotics of $u$).

The \emph{$\lambda$-energy} of a pseudo-holomorphic map $u:C\to\hat Y$ is the supremum
\begin{equation}\label{lambdaenergyforsymplectization}
E_\lambda(u):=\sup_{\begin{smallmatrix}\phi:\RR\to[0,1]\\\phi'(s)\geq 0\\\phi(s)=0\;s\ll 0\\\phi(s)=1\;s\gg 0\end{smallmatrix}}\int_Cu^\ast(d\phi\wedge\lambda).
\end{equation}
The $\lambda$-energy of a pseudo-holomorphic map to a symplectic cobordism $u:C\to\hat X$ is the supremum of $\int_Cu^\ast(d\phi\wedge\lambda^\pm)$ over all $\phi:\hat X\to[0,1]$ which are constant away from the ends \eqref{endmarkingsI}--\eqref{endmarkingsII} and which in each of these ends are as above, namely they are functions of $s$ satisfying $\phi'(s)\geq 0$ and $\phi(s)=0$ for $s\ll 0$ and $\phi(s)=1$ for $s\gg 0$.
To define the $\lambda$-energy of a pseudo-holomorphic map to an almost split symplectic cobordism $\hat X^{02,t}$ for large $t$ as in case (IV), we also allow $\phi$ to be non-constant over the middle finite symplectization region of $\hat X^{02,t}$ where the integrand is $d\phi\wedge\lambda^1$.

To define the $\lambda$-energy of a pseudo-holomorphic building of type $T$, we take the supremum (of the analogous sum of integrals) over all choices of $\{\phi_v:\hat X_v\to[0,1]\}_v$ as follows.
Each $\phi_v$ must be constant away from the symplectization regions, and over each symplectization region $\phi_v$ must be a function of $s$ with $\phi_v'(s)\geq 0$.
Finally, each $\phi_v$ must be constant in a neighborhood of positive/negative infinity; these constants must agree across interior edges of $T$ and must equal $1$ and $0$ for input/output edges of $T$ respectively.

The integrand of the $\lambda$-energy is pointwise $\geq 0$ for pseudo-holomorphic $u$.
An argument involving Stokes' theorem shows that any candidate value $\int_Cu^\ast(d\phi\wedge\lambda)$ of the $\lambda$-energy is equal to (a constant times) any other candidate value, up to an error bounded above by (a constant times) the $\omega$-energy.

The \emph{Hofer energy} of a pseudo-holomorphic map or building is the sum of the $\omega$-energy and the $\lambda$-energy:
\begin{equation}
E(u):=E_\omega(u)+E_\lambda(u).
\end{equation}
Up to a multiplicative constant, the Hofer energy depends only on the asymptotics and homology class of the map/building $u$ (for buildings, by this we mean only $\gamma_{e^+}$, $\{\gamma_{e^-}\}$, and $\#_v\beta_v$, not any of the intermediate Reeb orbits or the individual $\beta_v$); compare \cite[Propositions 5.13 and 6.3]{sftcompactness}.
Indeed, we have already seen above that the $\omega$-energy depends only on the asymptotics and homology class of $u$.
We have also seen above that the various candidate $\lambda$-energies differ by at most the $\omega$-energy, so for the purposes of defining the Hofer energy, we may as well replace the $\lambda$-energy term with one specific such candidate value, such as the action of the positive asymptotic Reeb orbit or the sum of the actions of the negative asymptotic Reeb orbits.
We conclude that for pseudo-holomorphic buildings $u$ of type $T$, we have
\begin{equation}\label{hoferenergyhomology}
E(u)\asymp a(\gamma_{e^+})+\sum_{e^-}a(\gamma_{e^-})+\langle\omega,\#_v\beta_v\rangle
\end{equation}
(equality up to a multiplicative constant).
Finally, note that when we consider families of cobordisms, the last term on the right hand side may be trivially bounded above by $\sup_t\langle\omega^t,\#_v\beta_v\rangle$ in case (III) and by $\sup_t\langle\omega^{02,t},\#_v\beta_v\rangle$ in case (IV) (note that the cohomology class of $\omega^{02,t}$ is independent of $t$ for $t$ sufficiently large, so this supremum is finite).
Define $E(T)$ as the right hand side of \eqref{hoferenergyhomology} with $\langle\omega,\#_v\beta_v\rangle$ replaced with the relevant supremum in cases (III) and (IV), and note that $E(T)=E(T')$ for any morphism $T\to T'$.

\begin{theorem}[{\cite[Theorems 10.1, 10.2, 10.3]{sftcompactness}}]\label{compactnessresult}
For any fixed datum $\D$ as in Setup \ref{setupI}--\ref{setupIV}, each moduli space $\Mbar(T)$ is compact, and for every $E<\infty$ there are at most finitely many (isomorphism classes of) $T\in\SSS$ with $\Mbar(T)$ non-empty and $E(T)\leq E$.
In particular, given $T\in\SSS$ there are at most finitely many (isomorphism classes of) $T'\to T$ with $\Mbar(T')$ non-empty.
\end{theorem}

The arguments of \cite[\S 10]{sftcompactness} show that every net of pseudo-holomorphic buildings of Hofer energy $\leq E$ has a convergent subnet, which proves Theorem \ref{compactnessresult}.
Although \cite[\S 10]{sftcompactness} is presented as a proof of sequential compactness, it applies essentially without change to yield the true compactness result we claim here.
The fact that the moduli spaces considered in \cite{sftcompactness} differ slightly from those considered here (there is a surjective forgetful map from the moduli spaces in \cite{sftcompactness} to ours) also makes no serious difference to the argument.

\subsection{Linearized operators}\label{linearizedopsec}

We now recall the relevant linearized operators associated to the pseudo-holomorphic curves that we consider.

\begin{definition}[Linearized operators of Reeb orbits]\label{reeblinearized}
For any $\gamma\in\PP$ and a choice of parameterization $\tilde\gamma:S^1\to Y$, denote by $D_\gamma^N$ the operator $J\sL_{R_\lambda}$ (where $\sL$ denotes the Lie derivative) acting on sections of $\tilde\gamma^\ast\xi$ over $S^1$, and define $D_\gamma:=D_\gamma^N\oplus i\cdot d$ for $i\cdot d$ acting on functions $S^1\to\CC$.
\end{definition}

The operator $D_\gamma^N$ defines a Fredholm map $H^s(S^1,\tilde\gamma^\ast\xi)\to H^{s-1}(S^1,\tilde\gamma^\ast\xi)$ for any $s\in\RR$ (either by Fourier analysis or by appealing to general elliptic regularity results).
By the analytic Fredholm theorem, the resolvent $(D_\gamma^N-zI)^{-1}:H^{s-1}(S^1,\tilde\gamma^\ast\xi)\to H^s(S^1,\tilde\gamma^\ast\xi)$ varies meromorphically in $z\in\CC$, with poles corresponding to the eigenvalues $\sigma(D_\gamma^N)$ of $D_\gamma^N$.
Since $D_\gamma^N$ is formally self-adjoint, all these eigenvalues are real.
The same discussion applies to $i\cdot d$ and to $D_\gamma$, and
\begin{equation}
\sigma(D_\gamma)=\sigma(D_\gamma^N)\cup\sigma(i\cdot d)=\sigma(D_\gamma^N)\cup\ZZ.
\end{equation}
Clearly $0\notin\sigma(D_\gamma^N)$ iff $\gamma$ is non-degenerate (as is our standing assumption).

\begin{definition}\label{smalleigenvalue}
Denote by $\delta_\gamma>0$ the smallest absolute value of any element of $\sigma(D_\gamma)\setminus\{0\}$.
\end{definition}

\begin{convention}[Choices of metric and connection on domains]\label{metricconnectionchoicedomain}
For the purposes of defining function spaces, stating estimates, etc.\ involving a Riemann surface $C$ equipped with punctures $\{p_e\}_e$, we use a choice of holomorphic cylindrical coordinates
\begin{equation}\label{cylindricalcoordsonC}
[0,\infty)\times S^1\to C\setminus p_e
\end{equation}
near each $p_e$.  We equip $C$ with a Riemannian metric which equals $ds^2+dt^2$ near $p_e$, and we equip $TC$ with a $j$-linear connection for which $\partial_s$ is parallel near $p_e$.  Different choices of this data will result in uniformly commensurable norms and estimates, so the particular choice is not important.
\end{convention}

\begin{convention}[Choices of metric and connection on targets]\label{metricconnectionchoicetarget}
For the purposes of defining function spaces, stating estimates, expressing linearized operators, etc.\ involving symplectizations $\hat Y$, we use any choice of $\RR$-invariant Riemannian metric on $\hat Y$ and any choice of connection on $T\hat Y$ which is $\hat J$-linear (meaning $\hat J\nabla_XY=\nabla_X\hat JY$, i.e.\ $\nabla\hat J=0$) and which is pulled back from a connection on $T\hat Y|_{\{0\}\times Y}=TY\oplus\RR$.
On symplectic cobordisms, use metrics and connections which are of this form in the positive/negative ends.
Different choices of metric and connection will always result in uniformly commensurable norms, so the particular choice is not important.
\end{convention}

\begin{convention}[Regularity classes of functions on nodal curves]\label{nodalregular}
A function on a nodal curve $C$ of a certain regularity class ($C^\infty$, $W^{k,2}$, etc.)\ simply means a function on its normalization $\tilde C$ (see Definition \ref{normalizationdef}) of the given regularity class which agrees across each pair of points identified under $\tilde C\to C$ to form the nodes of $C$.
We will only speak of functions on nodal $C$ in regularity classes which embed into $C^0$, so that the above makes sense.
Recall, in particular, that $W^{k,2}\subseteq C^0$ when $k\geq 2$ \cite[Lemma 5.17]{adamssobolev}.
\end{convention}

We refer to Adams \cite{adamssobolev} for basic properties and definitions of Sobolev spaces, including here only a brief discussion.
Recall that for integers $k\geq 0$, a function is said to be of class $W^{k,2}$ iff all its derivatives (in the sense of distributions) of order $\leq k$ are in $L^2$, and the $W^{k,2}$-norm of such a function is defined as the sum of the $L^2$-norms of these derivatives.
A mollification argument \cite[Theorem 3.16]{adamssobolev} of Meyers--Serrin shows that any compactly supported $W^{k,2}$ function can be approximated in the $W^{k,2}$-topology by a sequence of smooth functions supported in a neighborhood of the original support.

It is well-known that it is frequently useful to introduce ``weights'' into the definition of Sobolev spaces of functions on certain classes of non-compact manifolds.
We now recall the weighted Sobolev spaces relevant to our setting of pseudo-holomorphic curves with cylindrical ends.

\begin{remark}\label{kdeltacomment}
We frame our discussion for general $k$ and $\delta$ so as to be precise as possible regarding the constraints on $k$ and $\delta$ required for each step.
Nevertheless, it may help orient the reader to remark that, for our intended purpose, it is sufficient to fix some sufficiently large value of $k$ ($k\geq 4$ is sufficient) and to restrict our attention to $\delta>0$ satisfying $\delta<1$ and $\delta<\delta_\gamma$ for all Reeb orbits $\gamma$ under consideration.
\end{remark}

\begin{definition}[Weighted Sobolev spaces $W^{k,2,\delta}$]\label{sobolevspaces}
For $k\geq 0$ and $\delta<1$, we define weighted Sobolev spaces
\begin{align}
\label{linearizeddomainrestricted}&W^{k,2,\delta}(C,u^\ast T\hat X),\\
\label{linearizedcodomain}&W^{k,2,\delta}(C,u^\ast T\hat X_{\hat J}\otimes_\CC\Omega^{0,1}_C),
\end{align}
where $u:C\setminus\{p_e\}_e\to\hat X$ is a smooth map which in a neighborhood of each puncture $p_e$ is $C^\infty$-convergent with weight $\delta'>\delta$ to a Reeb orbit $\gamma_e\in\PP^\pm$ in the sense of Definition \ref{asymptoticdef}.
For \eqref{linearizeddomainrestricted}, we allow $C$ to have nodes, though in this case we require $k\geq 2$ in accordance with Convention \ref{nodalregular}.

The $W^{k,2,\delta}$-norm is defined as follows.  Away from $\{p_e\}_e$, we use the usual $W^{k,2}$-norm, and near a given $p_e$, the contribution to the norm squared is given by
\begin{equation}\label{endnorm}
\int_{[0,\infty)\times S^1}\sum_{j=0}^k\left|D^jf\right|^2e^{2\delta s}\;ds\,dt
\end{equation}
for any choice of local coordinates \eqref{cylindricalcoordsonC}.
Equivalently up to commensurability, one can set $\left\|f\right\|_{k,2,\delta}:=\left\|\mu\cdot f\right\|_{k,2}$ for some smooth function $\mu$ which equals $1$ away from the ends and which equals $e^{\delta\left|s\right|}$ in any end (multiplication by any such $\mu$ thus defines an isomorphism $W^{k,2,\delta}\to W^{k,2}$, allowing properties of $W^{k,2}$ to be lifted to $W^{k,2,\delta}$).
Our hypotheses that $\delta<1$ and that $u$ is $C^\infty$-convergent with weight $\delta'>\delta$ imply that the $W^{k,2,\delta}$-norm is independent up to commensurability of the choices involved in its definition.
\end{definition}

\begin{lemma}\label{smoothweighteddense}
Smooth functions of compact support are dense in $W^{k,2,\delta}$.
\end{lemma}

\begin{proof}
By the Meyers--Serrin theorem \cite[Theorem 3.16]{adamssobolev} it is enough to show that functions of compact support are dense in $W^{k,2,\delta}$.
It is enough to examine the model case of an end $[0,\infty)\times S^1$.
Let $\varphi:\RR\to[0,1]$ be any smooth (cutoff) function satisfying $\varphi(s)=1$ for $s\leq 1$ and $\varphi(s)=0$ for $s\geq 2$.
Now the sequence $\varphi(s/N)f(s,t)$ (or even $\varphi(s-N)f(s,t)$) converges to $f(s,t)$ in the $W^{k,2,\delta}$ topology as $N\to\infty$.
\end{proof}

Next, we introduce a space $\tilde W^{k,2,\delta}(C,u^\ast T\hat X)$ which is a modification of the space $W^{k,2,\delta}(C,u^\ast T\hat X)$ taking into account the space of deformations of the almost complex structure of $C$ equipped with the doubly marked points $\{p_e\}$.
Recall from \S\ref{rsdeformations} that when $C$ is smooth, this deformation theory is governed by $H^\bullet(C,TC(-2P))$ where $P:=\sum_ep_e\subseteq C$.
When $C$ has nodes, this deformation theory (not taking into account resolutions of the nodes) is governed by $H^\bullet(\tilde C,T\tilde C(-2P-\tilde N))$, where $\tilde N\subseteq\tilde C$ denotes the inverse images of the nodes $N\subseteq C$ (recall the normalization $\tilde C$ from Definition \ref{normalizationdef}).

\begin{definition}[Weighted Sobolev spaces $\tilde W^{k,2,\delta}$]\label{sobolevspacestilde}
Let $k$, $\delta$, and $u$ be as in Definition \ref{sobolevspaces}.
We define a weighted Sobolev space
\begin{equation}
\tilde W^{k,2,\delta}(C,u^\ast T\hat X)
\end{equation}
as
\begin{equation}\label{Wtildetotal}
W^{k,2,\delta}(C,u^\ast T\hat X)\oplus C^\infty_c(C\setminus\{p_e\}\cup N,\End^{0,1}(TC))
\end{equation}
modulo the subspace
\begin{equation}
\left\{Xu\oplus\sL_Xj\,\middle|\,X\in C^\infty(\tilde C,T\tilde C)\quad\begin{matrix}X\text{ holomorphic near }p_e\text{ and }\tilde N\\X(p_e)=dX(p_e)=0\hfill\\X(\tilde n)=0\text{ for }\tilde n\in\tilde N\hfill\end{matrix}\right\}.
\end{equation}
Since $X$ is holomorphic and vanishes to second order at each $p_e$ (so in local coordinates \eqref{cylindricalcoordsonC} it equals $f\cdot\partial_s$ for some holomorphic function $f$ vanishing at $p_e$), a calculation shows that $Xu\in W^{k,2,\delta}(C,u^\ast T\hat X)$ since $\delta<1$ and $u$ converges to a trivial cylinder with weight $\delta'>\delta$.

There is a unique commensurability class of norm on $\tilde W^{k,2,\delta}$ for which the natural map $W^{k,2,\delta}\to\tilde W^{k,2,\delta}$ is Fredholm, since this map has kernel $H^0(\tilde C,T\tilde C(-2P-\tilde N))$ and cokernel $H^1(\tilde C,T\tilde C(-2P-\tilde N))$, both of which are finite-dimensional.
\end{definition}

To better understand the spaces $\tilde W^{k,2,\delta}$, we discuss a few special cases for smooth $C$.

If $H^1(C,TC(-2P))=0$, namely if all variations of $C$ with its markings at $\{p_e\}$ are trivial up to gauge, then we have
\begin{equation*}
\tilde W^{k,2,\delta}(C,u^\ast T\hat X)=W^{k,2,\delta}(C,u^\ast T\hat X)\Bigm/\ker\Bigl(\aut(C,\{p_e\}_e)\to\bigoplus_e\gl(T_{p_e}C)\Bigr),
\end{equation*}
where $\ker\bigl(\aut(C,\{p_e\}_e)\to\bigoplus_e\gl(T_{p_e}C)\bigr)=H^0(C,TC(-2P))$ denotes the (finite-dimensional!) Lie algebra of the group of automorphisms of $C$ which fix each $p_e$ and act as the identity on each $T_{p_e}C$.
This Lie algebra of vector fields on $C$ maps injectively (by stability) to $W^{k,2,\delta}(C,u^\ast T\hat X)$ by differentiating $u$.
In this case, $\tilde W^{k,2,\delta}$ is equipped with the natural quotient norm from the presentation above.

If $H^0(C,TC(-2P))=0$, namely if $C$ equipped with its markings at $\{p_e\}$ has no nontrivial infinitesimal automorphisms, then we have
\begin{equation}\label{linearizeddomain}
\tilde W^{k,2,\delta}(C,u^\ast T\hat X)=W^{k,2,\delta}(C,u^\ast T\hat X)\oplus V,
\end{equation}
where $V\subseteq C^\infty_c(C\setminus\{p_e\}_e,\End^{0,1}(TC))$ is any subspace projecting isomorphically onto $H^1(C,TC(-2P))$ (which by \S\ref{rsdeformations} is the space of deformations of $C$ equipped with the doubly marked points $\{p_e\}$).
In this case, $\tilde W^{k,2,\delta}$ is equipped with the direct sum norm induced by \eqref{linearizeddomain} for any norm on $V$.
This norm on $\tilde W^{k,2,\delta}$ is well-defined up to commensurability, since every other such subspace $V'$ may be obtained as the image of $v\mapsto v+\sL_{X(v)}j$ for some unique $X:V\to C^\infty(C,TC)$ with $X=0$ and $dX=0$ at $p_e$, and the two resulting spaces \eqref{linearizeddomain} are identified via $(\xi,v)\mapsto(\xi+X(v)u,v+\sL_{X(v)}j)$.

To be even more concrete, suppose $C$ is smooth of genus zero.
If $\#\{p_e\}=1$ (i.e.\ the case of the plane), then it falls into the first case above with a one-dimensional (over $\CC$) space of infinitesimal automorphisms, corresponding to translations of $\CC$ (the automorphisms of $\CC$ acting trivially on the tangent space at infinity).
If $\#\{p_e\}=2$ (i.e.\ the case of the cylinder), then it falls into the second case above with a one-dimensional space of deformations of the almost complex structure.
Namely, all cylinders are biholomorphic to $\CC\setminus\{0\}$, the space of tangent space markings at $0$ and $\infty$ is $(\CC^\times)^2$ which is two-dimensional, and quotienting by $\Aut(\CC\setminus\{0\})=\CC^\times$ reduces this to the one-dimensional $\CC^\times$.
In fact, $\#\{p_e\}\geq 2$ all fall into the second case above.

\begin{definition}[Linearized operators]\label{linearizedbasicdef}
Let $k$, $\delta$, and $u$ be as in Definition \ref{sobolevspaces}, and suppose $k\geq 1$.
For any choice of $\hat J$-linear connection $\nabla$ as in Convention \ref{metricconnectionchoicetarget}, there is a linearized operator
\begin{equation*}
D_u^\nabla:W^{k,2,\delta}(C,u^\ast T\hat X)\oplus C^\infty_c(C\setminus\{p_e\}_e,\End^{0,1}(TC))\to W^{k-1,2,\delta}(\tilde C,u^\ast T\hat X_{\hat J}\otimes_\CC\Omega^{0,1}_{\tilde C})
\end{equation*}
(recall the normalization $\tilde C$ from Definition \ref{normalizationdef})
expressing the first order change in $(du)^{0,1}$, measured with respect to the chosen connection $\nabla$, as $u$ and $j$ vary, namely
\begin{equation}
D_u^\nabla(\xi,j'):=\left.\frac d{d(\xi,j')}\right|_{(\xi,j')=(0,j)}\PT^\nabla_{\exp_u\xi\to u}\Bigl[(d\exp_u\xi)^{0,1}_{j',\hat J}\Bigr]
\end{equation}
where $\PT$ denotes parallel transport.
When $u$ is $\hat J$-holomorphic, the operator $D_u^\nabla$ is independent of the choice of connection and descends to a map
\begin{equation}\label{singledomainlinearized}
D_u:\tilde W^{k,2,\delta}(C,u^\ast T\hat X)\to W^{k-1,2,\delta}(\tilde C,u^\ast T\hat X_{\hat J}\otimes_\CC\Omega^{0,1}_{\tilde C}).
\end{equation}
For general $u$, we may still define such a map \eqref{singledomainlinearized} by fixing a choice of connection $\nabla$ and a choice of subspace of \eqref{Wtildetotal} which projects isomorphically onto $\tilde W^{k,2,\delta}$ and intersects $W^{k,2,\delta}$ in a subspace of finite codimension.
\end{definition}

The operator
\begin{equation}\label{linearizedwithoutj}
D_u^\nabla:W^{k,2,\delta}(C,u^\ast T\hat X)\to W^{k-1,2,\delta}(\tilde C,u^\ast T\hat X_{\hat J}\otimes_\CC\Omega^{0,1}_{\tilde C})
\end{equation}
is a real Cauchy--Riemann operator, in the sense that $D_u^\nabla(fs)=fD_u^\nabla(s)+(\bar\partial f)s$ for real-valued functions $f$.
In particular, it is elliptic.

\begin{proposition}\label{linearizedfredholm}
The linearized operator \eqref{singledomainlinearized} is Fredholm provided $\pm\delta\notin\sigma(D_\gamma)$ for every orbit $\gamma$ which $u$ is positively/negatively asymptotic to.
\end{proposition}

\begin{proof}
It is equivalent to show that \eqref{linearizedwithoutj} is Fredholm under the given hypothesis on $\delta$.
The main point of the proof of this (as compared with the standard Fredholm results for elliptic operators on compact manifolds) is to prove suitable elliptic estimates in the ends of $C$, which follow from the hypothesis $\delta\notin\sigma(D_\gamma)$.
This argument can be viewed as a special case of a general result of Lockhart--McOwen \cite{lockhartmcowen}, and proofs in the specific context of pseudo-holomorphic curves with cylindrical ends can be found in Salamon \cite[\S 2]{salamonnotes} and Wendl \cite[Lecture 4]{wendlsftbook}.
\end{proof}

\begin{definition}[Regularity of pseudo-holomorphic buildings]\label{regularitydef}
Given a pseudo-holomorphic building $\{u_v:C_v\to\hat X_v\}_{v\in V(T)}$, we consider the linearized operator
\begin{equation}\label{totaldomainlinearized}
D_u:\bigoplus_{v\in V(T)}\tilde W^{k,2,\delta}(C,u^\ast T\hat X)\to\bigoplus_{v\in V(T)}W^{k-1,2,\delta}(\tilde C,u^\ast T\hat X_{\hat J}\otimes_\CC\Omega^{0,1}_{\tilde C})
\end{equation}
with $\delta\in(0,1)$ and $\delta<\delta_\gamma$ for every Reeb orbit $\gamma=\gamma_e$ for $e\in E(T)$.
This operator is Fredholm by Proposition \ref{linearizedfredholm}, and its kernel and cokernel are independent of the choice of $k$ and $\delta$ by elliptic regularity.

A point in a moduli space $\Mbar(T)$ is called \emph{regular} iff this linearized operator is surjective.
A point is called \emph{weakly regular} iff the linearized operator is surjective after adding variations in $t\in\s(T)$ to its domain (this is only relevant in cases (III) and (IV)).
\end{definition}

\begin{lemma}\label{trivialtransverse}
The trivial cylinder ${\id}\times{\tilde\gamma}:\RR\times S^1\to\RR\times Y$ over any Reeb orbit $\gamma$ is regular.  In fact, the associated linearized operator is an isomorphism.
\end{lemma}

\begin{proof}
The linearized operator $D$ may be decomposed into the tangential and normal deformation operators $D^T$ and $D^N$.  Precisely, there is a diagram whose rows are short exact sequences (we write $C=\RR\times S^1$):
\begin{equation}
\hspace{-1in}
\begin{tikzcd}[column sep = tiny]
\tilde W^{k,2,\delta}(C,TC)\ar{r}{du\circ}\ar{d}{D^T}&\tilde W^{k,2,\delta}(C,\gamma^\ast TY\oplus\RR\partial_s)\ar{r}\ar{d}{D}&W^{k,2,\delta}(C,\gamma^\ast\xi)\ar{d}{D^N}\\
W^{k-1,2,\delta}(C,TC\otimes_\CC\Omega^{0,1}_C)\ar{r}{du\circ}&W^{k-1,2,\delta}(C,(\gamma^\ast TY\oplus\RR\partial_s)\otimes_\CC\Omega^{0,1}_C)\ar{r}&W^{k-1,2,\delta}(C,\gamma^\ast\xi\otimes_\CC\Omega^{0,1}_C).
\end{tikzcd}
\hspace{-1in}
\end{equation}
Note that the domain of $D^T$ includes variations in complex structure on $C$, while the domain of $D^N$ does not.  It suffices to show that both $D^T$ and $D^N$ are isomorphisms.

To show that $D^N$ is an isomorphism, write $D^N=\partial_s+D_\gamma$, where $D_\gamma=J\sL_{R_\lambda}$ is the linearized operator for $\gamma$ (Definition \ref{reeblinearized}).  It follows from the fact that $\delta$ is not an eigenvalue of $D_\gamma$ that $D^N$ is an isomorphism.

To show that $D^T$ is an isomorphism, let us first observe that the complex
\begin{equation}
W^{k,2,\delta}(C,TC)\xrightarrow{D^T}W^{k-1,2,\delta}(C,TC\otimes_\CC\Omega^{0,1}_C)
\end{equation}
calculates $H^\bullet(\PPP^1,T_{\PPP^1}(-2[0]-2[\infty]))$, that is we have
\begin{align}
\label{kercalculatesH}\ker D^T&=H^0(\PPP^1,T_{\PPP^1}(-2[0]-2[\infty])),\\
\label{cokercalculatesH}\coker D^T&=H^1(\PPP^1,T_{\PPP^1}(-2[0]-2[\infty])),
\end{align}
where we identify $C$ with $\PPP^1\setminus\{0,\infty\}$.

To show \eqref{kercalculatesH}, argue as follows.
By elliptic regularity, $\ker D^T$ consists precisely of those holomorphic sections of $TC$ over $C$ which lie in $W^{k,2,\delta}$.
Thus we must show that a holomorphic section of $T\PPP^1$ over $\PPP^1\setminus\{0,\infty\}$ lies in $W^{k,2,\delta}$ iff it vanishes to order $\geq 2$ at $0,\infty\in\PPP^1$.
In fact, it suffices to show the seemingly much weaker statement that a holomorphic section of $T\PPP^1$ over $\PPP^1\setminus\{0,\infty\}$ lying in $W^{k,2,\delta}$ must be meromorphic at $0,\infty\in\PPP^1$ (this implies the previous assertion by a straightforward calculation).
So, let $f$ be a holomorphic section lying in $W^{k,2,\delta}$, and let us show that it is meromorphic at the punctures.
Since $f$ lies in $W^{k,2,\delta}$, it can be integrated (with respect to a fixed non-vanishing area form on $\PPP^1$) against any smooth function on $\PPP^1$ (more precisely, smooth section of the real dual of $T_{\PPP^1}$) which vanishes to order $N$ at $0$ and $\infty$, for some $N<\infty$ depending on $k$ and $\delta$.
The space of such smooth functions is of finite codimension in the space of all smooth functions on $\PPP^1$, so we may extend $f$ arbitrarily to a distribution $\tilde f$ on $\PPP^1$.
Since $f$ is holomorphic on $\PPP^1\setminus\{0,\infty\}$, the distribution $\bar\partial\tilde f$ is supported at $0,\infty\in\PPP^1$, so by Lemma \ref{distrdelta} it is a finite sum of delta functions and their derivatives at $0,\infty\in\PPP^1$.
It follows that $z^M\bar\partial\tilde f=0$ for sufficiently large $M<\infty$, for any local coordinate $z$ near $0$ or $\infty$.
We have $\bar\partial(z^M\tilde f)=z^M\bar\partial\tilde f=0$, so by elliptic regularity $z^M\tilde f$ is holomorphic at $0$ and $\infty$.
It follows that $f$ is meromorphic at $0,\infty\in\PPP^1$, which finishes the proof of \eqref{kercalculatesH}.
To show \eqref{cokercalculatesH}, apply the same reasoning to the formal adjoint to show that
\begin{equation}
(\coker D^T)^\vee=\ker((D^T)^\vee)=H^0(\PPP^1,\Omega_{\PPP^1}^{\otimes 2}(2[0]+2[\infty]))
\end{equation}
and apply Serre duality.

It follows from \eqref{kercalculatesH}--\eqref{cokercalculatesH} that $D^T$ is an isomorphism.
Indeed, $\ker D^T=H^0(\PPP^1,T_{\PPP^1}(-2[0]-2[\infty]))=0$, and by definition $\tilde W^{k,2,\delta}$ is $W^{k,2,\delta}$ direct sum a space which maps isomorphically to $H^1(\PPP^1,T_{\PPP^1}(-2[0]-2[\infty]))=\coker D^T$.
\end{proof}

The following is an easy exercise (see \cite[Theorem 1.5.3]{hormanderlinear}) and is frequently useful.

\begin{lemma}\label{distrdelta}
A distribution supported at a single point is a linear combination of the delta function at that point and its derivatives.\qed
\end{lemma}

\subsection{Index of moduli spaces}\label{indexsec}

We now define a notion of \emph{index} and \emph{virtual dimension} for objects of $\SSS$.

\begin{definition}[Index $\mu(T)$]\label{indexdefn}
We define $\mu(T)$ to be the Fredholm index of the linearized operator \eqref{totaldomainlinearized} for non-nodal $C_v$.
\end{definition}

Note that \eqref{totaldomainlinearized} makes sense for any smooth building $\{u_v\}_v$ for which $u_v$ approach trivial cylinders sufficiently rapidly, and varies nicely in families of such $\{u_v\}_v$, hence $\mu(T)$ is well-defined.

Standard arguments allow one to express $\mu(T)$ in terms of the Conley--Zehnder indices of the Reeb orbits $\gamma_{e^+}$ and $\{\gamma_{e^-_i}\}_i$ and the homology class of $\#_v\beta_v$ (see \cite[Proposition 1.7.1]{sftintro} or \cite[Proposition 4]{bourgeoismohnke}):
\begin{equation}\label{indexofTbyCZ}
\mu(T)=[\CZ_{\tau^+}(\gamma_{e^+})+n-3]-\sum_i[\CZ_{\tau^-_i}(\gamma_{e^-_i})+n-3]+\langle 2c_1(T\hat X,\tau),\#_v\beta_v\rangle,
\end{equation}
where $\tau=(\tau^+,\{\tau^-_i\}_i)$ denotes any collection of trivializations of $\xi^\pm$ over the orbits $\gamma$, and $c_1(T\hat X,\tau)$ denotes the first Chern class relative to the boundary trivialization of $T\hat X$ obtained by summing $\tau$ with the tautological trivialization of $\RR R_\lambda\oplus\RR\partial_s$ (the expression on the right hand side is independent of the choice of $\tau$).
We may thus define the \emph{homological grading} of a null-homologous Reeb orbit to be
\begin{equation}\label{gammagrading}
\left|\gamma\right|:=\CZ_\tau(\gamma)+n-3+\langle 2c_1(\xi,\tau),\beta\rangle\in\ZZ/2c_1(\xi)\cdot H_2(Y)
\end{equation}
for any trivialization $\tau$ and any null-homology $\beta$ of $\gamma$.
Analogously, there is a relative grading on the set of (monomials of) Reeb orbits in a fixed homology class.

The index satisfies $\mu(T)=\mu(T')$ for any morphism $T\to T'$ (this is evident from the formula in terms of Conley--Zehnder indices), and is additive under concatenations, that is $\mu(\#_iT_i)=\sum_i\mu(T_i)$ (trivial by definition).

\begin{definition}[Virtual dimension $\vdim(T)$]
The \emph{virtual dimension} of $T$ is defined as
\begin{equation}
\vdim(T):=\mu(T)-\#V_s(T)+\dim\s(T),
\end{equation}
recalling that $V_s(T)\subseteq V(T)$ denotes the set of symplectization vertices, i.e.\ those $v$ for which $\ast^+(v)=\ast^-(v)$.
This is the ``expected dimension'' of $\Mbar(T)$.
For a morphism $T'\to T$, let $\codim(T'/T):=\vdim T-\vdim T'$.
\end{definition}

\begin{lemma}
We have $\codim(T'/T)\geq 0$ with equality iff $T'\to T$ is an isomorphism, and $\codim(T'/T)>1$ iff $T\to T'$ factors nontrivially.
\qed
\end{lemma}

\subsection{Orientations of moduli spaces}\label{thickorsec}

We now review the theory of orientations in contact homology.  The general analytic methods used to orient moduli spaces of pseudo-holomorphic curves were introduced by Floer--Hofer \cite{floerhofer} (see also Bourgeois--Mohnke \cite{bourgeoismohnke}).  The resulting algebraic structure relevant for contact homology was worked out by Eliashberg--Givental--Hofer \cite{sftintro} (see also Bourgeois--Mohnke \cite{bourgeoismohnke}).

We find it most convenient to encode orientations via the formalism of \emph{orientation lines}, as introduced into the subject by Seidel.
An orientation line is simply a free $\ZZ$-module of rank one, equipped with a $\ZZ/2$-grading (i.e.\ declared to be either even or odd).
For every Reeb orbit $\gamma$ with basepoint $b$, we will define an orientation line $\oo_{\gamma,b}$.
For every object $T\in\SSS$, we will define an orientation line $\oo^\circ_T$, and we will show that there is a canonical isomorphism
\begin{equation}
\oo^\circ_T=\oo_{\gamma^+,b^+}\otimes\bigotimes_{\gamma^-\in\Gamma^-}\oo_{\gamma^-,b^-}.
\end{equation}
The virtual orientation sheaf of $\Mbar(T)$ is canonically isomorphic to
\begin{equation}
\oo_T:=\oo^\circ_T\otimes(\oo_\RR^\vee)^{\otimes V_s(T)}\otimes\oo_{\s(T)}.
\end{equation}
Recall that we are using the super tensor product $\otimes$ (see \S\ref{conventionsintrosec}).

\begin{definition}
Denote by $\oo_V:=H_{\dim V}(V,V\setminus 0)$ with parity $\dim V$ the orientation line\footnote{One should distinguish the orientation line $\oo_V$ from the determinant line $\det V:=\wedge^{\dim V}V$, as there is no functorial isomorphism $\oo_V\otimes_\ZZ\RR=\det V$ (rather only up to scaling by $\RR_{>0}$).} of the vector space $V$.
For a Fredholm map $A:E\to F$, define $\oo(A)=\oo_A:=\oo_{\ker A}\otimes\oo_{\coker A}^\vee$.
\end{definition}

\begin{definition}[Orientation lines $\oo_{\gamma,b}$ of Reeb orbits]\label{reeborientation}
Let $\gamma\in\PP=\PP(Y,\lambda)$, and fix a constant speed parameterization $\tilde\gamma:S^1\to Y$ of $\gamma$ (equivalently, fix a basepoint $b=\tilde\gamma(0)\in\left|\gamma\right|$).  We consider the bundle $V:=\tilde\gamma^\ast\xi\oplus\CC$ over $[0,\infty)\times S^1\subseteq\CC$.  The bundle $V$ is equipped with a connection, namely the connection on $\tilde\gamma^\ast\xi$ induced by the Lie derivative $\sL_{R_\lambda}$ plus the trivial connection on $\CC$, and this gives rise to a real Cauchy--Riemann operator $\bar\partial$ on $V$.  Now extend the pair $(V,\bar\partial)$ to all of $\CC$, and define
\begin{equation}\label{orientationdefiningoperator}
\oo_{\gamma,b}:=\oo(V,\bar\partial):=\oo(W^{k,2,\delta}(\CC,V)\to W^{k-1,2,\delta}(\CC,V\otimes_\CC\Omega^{0,1}_\CC))
\end{equation}
for small $\delta>0$.
We show in Lemma \ref{oogammawelldefined} immediately below that $\oo_{\gamma,b}$ is well-defined.
\end{definition}

Note that (because of the well-definedness of $\oo_{\gamma,b}$) any path between basepoints $b\to b'$ in the sense of \S\ref{Tcatdefsec} gives rise to an isomorphism $\oo_{\gamma,b}\to\oo_{\gamma,b'}$.

\begin{lemma}\label{oogammawelldefined}
The orientation line $\oo_{\gamma,b}$ defined by \eqref{orientationdefiningoperator} independent of the choice of extension $(V,\bar\partial)$ up to canonical isomorphism.
\end{lemma}

(Refer to Solomon \cite[\S 2, Proposition 2.8]{solomonthesis} for a similar result.)

\begin{proof}
The space of extensions of $(V,\bar\partial)$ is homotopy equivalent to $\Maps(S^2,BU(n))$ (noting that $\pi_1(BU(n))=\pi_0(U(n))=0$ so $V$ is trivial over $S^1$) and we have $\pi_i\Maps(S^2,BU(n))=\pi_{i+2}(BU(n))=\pi_{i+1}(U(n))$; in particular $\pi_0=\ZZ$ and $\pi_1=0$.  By simple connectivity, the line $\oo_{\gamma,b}$ depends at most on the choice of connected component of $\Maps(S^2,BU(n))$ (classified by relative Chern class).

To identify (canonically) the $\oo_{\gamma,b}$ from different relative Chern classes, argue as follows.
Choose an extension $(V,\bar\partial)$ for which $\bar\partial$ is complex linear in a neighborhood of the origin $0\in\CC$ (this is a contractible condition), so $(V,\bar\partial)$ is a holomorphic vector bundle near the origin.
Hence we may define $(V(-[0]),\bar\partial)$ near the origin as the holomorphic vector bundle whose sheaf of holomorphic sections is the subsheaf of holomorphic sections of $V$ which vanish at the origin, and we extend $(V(-[0]),\bar\partial)$ to concide with $(V,\bar\partial)$ away from the origin.
Thus $(V(-[0]),\bar\partial)$ is another extension with relative Chern class one less than $(V,\bar\partial)$, so it suffices to produce a canonical isomorphism $\oo(V,\bar\partial)=\oo(V(-[0]),\bar\partial)$.

To produce this isomorphism, it suffices to construct the expected exact sequence
\begin{equation}\label{chernclasschangeES}
0\to H^0_\delta(V([-0]),\bar\partial)\to H^0_\delta(V,\bar\partial)\to V_0\to H^1_\delta(V([-0]),\bar\partial)\to H^1_\delta(V,\bar\partial)\to 0
\end{equation}
where $H^i_\delta(V,\bar\partial)$ denotes the cohomology of the two term complex appearing in \eqref{orientationdefiningoperator} and $V_0$ denotes the fiber of $V$ over $0\in\CC$ (note that $V_0$ is a complex vector space and hence is canonically oriented).
To produce this exact sequence, it suffices to apply the snake lemma to the diagram obtained from the inclusion of the complex calculating $H^\bullet_\delta(V(-[0]),\bar\partial)$ into that calculating $H^\bullet_\delta(V,\bar\partial)$.
At the level of smooth sections, this is the diagram
\begin{equation}
\begin{tikzcd}[column sep = small]
0\ar{r}&C^\infty(\CC,V(-[0]))\ar{d}\ar{r}&C^\infty(\CC,V)\ar{d}\ar{r}&J^{0,\infty}(V)_0\ar{d}\ar{r}&0\\
0\ar{r}&C^\infty(\CC,V(-[0])\otimes\Omega^{0,1}_\CC)\ar{r}&C^\infty(\CC,V\otimes\Omega^{0,1}_\CC)\ar{r}&J^{0,\infty}(V\otimes\Omega^{0,1}_\CC)_0\ar{r}&0,
\end{tikzcd}
\end{equation}
where $J^{0,\infty}(-)_0$ denotes the infinite anti-holomorphic jet space at $0\in\CC$, i.e.\ keeping track of all anti-holomorphic derivatives $(\frac\partial{\partial\bar z})^k$ at zero; note that the right vertical map is surjective with kernel $V_0$.
Now the usual diagram chase (in combination with some appeals to elliptic regularity) produces the desired exact sequence \eqref{chernclasschangeES}.
In doing this diagram chase, it may be helpful to remark that, to define the connecting homomorphism, one may, for convenience, restrict consideration to lifts of elements of $V_0$ to $W^{k,2,\delta}(\CC,V)$ which are holomorphic near zero; also note that by elliptic regularity, all elements of $H^1_\delta(\CC,V)$ can be represented by smooth sections of $V\otimes\Omega^{0,1}_\CC$ supported inside any given non-empty open subset of $\CC$.
\end{proof}

\begin{definition}[Parity of Reeb orbits]
The \emph{parity} $\left|\gamma\right|\in\ZZ/2$ of $\gamma\in\PP$ is the parity of $\oo_\gamma$.
\end{definition}

By \eqref{indexofTbyCZ} we have $\left|\gamma\right|=\CZ(\gamma)+n-3$, i.e.\ the parity is simply the reduction to $\ZZ/2$ of the grading \eqref{gammagrading} (whenever the latter is defined).
This formula for $\left|\gamma\right|$ in terms of the Conley--Zehnder index can also be expressed as $\left|\gamma\right|=\sign(\det(I-A_\gamma))\in\{\pm 1\}=\ZZ/2$ where $A_\gamma$ denotes the linearized return map of $\gamma$ acting on $\xi_p$ (using the general property of the Conley--Zehnder index $(-1)^{\CZ(\Psi)}=(-1)^{n-1}\sign(\det(I-\Psi(1)))$ for $\Psi:[0,1]\to\Sp_{2n-2}(\RR)$).  It thus follows that
\begin{equation}
\left|\gamma\right|=\#(\sigma(A_\gamma)\cap(0,1))\in\ZZ/2,
\end{equation}
where $\sigma(\cdot)$ denotes the spectrum (recall that the spectrum of any matrix $A\in\Sp_{2n}(\RR)$ lies in $\RR^\times\cup\{z\in\CC:\left|z\right|=1\}$ and that $1\notin\sigma(A_\gamma)$ is equivalent to the non-degeneracy of $\gamma$).  Note that by definition $\gamma$ is non-degenerate iff $1\notin\sigma(A_\gamma)$.  It follows that the index of the $k$-fold multiple cover $\gamma_k$ of $\gamma$ is given by
\begin{equation}\label{multcoverindex}
\left|\gamma_k\right|=\left|\gamma\right|+(k+1)\#(\sigma(A_\gamma)\cap(-1,0))\in\ZZ/2.
\end{equation}

\begin{definition}[Good and bad Reeb orbits]
There is an action of $\ZZ/d_\gamma$ on $\oo_{\gamma,b}$ by functoriality, which just amounts to a homomorphism $\ZZ/d_\gamma\to\{\pm 1\}$ (independent of $b$ since $\ZZ/d_\gamma$ is abelian).  The orbit $\gamma$ is called \emph{good} iff this homomorphism is trivial (and \emph{bad} otherwise).  For good $\gamma$, we thus have an orientation line $\oo_\gamma$ independent of $b$ up to canonical isomorphism.
\end{definition}

The bad Reeb orbits are precisely the even multiple covers $\gamma_{2k}$ of \emph{simple} orbits $\gamma$ with $\#(\sigma(A_\gamma)\cap(-1,0))$ odd by \cite[Lemma 1.8.8, Remark 1.9.2]{sftintro}.  To see this, it suffices to show that a generator of the $\ZZ/k$ action on $\oo_{\gamma_k}$ acts by $(-1)^{\left|\gamma_k\right|-\left|\gamma\right|}$ and use \eqref{multcoverindex}.  This can be proven by pulling back the operator from \eqref{orientationdefiningoperator} under $z\mapsto z^k$ and analyzing the representations of $\ZZ/k$ occurring in the kernel and cokernel (see \cite[Proof of Theorem 3]{bourgeoismohnke}).

\begin{definition}[Orientation lines $\oo^\circ_T$]
For any $T\in\SSS$, we define the orientation line $\oo^\circ_T$ to be the orientation line of the linearized operator \eqref{totaldomainlinearized} at any smooth building of type $T$.
\end{definition}

We show in Lemma \ref{otdefined} immediately below that $\oo^\circ_T$ is well-defined (i.e.\ is independent of the choice of smooth building, up to canonical isomorphism).
Note that it does not matter whether we define $\oo^\circ_T$ in terms of the linearized operator with domain $W^{k,2,\delta}$ or $\tilde W^{k,2,\delta}$, since their ``difference'' is a complex vector space and thus is canonically oriented.
Note also that the parity of $\oo^\circ_T$ is just the index $\mu(T)$ mod $2$.

\begin{lemma}\label{otdefined}
The orientation line of the linearized operator \eqref{totaldomainlinearized} at a smooth building of type $T$ is canonically isomorphic to $\oo_{\gamma_{e^+},b_{e^+}}\otimes\bigotimes_{e^-}\oo_{\gamma_{e^-},b_{e^-}}^\vee$.
\end{lemma}

\begin{proof}
We consider the linearized operator \eqref{totaldomainlinearized} direct sum the operators from \eqref{orientationdefiningoperator} for each output Reeb orbit $\gamma_{e^-}$ of $T$.
The orientation line of this direct sum is thus $\oo_T^\circ\otimes\bigotimes_{e^-}\oo_{\gamma_{e^-},b_{e^-}}$.

Now the Floer--Hofer kernel gluing operation \cite{floerhofer} (see also Bourgeois--Mohnke \cite{bourgeoismohnke}) provides a canonical isomorphism between the orientation line of this operator and the orientation line of the operator obtained by gluing the domains end to end in the specified way (note the use of the basepoints $b_{e^-}$ for $\gamma_{e^-}$ specified by $T$).
Note that, for technical convenience, it is enough to do this gluing just for maps $u_v$ which coincide with trivial cylinders near infinity.

This glued up operator is now exactly of the form from \eqref{orientationdefiningoperator} for $\gamma_{e^+}$, so we deduce an isomorphism $\oo_T^\circ\otimes\bigotimes_{e^-}\oo_{\gamma_{e^-},b_{e^-}}=\oo_{\gamma_{e^+},b_{e^+}}$.
\end{proof}

\begin{remark}
Gluing the linearized operators with domain $W^{k,2,\delta}$ results in an index increase of two, gluing the linearized operators with domain $\tilde W^{k,2,\delta}$ preserves the index, and gluing the linearized operators with domain $W^{k,2,\delta}$ plus decay to constants in the ends results in an index decrease of two.
The Floer--Hofer kernel gluing operation can be done in any of these contexts; Bourgeois--Mohnke choose the last.
\end{remark}

For any morphism $T\to T'$, there is a canonical isomorphism $\oo^\circ_T\to\oo^\circ_{T'}$, again by the Floer--Hofer kernel gluing operation.
Associativity of this operation implies that this isomorphism respects the identifications from Lemma \ref{otdefined} and that it makes $\oo^\circ$ into a functor from $\SSS_\I$ to the category of orientation lines and isomorphisms.
For any concatenation $\{T_i\}$, there is a tautological identification $\oo^\circ_{\#_iT_i}=\bigotimes_i\oo^\circ_{T_i}$; associativity of the Floer--Hofer operation implies that this identification is compatible with Lemma \ref{otdefined}.

\begin{remark}
The above reasoning, in particular the proof of Lemma \ref{otdefined}, relies on the fact that the topology of the curves in question is particularly simple.
To prove the analogous result in the SFT setting requires a more complicated argument; see Bourgeois--Mohnke \cite[Proposition 8]{bourgeoismohnke}.
\end{remark}

For any $T\in\SSS$, define
\begin{equation}
\oo_T:=\oo^\circ_T\otimes(\oo_\RR^\vee)^{\otimes V_s(T)}\otimes\oo_{\s(T)},
\end{equation}
where $\oo_{\s(T)}$ denotes the global sections of the orientation sheaf of $\s$ considered as a real manifold.
We shall see that the virtual orientation sheaf of $\Mbar(T)$ is canonically isomorphic to $\oo_T$.
Note that the parity of $\oo_T$ equals $\vdim(T)$.

For any morphism $T'\to T$, there is a canonical identification
\begin{equation}\label{algebraicorientationgluing}
\oo_{T'}\otimes\oo_{G_{T'//T}}\otimes\oo_{s(T')}^\vee=\oo_T,
\end{equation}
where $G_{T'//T}$ denotes the space of gluing parameters defined in \S\ref{localmodelsection}.
The appearance of $\oo_{s(T')}^\vee$ is explained by the fact that $G_{T'/}$ includes all variations of $t$, rather than just those normal to $\s(T')$.

For any concatenation $\{T_i\}$, there is a tautological identification
\begin{equation}
\oo_{\#_iT_i}=\bigotimes_i\oo_{T_i}.
\end{equation}

\subsection{Local structure of moduli spaces via local models \texorpdfstring{$G$}{G}}\label{firstgluingstatements}

We now state the precise sense in which the spaces $G_{T'//T}$ from \S\ref{localmodelsection} are local topological models for the regular loci in the moduli spaces $\Mbar(T)$.  We also state the compatibility of this local topological structure with the natural maps on orientation lines discussed earlier.  These statements are in essence a gluing theorem, and their proofs are given in \S\ref{gluingsec}.

We denote by $\Mbar(T)^\reg\subseteq\Mbar(T)$ the locus of points which are regular and have trivial isotropy (we do not discuss the entire locus of regular points, possibly with isotropy, just for sake of simplicity, although it would not really present any additional difficulty to do so).

The (Banach space) implicit function theorem implies that $\M(T)^\reg$ is a (smooth) manifold of dimension $\vdim T$ over the locus without nodes.
Recall that for any morphism $T\to T'$, we have $\mu(T)=\mu(T')$.
Denote by $N(x)$ the set of nodes of the domain curve of a given point $x\in\Mbar(T)$.

\begin{theorem}[Local structure of $\Mbar(T)^\reg$]\label{localgluingnonthickened}
Let $x_0\in\Mbar(T)^\reg$ be of type $T'\to T$.  Then $\mu(T)-\#V_s(T')-2\#N(x_0)\geq 0$ and there is a local homeomorphism
\begin{equation}\label{ultimategluingmapnonthickened}
\bigl(G_{T'//T}\times \CC^{N(x_0)}\times\RR^{\mu(T)-\#V_s(T')-2\#N(x_0)},(0,0,0)\bigr)\to\bigl(\Mbar(T),x_0\bigr)
\end{equation}
whose image lands in $\Mbar(T)^\reg$ and which commutes with the maps from both sides to $\SSS_{T'//T}\times\overline{\s(T)}$ (as well as the stratifications by number of nodes).
\end{theorem}

\begin{proof}
See \S\S\ref{localgluingproofI}--\ref{localgluingproofIII}.
\end{proof}

The (Banach space) implicit function theorem moreover gives a canonical identification
\begin{equation}\label{Minteriororientationsnonthickened}
\oo_{\Mbar(T)^\reg}=\oo_T.
\end{equation}
More precisely, this identification is made over the locus in $\M(T)^\reg$ without nodes, and has a unique continuous extension to all of $\Mbar(T)^\reg$ by virtue of the local topological description in Theorem \ref{localgluingnonthickened}.  It is also compatible with morphisms $T'\to T$ in the following precise sense.

\begin{theorem}[Compatibility of the ``analytic'' and ``geometric'' maps on orientations]\label{localorientationsnonthickened}
The following diagram commutes:
\begin{equation}
\begin{tikzcd}
\oo_{\Mbar(T')^\reg}\otimes\oo_{G_{T'//T}}\otimes\oo_{s(T')}^\vee\ar[equals]{r}{\eqref{Minteriororientationsnonthickened}}\ar[equals]{d}{\eqref{ultimategluingmapnonthickened}}&\oo_{T'}\otimes\oo_{G_{T'//T}}\otimes\oo_{s(T')}^\vee\ar[equals]{d}{\eqref{algebraicorientationgluing}}\\
\oo_{\Mbar(T)^\reg}\ar[equals]{r}{\eqref{Minteriororientationsnonthickened}}&\oo_T,
\end{tikzcd}
\end{equation}
where the left vertical map is the ``geometric'' map induced by the local topological structure of $\Mbar(T)^\reg$ given in \eqref{ultimategluingmapnonthickened}, and the right vertical map is the ``analytic'' map \eqref{algebraicorientationgluing} defined earlier via the ``kernel gluing'' operation.
\end{theorem}

\begin{proof}
See \S\ref{localorientationsproof}.
\end{proof}

\section{Implicit atlases}\label{IAsection}

In this section, we define (topological) implicit atlases with cell-like stratification on the moduli spaces $\Mbar(T)$ stratified by $\SSS_{/T}$.
The notion of an implicit atlas with cell-like stratification was introduced in \cite[\S\S 3,6]{pardonimplicitatlas}, and our constructions of implicit atlases follow the general procedure introduced in \cite[\S\S 1--2,9--10]{pardonimplicitatlas}.
We will, however, give a self-contained treatment.

\subsection{Implicit atlases with cell-like stratification}

We review the notion of an implicit atlas with cell-like stratification from \cite[Definitions 3.1.1, 3.2.1, 6.1.2, 6.1.6]{pardonimplicitatlas}.
The notion we present here is, in fact, more general, as we must allow stratifications by categories rather than just posets (recall Definitions \ref{stratificationdef} and \ref{stratificationcategorydef}).
Specifically, we consider here stratifications by categories $\TTT$ satisfying the following properties:
\begin{enumerate}
\item$\TTT$ has a final object.
\item$\TTT$ is finite, meaning $\#\left|\TTT\right|<\infty$ and $\#\Hom(\ttt_1,\ttt_2)<\infty$ for $\ttt_1,\ttt_2\in\TTT$.
\item$\TTT_{\ttt/}$ is a poset for every $t\in\TTT$.
\end{enumerate}
It will be useful to consider such categories $\TTT$ equipped with \emph{dimension and orientation data}, which consists of:
\begin{enumerate}
\item A function $\dim:\TTT\to\ZZ$ such that for every morphism $f:\ttt\to\ttt'$, we have $\dim\ttt\leq\dim\ttt'$ with equality iff $f$ is an isomorphism.
\item For every $\ttt\in\TTT$, an orientation line $\oo_\ttt$ of parity $\dim\ttt$ (more formally, this means a functor from the subcategory of isomorphisms in $\TTT$ to the category of orientation lines).
\item For every $\ttt\to\ttt'$ of codimension one (meaning $\dim\ttt'=\dim\ttt+1$), an isomorphism $\oo_\ttt\otimes\oo_{\RR_{\geq 0}}\xrightarrow\sim\oo_{\ttt'}$ (where $\oo_{\RR_{\geq 0}}=\oo_\RR$ is denoted as $\oo_{\RR_{\geq 0}}$ to indicate that it represents the orientation line of directions transverse to the boundary of a manifold with boundary).
\end{enumerate}

\begin{definition}\label{celllikedef}
Let $\TTT$ be a category as above equipped with dimension and orientation data.
An \emph{oriented cell-like stratification} is a stratification $X\to\TTT$ such that:
\begin{enumerate}
\item For every $\ttt\in\TTT$, the closed stratum $X_{/\ttt}$ is a topological manifold with boundary of dimension $\dim\ttt$, whose interior is precisely the locally closed stratum $X_\ttt\subseteq X_{/\ttt}$.
\item There are specified isomorphisms $\oo_{X_{/\ttt}}=\oo_\ttt$ such that for every $\ttt\to\ttt'$ of codimension one, the isomorphism $\oo_{X_{/\ttt}}\otimes\oo_{\RR_{\geq 0}}\xrightarrow\sim\oo_{X_{/\ttt'}}$ induced by the local topology agrees with the specified isomorphism $\oo_\ttt\otimes\oo_{\RR_{\geq 0}}\xrightarrow\sim\oo_{\ttt'}$.
\end{enumerate}
\end{definition}

\begin{definition}\label{implicitatlasdefinition}
Let $X$ be a compact Hausdorff space with stratification by a category $\TTT$ as above equipped with dimension and orientation data.
An \emph{oriented implicit atlas with cell-like stratification} on $X$ is an index set $A$ along with the following data:
\begin{enumerate}
\item(Covering groups) A finite group $\Gamma_\alpha$ for all $\alpha\in A$ (let $\Gamma_I:=\prod_{\alpha\in I}\Gamma_\alpha$).
\item(Obstruction spaces) A finitely generated $\RR[\Gamma_\alpha]$-module $E_\alpha$ for all $\alpha\in A$ (let $E_I:=\bigoplus_{\alpha\in I}E_\alpha$).
\item(Thickenings) A Hausdorff $\Gamma_I$-space $X_I$ for all finite $I\subseteq A$, equipped with a $\Gamma_I$-invariant stratification $X_I\to\TTT$, and a homeomorphism $X=X_\varnothing$ respecting the stratification.
\item(Kuranishi maps) A $\Gamma_\alpha$-equivariant function $s_\alpha:X_I\to E_\alpha$ for all $\alpha\in I\subseteq A$ (for $I\subseteq J$, let $s_I:X_J\to E_I$ denote $\bigoplus_{\alpha\in I}s_\alpha$).
\item(Footprints) A $\Gamma_I$-invariant open set $U_{IJ}\subseteq X_I$ for all $I\subseteq J\subseteq A$.
\item(Footprint maps) A $\Gamma_J$-equivariant function $\psi_{IJ}:(s_{J\setminus I}|X_J)^{-1}(0)\to U_{IJ}$ for all $I\subseteq J\subseteq A$.
\item(Regular and trivial isotropy locus) A $\Gamma_I$-invariant subset $X_I^\reg\subseteq X_I$ for all $I\subseteq A$.
\end{enumerate}
which must satisfy the following ``compatibility axioms'':
\begin{enumerate}
\item$\psi_{IJ}\psi_{JK}=\psi_{IK}$ and $\psi_{II}=\id$.
\item$s_I\psi_{IJ}=s_I$.
\item$U_{IJ_1}\cap U_{IJ_2}=U_{I,J_1\cup J_2}$ and $U_{II}=X_I$.
\item The restriction of the stratification on $X_J$ to $(s_{J\setminus I}|X_J)^{-1}(0)$ is identified with the pullback of the stratification on $X_I$ via $\psi_{IJ}$.
These identifications must be compatible in triples $I\subseteq J\subseteq K$ (this condition can be nontrivial when $\TTT$ is a category rather than a poset).
\item(Homeomorphism axiom) The footprint map $\psi_{IJ}$ induces a homeomorphism $(s_{J\setminus I}|X_J)^{-1}(0)/\Gamma_{J\setminus I}\xrightarrow\sim U_{IJ}$.
\end{enumerate}
and the following ``transversality axioms'':
\begin{enumerate}[resume]
\item$\psi_{IJ}^{-1}(X_I^\reg)\subseteq X_J^\reg$.
\item$\Gamma_{J\setminus I}$ acts freely on $\psi_{IJ}^{-1}(X_I^\reg)$.
\item(Openness axiom) $X_I^\reg\subseteq X_I$ is open.
\item(Submersion axiom) The stratification of $X_I^\reg$ is cell-like of dimension $\dim\ttt+\dim E_I$ and oriented with respect to $\oo_\ttt\otimes\oo_{E_I}$.  Furthermore, near every point of $\psi_{IJ}^{-1}(X_I^\reg)\subseteq X_J^\reg$, the map $s_{J\setminus I}:X_J\to E_{J\setminus I}$ is locally modelled on the projection $X_I^\reg\times\RR^{\dim E_{J\setminus I}}\to\RR^{\dim E_{J\setminus I}}$ compatibly with oriented stratifications.
\item(Covering axiom) $X_\varnothing=\bigcup_{I\subseteq A}\psi_{\varnothing I}((s_I|X_I^\reg)^{-1}(0))$.
\end{enumerate}
\end{definition}

\begin{definition}\label{implicitatlasdefinitionbdry}
An \emph{oriented implicit atlas with boundary} is an oriented implicit atlas with cell-like stratification by the two-element poset $\{\partial,\circ\}$ with $\partial<\circ$.
\end{definition}

\begin{remark}\label{forgetstratificationrmk}
Given an oriented implicit atlas with cell-like stratification, one may obtain an oriented implicit atlas with boundary simply by collapsing all strata other than the maximal stratum $\circ$ into a single stratum $\partial$.
\end{remark}

The \emph{virtual dimension} $d$ of a space equipped with an implicit atlas shall mean $\dim\ttt^\ttop$, where $\ttt^\ttop\in\TTT$ is the final object, and the \emph{virtual orientation line} $\oo$ of such a space shall mean $\oo_{\ttt^\ttop}$.

\subsection{Stratifications of \texorpdfstring{$G$}{G} are cell-like}\label{celllikeproofsec}

We now show that the natural stratifications $G_{T/}\to\SSS_{T/}$ are cell-like in the sense of Definition \ref{celllikedef}, where we equip $\SSS_{T/}$ with the dimension function
\begin{align}
\SSS_{T/}&\to\ZZ\\
(T\to T')&\mapsto\codim(T/T')+\dim\s(T)
\end{align}
and orientation data
\begin{equation}
(T\to T')\mapsto(\oo_\RR^\vee)^{\otimes V_s(T')}\otimes(\oo_\RR)^{\otimes V_s(T)}\otimes\oo_{\s(T')}.
\end{equation}

\begin{lemma}
The stratification $(G_\I)_{T/}\to(\SSS_\I)_{T/}$ is cell-like.
\end{lemma}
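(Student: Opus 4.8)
The claim is that the stratification $(G_\I)_{T/} = (0,\infty]^{E^{\mathrm{int}}(T)} \to (\SSS_\I)_{T/}$ is cell-like, meaning that for every object $(T\to T')$ of the poset $(\SSS_\I)_{T/}$, the pair $\bigl((G_\I)_{T/}^{\leq(T\to T')},\,(G_\I)_{T/}^{<(T\to T')}\bigr)$ is a topological manifold with boundary of dimension equal to the assigned dimension function $\codim(T/T') + \dim\s(T)$, which here (since $\s$ plays no role in Setup I) is just $\codim(T/T') = \#V_s(T) - \#V_s(T') = |E^{\mathrm{int}}(T)| - |E^{\mathrm{int}}(T')|$, i.e. the number of edges of $T$ contracted by $T\to T'$.

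The plan is to make this completely explicit using the combinatorial description of the stratification. First I would recall that the poset $(\SSS_\I)_{T/}$ has a concrete model: an object $T\to T'$ corresponds exactly to a subset $S\subseteq E^{\mathrm{int}}(T)$, namely the set of edges \emph{not} contracted by the map (equivalently, the edges that survive as interior edges of $T'$), and $(T\to T')\leq(T\to T'')$ iff the surviving-edge set of $T''$ is contained in that of $T'$ — contracting more edges moves you up the poset. (One should double-check the direction of the order against Definition \ref{treeIdef}, but the key point is only that the poset of objects under $T$ is, as a poset, isomorphic to the Boolean lattice on $E^{\mathrm{int}}(T)$, possibly with reversed order.) Under the stratification map, $g = \{g_e\}_{e\in E^{\mathrm{int}}(T)}$ maps to the object whose surviving-edge set is $\{e : g_e = \infty\}$. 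Therefore, writing $S = \{e : g_e=\infty\}$ for the surviving set of $(T\to T')$, the closed stratum is
\begin{equation*}
(G_\I)_{T/}^{\leq(T\to T')} = \{g : g_e = \infty \text{ for all } e\in S\} \cong (0,\infty]^{E^{\mathrm{int}}(T)\setminus S},
\end{equation*}
and the open stratum $(G_\I)_{T/}^{<(T\to T')}$ is the subset where additionally some $g_e$ with $e\notin S$ equals $\infty$, i.e. the ``boundary'' $(0,\infty]^{E^{\mathrm{int}}(T)\setminus S}\setminus(0,\infty)^{E^{\mathrm{int}}(T)\setminus S}$.

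The second step is then just to observe that $\bigl((0,\infty]^k,\, (0,\infty]^k\setminus(0,\infty)^k\bigr)$ is a topological manifold with boundary of dimension $k$: the homeomorphism $(0,\infty]\xrightarrow{\sim}[0,1)$ sending $g\mapsto e^{-g}$ (with $\infty\mapsto 0$) identifies $\bigl((0,\infty]^k, \text{the locus where some coordinate is }\infty\bigr)$ with $\bigl([0,1)^k, \text{the locus where some coordinate is }0\bigr) = ([0,1)^k, \partial([0,1)^k))$, which is manifestly a manifold-with-boundary of dimension $k$ and its topological boundary. Here $k = |E^{\mathrm{int}}(T)\setminus S| = |E^{\mathrm{int}}(T)| - |E^{\mathrm{int}}(T')| = \codim(T/T')$, which matches the prescribed dimension function. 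This completes the verification.

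The only real thing to be careful about — and the step I'd expect to require the most attention — is bookkeeping the correspondence between the poset $(\SSS_\I)_{T/}$ and subsets of $E^{\mathrm{int}}(T)$, and in particular confirming that the stratification map as defined ($g\mapsto$ contract the edges with $g_e<\infty$) really does send the poset topology on the target to the subspace stratification we want, and that the closure of the stratum indexed by $(T\to T')$ is exactly the coordinate sub-cube described above (rather than something larger, due to automorphisms: note $(\SSS_\I)_{/T}$ is not a poset, but $(\SSS_\I)_{T/}$ \emph{is} a poset by the Slice categories definition, so this subtlety does not actually arise here). Once that identification is pinned down, the rest is the trivial observation about $[0,1)^k$ above; there is no analysis involved.
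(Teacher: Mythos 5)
Your proposal is correct and is essentially the paper's argument: the paper's proof is just ``by inspection,'' and your write-up is that inspection carried out explicitly — identifying the closed stratum of $(T\to T')$ with the coordinate sub-cube $(0,\infty]^{E^{\mathrm{int}}(T)\setminus S}$ and using the change of variables $h=e^{-g}$ (the same substitution the paper uses for $G_\II$) to see it is a topological manifold with boundary of dimension $\codim(T/T')$, with the lower strata forming its boundary. The only cosmetic caveat is that $(\SSS_\I)_{T/}$ is equivalent to, rather than literally isomorphic to, the Boolean lattice on $E^{\mathrm{int}}(T)$ (objects differing by paths between basepoints are distinct but uniquely isomorphic), which is harmless since isomorphic objects have the same down-sets and hence the same $X^{\leq\s}$ and $X^{<\s}$.
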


\begin{proof}
By inspection.
\end{proof}

\begin{lemma}\label{GIIisPL}
The stratification $(G_\II)_{T/}\to(\SSS_\II)_{T/}$ is cell-like.
\end{lemma}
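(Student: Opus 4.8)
The plan is to identify $(G_\II)_{T/}$ with a standard ``toric'' local model — a manifold with generalized corners in the sense of Joyce \cite{joycegcorners} — for which cell-likeness of the natural face stratification is essentially built in, and then to check the combinatorics. First I would perform the change of variables $h_\bullet:=e^{-g_\bullet}$ anticipated in the remark above, which identifies $(0,\infty]$ with $[0,1)$ (sending $\infty\mapsto 0$) and turns the defining relations $g_v=g_e+g_{v'}$ into the binomial relations $h_v=h_eh_{v'}$ (with the convention $h_{v'}:=1$ for $v'\notin V_{00}(T)$). Thus $(G_\II)_{T/}$ becomes the closed subset of $[0,1)^{E^{\mathrm{int}}(T)}\times[0,1)^{V_{00}(T)}$ cut out by these binomials, i.e.\ the set of monoid homomorphisms to $([0,1),\cdot)$ from the ``gluing monoid'' $Q_T$ generated by symbols $x_e\ (e\in E^{\mathrm{int}}(T))$ and $x_v\ (v\in V_{00}(T))$ subject to $x_v=x_ex_{v'}$. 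Under this identification the stratification $(G_\II)_{T/}\to(\SSS_\II)_{T/}$ sends $h$ to the morphism contracting $\{e:h_e\ne 0\}$ and relabelling $\{v:h_v\ne 0\}$, which is exactly the canonical stratification of $\Hom(Q_T,[0,\infty))$ by the faces of $Q_T$.

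Next I would reduce to a statement about a single such model together with all of its coordinate slices. The edge coordinates $h_e$ for edges whose source lies outside $V_{00}(T)$ occur in no relation, so $(G_\II)_{T/}$ splits, compatibly with the stratification and the dimension function, as a product of a standard corner $[0,1)^\bullet$ (handled as in the $G_\I$ case ``by inspection'', using that a finite product of cell-like stratifications is cell-like) and the analogous space built from the $V_{00}$-subtree $\widetilde T$ together with its frontier edges; so we may assume every vertex lies in $V_{00}(T)$ except for certain leaves. For a general $\s=(T\to T')\in(\SSS_\II)_{T/}$ with $S$, $W$ the contracted edges and relabelled vertices, the subspace $X^{\le\s}$ is the locus where $h_e=0$ for $e\notin S$ and $h_v=0$ for $v\notin W$; using the observation that $v\in W$ forces every child edge of $v$ into $S$, one checks $X^{\le\s}$ is again of the form $\Hom(Q_{T,\s},[0,\infty))$ for the submonoid $Q_{T,\s}\subseteq Q_T$ generated by the surviving symbols, and that $X^{<\s}=\bigcup_{\s'<\s}X^{\le\s'}$ is precisely the locus where some further surviving coordinate vanishes. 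Hence it suffices to prove: for each such monoid $Q$, the space $\Hom(Q,[0,\infty))$ is a topological manifold with boundary of dimension $\operatorname{rank}Q^{\mathrm{gp}}$, with boundary the union of the coordinate hyperfaces, and $\operatorname{rank}Q_{T,\s}^{\mathrm{gp}}=\codim(T/T')$.

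The topological statement is off the shelf once $Q$ is known to be a finitely generated, integral, sharp, saturated monoid: its groupification is then a lattice, $\Hom(Q,[0,\infty))$ is the positive part of an affine toric variety, which is homeomorphic to a pointed rational polyhedral cone — a cone over a convex polytope — and hence to a manifold with generalized corners \cite{joycegcorners} (see also \cite{kottkemelrose,gillammlocho}); in particular it is a topological manifold with boundary of the stated dimension with cell-like face stratification. So the real work is combinatorial: one must verify that the tree-shaped binomial relations $x_v=x_ex_{v'}$ produce an integral, saturated, sharp monoid — which I would do by exhibiting an explicit embedding of $Q_T$ into a lattice adapted to the rooted structure of $\widetilde T$, exploiting that, read from the frontier up to the root, the relations are triangular in the vertex variables — and that the rank matches the codimension, i.e.\ $\operatorname{rank}Q_{T,\s}^{\mathrm{gp}}=\#V_s(T)-\#V_s(T')$, which is a count of surviving generators minus independent relations against the change in the number of symplectization vertices. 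This bookkeeping — pinning down exactly which pairs $(S,W)$ index strata and confirming the rank/codimension identity in the degenerate cases (planes, leaves of $\widetilde T$ with several frontier children, the pure relabelling morphisms) — is where I expect the main difficulty to lie; the geometric input is otherwise routine.
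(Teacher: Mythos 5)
Your strategy is sound and genuinely different from the paper's. The paper's proof first uses the splitting \eqref{GIIsplitting}--\eqref{SIIsplitting} of each closed stratum $(G_\II)_{T//T'}$ as a product of smaller $G_\I^\pm$ and $G_\II$ spaces, reducing by induction to the single statement that $(G_\II)_{T/}$ is a topological manifold with boundary whose interior is the top stratum; it then proves that statement by a two-line explicit trick: after $h=e^{-g}$, the substitution $q_e=h_e^2-h_{v'}^2$, justified by $(h_e+ih_{v'})^2=(h_e^2-h_{v'}^2)+2ih_v$, gives the global chart \eqref{GIIalternativedef} identifying $(G_\II)_{T/}$ with $[0,\infty)^a\times\RR^b$. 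You instead view $(G_\II)_{T/}$ as $\Hom(Q_T,[0,\infty))$ for a binomially presented monoid and appeal to toric geometry for all strata at once. Your combinatorial observations are correct and are exactly what makes this work: because relabelling a $00$-vertex forces all of its child edges to be contracted, and contracting an edge forces its endpoints to receive the same label, each locus $X^{\leq\s}$ really is the full solution set of the surviving binomial system (for an arbitrary vanishing locus of generators one would only get a union of faces, and the argument would break). The payoff of your route is conceptual uniformity — it explains the ``generalized corners'' remark, treats all strata on the same footing, and adapts verbatim to $G_\III$, $G_\IV$ — whereas the paper's trick is elementary, self-contained, and even yields the stronger statement that $(G_\II)_{T/}$ is globally a standard corner.

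Two caveats keep your write-up at the level of a plan rather than a proof. First, the statement you call ``off the shelf'' — that $\Hom(Q,[0,\infty))$ for a finitely generated, integral, sharp, saturated $Q$ is a topological manifold with boundary of dimension $\operatorname{rank}Q^{\mathrm{gp}}$ whose boundary is the union of the proper faces, with cell-like face stratification — is true, but it is not simply quotable from \cite{joycegcorners,kottkemelrose,gillammlocho}, which are concerned with the smooth (g-corner) structure; what you actually need is the moment-map homeomorphism of the nonnegative part of an affine toric variety onto the real cone spanned by $Q$ (order-preserving on faces), or an equivalent argument, and you would have to supply or locate that precisely. Second, as you yourself flag, the verification that the presented tree monoids $Q_{T,\s}$ are integral and saturated, and that $\operatorname{rank}Q_{T,\s}^{\mathrm{gp}}=\#V_s(T)-\#V_s(T')$, is left undone; it is plausible (e.g.\ already the two-level tree with two $00$-children produces the standard quadric monoid $x_{e_1}x_{f_1}=x_{e_2}x_{f_2}$, which is fs of the right rank), but it is genuine work, and it is precisely the work that the paper's change of variables makes unnecessary.
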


\begin{proof}
Observe that for any $f:T\to T'$, we have
\begin{align*}
(G_\II)_{T//T'}&=\prod_{\ast(v')=00}(G_\I^+)_{f^{-1}(v')/}\times\prod_{\ast(v')=01}(G_\II)_{f^{-1}(v')/}\times\prod_{\ast(v')=11}(G_\I^-)_{f^{-1}(v')/},\\
(\SSS_\II)_{T//T'}&=\prod_{\ast(v')=00}(\SSS_\I^+)_{f^{-1}(v')/}\times\prod_{\ast(v')=01}(\SSS_\II)_{f^{-1}(v')/}\times\prod_{\ast(v')=11}(\SSS_\I^-)_{f^{-1}(v')/},
\end{align*}
compatibly with stratifications.  Note that a product of cell-like stratifications is cell-like.  Thus by induction (say, on the number of vertices of $T$), it suffices to show that $(G_\II)_{T/}$ is a topological manifold with boundary, whose interior coincides with the top stratum.

To show that $(G_\II)_{T/}$ is a topological manifold with boundary, we perform a change of variables $h=e^{-\g}\in[0,1)$.  For convenience, we will allow $h\in[0,\infty)$ (this relaxation is certainly permitted for the present purpose).  The relation $\g_v=\g_e+\g_{v'}$ now becomes $h_v=h_eh_{v'}$.  Under this relation $h_v=h_eh_{v'}$, observe that $h_v\in[0,\infty)$ and $h_e^2-h_{v'}^2\in(-\infty,\infty)$ determine $h_e\in[0,\infty)$ and $h_{v'}\in[0,\infty)$ uniquely, since
\begin{equation}
(h_e+ih_{v'})^2=(h_e^2-h_{v'}^2)+2ih_v.
\end{equation}
Thus if we perform another change of variables $q_e=h_e^2-h_{v'}^2$ for $v\xrightarrow ev'$, then we have
\begin{equation}\label{GIIalternativedef}
(G_\II)_{T/}=\left\{\begin{matrix}
h_{v^\ttop}\in[0,\infty)\hfill&\text{if }{\ast(v^\ttop)=00}\hfill\\
q_e\in(-\infty,\infty)\hfill&\text{for }v\xrightarrow ev'\text{ with }{\ast(v')=00}\hfill\\
h_e\in[0,\infty)\hfill&\text{for }\ast(e)=1\hfill
\end{matrix}\right\}.
\end{equation}
This is clearly a topological manifold with boundary of dimension $\#V_s(T)$.  Furthermore, the top stratum is the locus where $h_{v^\ttop}>0$ and $h_e>0$, which is clearly its interior.

Though not logically necessary due to the inductive reasoning above, let us remark that one can also express the stratification of $(G_\II)_{T/}$ by $(\SSS_\II)_{T/}$ concretely in terms of the coordinates \eqref{GIIalternativedef} and thereby verify explicitly that it is cell-like.
\end{proof}

\begin{lemma}\label{GIIIisPL}
The stratification $(G_\III)_{T/}\to(\SSS_\III)_{T/}$ is cell-like.
\end{lemma}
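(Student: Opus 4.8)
The plan is to recognize $(G_\III)_{T/}$ as a product and reduce everything to Lemma \ref{GIIisPL} plus the fact, already invoked in the proof of that lemma, that a product of cell-like stratifications is cell-like.

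First I would observe that, directly from \eqref{GIIIdefeq}, the space splits as
\begin{equation}
(G_\III)_{T/}=(G_\II)_{T/}\times I_{\s(T)},\qquad I_{\s(T)}:=\begin{cases}[0,1)&\text{if }\s(T)=\{0\}\\(0,1)&\text{if }\s(T)=(0,1)\\(0,1]&\text{if }\s(T)=\{1\},\end{cases}
\end{equation}
where $(G_\II)_{T/}$ denotes the space defined by the equations \eqref{GIIdefeq} applied to the underlying tree $T$ with its $\SSS_\II$-type decorations (Definition \ref{treeIIIdef}).  Since $T\in\SSS_\III$ may be empty or disconnected, I would first note that each connected component of $T$, equipped with its labels, is an object of $\SSS_\II$, that $(G_\II)_{T/}$ together with its stratification is the product of the corresponding data over the components, and hence (product of cell-like is cell-like) that Lemma \ref{GIIisPL} gives cell-likeness of $(G_\II)_{T/}\to(\SSS_\II)_{T/}$ also in this generality.

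Next I would check that the stratification $(G_\III)_{T/}\to(\SSS_\III)_{T/}$ respects this product.  On the target side, a morphism $T\to T'$ in $\SSS_\III$ is precisely a morphism of the underlying $\SSS_\II$-type decorated trees together with the constraint $\s(T)\subseteq\overline{\s(T')}$, so — $\s(T)$ being fixed — one has $(\SSS_\III)_{T/}=(\SSS_\II)_{T/}\times P_{\s(T)}$ with $P_{\s(T)}:=\{\s'\in\{\{0\},\{1\},(0,1)\}:\s(T)\subseteq\overline{\s'}\}$, and the dimension function fixed for cell-like stratifications of $(\SSS_\III)_{T/}$ is the sum of the $G_\II$ dimension function $\#V_s(T)-\#V_s(T')$ on the first factor and $\dim\s(T')$ on the second.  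On the source side, the recipe in \eqref{GIIIdefeq} reads off the contracted edges and the $00\mapsto01$ label changes from $(\{g_e\}_e,\{g_v\}_v)$ alone (this is exactly the $G_\II$ stratification), and reads off $\s(T')$ from $t$ alone (namely $\{0\}$, $\{1\}$, resp.\ $(0,1)$ according as $t=0$, $t=1$, resp.\ $t\in(0,1)$).  Thus the stratification is the product of the $G_\II$ stratification with a stratification $I_{\s(T)}\to P_{\s(T)}$.

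Finally I would verify that $I_{\s(T)}\to P_{\s(T)}$ is cell-like by inspection: for $\s(T)=(0,1)$ it is the trivial stratification of the $1$-manifold $(0,1)$; for $\s(T)=\{0\}$ (and symmetrically $\{1\}$) the poset $P_{\{0\}}$ consists of $\{0\}<(0,1)$ with $\dim\{0\}=0$, $\dim(0,1)=1$, and $[0,1)^{\leq\{0\}}=\{0\}$, $[0,1)^{<\{0\}}=\varnothing$, $[0,1)^{\leq(0,1)}=[0,1)$, $[0,1)^{<(0,1)}=\{0\}$ are manifolds with boundary of the correct dimensions.  Since a product of cell-like stratifications is cell-like and $(G_\II)_{T/}\to(\SSS_\II)_{T/}$ is cell-like by Lemma \ref{GIIisPL}, the product stratification $(G_\III)_{T/}\to(\SSS_\III)_{T/}$ is cell-like.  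There is no real obstacle in this argument; the only points requiring a line of care are extending Lemma \ref{GIIisPL} from connected to possibly disconnected trees (done component-wise) and checking the additivity of the dimension function under the product splitting, both of which are routine.
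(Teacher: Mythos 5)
Your argument is correct and is essentially the paper's own proof: the paper likewise writes $(G_\III)_{T/}$ as $\prod_i(G_\II)_{T_i/}$ over the connected components $T_i\in\SSS_\II$ times the interval factor $[0,1)$, $(0,1)$, or $(0,1]$, splits $(\SSS_\III)_{T/}$ correspondingly, and concludes from Lemma \ref{GIIisPL} together with the fact that a product of cell-like stratifications is cell-like. Your extra verifications (the interval stratification by $P_{\s(T)}$ and the additivity of the dimension function) are just the routine details the paper leaves implicit.
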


\begin{proof}
Express the underlying forest of $T\in\SSS_\III$ as the disjoint union of $T_i\in\SSS_\II$, so we have
\begin{align}
(G_\III)_{T/}&=\prod_{i\in I}(G_\II)_{T_i/}\times\begin{cases}[0,1)&\s(T)=\{0\}\\(0,1)&\s(T)=(0,1)\\(0,1]&\s(T)=\{1\},\end{cases}\\
(\SSS_\III)_{T/}&=\prod_{i\in I}(\SSS_\II)_{T_i/}\times\begin{cases}\{\{0\}<(0,1)\}&\s(T)=\{0\}\\\{(0,1)\}&\s(T)=(0,1)\\\{(0,1)>\{1\}\}&\s(T)=\{1\}.\end{cases}
\end{align}
Now apply Lemma \ref{GIIisPL} and note that the product of two cell-like stratifications is again cell-like.
\end{proof}

\begin{lemma}\label{GIVisPL}
The stratification $(G_\IV)_{T/}\to(\SSS_\IV)_{T/}$ is cell-like.
\end{lemma}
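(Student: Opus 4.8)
The plan is to follow the template established in the proofs of Lemmas \ref{GIIisPL} and \ref{GIIIisPL}. I would first dispose of the cases $\s(T)\in\{\{0\},(0,\infty)\}$: in these cases every vertex of $T$ is labelled in $\{00,02,22\}$, so relabelling $2\mapsto 1$ identifies $T$ together with its decorations with an object of $\SSS_\II$, under which $(\SSS_\IV)_{T/}$ becomes $(\SSS_\II)_{T/}$ times the poset $\{\{0\}<(0,\infty)\}$ (when $\s(T)=\{0\}$) or $(\SSS_\II)_{T/}$ itself (when $\s(T)=(0,\infty)$), and correspondingly $(G_\IV)_{T/}$ becomes $(G_\II)_{T/}\times[0,\infty)$ or $(G_\II)_{T/}\times(0,\infty)$ — compatibly with stratifications and dimension functions. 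These cases then follow from Lemma \ref{GIIisPL} and the fact, used already in the proof of Lemma \ref{GIIIisPL}, that a product of cell-like stratifications is cell-like.

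The substantive case is $\s(T)=\{\infty\}$. As in Lemma \ref{GIIisPL}, the first step is a product splitting of $(G_\IV)_{T//T'}$ over the vertices $v'\in V(T')$ — with a factor $(G_\I)_{f^{-1}(v')/}$ for each symplectization vertex $v'$ (type $00$, $11$, or $22$), a factor $(G_\II)_{f^{-1}(v')/}$ (after the evident relabelling of $\{0,1\}$ or $\{1,2\}$) for each cobordism vertex $v'$ of type $01$ or $12$, and (in the part of the poset where $\s(T')=(0,\infty)$, where every vertex of $T'$ has type $00$, $02$, or $22$) a reduction to the case already treated above — compatibly with the analogous splitting of $(\SSS_\IV)_{T//T'}$. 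This reduces, by induction on $\#V(T)$, to showing that $(G_\IV)_{T/}$ itself is a topological manifold with boundary whose interior is its top stratum. For that I would run the change of variables $h=e^{-g}$ (relaxing to $h\in[0,\infty)$ as in Lemma \ref{GIIisPL}) together with $h_t=e^{-t}\in[0,\infty)$, turning each relation $g_v=g_e+g_{v'}$ into $h_v=h_eh_{v'}$ and the relation $t=g_e+g_{v'}$ attached to each $01$-vertex into $h_t=h_eh_{v'}$. The $\{00\}$-vertices and $0$-labelled edges form a ``$Y^+$-end'' subsystem exactly as in Lemma \ref{GIIisPL}; the $\{12,22\}$-vertices and $2$-labelled edges form an unconstrained subsystem, a product of copies of $[0,\infty)$; and the key observation is that the ``neck'' subsystem — the $\{11\}$-vertices, the $1$-labelled edges, and the single shared parameter $h_t$ — becomes, after adjoining a formal root $\star$ above all the $01$-vertices with $g_\star:=t$, again a ``$Y^+$-end'' subsystem for the $\star$-rooted tree. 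Applying the quadratic substitution of Lemma \ref{GIIisPL} coordinatewise to all three subsystems then presents $(G_\IV)_{T/}$ as a product of half-lines and lines, i.e.\ a topological manifold with boundary whose interior is its top stratum; combined with the product splitting and the stability of cell-likeness under products, the induction on $\#V(T)$ yields the claim, just as in Lemma \ref{GIIisPL}.

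I expect the main obstacle to be getting the neck parameter $t$ correctly built into this picture. Unlike in $\SSS_\II$, the single $t$ is shared among all $01$-vertices of $T$ and across all of its connected components, so the ``$Y^+$-end'' reduction of the neck subsystem must be carried out globally, with one formal root $\star$, rather than component by component; and the stratification by $(\SSS_\IV)_{T/}$ now also records whether $t$ is finite — i.e.\ whether $\s$ has passed from $\{\infty\}$ into $(0,\infty)$ — so one must verify that the cell-like dimension function $(T\to T')\mapsto\codim(T/T')+\dim\s(T)$ is matched on the nose across that boundary, with the jump $\dim\s(T')=1$ compensated exactly by the change in the symplectization-vertex count when a long neck subtree is contracted to a type-$02$ vertex. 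Reconciling the three subsystems with this boundary behaviour is the delicate point; once it is in place the verification reduces to a finite, explicit inspection in the $(h,q)$-coordinates.
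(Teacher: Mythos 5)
Your proposal is correct and follows essentially the same route as the paper's proof: the cases $\s(T)\ne\{\infty\}$ reduce to Lemmas \ref{GIIisPL}--\ref{GIIIisPL} via a product decomposition, and the case $\s(T)=\{\infty\}$ is handled by the splitting as in \eqref{GIIsplitting}--\eqref{SIIsplitting}, induction, and the change of variables $h=e^{-g}$ with the quadratic substitution. Your ``formal root $\star$ with $g_\star:=t$'' observation is exactly what the paper's coordinates \eqref{GIValternativedef} encode, namely the single parameter $\tau\in[0,\infty)$ together with the coordinates $q_e\in(-\infty,\infty)$ for edges into $11$-vertices.
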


\begin{proof}
For $\s\ne\{\infty\}$, this is just Lemma \ref{GIIIisPL}.

For $\s=\{\infty\}$, argue as follows.  For any given $T\to T'$, we may express $(G_\IV)_{T//T'}\to(\SSS_\IV)_{T//T'}$ as a product as in the first step of the proof of Lemma \ref{GIIisPL}, by expressing $T'$ as a concatenation of maximal $T_i$.  Thus by induction, it suffices to show that $(G_\IV)_{T/}$ is a topological manifold with boundary whose interior is the stratum corresponding to the maximal contraction of $T$.  To see this, we perform the same change of variables as in Lemma \ref{GIIisPL} to write
\begin{equation}\label{GIValternativedef}
(G_\IV)_{T/}=
\left\{\begin{matrix}
h_{v^\ttop}\in[0,\infty)\hfill&\text{if }{\ast(v^\ttop)}=00\hfill\\
q_e\in(-\infty,\infty)\hfill&\text{for }v\xrightarrow ev'\text{ with }{\ast(v')=00}\hfill\\
\tau\in[0,\infty)\hfill\\
q_e\in(-\infty,\infty)\hfill&\text{for }v\xrightarrow ev'\text{ with }{\ast(v')=11}\hfill\\
h_e\in[0,\infty)\hfill&\text{for }{\ast(e)}=2\hfill
\end{matrix}\right\}.
\end{equation}
This is clearly a topological manifold of dimension $\#V_s(T)+1$, and its maximal stratum is the locus where $\tau>0$, $h_{v^\ttop}>0$, and $h_e>0$, which is clearly its interior.
\end{proof}

\subsection{Sets of thickening datums \texorpdfstring{$A(T)$}{A(T)}}\label{Athicksec}

We define sets of ``thickening datums'' $A(T)$ for the implicit atlases on $\Mbar(T)$.  Roughly speaking, a thickening datum $\alpha\in A(T)$ is a collection of data $(r_\alpha,D_\alpha,E_\alpha,\nu_\alpha)$ from which we will construct a ``thickened'' version of a given moduli space by: (1) adding $r_\alpha$ marked points to the domain (constrainted to lie on the codimension two submanifold $\hat D_\alpha$), (2) adding an extra parameter $e_\alpha$ lying in the vector space $E_\alpha$, and (3) adding an extra term $\nu_\alpha(e_\alpha)$ (which depends on the location of the $r_\alpha$ added points and the positive/negative ends $E^\eext(T)$ in the domain) to the pseudo-holomorphic curve equation.

Recall from \S\ref{rsmoduli} that $\Mbar_{0,n}$ ($n\geq 3$) denotes the Deligne--Mumford moduli space of stable nodal Riemann surfaces of genus zero with $n$ marked points labeled with $\{1,\ldots,n\}$.  We denote by $\Cbar_{0,n}\to\Mbar_{0,n}$ the universal family.  Recall that $\Mbar_{0,n}$ is a compact smooth manifold.  We usually prefer to label the marked points using a set other than $\{1,\ldots,n\}$, so we will also use the notation $\Mbar_{0,n}$ and $\Cbar_{0,n}$ when $n$ is a \emph{finite set} ($\#n\geq 3$) used to label the marked points.

\begin{definition}[Set of thickening datums $A_\I$]\label{AIdef}
A \emph{thickening datum} $\alpha$ for $T\in\SSS_\I$ consists of the following data:
\begin{enumerate}
\item\label{AIdefr}$r_\alpha\geq 0$ an integer such that $r_\alpha+\#E^\eext(T)\geq 3$.
\item\label{AIdefE}$n_\alpha\geq 0$ an integer (let $E_\alpha:=\RR^{n_\alpha}$) and an action on $E_\alpha$ of $S_{r_\alpha}$ (the group of permutations of $\{1,\ldots,r_\alpha\}$).
\item\label{AIdefD}$D_\alpha\subseteq Y$ a compact codimension two submanifold with boundary.  We let $\hat D_\alpha:=\RR\times D_\alpha\subseteq\hat Y$.
\item\label{AIdefl}$\nu_\alpha:E_\alpha\to C^\infty(\hat Y\times\Cbar_{0,E^\eext(T)\cup\{1,\ldots,r_\alpha\}},T\hat Y\otimes_\RR\Omega^{0,1}_{\Cbar_{0,E^\eext(T)\cup\{1,\ldots,r_\alpha\}}/\Mbar_{0,E^\eext(T)\cup\{1,\ldots,r_\alpha\}}})^\RR$ an $S_{r_\alpha}$-equivariant linear map which vanishes in a neighborhood of the nodes and the $E^\eext(T)$-marked points of the fibers of $\Cbar_{0,E^\eext(T)\cup\{1,\ldots,r_\alpha\}}\to\Mbar_{0,E^\eext(T)\cup\{1,\ldots,r_\alpha\}}$.  The superscript $^\RR$ indicates taking the subspace of $\RR$-invariant sections (where $\RR$ acts on $\hat Y$ by translation).
\end{enumerate}
We denote by $A_\I(T)$ the set of such thickening datums.
Note that for any morphism $T\to T'$, there is a tautological identification $A_\I(T)=A_\I(T')$.
\end{definition}

\begin{definition}[Set of thickening datums $A_\II$]\label{AIIdef}
A \emph{thickening datum} $\alpha$ for $T\in\SSS_\II$ consists of the following data:
\begin{enumerate}
\item $r_\alpha$, $E_\alpha$ as in Definition \ref{AIdef}\ref{AIdefr},\ref{AIdefE}.
\item $D^\pm_\alpha\subseteq Y^\pm$, $\nu^\pm_\alpha:E_\alpha\to C^\infty(\hat Y^\pm\times\Cbar_{0,E^\eext(T)\cup\{1,\ldots,r_\alpha\}},T\hat Y^\pm\otimes_\RR\Omega^{0,1}_{\Cbar_{0,E^\eext(T)\cup\{1,\ldots,r_\alpha\}}/\Mbar_{0,E^\eext(T)\cup\{1,\ldots,r_\alpha\}}})^\RR$ as in Definition \ref{AIdef}\ref{AIdefD},\ref{AIdefl}.
\item\label{AIIdefD}$\hat D_\alpha\subseteq\hat X$ a closed codimension two submanifold with boundary.  We require that $\hat D_\alpha$ coincide (via \eqref{endmarkingsI}--\eqref{endmarkingsII}) with $\hat D_\alpha^\pm$ outside a compact subset of $\hat X$.
\item\label{AIIdefl}$\nu_\alpha:E_\alpha\to C^\infty(\hat X\times\Cbar_{0,E^\eext(T)\cup\{1,\ldots,r_\alpha\}},T\hat X\otimes_\RR\Omega^{0,1}_{\Cbar_{0,E^\eext(T)\cup\{1,\ldots,r_\alpha\}}/\Mbar_{0,E^\eext(T)\cup\{1,\ldots,r_\alpha\}}})$ an $S_{r_\alpha}$-equivariant linear map vanishing in a neighborhood of the nodes and $E^\eext(T)$-marked points.  We require that $\nu_\alpha$ coincide (via \eqref{endmarkingsI}--\eqref{endmarkingsII}) with $\nu_\alpha^\pm$ outside a compact subset of $\hat X$.
\end{enumerate}
We denote by $A_\II(T)$ the set of such thickening datums.
Note that for any morphism $T\to T'$, there is a tautological identification $A_\II(T)=A_\II(T')$.
\end{definition}

\begin{definition}[Set of thickening datums $A_\III$]\label{AIIIdef}
A \emph{thickening datum} $\alpha$ for a connected non-empty $T\in\SSS_\III$ is defined identically as a thickening datum for $T\in\SSS_\II$.  This makes sense since in Setup \ref{setupIII}, the identifications \eqref{endmarkingsI}--\eqref{endmarkingsII} are independent of $t$ and the definition of a thickening datum does not make reference to $\lambda^t$ or $J^t$.  We denote by $A_\III(T)$ the set of such thickening datums.
Note that for any morphism $T\to T'$, there is a tautological identification $A_\III(T)=A_\III(T')$.
\end{definition}

\begin{definition}[Set of thickening datums $A_\IV$]\label{AIVdef}
A \emph{thickening datum} $\alpha$ for a connected non-empty $T\in\SSS_\IV$ consists of the following data:
\begin{enumerate}
\item $r_\alpha$, $E_\alpha$ as in Definition \ref{AIdef}\ref{AIdefr},\ref{AIdefE}.
\item $\{D^i_\alpha\subseteq Y^i\}_{i=0,1,2}$, $\{\nu^i_\alpha:E_\alpha\to C^\infty(\hat Y^i\times\Cbar_{0,E^\eext(T)\cup\{1,\ldots,r_\alpha\}},T\hat Y^i\otimes_\RR\Omega^{0,1}_{\Cbar_{0,E^\eext(T)\cup\{1,\ldots,r_\alpha\}}/\Mbar_{0,E^\eext(T)\cup\{1,\ldots,r_\alpha\}}})^\RR\}_{i=0,1,2}$ as in Definition \ref{AIdef}\ref{AIdefD},\ref{AIdefl}.
\item $\{\hat D_\alpha^{i,i+1}\subseteq\hat X^{i,i+1}\}_{i=0,1}$ as in Definition \ref{AIIdef}\ref{AIIdefD}.
\item $\hat D_\alpha^{02,t}\subseteq\hat X^{02,t}$ a smoothly varying family of compact codimension two submanifolds with boundary for $t\in[0,\infty)$ as in Definition \ref{AIIdef}\ref{AIIdefD} which coincides with the descent of $\hat D^{01}_\alpha\sqcup\hat D^{12}_\alpha$ for sufficiently large $t$.  ``Smoothly varying'' means the projection to $[0,\infty)$ is a submersion (and remains a submersion when restricted to the boundary).
\item $\{\nu_\alpha^{i,i+1}:E_\alpha\to C^\infty(\hat X^{i,i+1}\times\Cbar_{0,E^\eext(T)\cup\{1,\ldots,r_\alpha\}},T\hat X^{i,i+1}\otimes_\RR\Omega^{0,1}_{\Cbar_{0,E^\eext(T)\cup\{1,\ldots,r_\alpha\}}/\Mbar_{0,E^\eext(T)\cup\{1,\ldots,r_\alpha\}}})\}_{i=0,1}$ as in Definition \ref{AIIdef}\ref{AIIdefl}.
\item $\nu_\alpha^{02,t}:E_\alpha\to C^\infty(\hat X^{02,t}\times\Cbar_{0,E^\eext(T)\cup\{1,\ldots,r_\alpha\}},T\hat X^{02,t},\otimes_\RR\Omega^{0,1}_{\Cbar_{0,E^\eext(T)\cup\{1,\ldots,r_\alpha\}}/\Mbar_{0,E^\eext(T)\cup\{1,\ldots,r_\alpha\}}})$ as in Definition \ref{AIIdef}\ref{AIIdefl} coinciding with the descent of $\nu^{01}_\alpha\sqcup\nu^{12}_\alpha$ for sufficiently large $t$, and varying smoothly with $t$.
\end{enumerate}
We denote by $A_\IV(T)$ the set of such thickening datums.
Note that for any morphism $T\to T'$, there is a tautological identification $A_\IV(T)=A_\IV(T')$.
\end{definition}

\subsection{Index sets \texorpdfstring{$\bar A(T)$}{Abar(T)}}\label{Abarthicksec}

We define the index sets $\bar A(T)$ of the implicit atlases on $\Mbar(T)$ as unions of copies of the sets of thickening datums $A(T)$.
Precisely, we define
\begin{equation}
\bar A(T):=\bigsqcup_{T'\subseteq T}A(T'),
\end{equation}
where the disjoint union is over all \emph{subtrees} $T'$ of $T$.
What qualifies as a subtree depends on which case (I), (II), (III), (IV) we are in, and we discuss each case individually.
Informally, in cases (I) and (II), a subtree of $T$ is a tree $T'$ which appears in some concatenation yielding $T$, however this interpretation breaks down somewhat in cases (III) and (IV) due to connectedness issues.

\begin{figure}[ht]
\centering
\includegraphics{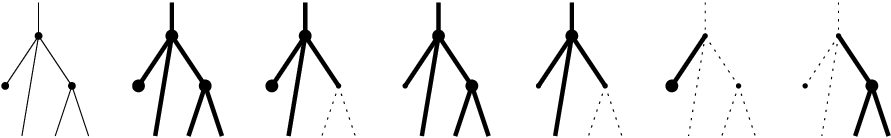}
\caption{A tree and its six physical subtrees.}\label{subtreesfig}
\end{figure}

Let us first explain what we mean by a subtree of an ordinary directed tree $T$ (or forest), as opposed to an object of one of the categories $\SSS$.
Choosing some subset of the vertices $V(T')\subseteq V(T)$, we define $E(T')\subseteq E(T)$ as all edges incident to an element of $V(T')$ (at at least one of its endpoints).
The resulting graph $T'$ will be called a subtree when it is connected and non-empty.
Below, we will use the word ``physical subtree'' for this notion, to distinguish it from the notion of a subtree of an object of some $\SSS$.
An example is illustrated in Figure \ref{subtreesfig}.

\begin{definition}[Index set $\bar A_\I$]
For $T\in\SSS_\I$, we define
\begin{equation}
\label{barAIcoproduct}\bar A_\I(T):=\bigsqcup_{T\supseteq T'\in\SSS_\I}A_\I(T').
\end{equation}
More precisely, there is exactly one subtree $T\supseteq T'\in\SSS_\I$ for every physical subtree of the underlying tree of $T$.
A physical subtree becomes an object of $\SSS_\I$ simply by restricting the vertex and edge decorations from $T$.
Note that there is no canonical way to choose basepoints for the input/output edges of subtrees $T'\in\SSS_\I$, however the disjoint union \eqref{barAIcoproduct} remains well-defined since the subgroup of $\Aut(T')$ given by paths between basepoints acts trivially on $A_\I(T')$.
\end{definition}

\begin{definition}[Index set $\bar A_\II$]
For $T\in\SSS_\II$, we define
\begin{equation}
\label{barAIIcoproduct}\bar A_\II(T):=\bigsqcup_{T\supseteq T'\in\SSS_\I^+}A_\I^+(T')\sqcup\bigsqcup_{T\supseteq T'\in\SSS_\I^-}A_\I^-(T')\sqcup\bigsqcup_{T\supseteq T'\in\SSS_\II}A_\II(T').
\end{equation}
More precisely, physical subtrees of $T$ give rise to terms in this disjoint union as follows:
\begin{enumerate}
\item $T\supseteq T'\in\SSS_\II$ are those with $\ast(e^+)=0$ and $\ast(e^-)=1$ for $e^\pm\in E^\pm(T')$.
\item $T\supseteq T'\in\SSS_\I^+$ are those for which all edges and vertices have $\ast=0$.
\item $T\supseteq T'\in\SSS_\I^-$ are those for which all edges and vertices have $\ast=1$.
\end{enumerate}
Note that those physical subtrees $T'\subseteq T$ for which all edges and vertices have $\ast=0$ and which have no negative external edges can be regarded as objects of $\SSS_\I^+$ or $\SSS_\II$, and thus appear twice in \eqref{barAIIcoproduct}.
\end{definition}

\begin{definition}[Index set $\bar A_\III$]
For $T\in\SSS_\III$, we define
\begin{align}
\bar A_\III(T)&:=\bigsqcup_{T\supseteq T'\in\SSS_\I^+}A_\I^+(T')\sqcup\bigsqcup_{T\supseteq T'\in\SSS_\I^-}A_\I^-(T')\nonumber\\
\label{barAIIIcoproduct}&\qquad\sqcup\bigsqcup_{T\supseteq T'\in\SSS_\II^{t=0}}A_\II^{t=0}(T')\sqcup\bigsqcup_{T\supseteq T'\in\SSS_\II^{t=1}}A_\II^{t=1}(T')\sqcup\bigsqcup_{T\supseteq T'\in\SSS_\III}A_\III(T').
\end{align}
More precisely, physical subtrees of $T$ give rise to terms in this disjoint union as follows:
\begin{enumerate}
\item $T\supseteq T'\in\SSS_\III$ are those with $\ast(e^+)=0$ and $\ast(e^-)=1$ for $e^\pm\in E^\pm(T')$.
\item $T\supseteq T'\in\SSS_\II^{t=0}$ are those with $\ast(e^+)=0$ and $\ast(e^-)=1$ for $e^\pm\in E^\pm(T')$ and $\s(T)=\{0\}$.
\item $T\supseteq T'\in\SSS_\II^{t=1}$ are those with $\ast(e^+)=0$ and $\ast(e^-)=1$ for $e^\pm\in E^\pm(T')$ and $\s(T)=\{1\}$.
\item $T\supseteq T'\in\SSS_\I^+$ are those for which all edges and vertices have $\ast=0$.
\item $T\supseteq T'\in\SSS_\I^-$ are those for which all edges and vertices have $\ast=1$.
\end{enumerate}
It should be emphasized that subtrees $T\supseteq T'\in\SSS_\II^{t=0}$ (respectively $T\supseteq T'\in\SSS_\II^{t=1})$ are only present when $\s(T)=\{0\}$ (respectively $\s(T)=\{1\}$).
It should also be emphasized that we consider only sub\emph{trees} (required to be connected and non-empty), rather than subforests.
\end{definition}

\begin{definition}[Index set $\bar A_\IV$]
For $T\in\SSS_\IV$, we define
\begin{align}
\bar A_\IV(T)&:=
\bigsqcup_{T\supseteq T'\in\SSS_\I^0}A_\I^0(T')\sqcup\bigsqcup_{T\supseteq T'\in\SSS_\I^2}A_\I^2(T')\sqcup\bigsqcup_{T\supseteq T'\in\SSS_\II^{02}}A_\II^{02}(T')\nonumber\\
\label{barAIVcoproduct}&\qquad
\sqcup\bigsqcup_{T\supseteq T'\in\SSS_\II^{01}}A_\II^{01}(T')\sqcup\bigsqcup_{T\supseteq T'\in\SSS_\I^1}A_\I^1(T')\sqcup\bigsqcup_{T\supseteq T'\in\SSS_\II^{12}}A_\II^{12}(T')\\
&\qquad\sqcup\bigsqcup_{T\supseteq T'\in\SSS_\IV}A_\IV(T').\nonumber
\end{align}
More precisely, physical subtrees of $T$ give rise to terms in this disjoint union as follows:
\begin{enumerate}
\item $T\supseteq T'\in\SSS_\IV$ are those with $\ast(e^+)=0$ and $\ast(e^-)=2$ for $e^\pm\in E^\pm(T')$.
\item $T\supseteq T'\in\SSS_\II^{01}$ are those with $\ast(e^+)=0$ and $\ast(e^-)=1$ for $e^\pm\in E^\pm(T')$ and~\mbox{$\s(T)=\{\infty\}$}.
\item $T\supseteq T'\in\SSS_\II^{12}$ are those with $\ast(e^+)=1$ and $\ast(e^-)=2$ for $e^\pm\in E^\pm(T')$ and~\mbox{$\s(T)=\{\infty\}$}.
\item $T\supseteq T'\in\SSS_\II^{02}$ are those with $\ast(e^+)=0$ and $\ast(e^-)=2$ for $e^\pm\in E^\pm(T')$ and $\s(T)=\{0\}$.
\item $T\supseteq T'\in\SSS_\I^0$ are those for which all edges and vertices have $\ast=0$.
\item $T\supseteq T'\in\SSS_\I^1$ are those for which all edges and vertices have $\ast=1$.
\item $T\supseteq T'\in\SSS_\I^2$ are those for which all edges and vertices have $\ast=2$.
\end{enumerate}
\end{definition}

A morphism $T\to T'$ induces a natural inclusion
\begin{equation}\label{Abarfunct}
\bar A(T')\hookrightarrow\bar A(T),
\end{equation}
since given $T\to T'$, any subtree $T''\subseteq T'$ pulls back to a subtree of $T$.

For any concatenation $\{T_i\}_i$, there is a natural inclusion
\begin{equation}\label{Abarprod}
\bigsqcup_i\bar A(T_i)\hookrightarrow\bar A(\#_iT_i).
\end{equation}

\subsection{Thickened moduli spaces}\label{thickenedmodulisec}

We now define the thickened moduli spaces for the implicit atlases on the moduli spaces $\Mbar$.

\begin{figure}[ht]
\centering
\includegraphics{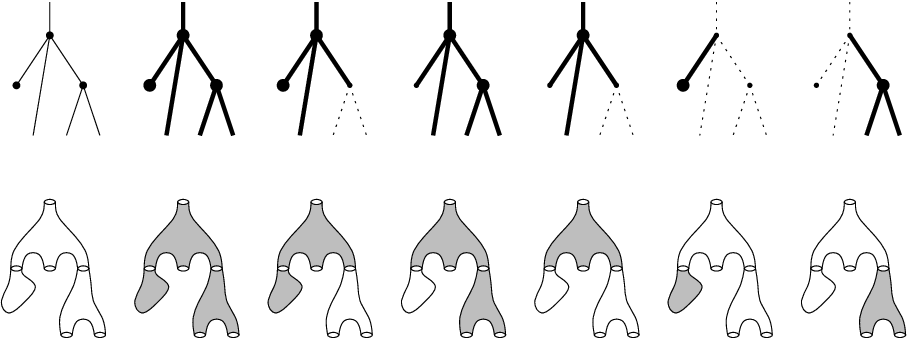}
\caption{A tree with its six subtrees, and a corresponding pseudo-holomorphic building with its corresponding subbuildings.}\label{subtreebuildingfig}
\end{figure}

\begin{definition}[Moduli space $\M_\I(T)_I$]\label{MIthickdef}
Let $T\in\SSS_\I$ and let $I\subseteq\bar A_\I(T)$ be finite.  An \emph{$I$-thickened pseudo-holomorphic building of type $T$} consists of the following data:
\begin{enumerate}
\item\label{MIthickdefCp}Domains $C_v$ and punctures $p_{v,e}$ as in Definition \ref{MIdef}\ref{MIdefC}.  For $\alpha\in I$, let $C_\alpha:=\bigsqcup_{v\in T_\alpha}C_v/\!\!\sim$, where $T\supseteq T_\alpha\in\SSS_\I$ denotes the subtree indexing the term in \eqref{barAIcoproduct} containing $\alpha\in I$, and $\sim$ identifies $p_{v,e}\sim p_{v',e}$ for interior edges $v\xrightarrow ev'$ of $T_\alpha$ (see Figure \ref{subtreebuildingfig}).
\item Maps $u_v$, asymptotic markers $\tilde b_e$, and matching isomorphisms $m_e$ as in Definition \ref{MIdef}\ref{MIdefmap},\ref{MIdefEA},\ref{MIdefbase},\ref{MIdefM}.
\item\label{MIthickdeftr}For all $\alpha\in I$, we require that $(u|C_\alpha)\pitchfork\hat D_\alpha$ with exactly $r_\alpha$ intersections, which together with $\{p_{v,e}\}_{v,e}$ stabilize $C_\alpha$.  By $(u|C_\alpha)\pitchfork\hat D_\alpha$, we mean that $\gamma_e\cap D_\alpha=\varnothing$ for edges $e\in E(T_\alpha)$, $(u|C_\alpha)^{-1}(\partial\hat D_\alpha)=\varnothing$, $(u|C_\alpha)^{-1}(\hat D_\alpha)$ does not contain any node, and $(du)_p:T_pC_\alpha\to T_{u(p)}\hat Y/T_{u(p)}\hat D_\alpha$ is surjective for $p\in(u|C_\alpha)^{-1}(\hat D_\alpha)$.
\item\label{MIthickphi}$\{\phi_\alpha:C_\alpha\to\Cbar_{0,E^\eext(T_\alpha)\cup\{1,\ldots,r_\alpha\}}\}_{\alpha\in I}$, where each $\phi_\alpha$ maps $C_\alpha$ isomorphically onto a fiber of $\Cbar_{0,E^\eext(T_\alpha)\cup\{1,\ldots,r_\alpha\}}$, where $C_\alpha$ is equipped with its given marked points $p_{v,e}$ for $e\in E^\eext(T_\alpha)$ and any marking of $(u|C_\alpha)^{-1}(\hat D_\alpha)$ with $\{1,\ldots,r_\alpha\}$.  Note that under \ref{MIthickdeftr} above, choosing $\phi_\alpha$ is equivalent to choosing a marking of $(u|C_\alpha)^{-1}(\hat D_\alpha)$ with $\{1,\ldots,r_\alpha\}$.
\item\label{MIthickdefe}$\{e_\alpha\in E_\alpha\}_{\alpha\in I}$.
\item We require that $u$ satisfy
\begin{equation}\label{Ithickdelbar}
\Bigl(du+\sum_{\alpha\in I}\nu_\alpha(e_\alpha)(\phi_\alpha(\cdot),u(\cdot))\Bigr)^{0,1}_{\hat J}=0.
\end{equation}
Note that the term in $\sum_{\alpha\in I}$ corresponding to $\alpha$ makes sense only over $C_\alpha$, and we define it to be zero elsewhere.
\end{enumerate}
An \emph{isomorphism} $(\{C_v\},\{p_{v,e}\},\{u_v\},\{\tilde b_e\},\{m_e\},\{\phi_\alpha\},\{e_\alpha\})\to(\{C_v'\},\{p_{v,e}'\},\{u_v'\},\{\tilde b_e'\},\{m_e'\},\{\phi_\alpha'\},\{e_\alpha'\})$ between $I$-thickened pseudo-holomorphic buildings of type $T$ is defined as in Definition \ref{MIdef}, with the additional requirements that $e_\alpha=e_\alpha'$ and $\phi_{\alpha,v}=\phi_{\alpha,v}'\circ i_v$ for $v\in T_\alpha$.  We denote by $\M_\I(T)_I$ the set of isomorphism classes of stable $I$-thickened pseudo-holomorphic buildings of type $T$.
Note the tautological action of $\Aut(T)$ on $\M(T)$ by changing the marking.
\end{definition}

Note that the sum over $\alpha$ in \eqref{Ithickdelbar} is supported away from the punctures $p_{v,e}\in C_v$, and hence $u_v$ is genuinely $\hat J$-holomorphic near $p_{v,e}$.  Note also that \eqref{Ithickdelbar} is equivalent to the assertion that the graph $(\id,u_v):C_v\to C_v\times\hat Y$ is pseudo-holomorphic for the almost complex structure on $C_v\times\hat Y$ given by
\begin{equation}
\left(\begin{matrix}j_{C_v}&0\\(\sum_{\alpha\in I}\nu_\alpha(e_\alpha)(\phi_\alpha(\cdot),\cdot))^{0,1}&\hat J\end{matrix}\right).
\end{equation}
Hence solutions to the $I$-thickened pseudo-holomorphic curve equation enjoy all of the nice local \emph{a priori} estimates from \S\ref{ellipticestimates}.
Specifically, Lemma \ref{apriorijhol} applied to the graph of $u$ implies that $C^0$-estimates imply $C^\infty$-estimates, and Lemma \ref{apriorinode} and Proposition \ref{hwzcylinderestimate} apply to $u$ itself since the perturbation terms $\nu_\alpha$ are supported away from the nodes and the long cylinders of the domain.

\begin{definition}[Moduli space $\M_\II(T)_I$]\label{MIIthickdef}
Let $T\in\SSS_\II$ and let $I\subseteq\bar A_\II(T)$.  An \emph{$I$-thickened pseudo-holomorphic building of type $T$} consists of the following data:
\begin{enumerate}
\item Domains $C_v$ and punctures $p_{v,e}$ as in Definition \ref{MIdef}\ref{MIdefC}.
\item Maps $u_v$, asymptotic markers $\tilde b_e$, and matching isomorphisms $m_e$ as in Definition \ref{MIdef}\ref{MIdefmap},\ref{MIdefEA},\ref{MIdefbase},\ref{MIdefM} (the target of $u_v$ is $\hat X^{\ast(v)}$).
\item For all $\alpha\in I$, we require that $(u|C_\alpha)\pitchfork\hat D_\alpha$ with exactly $r_\alpha$ intersections, which together with $\{p_{v,e}\}_{v,e}$ stabilize $C_\alpha$.
More precisely, we intersect $u_v$ with the relevant $\hat D_\alpha\subseteq\hat X$ or $\hat D_\alpha^\pm\subseteq\hat Y^\pm$ depending on $\ast(v)$.
\item$\{\phi_\alpha:C_\alpha\to\Cbar_{0,E^\eext(T_\alpha)\cup\{1,\ldots,r_\alpha\}}\}_{\alpha\in I}$ as in Definition \ref{MIthickdef}\ref{MIthickphi}.
\item We require that $u$ satisfy \eqref{Ithickdelbar}, noting to use $\nu_\alpha$ or $\nu_\alpha^\pm$ depending on $\ast(v)$.
\end{enumerate}
An \emph{isomorphism} between pseudo-holomorphic buildings of type $T$ is defined as in Definition \ref{MIthickdef}, except that there is a translation $s_v\in\RR$ only if $v$ is a symplectization vertex.  We denote by $\M_\II(T)_I$ the set of isomorphism classes of stable $I$-thickened pseudo-holomorphic buildings of type $T$.
\end{definition}

\begin{definition}[Moduli space $\M_\III(T)_I$]\label{MIIIthickdef}
Let $T\in\SSS_\III$ and let $I\subseteq\bar A_\III(T)$ be finite.  We denote by $\M_\III(T)_I$ the union over $t\in\s(T)$ of the set of isomorphism classes of stable $I$-thickened pseudo-holomorphic buildings of type $T$.
\end{definition}

\begin{definition}[Moduli space $\M_\IV(T)_I$]\label{MIVthickdef}
Let $T\in\SSS_\IV$ and let $I\subseteq\bar A_\IV(T)$ be finite.  We denote by $\M_\IV(T)_I$ the union over $t\in\s(T)$ of the set of isomorphism classes of stable $I$-thickened pseudo-holomorphic buildings of type $T$.
\end{definition}

\begin{definition}[Moduli spaces $\Mbar_I$]\label{Mbarthickdef}
For $T\in\SSS$ and $I\subseteq\bar A(T)$, we define
\begin{equation}
\Mbar(T)_I:=\bigsqcup_{\begin{smallmatrix}T'\to T\\\Mbar(T')\ne\varnothing\end{smallmatrix}}\M(T')_I/\!\Aut(T'/T).
\end{equation}
Each such set $\Mbar(T)_I$ has a natural Gromov topology which is Hausdorff.
\end{definition}

The stratifications \eqref{MSstrat} are clearly defined on the thickened moduli spaces $\Mbar(T)_I$.  The tautological functorial structure \eqref{Mbarfunct} (combined with \eqref{Abarfunct}) and \eqref{Mbarprod} (combined with \eqref{Abarprod}) also exists for the thickened moduli spaces.

\begin{definition}[Regularity of $I$-thickened pseudo-holomorphic buildings]\label{linopthickened}
As in Definition \ref{regularitydef}, given an $I$-thickened pseudo-holomorphic building of type $T$, we consider the linearized operator
\begin{equation}\label{totallinearizedthickened}
E_I\oplus\bigoplus_{v\in V(T)}\tilde W^{k,2,\delta}(C_v,u_v^\ast T\hat X_v)\to\bigoplus_{v\in V(T)}W^{k-1,2,\delta}(\tilde C_v,u_v^\ast(T\hat X_v)_{\hat J_v}\otimes_\CC\Omega^{0,1}_{\tilde C_v})
\end{equation}
(recall that $E_I:=\bigoplus_{\alpha\in I}E_\alpha$).
Note that when $r_\alpha>0$ for some $\alpha\in I$, we must take $k\geq 2$ so that $W^{k,2}\subseteq C^0$ (see \cite[Lemma 5.17]{adamssobolev}) so that the first variation of $\phi_\alpha$ under $W^{k,2}$ deformations of $u$ is defined.

A point in a moduli space $\Mbar(T)_I$ is called \emph{regular} iff this linearized operator is surjective.
We denote by $\Mbar(T)_I^\reg\subseteq\Mbar(T)_I$ the locus of points which are regular and which have trivial automorphism group.
\end{definition}

\subsection{Implicit atlas structure}

We now define the rest of the implicit atlas structure for the implicit atlases $\bar A(T)$ on $\Mbar(T)$.

Recall that the covering groups $\Gamma_\alpha$ and the obstruction spaces $E_\alpha$ are already built into the definition of $\bar A(T)$ from \S\S\ref{Athicksec}--\ref{Abarthicksec}.
The thickened moduli spaces $\Mbar(T)_I$ and their regular loci $\Mbar(T)_I^\reg\subseteq\Mbar(T)_I$ were defined in \S\ref{thickenedmodulisec}.
The Kuranishi maps $s_\alpha:\Mbar(T)_I\to E_\alpha$ are the tautological maps picking out $e_\alpha$ (Definition \ref{MIthickdef}\ref{MIthickdefe}), and the footprint maps $\psi_{IJ}:(s_{J\setminus I}|\Mbar(T)_J)^{-1}(0)\to\Mbar(T)_I$ are the tautological forgetful maps.

The footprint $U_{IJ}\subseteq\Mbar(T)_I$ is defined as the locus of buildings which satisfy the transversality condition Definition \ref{MIthickdef}\ref{MIthickdeftr} for all $\alpha\in J$, and it follows immediately from this definition that $\psi_{IJ}$ induces a bijection $(s_{J\setminus I}|\Mbar(T)_J)^{-1}(0)/\Gamma_{J\setminus I}\to U_{IJ}$.
Inspection of the definition of the Gromov topology shows that this map is a homeomorphism and that $U_{IJ}\subseteq\Mbar(T)_I$ is open.

\begin{theorem}
The above data define oriented implicit atlases $\bar A(T)$ on $\Mbar(T)$.
\end{theorem}

\begin{proof}
Of the axioms which have not already been discussed (and which are nontrivial), the covering axiom follows from Propositions \ref{coveringI}--\ref{coveringIV} below, and the openness and submersion axioms follow from Theorems \ref{localgluing} and \ref{localorientations} below.
\end{proof}

\subsection{Stabilization of pseudo-holomorphic curves}\label{stabilizationsec}

We now verify the covering axiom for the implicit atlases $\bar A(T)$ on $\Mbar(T)$.  Namely, we show that the moduli spaces $\Mbar(T)$ are covered by the regular loci in their thickenings $\Mbar(T)_I^\reg$.  The essential content is to show that at every point in each moduli space $\Mbar(T)$, there exists a codimension two submanifold (as in Definition \ref{AIdef}\ref{AIdefD}) which stabilizes the domain (i.e.\ satisfies Definition \ref{MIthickdef}\ref{MIthickdeftr}).

\begin{lemma}\label{injective}
Let $u:D^2\to(X,J)$ be $J$-holomorphic (for an almost complex manifold $(X,J)$).  Then either $du:T_pC\to T_{u(p)}X$ is injective for some $p\in D^2$ or\footnote{In fact, it is a standard (but nontrivial) fact that the first alternative can be strengthened to state that the zeroes of $du$ form a discrete set (see \cite[Lemma 2.4.1]{mcduffsalamonJholsymp}).} $u$ is constant.
\end{lemma}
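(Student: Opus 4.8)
The plan is to use the defining property of a $J$-holomorphic map — namely that its differential is complex linear at every point — together with the elementary fact that a complex-linear map out of a one-dimensional complex vector space is either zero or injective.

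First I would unwind the hypothesis: $u$ being $J$-holomorphic means $(du)^{0,1}_J=0$, which is equivalent to $du_p\circ j=J\circ du_p$ for every $p\in D^2$. In other words, $du_p\colon(T_pD^2,j)\to(T_{u(p)}X,J)$ is a homomorphism of complex vector spaces. Since $T_pD^2$ is one-dimensional over $\CC$, such a homomorphism is determined by the image of any nonzero vector $\xi\in T_pD^2$, and if $du_p(\xi)\neq 0$ then $du_p$ is injective: for $z\in\CC$ the relation $z\cdot du_p(\xi)=0$ forces $z=0$, a complex vector space having no zero divisors. Hence for each $p$, the map $du_p$ is \emph{either} zero \emph{or} injective.

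Now I would split into two cases. If there exists $p\in D^2$ with $du_p\neq 0$, then $du_p$ is injective and we are in the first alternative. Otherwise $du_p=0$ for all $p\in D^2$, so $u$ has vanishing differential everywhere and is therefore locally constant; since $D^2$ is connected, $u$ is constant, and we are in the second alternative.

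There is essentially no obstacle here: the entire content is the linear-algebra observation that a complex-linear map defined on $\CC$ is zero or injective, plus the topological fact that a map with vanishing differential on a connected domain is constant. I would only remark, in order to flag the relation to the cited stronger statement, that the strengthening in the footnote — that the non-injectivity locus of $du$ is discrete whenever $u$ is non-constant — genuinely requires the finer local analysis of $J$-holomorphic maps (as in \cite{mcduffsalamonJholsymp}) and is not needed for the present lemma.
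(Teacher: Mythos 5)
Your proof is correct and is essentially the paper's argument, just spelled out: $J$-holomorphicity makes $du_p$ complex linear, so on the one-complex-dimensional domain it is pointwise either zero or injective, and if it is nowhere injective it vanishes identically, forcing $u$ to be constant on the connected disk. The paper compresses exactly this reasoning into one line.
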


\begin{proof}
If $du$ is non-injective, it must be zero by $J$-holomorphicity.
\end{proof}

\begin{lemma}\label{xiinjective}
Let $u:D^2\to(\hat Y,\hat J)$ be $\hat J$-holomorphic (for $(Y,\lambda,J)$ as in Setup \ref{setupI}).  Denote by $\pi_\xi:T\hat Y\to\xi$ the projection under the splitting $T\hat Y=\xi\oplus\RR R_\lambda\oplus\RR\partial_s$.  Then either $\pi_\xi du:T_pC\to\xi_{u(p)}$ is injective for some $p\in D^2$ or\footnote{In fact, it is a standard (but nontrivial) fact that the first alternative can be strengthened to state that the zeroes of $\pi_\xi du$ form a discrete set (see Hofer--Wysocki--Zehnder \cite[Proposition 4.1]{hwzsymplectizationsII}).} $u$ factors through ${\id}\times{\gamma}:\RR\times\RR\to\RR\times Y$ for some Reeb trajectory $\gamma:\RR\to Y$.
\end{lemma}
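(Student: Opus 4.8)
The plan is to reduce the statement to the tautology that a section of a vector bundle either vanishes identically or is somewhere nonzero, once one checks that $\pi_\xi\circ du$ is \emph{complex linear}, and then to integrate the equation $\pi_\xi\circ du\equiv 0$ by a flow-box argument. The Carleman similarity principle is \emph{not} needed for the statement as written (it enters only in the stronger parenthetical claim about discreteness of the zero locus), so the argument runs parallel to Lemma~\ref{injective}, with one extra integration step.

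First I would observe that $\hat J$ preserves the splitting $T\hat Y=\xi\oplus(\RR R_\lambda\oplus\RR\partial_s)$: one has $\hat J\xi=\xi$ by definition, $\hat J\partial_s=R_\lambda$, and hence $\hat J R_\lambda=-\partial_s$, so the complementary plane field is $\hat J$-invariant as well. It follows that $\pi_\xi\colon T\hat Y\to\xi$ intertwines $\hat J$ with $J=\hat J|_\xi$. Since $du\colon(TC,j)\to(T\hat Y,\hat J)$ is complex linear (the $\hat J$-holomorphic curve equation), the composite $\pi_\xi\circ du\colon(TC,j)\to(\xi,J)$ is complex linear. As $TC$ is a complex line bundle, a complex linear map $T_pC\to\xi_{u(p)}$ is injective if and only if it is nonzero; therefore, if $\pi_\xi\circ du$ fails to be injective at every $p\in D^2$, it vanishes identically, and it suffices to treat that case.

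So suppose $\pi_\xi\circ du\equiv 0$. Writing $u=(a,f)\colon D^2\to\RR\times Y$ and noting $\pi_\xi\circ du=\pi_\xi\circ df$, the hypothesis says $df_p\in\RR R_\lambda(f(p))$ for all $p$. In a flow box for $R_\lambda$ near any value of $f$ (coordinates $(\tau,y)$ with $R_\lambda=\partial_\tau$), the condition $df\in\RR\partial_\tau$ forces the $y$-coordinate of $f$ to be locally constant; hence the map $p\mapsto(\text{maximal }R_\lambda\text{-integral curve through }f(p))$ is locally constant on the connected set $D^2$, so constant. Thus $f(D^2)$ lies on a single integral curve $\gamma\colon\RR\to Y$ (complete by compactness of $Y$); since $\gamma$ is an immersion and $D^2$ is simply connected, $f$ lifts to a smooth $b\colon D^2\to\RR$ with $f=\gamma\circ b$, and then $u=(a,f)=(\id_\RR\times\gamma)\circ(a,b)$ factors through $\id\times\gamma\colon\RR\times\RR\to\RR\times Y$ as required. (One can additionally verify, using the remaining component $da=(f^\ast\lambda)\circ j$ of the curve equation together with $f^\ast\lambda=b^\ast(\gamma^\ast\lambda)=db$, that $(a,b)\colon D^2\to\CC$ is holomorphic, though this is not needed for the statement.)

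The only point requiring a little care — the ``main obstacle'', such as it is — is the passage from ``locally constant'' to ``constant'' for the leaf-valued map above, since the leaf space of the Reeb flow need not be Hausdorff. This is handled purely topologically: the fibers of a locally constant map on a connected space are open and partition it, so there is exactly one. Everything else is a one-line linear-algebra check or a standard flow-box computation.
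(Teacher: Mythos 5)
Your proposal is correct and follows essentially the same route as the paper's proof: non-injectivity forces $\pi_\xi du\equiv 0$ by complex linearity, and then $du$ is tangent to the $2$-dimensional foliation spanned by $\RR R_\lambda\oplus\RR\partial_s$, so $u$ factors through a leaf. You simply spell out the details (the flow-box integration, the lift through $\id\times\gamma$ on the simply connected disk) that the paper leaves implicit.
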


\begin{proof}
If $\pi_\xi du$ is non-injective, it must be zero.  If $\pi_\xi du$ vanishes identically, then $du$ is everywhere tangent to the $2$-dimensional foliation of $\hat Y$ by $\RR R_\lambda\oplus\RR\partial_s$, and thus $u$ factors through one of its leaves.
\end{proof}

Recall that $U_{IJ}\subseteq\Mbar(T)_I$ denotes the locus of buildings satisfying the transversality condition in Definition \ref{MIthickdef}\ref{MIthickdeftr} for $\alpha\in J\setminus I$.

\begin{proposition}\label{coveringI}
For every $x\in\Mbar_\I(T)$, there exists $\alpha\in A_\I(T)$ such that $x\in U_{\varnothing,\{\alpha\}}$ and $\psi_{\varnothing,\{\alpha\}}^{-1}(x)\subseteq\Mbar_\I(T)_{\{\alpha\}}^\reg$.
\end{proposition}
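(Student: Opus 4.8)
The goal is, for a given stable pseudo-holomorphic building $x=(\{C_v\},\{u_v\},\ldots)$ of type $T$, to produce a single thickening datum $\alpha=(r_\alpha,D_\alpha,E_\alpha,\lambda_\alpha)\in A_\I(T)$ whose associated divisor $\hat D_\alpha$ (i) meets $x$ transversally in exactly $r_\alpha$ points which, together with the existing marked points $\{p_{v,e}\}$, stabilize every component $C_v$ of the normalization of every $C_v$, and (ii) the vector space $E_\alpha$ together with the perturbation term $\lambda_\alpha$ is rich enough that the linearized operator \eqref{totallinearizedthickened} at $\psi_{\varnothing,\{\alpha\}}^{-1}(x)$ is surjective. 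Membership $x\in U_{\varnothing,\{\alpha\}}$ is exactly condition (i), since $U_{\varnothing,\{\alpha\}}$ is by definition the locus where the transversality hypothesis of Definition \ref{MIthickdef}(\ref{MIthickdeftr}) holds for $\alpha$; and $\psi_{\varnothing,\{\alpha\}}^{-1}(x)\in\Mbar_\I(T)_{\{\alpha\}}^{\reg}$ is exactly (ii).

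\textbf{Step 1: construct the stabilizing divisor.} On each irreducible component of each $\widetilde C_v$, the map $u_v$ is non-constant except possibly on components carrying enough special points to already be stable; on the non-constant components, Lemma \ref{xiinjective} shows that $\pi_\xi\,du_v$ is injective at some interior point $p$ not a node, not a marked point, and not in the preimage of any other component's chosen points — indeed it is injective on a dense open set. Pick such a point on each unstable non-constant component, and through each such image point $u_v(p)\in\hat Y$ choose a germ of a codimension-two submanifold of $\hat Y$ transverse to $\mathrm{im}\,du_v$; since $\hat D_\alpha$ must be $\RR$-invariant of the form $\RR\times D_\alpha$, one instead chooses $D_\alpha\subseteq Y$ to be a small codimension-two submanifold-with-boundary through the relevant points of $Y$, transverse to the projections of the curves, chosen small enough that it avoids all the asymptotic Reeb orbits $\gamma_e$ and all other intersection points are cut off by the boundary — so the global intersection is finite, transverse, misses $\partial\hat D_\alpha$, misses the nodes, and stabilizes. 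Set $r_\alpha:=\#(u|C_\alpha)^{-1}(\hat D_\alpha)$ for the maximal subtree $T_\alpha=T$. This is the standard ``stabilization by a divisor'' argument (cf.\ the Cieliebak--Mohnke / McDuff--Wehrheim style of argument, adapted here to the $\RR$-invariant setting of symplectizations).

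\textbf{Step 2: achieve regularity via the obstruction bundle $E_\alpha$.} Having fixed $D_\alpha$, the curve $x$ now carries a well-defined domain in the relevant Deligne--Mumford space, and one takes $E_\alpha$ to be a finite-dimensional space of sections and $\lambda_\alpha$ a linear map as in Definition \ref{AIdef}(\ref{AIdefl}) chosen so that the image of $E_\alpha$ under $e\mapsto\lambda_\alpha(e)(\phi_\alpha(\cdot),u(\cdot))$, when added to the image of the (unthickened) linearized operator \eqref{totallinearized}, spans the cokernel. Because \eqref{totallinearized} is Fredholm, its cokernel is finite-dimensional; choose finitely many smooth compactly-supported $\hat J$-antilinear-valued sections spanning (a complement of the image of) this cokernel, supported away from nodes, punctures, and the $S$-marked points, and — crucially — supported where $\pi_\xi\,du$ is injective so that they can be realized as pullbacks of sections on $\hat Y\times\Cbar$ under the graph embedding; average over $S_{r_\alpha}$ to make $\lambda_\alpha$ equivariant, enlarging $E_\alpha$ as needed. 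Then by construction \eqref{totallinearizedthickened} is surjective at $x$, i.e.\ $\psi_{\varnothing,\{\alpha\}}^{-1}(x)\in\Mbar_\I(T)_{\{\alpha\}}^{\reg}$.

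\textbf{Main obstacle.} The delicate point is not the linear algebra of Step 2 but the geometric compatibility demanded of $\lambda_\alpha$ in Definition \ref{AIdef}(\ref{AIdefl}): it must be an $\RR$-invariant section on $\hat Y\times\Cbar_{0,S\cup\{1,\ldots,r_\alpha\}}$, vanishing near nodes and $S$-marked points, yet its pullback along $(\phi_\alpha,u)$ must hit the cokernel of the linearized operator on the \emph{actual building} $x$. One has to check that the graph map $p\mapsto(\phi_\alpha(p),u(p))$ is an embedding off a measure-zero set on each component where we want to place perturbation support — this uses injectivity of $\pi_\xi\,du$ from Lemma \ref{xiinjective} on the symplectization components and injectivity of $du$ from Lemma \ref{injective} in the general case — and that a germ of section near such an embedded point extends to a global section on $\hat Y\times\Cbar$ with the required support and $\RR$-invariance properties (the $\RR$-invariance forces one to spread the chosen cokernel-killing section along the whole $\RR$-orbit, which is harmless since the curve decays to a trivial cylinder at the punctures where $\lambda_\alpha$ is anyway required to vanish). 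Assembling these local choices into one $\alpha$ that simultaneously handles stabilization of \emph{all} components and surjectivity is where the bookkeeping lives, but it is routine once the two injectivity lemmas are in hand.
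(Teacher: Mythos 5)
Your outline (stabilize the domain with a divisor through points where $\pi_\xi\,du$ is injective, then enlarge $E_\alpha$ so that the perturbations cover the finite-dimensional cokernel) has the same shape as the paper's proof, but both of your steps break down at the same crucial case: components on which $\pi_\xi\,du_v\equiv 0$, i.e.\ (branched) covers of trivial cylinders. In Step 1 you assert that on every non-constant unstable component Lemma \ref{xiinjective} produces a point where $\pi_\xi\,du_v$ is injective; that is not what the lemma says --- its second alternative is that $u_v$ factors through $\RR\times\RR\to\RR\times Y$ over a Reeb trajectory, which includes plenty of non-constant maps. The paper's proof devotes its entire ``claim'' to exactly this case: non-closed trajectories are ruled out by the asymptotics at the positive puncture, an unramified cover of a trivial cylinder would make the building unstable, and a ramified cover forces $C_v$ to be stable by Riemann--Hurwitz, so no stabilizing point is needed there. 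Without that argument your Step 1 is incomplete precisely where transversality problems traditionally arise.

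More seriously, in Step 2 you restrict the support of the cokernel-killing sections to the locus where $\pi_\xi\,du$ is injective, on the grounds that the graph map must be an embedding in order to realize them as pullbacks from $\hat Y\times\Cbar_{0,S\cup\{1,\ldots,r_\alpha\}}$. This is both unnecessary and fatal. Unnecessary, because once the divisor intersections stabilize the domain, $\phi_\alpha$ maps $C_\alpha$ isomorphically onto a fiber of the universal curve, so $p\mapsto(\phi_\alpha(p),u(p))$ is an embedding no matter how multiply covered $u$ is --- this is the whole point of building the Deligne--Mumford factor into the thickening datum, and it is also what renders the $\RR$-invariance requirement harmless, since the localization happens in the $\Cbar$ factor rather than in $\hat Y$. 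Fatal, because a stable component can perfectly well be a branched cover of a trivial cylinder (e.g.\ a three-punctured sphere double-covering it), where $\pi_\xi\,du\equiv 0$ identically; these are exactly the components on which the cokernel of the unthickened operator typically sits, and your perturbations have no support there, so surjectivity of \eqref{totallinearizedthickened} cannot be achieved by your construction. The paper instead observes that \emph{every} element of \eqref{cinftyc} (smooth sections supported away from punctures and nodes, with no injectivity condition on $u$) is realizable through $\lambda_\alpha$, and then concludes by density of \eqref{cinftyc} in $W^{k-1,2,\delta}$, which suffices since the cokernel is finite-dimensional.
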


\begin{proof}
The point $x$ is an isomorphism class of stable pseudo-holomorphic building of type $T'\to T$.

We claim that for all $v\in V(T')$, either $C_v$ is stable (i.e.\ the degree of $v$ is $\geq 3$) or $\pi_\xi du_v$ is injective somewhere on $C_v$.  To see this, suppose that $\pi_\xi du_v\equiv 0$ and apply Lemma \ref{xiinjective} to $u_v:C_v\to\hat Y$.  If the resulting Reeb trajectory $\gamma:\RR\to Y$ is not a closed orbit, then $u_v$ factors through $\RR\times\RR\to\RR\times Y$, and consideration of the positive puncture of $C_v$ leads to a contradiction.  Thus $u_v$ factors through a trivial cylinder $\RR\times S^1\to\RR\times Y$ for some simple Reeb orbit $\gamma:S^1\to Y$.  The map $C_v\to\RR\times S^1$ is holomorphic, and it must have ramification points, as otherwise the building $x$ would be unstable.  It now follows from Riemann--Hurwitz that $C_v$ is stable.  Thus the claim is valid.

Now using the claim, it follows from Sard's theorem that there exists $D_\alpha\subseteq Y$ satisfying Definition \ref{MIthickdef}\ref{MIthickdeftr} for some $r_\alpha\geq 0$.  Now to show the existence of $E_\alpha$ and $\nu_\alpha$ so that $x$ is regular for the thickening datum $\alpha=(D_\alpha,r_\alpha,E_\alpha,\nu_\alpha)$, it suffices to show that
\begin{equation}\label{cinftyc}
\bigoplus_{v\in V(T')}C^\infty_c(\tilde C_v\setminus(\{p_{v,e}\}_e\cup\tilde N_v),u_v^\ast T\hat Y_{\hat J}\otimes_\CC\Omega^{0,1}_{\tilde C_v})
\end{equation}
surjects onto the (finite-dimensional) cokernel of the linearized operator at $x$ (where $\tilde N_v\subseteq\tilde C_v$ denotes the pre-images of the nodes of $C_v$).
Let $\varepsilon$ be a continuous linear functional on
\begin{equation}\label{alltarget}
\bigoplus_{v\in V(T')}W^{k-1,2,\delta}(\tilde C_v,u_v^\ast T\hat Y_{\hat J}\otimes_\CC\Omega^{0,1}_{\tilde C_v})
\end{equation}
which vanishes both on \eqref{cinftyc} and on the image of the linearized operator; it suffices to show that $\varepsilon=0$.
By Lemma \ref{smoothweighteddense}, the space
\begin{equation}\label{cinftycreprise}
\bigoplus_{v\in V(T')}C^\infty_c(\tilde C_v\setminus\{p_{v,e}\}_e,u_v^\ast T\hat Y_{\hat J}\otimes_\CC\Omega^{0,1}_{\tilde C_v})
\end{equation}
is dense in \eqref{alltarget}, so it suffices to show that $\varepsilon$ vanishes as a distribution.
Since $\varepsilon$ annihilates \eqref{cinftyc}, it is supported over the finite set $\tilde N_v$.
When $k=2$, no nonzero distribution of finite support is continuous on $W^{k-1,2}$ (recall Lemma \ref{distrdelta} and note that $W^{1,2}\nsubseteq C^0$ \cite[Example 5.26]{adamssobolev}).
Since we did not use the fact that $\varepsilon$ annihilates the image of the linearized operator, we have in fact shown that \eqref{cinftyc} is dense in \eqref{alltarget}.
\end{proof}

\begin{remark}
There are two alternative ways to conclude the proof of Proposition \ref{coveringI}.
First, using the fact that $\varepsilon$ annihilates the image of $D$, it follows that $D^\ast\varepsilon$ ($D^\ast$ denotes the formal adjoint of $D$) is a linear combination of $\delta$-functions supported over $\tilde N_v$.  But such $\delta$-functions live in $H^{-2}$ (by Sobolev embedding $H^2\subseteq C^0$ \cite[Lemma 5.17]{adamssobolev}) and the formal adjoint is elliptic, so this means $\varepsilon\in H^{-1}$, which contains no nonzero distributions supported at single points as argued above.
Second, we could define the linearized operator for nodal curves using weights near the nodes (as we do in \S\ref{gluingsobolev} to prove gluing) and simply cite Lemma \ref{smoothweighteddense} to conclude that \eqref{cinftyc} is dense in the target weighted Sobolev space for all $k\geq 2$.
\end{remark}

\begin{proposition}\label{coveringII}
For every $x\in\Mbar_\II(T)$, there exists $\alpha\in A_\II(T)$ such that $x\in U_{\varnothing,\{\alpha\}}$ and $\psi_{\varnothing,\{\alpha\}}^{-1}(x)\subseteq\Mbar_\II(T)_{\{\alpha\}}^\reg$.
\end{proposition}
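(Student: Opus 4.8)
The plan is to follow the proof of Proposition \ref{coveringI} essentially verbatim, with the only modification being that the building now has two kinds of vertices: symplectization vertices $v\in V_s(T')$, whose target is a symplectization $\hat Y^+$ or $\hat Y^-$, and cobordism vertices $v\notin V_s(T')$, whose target is $\hat X$. Write $x$ as the isomorphism class of a stable $\hat J$-holomorphic building of type $T'\to T$ with $T'\in\SSS_\II$, and recall that every vertex $v$ of $T'$ has exactly one incoming edge, so $u_v$ is positively asymptotic to the Reeb orbit $\gamma_{e^+(v)}$ and in particular is non-constant.

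The first and only substantive step is the stabilization dichotomy: for every vertex $v\in V(T')$, either $C_v$ is already stable or $u_v$ has a point at which the relevant linearization is injective. For a symplectization vertex $v$ this is exactly the argument in Proposition \ref{coveringI}: apply Lemma \ref{xiinjective} to $u_v\colon C_v\to\hat Y^{\pm}$; if $\pi_\xi du_v\equiv 0$ then $u_v$ factors through the two-dimensional foliation of $\hat Y^\pm$ by $\RR R_{\lambda^\pm}\oplus\RR\partial_s$, and considering the positive puncture (which must be asymptotic to a closed orbit) forces $u_v$ to factor through a trivial cylinder $\RR\times S^1\to\RR\times Y^\pm$ over a simple orbit, whence the induced holomorphic map $C_v\to\RR\times S^1$ must have ramification points (otherwise $x$ would be unstable) and $C_v$ is stable by Riemann--Hurwitz. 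For a cobordism vertex $v$ the argument is strictly easier: since $u_v$ is non-constant, Lemma \ref{injective} (applied on small disks, together with unique continuation) shows $du_v\colon T_pC_v\to T_{u_v(p)}\hat X$ is injective for some $p$.

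Given the dichotomy, Sard's theorem produces, exactly as in Proposition \ref{coveringI}, divisors $D_\alpha^\pm\subseteq Y^\pm$ and a closed codimension-two submanifold-with-boundary $\hat D_\alpha\subseteq\hat X$ agreeing with $\RR\times D_\alpha^\pm$ outside a compact set, chosen disjoint from the finitely many (compact) asymptotic Reeb orbits of the building, so that $u$ meets $\hat D_\alpha$ transversally in some $r_\alpha$ points which, together with $\{p_{v,e}\}$, stabilize $C_\alpha$ (here the subtree is $T_\alpha=T'$, which indexes the $A_\II(T')$ summand of \eqref{barAIIcoproduct}). Disjointness from the asymptotic orbits confines these intersection points to a compact part of the domain, so the transversality condition of Definition \ref{MIthickdef}(\ref{MIthickdeftr}) holds; this already gives $x\in U_{\varnothing,\{\alpha\}}$ once $E_\alpha,\lambda_\alpha$ are chosen.

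Finally, to make the thickened building regular one chooses $E_\alpha$ together with $\lambda_\alpha^\pm,\lambda_\alpha$ — supported in a compact subset of $\hat X$ disjoint from the nodes, the $S$-marked points, and the added marked points, so that one may take $\lambda_\alpha^\pm=0$ — in such a way that differentiation of the $\{\alpha\}$-thickened equation in the $E_\alpha$ directions surjects onto the (finite-dimensional) cokernel of the linearized operator \eqref{totallinearized} at $x$; equivalently, one needs $C^\infty_c(\tilde C_v\setminus(\{p_{v,e}\}_e\cup N_v),u_v^\ast(T\hat X_v)_{\hat J_v}\otimes_\CC\Omega^{0,1}_{\tilde C_v})$ to surject onto that cokernel, which holds because it is dense in $W^{k-1,2,\delta}$ for $k=2$ and admissible $\delta>0$ (alternatively by the elliptic-regularity argument in the remark following Proposition \ref{coveringI}). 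For this $\alpha$ the thickened operator \eqref{totallinearizedthickened} is surjective, so $\psi_{\varnothing,\{\alpha\}}^{-1}(x)\subseteq\Mbar_\II(T)_{\{\alpha\}}^\reg$, as desired. The only point to check beyond Proposition \ref{coveringI} is the dichotomy at cobordism vertices, and since those components are automatically non-constant this is easier than the symplectization case, so I anticipate no real obstacle.
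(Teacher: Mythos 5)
Your overall strategy is the paper's: the same stabilization dichotomy (symplectization vertices via Lemma \ref{xiinjective} and Riemann--Hurwitz, cobordism vertices via Lemma \ref{injective}), the same construction of $D_\alpha^\pm\subseteq Y^\pm$ and $\hat D_\alpha\subseteq\hat X$, and the same density-of-$C^\infty_c$ argument for hitting the cokernel. However, there is one genuine gap: you cannot take $\lambda_\alpha^\pm=0$. A general point $x\in\Mbar_\II(T)$ is a building of type $T'\to T$ which may have symplectization levels, i.e.\ vertices with $\ast(v)=00$ or $11$ whose components map to $\hat Y^\pm$. By Definition \ref{MIIthickdef}, the thickening term acting on such a component is $\lambda_\alpha^\pm(e_\alpha)$, not $\lambda_\alpha(e_\alpha)$: the perturbation $\lambda_\alpha$ on $\hat X$ simply does not see these components. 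Since the operator \eqref{totallinearized} is a direct sum over vertices, its cokernel contains the cokernels of the unthickened operators at the $00$/$11$ vertices, and with $\lambda_\alpha^\pm=0$ the $E_\alpha$-directions in \eqref{totallinearizedthickened} contribute nothing in those blocks. So your choice achieves regularity only when every symplectization-level component is already cut out transversally --- which is exactly the situation (e.g.\ multiply covered cylinders and planes in $\hat Y^\pm$) the thickening is designed to repair. Your ``equivalently'' step is therefore not an equivalence: the compactly supported sections you need over components with target $\hat Y^\pm$ must be produced by $\RR$-invariant $\lambda_\alpha^\pm$ chosen exactly as in Proposition \ref{coveringI}, with $\lambda_\alpha$ then required to agree with (cutoffs of) $\lambda_\alpha^\pm$ near infinity as in Definition \ref{AIIdef}. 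This is what the paper means by ``$(E_\alpha,\lambda_\alpha)$ are constructed as in Proposition \ref{coveringI}'', and with that correction your argument goes through.

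A minor additional imprecision: at a cobordism vertex the dichotomy must be applied to each \emph{irreducible component} of the (possibly nodal) curve $C_v$, not to $u_v$ as a whole; non-constancy of $u_v$ gives an injective point somewhere on $C_v$, but stabilization requires a point on every unstable component. This is easily repaired --- stability of the building forces the map to be non-constant on each unstable component, so Lemma \ref{injective} applies componentwise --- but it should be said that way.
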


\begin{proof}
The point $x$ is an isomorphism class of stable pseudo-holomorphic building of type $T'\to T$.

As in the proof of Proposition \ref{coveringI}, for $v\in V(T')$ with $\ast(v)=00$ or $\ast(v)=11$, either $C_v$ is stable or $\pi_\xi du_v$ is injective somewhere on $C_v$.  For $\ast(v)=01$, every irreducible component of $C_v$ is either stable or has a point where $du_v$ injective by Lemma \ref{injective}.

Now we can find a (compact) codimension two submanifold $\hat D_\alpha\subseteq\hat X$ such that $u\pitchfork\hat D_\alpha$ with intersections stabilizing every $C_v$ with $\ast(v)=01$.  Now we consider the remaining unstable $C_v$ ($\ast(v)=00$ or $\ast(v)=11$), and we choose codimension two submanifolds $D_\alpha^\pm\subseteq Y^\pm$ stabilizing these.  We then cutoff $\hat D_\alpha^\pm$ near infinity in $\hat X$ and add this to $\hat D_\alpha$.  Thus $u\pitchfork\hat D_\alpha$ with $r_\alpha$ intersections which stabilize $C$.

Now $(E_\alpha,\nu_\alpha)$ are constructed as in Proposition \ref{coveringI}.
\end{proof}

\begin{proposition}\label{coveringIII}
For every $x\in\Mbar_\III(T)$, there exist $\alpha_i\in A_\III(T_i)$ such that $x\in U_{\varnothing,\{\alpha_i\}_i}$ and $\psi_{\varnothing,\{\alpha_i\}_i}^{-1}(x)\subseteq\Mbar_\III(T)_{\{\alpha_i\}_i}^\reg$ (writing $T=\bigsqcup_iT_i$ with $T_i$ connected and non-empty).
\end{proposition}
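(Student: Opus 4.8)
The plan is to reduce to Propositions \ref{coveringI} and \ref{coveringII}, applied separately to each connected component of the (possibly disconnected) building $x$ at its fixed parameter value. The point $x\in\Mbar_\III(T)$ is an isomorphism class of stable pseudo-holomorphic building of type $T'\to T$, living at some $t_0\in\s(T')$; writing $T=\bigsqcup_i T_i$ into connected components yields a corresponding decomposition $T'=\bigsqcup_i T'_i$ (with $T'_i\to T_i$) and a decomposition of $x$ into stable $\hat J^{t_0}$-holomorphic buildings $x_i$ of type $T'_i$ in $\hat X$. Since $(\hat X,\hat\omega^{t_0},\hat J^{t_0})$ together with $(Y^\pm,\lambda^\pm,J^\pm)$ is data as in Setup \ref{setupII}, and a thickening datum for Setup \ref{setupIII} is by Definition \ref{AIIIdef} literally the same as one for Setup \ref{setupII} (in particular the divisor $\hat D_{\alpha_i}\subseteq\hat X$ and the perturbation term $\lambda_{\alpha_i}$ are independent of $t$), each $x_i$ is exactly the input to which the argument of Proposition \ref{coveringII} applies.

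First I would carry out the stabilization step componentwise: for each vertex $v$ of $T'_i$ with $\ast(v)\in\{00,11\}$, Lemma \ref{xiinjective} gives that $C_v$ is either stable or has $\pi_\xi du_v$ injective somewhere (ruling out non-periodic Reeb trajectories via the positive puncture of $C_v$ and the trivial-cylinder case via Riemann--Hurwitz), while for $\ast(v)=01$ every irreducible component of $C_v$ is either stable or has $du_v$ injective somewhere by Lemma \ref{injective}. Then by Sard's theorem I would choose, exactly as in Proposition \ref{coveringII}, a compact codimension two submanifold with boundary $\hat D_{\alpha_i}\subseteq\hat X$ (first an interior divisor stabilizing the $\ast=01$ components, then adjoining cutoffs near infinity of divisors $D^\pm_{\alpha_i}\subseteq Y^\pm$ stabilizing the $\ast\in\{00,11\}$ components) so that $u|C_{\alpha_i}$ meets $\hat D_{\alpha_i}$ transversally in exactly $r_{\alpha_i}$ points which, together with the punctures, stabilize $C_{\alpha_i}$. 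Next, again componentwise and as in Proposition \ref{coveringI}, I would choose $(E_{\alpha_i},\lambda_{\alpha_i})$ so that the compactly supported sections \eqref{cinftyc} for $x_i$ surject onto the cokernel of its linearized operator --- possible since \eqref{cinftyc} is dense in $W^{k-1,2,\delta}$ for $k=2$ and admissible $\delta>0$. Here the linearized operator of $x_i$ is the $i$-th block of \eqref{totallinearizedthickened} (this block-diagonalization over the $T'_i$ holds because $x$ is disconnected and $\lambda_{\alpha_i}$ is supported on $C_{\alpha_i}$), and following Remark \ref{linearizedwiths} this means the operator of Definition \ref{totallinearizeddef} \emph{without} variations in $t$, the restrictive notion of regularity used throughout the rest of the paper.

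Setting $\alpha_i\in A_\III(T_i)$ to be the resulting thickening datum, the transversality conditions of Definition \ref{MIthickdef}(\ref{MIthickdeftr}) for the distinct $\alpha_i$ are supported on disjoint pieces of the domain and all hold at $x$, so $x\in U_{\varnothing,\{\alpha_i\}_i}$; and the block-diagonal operator just arranged is surjective independently of the added markings $\phi_{\alpha_i}$ and of the $S_{r_{\alpha_i}}$-actions, so $\psi_{\varnothing,\{\alpha_i\}_i}^{-1}(x)\subseteq\Mbar_\III(T)_{\{\alpha_i\}_i}^\reg$. I do not expect a genuinely hard step --- the analytic content is imported verbatim from Propositions \ref{coveringI} and \ref{coveringII} --- but the one point requiring care is the role of the parameter $t$: because thickening datums (and hence $\hat D_{\alpha_i}$ and $\lambda_{\alpha_i}$) are $t$-independent, one must be content to stabilize and regularize the single building $x$ at $t_0$ and must use the restrictive notion of regularity; openness of both the transversality locus and the regular locus then takes care of buildings at nearby parameter values automatically.
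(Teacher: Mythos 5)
Your proposal is correct and follows essentially the same route as the paper, whose proof is precisely to apply Proposition \ref{coveringII} to each subbuilding of type $T_i$; you have simply filled in the componentwise stabilization, the $t$-independence of thickening datums from Definition \ref{AIIIdef}, and the block-diagonal structure of the linearized operator, all of which the paper leaves implicit.
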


\begin{proof}
Apply Proposition \ref{coveringII} to each subbuilding of type $T_i$ to get $\alpha_i$.
\end{proof}

\begin{proposition}\label{coveringIV}
For every $x\in\Mbar_\IV(T)$, there exist $\alpha_i\in A_\IV(T_i)$ such that $x\in U_{\varnothing,\{\alpha_i\}_i}$ and $\psi_{\varnothing,\{\alpha_i\}_i}^{-1}(x)\subseteq\Mbar_\IV(T)_{\{\alpha_i\}_i}^\reg$ (writing $T=\bigsqcup_iT_i$ with $T_i$ connected and non-empty).
\end{proposition}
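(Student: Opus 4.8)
The plan is to follow the proofs of Propositions \ref{coveringI} and \ref{coveringII} essentially verbatim, now accounting for the several targets appearing in Setup \ref{setupIV}: the symplectizations $\hat Y^0,\hat Y^1,\hat Y^2$, the cobordisms $\hat X^{01},\hat X^{12}$, and the one-parameter family $\hat X^{02,t}$ for $t\in[0,\infty]$. Since the asserted data are $\alpha_i\in A_\IV(T_i)$ indexed by the connected components $T_i$ of $T$, and the corresponding subbuildings of $x$ are decoupled, we may assume $T$ is connected; then $x$ is represented by a stable building of type $T'\to T$ over some $t_0\in\s(T)\subseteq[0,\infty]$.

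First I would run the dichotomy argument vertex by vertex. For a symplectization vertex $v\in V_s(T')$, so $u_v$ maps to one of $\hat Y^0,\hat Y^1,\hat Y^2$, Lemma \ref{xiinjective} shows that either $C_v$ is already stable or $u_v$ factors through a trivial cylinder over a simple Reeb orbit, in which case the induced branched cover $C_v\to\RR\times S^1$ must have ramification (else $x$ is unstable) and Riemann--Hurwitz forces $C_v$ to be stable. For a non-symplectization vertex, $u_v$ maps to $\hat X^{01}$, $\hat X^{12}$, or $\hat X^{02,t_0}$, all carrying tamed almost complex structures, so Lemma \ref{injective} shows each irreducible component of $C_v$ is stable or carries a point of injectivity of $du_v$. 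Thus every unstable $C_v$ admits such a point, and by Sard's theorem we can choose compact codimension-two submanifolds $D^i_\alpha\subseteq Y^i$ ($i=0,1,2$) together with divisors in $\hat X^{01}$, $\hat X^{12}$, and $\hat X^{02,t_0}$, each transverse to the relevant $u_v$ with some number $r_\alpha$ of intersections, jointly stabilizing all unstable components. As in Proposition \ref{coveringII}, I would then cut off the $D^i_\alpha$ near infinity to assemble $\hat D^{01}_\alpha$ from $D^0_\alpha,D^1_\alpha$ and $\hat D^{12}_\alpha$ from $D^1_\alpha,D^2_\alpha$, and to assemble the family $\hat D^{02,t}_\alpha$.

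The only genuinely new point is the $t$-family $\hat D^{02,t}_\alpha$ ($t\in[0,\infty)$) demanded by Definition \ref{AIVdef}: it must vary smoothly, equal the descent of $\hat D^{01}_\alpha\sqcup\hat D^{12}_\alpha$ for $t$ large, and, when $t_0<\infty$, stabilize $x$ at $t=t_0$. This descent is well-defined because both $\hat D^{01}_\alpha$ and $\hat D^{12}_\alpha$ restrict to $\RR\times D^1_\alpha$ near the glued end; if $t_0$ lies in the large-$t$ regime we are done once $\hat D^{01}_\alpha,\hat D^{12}_\alpha$ are chosen generically, and otherwise one interpolates smoothly from a divisor stabilizing $x$ at $t_0$ to the prescribed descent near $t=\infty$, which is unobstructed since stabilization is an open condition (and the family need only ``work'' at $t_0$, regularity here being in the sense of Definition \ref{totallinearizeddef}, without $t$-variations). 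Finally, with the divisor data $(r_\alpha,D_\alpha)$ fixed, I would produce $E_\alpha$ and $\lambda_\alpha$ exactly as in Proposition \ref{coveringI}: the analog of \eqref{cinftyc}, namely compactly supported sections over $\tilde C_v\setminus(\{p_{v,e}\}_e\cup N_v)$ valued in $u_v^\ast(T\hat X_v)_{\hat J_v}\otimes_\CC\Omega^{0,1}_{\tilde C_v}$, is dense in $W^{k-1,2,\delta}$ for admissible $\delta>0$, hence surjects onto the finite-dimensional cokernel of the linearized operator \eqref{totallinearized} at $x$, and one takes $E_\alpha$ to be a finite-dimensional subspace realizing this surjection, made $S_{r_\alpha}$-equivariant in the usual way. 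I expect no real obstacle beyond this bookkeeping, since $\SSS_\IV$ differs from $\SSS_\II$ only by extra labels and a one-parameter family of cobordisms, neither of which affects the local analysis underlying Lemmas \ref{injective} and \ref{xiinjective} or the density argument.
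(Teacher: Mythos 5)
Your proposal is correct and takes essentially the same approach as the paper, whose proof is just the remark that the argument is the same as for Propositions \ref{coveringII}--\ref{coveringIII}, i.e.\ apply the Setup \ref{setupII}-style stabilization and cokernel-surjectivity argument to each connected subbuilding $T_i$. The extra care you take with the family $\hat D^{02,t}_\alpha$ (stabilizing at $t_0$ and interpolating to the descent of $\hat D^{01}_\alpha\sqcup\hat D^{12}_\alpha$ for large $t$) is precisely the bookkeeping the paper leaves implicit and introduces no new difficulty.
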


\begin{proof}
Essentially the same as the proof of Propositions \ref{coveringII}--\ref{coveringIII}.
\end{proof}

\subsection{Local structure of thickened moduli spaces via local models \texorpdfstring{$G$}{G}}

We now state the precise sense in which the spaces $G_{T'//T}$ are local topological models for the regular loci in the thickened moduli spaces $\Mbar(T)$.  We also state the compatibility of this local topological structure with the natural maps on orientation lines discussed earlier.  These statements (which contain those from \S\ref{firstgluingstatements} as a special case) are in essence a gluing theorem, and their proofs are given in \S\ref{gluingsec}.  They imply that the openness and submersion axioms hold for the implicit atlases we have defined, and they give canonical isomorphisms $\oo_{\Mbar(T)}=\oo_T$.

The (Banach space) implicit function theorem implies that $\M(T)_I^\reg$ is a (smooth) manifold of dimension $\mu(T)-\#V_s(T)+\dim\s(T)+\dim E_I$ over the locus without nodes.

\begin{theorem}[Local structure of $\Mbar(T)_I^\reg$]\label{localgluing}
Let $I\subseteq J\subseteq\bar A(T)$.  Let $x_0\in\Mbar(T)_J$ be of type $T'\to T$ and satisfy $s_{J\setminus I}(x_0)=0$ and $\psi_{IJ}(x_0)\in\Mbar(T)_I^\reg$.  Then $\mu(T)-\#V_s(T')-2\#N(x_0)+\dim E_I\geq 0$ and there is a local homeomorphism
\begin{equation}\label{ultimategluingmap}
\bigl(G_{T'//T}\times E_{J\setminus I}\times\CC^{N(x_0)}\times\RR^{\mu(T)-\#V_s(T')-2\#N(x_0)+\dim E_I},(0,0,0)\bigr)\to\bigl(\Mbar(T)_J,x_0\bigr)
\end{equation}
whose image lands in $\Mbar(T)_J^\reg$ and which commutes with the maps from both sides to $\SSS_{T'//T}\times\overline{\s(T)}\times E_{J\setminus I}$ (as well as the stratifications by number of nodes).
\end{theorem}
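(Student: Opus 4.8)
The plan is a gluing construction in the style of Floer--Hofer and \cite{sftcompactness}, adapted to the thickened equation \eqref{Ithickdelbar}. Write $x_0$ as a stable building of type $T'$, whose domains $C_v$ carry $N(x_0)$ nodes in total, and fix $\hat D_\alpha$-stabilizing marked points. The source of \eqref{ultimategluingmap} records the following gluing data near $0$: a point $\mathbf g\in(G_\ast)_{T'//T}$ prescribing the neck lengths for the interior edges of $T'$ contracted by $T'\to T$ --- the linear relations $g_v=g_e+g_{v'}$ defining $(G_\ast)_{T'//T}$ encode that the $\RR$-translations of the symplectization components must be mutually consistent, which is exactly why the model spaces $G_\ast$ of \S\ref{Gsec}, rather than naive products of half-lines, are the correct parameter spaces; a node-smoothing parameter $\mathbf a\in\CC^{N(x_0)}$; the free thickening parameters $e\in E_{J\setminus I}$; and a residual slice $\eta\in\RR^{\mu(T)-\#V_s(T')-2\#N(x_0)+\dim E_I}$. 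To such data one associates a glued domain $C_{\mathbf g,\mathbf a}$ by the standard plumbing of nodes and necks, and an approximate map $u^{\mathrm{pre}}$ by interpolating the $u_v$ across the necks with cutoff functions; the exponential decay of the ends of $x_0$ provided by \eqref{asymptoticend} makes $u^{\mathrm{pre}}$ solve \eqref{Ithickdelbar} with error exponentially small in the neck lengths.

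Since $\psi_{IJ}(x_0)\in\Mbar_\ast(T)_I^\reg$, the linearized operator \eqref{totallinearizedthickened} at $x_0$ for the index set $J$ is surjective (it differs from the $I$-version only by the harmless extra summand $E_{J\setminus I}$ in its domain). A standard linear gluing argument --- gluing approximate right inverses of the pieces and absorbing the error using the exponential decay --- shows that the linearized operator $D$ at $u^{\mathrm{pre}}$ is surjective with right inverse bounded uniformly as $\mathbf g\to 0$ and $\mathbf a\to 0$, in weighted Sobolev norms adapted to $C_{\mathbf g,\mathbf a}$. The implicit function theorem, equivalently a Newton--Picard iteration using the quadratic estimate for \eqref{Ithickdelbar}, then produces for each choice of data near $0$ a genuine solution close to $u^{\mathrm{pre}}$, depending smoothly on all parameters. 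Inside $\ker D$ the translation gauge, the neck-stretching directions coming from the contracted edges, the node-smoothing directions, and the coordinate directions of $E_{J\setminus I}$ span a subspace of dimension $\#V_s(T')+2\#N(x_0)+\dim E_{J\setminus I}$, so a complement --- the slice parameterized by $\eta$ --- has dimension $\dim\ker D-\#V_s(T')-2\#N(x_0)-\dim E_{J\setminus I}=\mu(T)+\dim E_I-\#V_s(T')-2\#N(x_0)$, using $\dim\ker D=\mu(T)+\dim E_J$ (surjectivity together with \eqref{indexofTbyCZ} and index-additivity under gluing, Definition \ref{indexdefn}); being the dimension of a linear subspace, this is $\geq 0$, which is the asserted inequality. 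That the image lands in $\Mbar_\ast(T)_J^\reg$ is automatic, surjectivity of \eqref{totallinearizedthickened} being an open condition.

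It remains to show the constructed solutions exhaust a neighborhood of $x_0$ with continuous, stratification-compatible inverse. For surjectivity one uses SFT compactness \cite{sftcompactness}: a sequence in $\Mbar_\ast(T)_J$ converging to $x_0$ in the Gromov topology is, after passing to a suitable gauge (domain-stabilizing marked points, as in the definition of the Gromov topology) and for large index, a small exponential graph over some $u^{\mathrm{pre}}$ with small gluing parameters --- here openness of the transversality condition in Definition \ref{MIthickdef}(\ref{MIthickdeftr}) together with elliptic $C^1$-bounds controls the intersections with each $\hat D_\alpha$ --- so the uniqueness clause of the implicit function theorem identifies it with a constructed solution and, run in reverse, furnishes continuity of the inverse. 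Compatibility with the stratifications is built in: the stratum of $\Mbar_\ast(T)_J$ containing a glued curve is determined by which $g_e$ and $g_v$ are finite and by $t\in\s(T)$, which is precisely the stratification $(G_\ast)_{T'//T}\to(\SSS_\ast)_{T'//T}$ of \S\ref{Gsec}, and a node of $C_{\mathbf g,\mathbf a}$ persists exactly when its coordinate in $\CC^{N(x_0)}$ vanishes; the whole construction is $\Aut(T'/T)$-equivariant, so it descends to $\Mbar_\ast(T)_J$.

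The main obstacle is the uniform linear estimate near the deepest strata, where many necks of widely differing lengths degenerate simultaneously and the relative-shift constraints of $(G_\ast)_{T'//T}$ couple them, all while the domain nodes are being smoothed: constructing a right inverse for $D$ bounded uniformly over a neighborhood of $0$ in $(G_\ast)_{T'//T}\times\CC^{N(x_0)}$ is the technical heart of \S\ref{gluingsec}. A further subtlety particular to this paper is that the compactifications collapse trivial cylinders, so one must check that $(G_\ast)_{T'//T}$ as defined already accounts for every degeneration --- no extra parameters for inserting or lengthening trivial cylinders are needed --- and that the cutoff pregluing is compatible with this collapsing.
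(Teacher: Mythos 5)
Your outline coincides with the paper's proof in \S\ref{gluingsec} in all of its main moves: pregluing over $(G_\ast)_{T'//T}\times\CC^{N(x_0)}$, a right inverse bounded uniformly as the necks degenerate, Newton--Picard iteration to produce true solutions, and surjectivity via compactness plus exponential decay on long necks.  The genuine gap is the absence of any gauge-fixing mechanism, and this is not a deferrable detail: without it the map \eqref{ultimategluingmap} is not even well-defined, and injectivity cannot be reduced to ``the uniqueness clause of the implicit function theorem.''  Your $\RR^{\mu(T)-\#V_s(T')-2\#N(x_0)+\dim E_I}$ factor is described as a complement, inside $\ker D$ at a glued curve, of the translation/neck/node/$E_{J\setminus I}$ directions; but that complement varies with the parameters, the distinguished directions you list exist only at glued curves (not at $x_0$, where neck-stretching and node-smoothing are not kernel elements of the linearized operator), and nothing in your setup transports a fixed slice coherently over all strata down to the corner.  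Likewise, a curve near $x_0$ does not come with a preferred presentation as an exponential graph over a preglued map: before IFT uniqueness can be invoked one must pin down the identification of its target with a specific $\hat X_g$ (i.e.\ the relative $\RR$-levels of the symplectization components), the neck parameters and twists, and the domain parameters.  The paper resolves exactly these points with concrete devices: auxiliary divisors $D_{v,i}$ and points $q_{v,i}$ stabilizing the domain, zero-level sections $q_v'$ giving the $\RR^{V_s(T')}$ gauge-fixing component of \eqref{Fgdef}, neck-midpoint markers $q_e''$ with the condition $\pi_{\RR\partial_s\oplus\RR R_\lambda}\xi(q_e'')=0$, the evaluation functional $L_g$ which fixes $\im Q_g=\ker L_g$ uniformly in $g$, and the kernel-gluing isomorphism $\ker D_0\xrightarrow\sim\ker D_g$.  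These are what make the parameters uniquely and continuously recoverable from a nearby curve (Lemma \ref{gluingbijective}, which also needs the Hofer--Wysocki--Zehnder long-cylinder asymptotics), and what make the glued family continuous across strata (Lemma \ref{gluingcontinuous}) --- a statement your sketch replaces by an assertion of smooth dependence, which is neither claimed nor available at the corner; the theorem is purely topological, and this cross-strata continuity (or, equivalently after bijectivity, continuity of the inverse) is where the uniform comparison of Newton iterations at nearby gluing parameters is actually needed.

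A smaller point: your derivation of $\mu(T)-\#V_s(T')-2\#N(x_0)+\dim E_I\geq 0$ argues at a glued regular curve whose existence is itself an output of the construction, and presumes the linear independence in $\ker D$ of the directions you list.  The clean argument is at $x_0$ itself: the gauge-fixed operator $D_0$ is surjective, the projection $\ker D_0\to E_{J\setminus I}$ is surjective, and the index of $D_0$ is $\mu(T)-\#V_s(T')-2\#N(x_0)+\dim E_J$, which immediately gives the inequality; this is how the paper concludes it.
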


\begin{proof}
See \S\S\ref{localgluingproofI}--\ref{localgluingproofIII}.
\end{proof}

The (Banach space) implicit function theorem moreover gives a canonical identification
\begin{equation}\label{Minteriororientations}
\oo_{\Mbar(T)_I^\reg}=\oo_T\otimes\oo_{E_I}.
\end{equation}
More precisely, this identification is made over the locus in $\M(T)_I^\reg$ without nodes, and has a unique continuous extension to all of $\Mbar(T)_I^\reg$ by virtue of the local topological description in Theorem \ref{localgluing}.  This identification is easily seen to be compatible with $\psi_{IJ}$ and with concatenations.  It is also compatible with morphisms $T'\to T$ in the following precise sense.

\begin{theorem}[Compatibility of the ``analytic'' and ``geometric'' maps on orientations]\label{localorientations}
The following diagram commutes:
\begin{equation}\label{gluingorientations}
\begin{tikzcd}
\oo_{\Mbar(T')^\reg_I}\otimes\oo_{G_{T'//T}}\otimes\oo_{s(T')}^\vee\ar[equals]{r}{\eqref{Minteriororientations}}\ar[equals]{d}{\eqref{ultimategluingmap}}&\oo_{T'}\otimes\oo_{E_I}\otimes\oo_{G_{T'//T}}\otimes\oo_{s(T')}^\vee\ar[equals]{d}{\eqref{algebraicorientationgluing}}\\
\oo_{\Mbar(T)^\reg_I}\ar[equals]{r}{\eqref{Minteriororientations}}&\oo_T\otimes\oo_{E_I},
\end{tikzcd}
\end{equation}
where the left vertical map is the ``geometric'' map induced by the local topological structure of $\Mbar(T)^\reg_I$ coming from \eqref{ultimategluingmap}, and the right vertical map is the ``analytic'' map \eqref{algebraicorientationgluing} defined earlier via the ``kernel gluing'' operation.
\end{theorem}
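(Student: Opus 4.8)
The plan is to reduce the statement to the assertion that the nonlinear gluing map of Theorem \ref{localgluing} realizes, on orientation lines, the Floer--Hofer linear (``kernel'') gluing operation. This is precisely what the gluing construction of \S\ref{gluingsec} is designed to respect, so the substantive content of the proof is a careful bookkeeping of signs.

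First I would make a reduction by continuity. The identification \eqref{Minteriororientations} is defined over the open dense locus of $\Mbar_\ast(T')^\reg_I$ consisting of buildings without nodes, and extends continuously by virtue of Theorem \ref{localgluing}; since both vertical maps in \eqref{gluingorientations} are continuous, it suffices to verify commutativity over this locus, where $\M_\ast(T')^\reg_I$ and $\M_\ast(T)^\reg_I$ are honest smooth manifolds cut out transversally by the $I$-thickened $\bar\partial$-operator. (The general nodal case then follows, or one passes to the normalization, the node factors $\CC^{N(x_0)}$ being canonically complex-oriented and contributing no sign; the finite group $\Aut(T'/T)$ acts compatibly with all identifications below and may be ignored.) So I fix $x_0\in\M_\ast(T')^\reg_I$ without nodes and a contraction $T'\to T$, and take $J=I$ in Theorem \ref{localgluing}; then \eqref{ultimategluingmap} becomes a germ of homeomorphism $(G_\ast)_{T'//T}\times\RR^{\mu(T)-\#V_s(T')+\dim E_I}\to(\Mbar_\ast(T)_I,x_0)$ which lands in the regular locus and, over the interior of each stratum, is a diffeomorphism onto a smooth submanifold of $\Mbar_\ast(T)^\reg_I$.

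Next I would unwind the ``geometric'' left vertical map using this local model. It expresses $\oo_{\Mbar_\ast(T)^\reg_I}$ near $x_0$ as $\oo_{(G_\ast)_{T'//T}}\otimes\oo_{\RR^{\mu(T)-\#V_s(T')+\dim E_I}}$, and the $\RR$-factor is identified by the gluing construction with the kernel of the linearized operator \eqref{totallinearizedthickened} of the glued curve, whose orientation is $\oo^0_T\otimes\oo_{E_I}$ by the implicit function theorem, i.e.\ by \eqref{Minteriororientations} applied to the glued operator. For the $\oo_{(G_\ast)_{T'//T}}$ factor, the explicit coordinates from the proofs of Lemmas \ref{GIIisPL} and \ref{GIVisPL} (the change of variables $h=e^{-g}$ and $q_e=h_e^2-h_{v'}^2$) identify $\oo_{(G_\ast)_{T'//T}}$ with a tensor product of copies of $\oo_\RR^\vee$, one for each symplectization vertex of $T'$ not surviving to $T$, together with $\oo_{\s(T)}\otimes\oo_{\s(T')}^\vee$; I would check this agrees, factor by factor, with the composite of the ``boundary'' maps on $(\oo_\RR^\vee)^{\otimes V_s}\otimes\oo_{\s}$ defined in \S\ref{thickorsec}. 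Granting this, the diagram \eqref{gluingorientations} collapses --- after using the horizontal equalities to cancel the auxiliary factors $\oo_\RR^{\otimes V_s}\otimes\oo_{\s}^\vee$ against the $(\oo_\RR^\vee)^{\otimes V_s}\otimes\oo_{\s}$ hidden in \eqref{Minteriororientations} and in $\oo_{(G_\ast)_{T'//T}}$ --- to the single assertion that the kernel of the glued operator, oriented via the implicit function theorem, is carried to $\oo^0_T$ by the \emph{same} isomorphism $\oo^0_{T'}\to\oo^0_T$ that Floer--Hofer kernel gluing produces.

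That last assertion is the linear core of the gluing theorem. The pre-glued approximate solution attached to a gluing parameter $g$ has a linearized operator differing from the true one by a uniformly small (exponentially decaying in the neck lengths) perturbation, and the Floer--Hofer kernel-gluing operation exhibits its kernel and cokernel --- hence its determinant line --- as obtained from those of the pieces; the Newton iteration upgrading the approximate solution to a genuine one is a further small perturbation that does not change the induced orientation. Concretely, I would consider the family of (thickened, pre-glued) Fredholm operators over the contractible space $(G_\ast)_{T'//T}$: its determinant line bundle is canonically trivialized by kernel gluing, with fiber $\oo^0_{T'}$ at the no-gluing end, hence globally identified with $\oo^0_T$; the same family, perturbed and cut out, is the local model \eqref{ultimategluingmap}, whose transverse zero set carries the geometric orientation, and the commutativity sought is the standard compatibility between the orientation of a transverse zero locus and the determinant line of its defining family of Fredholm operators (the same principle underlying \eqref{Minteriororientations}), applied in this family over $(G_\ast)_{T'//T}$. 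The main obstacle is the sign bookkeeping: one must fix a single consistent convention --- which $W^{k,2,\delta}$ versus $\tilde W^{k,2,\delta}$ model of the operators is glued (the index shift under kernel gluing being $+2$, $0$, or $-2$ respectively, as recalled in \S\ref{thickorsec}), the order in which the $\RR$-translation and $\s(T)$-variation directions are listed, and the orientation of the $h$/$q$-coordinates on $(G_\ast)_{T'//T}$ --- so that the four pieces of bookkeeping (kernel gluing, the identification \eqref{Minteriororientations}, the boundary maps of \S\ref{thickorsec}, and the translation count) fit together; once these are fixed compatibly, both routes around \eqref{gluingorientations} reduce to the same coordinate-by-coordinate comparison, and the diagram commutes.
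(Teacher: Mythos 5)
Your proposal is correct and takes essentially the same route as the paper's proof: restrict to the non-nodal locus with $J=I$, identify the derivative of the gluing map with the kernel-pregluing map ${(1-Q_gD_g)}\circ{\calib}\circ{\glue}\circ{\PT}:\ker D_0\to\ker D_g$, and observe that this is exactly the Floer--Hofer analytic kernel gluing up to a finite-dimensional ($\J$, $E_I$, point-condition) discrepancy and bookkeeping of the $(\oo_\RR^\vee)^{\otimes V_s}$ and $\oo_{\s}$ factors. The paper makes your ``the Newton iteration is a small perturbation not affecting orientations'' step precise by noting that $Q_g$ remains an approximate right inverse to $\F_g'(\xi,\cdot)$ over the whole ball $B(c_{k,\delta}')$, so the kernels $\ker\F_g'(\xi,\cdot)$ form an oriented vector bundle allowing the comparison to be made at $\xi=0$; this is the same idea as your determinant-line-bundle framing.
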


\begin{proof}
See \S\ref{localorientationsproof}.
\end{proof}

\section{Virtual fundamental cycles}

In this section, we prove Theorem \ref{main} as stated in the introduction.
Specifically, Theorem \ref{main} follows by combining Definition \ref{thetadefs} with Remark \ref{rmkmapofSgivesvmc}, Proposition \ref{vmctransverse}, and Proposition \ref{thetanonempty}.

\subsection{Review of the VFC package}

We begin with a review of the framework introduced in \cite{pardonimplicitatlas} for defining the virtual fundamental cycle of a space equipped with an implicit atlas.
We state here all the results from \cite{pardonimplicitatlas} which we will be appealing to in the rest of this section.
We use $\QQ$ coeffients throughout, and orientation lines are tacitly tensored up to $\QQ$.

\subsubsection{The basic formalism}

Let $X$ be a compact Hausdorff space equipped with an implicit atlas $A$ with boundary in the sense of Definition \ref{implicitatlasdefinitionbdry} (recall also Remark \ref{forgetstratificationrmk}), oriented with respect to $\oo$ and of virtual dimension $d$ (meaning these are the orientation line and virtual dimension of the top stratum).
Assume for the moment that $A$ is finite, and let
\begin{equation}
C_\bullet(E;A):=C_{\dim E_A+\bullet}(E_A,E_A\setminus 0;\oo_{E_A}^\vee)^{\Gamma_A}
\end{equation}
following \cite[Definition 4.2.4]{pardonimplicitatlas} (the superscript indicates taking $\Gamma_A$-invariants); note that there is a canonical isomorphism $H_\bullet(E;A)=\QQ$ (concentrated in degree zero).
Now for any $(X,A)$ as above, \cite[Definition 4.2.6]{pardonimplicitatlas} defines a cochain complex
\begin{equation}
C_\vir^\bullet(X\rel\partial;A)
\end{equation}
together with a canonical pushforward map
\begin{equation}\label{sastvir}
C_\vir^{d+\bullet}(X\rel\partial;A)\to C_{-\bullet}(E;A),
\end{equation}
and \cite[Theorem 4.3.4]{pardonimplicitatlas} provides a canonical isomorphism
\begin{equation}\label{Cviriscech}
H_\vir^\bullet(X\rel\partial;A)=\cH^\bullet(X,j_!\oo),
\end{equation}
where $j:X\setminus\partial X\hookrightarrow X$.
The map \eqref{sastvir} is the (chain level!) virtual fundamental cycle of $X$ (taking cohomology and combining with \eqref{Cviriscech} yields a map $\cH^d(X,j_!\oo)\to\QQ$ which is, by definition, integration over the virtual fundamental class of $X$).

\begin{remark}
As an aside, let us provide some very brief motivation for this formalism of virtual fundamental cycles (the reader may refer to \cite[\S\S 1--2]{pardonimplicitatlas} for details).
Suppose that for some $\alpha\in A$, the $\alpha$-thickened space $X_\alpha$ is everywhere regular (namely $X_\alpha=X_\alpha^\reg$), has no isotropy (meaning $\Gamma_\alpha=1$), and has footprint all of $X$ (that is $U_{\varnothing,\{\alpha\}}=X_\varnothing$).
In this situation, we have $X=s_\alpha^{-1}(0)$ for $s_\alpha:X_\alpha\to E_\alpha$, and we obviously want to define $[X]^\vir:=[X_\alpha]\cap s_\alpha^\ast\tau_{E_\alpha}$, where $\tau_{E_\alpha}$ denotes the Thom class of $E_\alpha$.
Another way of stating the equality $[X]^\vir=[X_\alpha]\cap s_\alpha^\ast\tau_{E_\alpha}$ is to say that evaluation on $[X]^\vir$ is given by the composition
\begin{equation}\label{basicvfc}
\cH^\bullet(X)=H_{\dim X_\alpha-\bullet}(X_\alpha,X_\alpha\setminus X)\xrightarrow{(s_\alpha)_\ast} H_{\dim X_\alpha-\bullet}(E_\alpha,E_\alpha\setminus 0)=\QQ[\dim E_\alpha-\dim X_\alpha]
\end{equation}
(ignoring orientation lines for the moment), where the first equality is a form of Poincar\'e duality.
In general, the complex $C_\vir^\bullet(X\rel\partial;A)$ is built out of relative chain groups of (roughly speaking) the regular loci in the thickened spaces $X_I^\reg$, and \eqref{sastvir}--\eqref{Cviriscech} are a direct generalization of \eqref{basicvfc}.
\end{remark}

For an inclusion of implicit atlases $A\subseteq A'$ on the same space $X$, there are canonical quasi-isomorphisms
\begin{align}
\label{Cvirinflate}C_\vir^\bullet(X\rel\partial;A)&\xrightarrow\sim C_\vir^\bullet(X\rel\partial;A'),\\
\label{CEinflate}C_\bullet(E;A)&\xrightarrow\sim C_\bullet(E;A'),
\end{align}
which compose as expected and are compatible with \eqref{sastvir} (on the chain level), see \cite[Definition 4.2.7]{pardonimplicitatlas}.\footnote{More precisely, the maps defined in \cite{pardonimplicitatlas} carry an extra factor of ${}\otimes C_\bullet(E;A'\setminus A)$ on the left.  Let $[E_\alpha]\in C_0(E;\alpha)$ be the fundamental cycle obtained by pulling back some $[\RR^n]\in C_n(\RR^n,\RR^n\setminus 0;\oo_{\RR^n}^\vee)$ (fixed once and for all) under the specified identification $E_\alpha=\RR^{n_\alpha}$ and averaging over $\Gamma_\alpha$ (alternatively, we could modify the definition of a thickening datum to include the data of a fundamental cycle $[E_\alpha]\in C_0(E;\alpha)$).  Now \eqref{Cvirinflate}--\eqref{CEinflate} are defined by pre-composing the maps in \cite{pardonimplicitatlas} with $\otimes\bigotimes_{\alpha\in A'\setminus A}[E_\alpha]$.}
These maps allow us to remove our assumption that $A$ is finite, namely by defining $C_\bullet(E;A)$ and $C_\vir^\bullet(X\rel\partial;A)$ for general $A$ by taking the direct limit over the collection of finite subsets of $A$.

\begin{lemma}[{\cite[Lemma 5.2.6]{pardonimplicitatlas}}]\label{transversevfc}
If $X=X^\reg$, then the map $\cH^d(X;j_!\oo)\to\QQ$ from \eqref{sastvir} is evaluation on the ordinary fundamental class $[X]\in H_d(X,\partial X;\oo^\vee)$.
\end{lemma}

\subsubsection{Homotopy sheaves}

Recall from \cite[Definition A.2.5]{pardonimplicitatlas} that a presheaf of complexes $\F^\bullet$ on the category of compact subsets of a locally compact Hausdorff space $X$ is called a \emph{homotopy sheaf} iff it satisfies the following three axioms:
\begin{enumerate}
\item[(hSh1)]$\F^\bullet(\varnothing)$ is acyclic.
\item[(hSh2)]$\bigl[\F^\bullet(K_1\cup K_2)\to\F^{\bullet-1}(K_1)\oplus\F^{\bullet-1}(K_2)\to\F^{\bullet-2}(K_1\cap K_2)\bigr]$ is acyclic for all $K_1,K_2\subseteq X$.
\item[(hSh3)]$\varinjlim_{\begin{smallmatrix}K\subseteq U\\U\textrm{ open}\end{smallmatrix}}\F^\bullet(\overline U)\to\F^\bullet(K)$ is a quasi-isomorphism for all $K\subseteq X$.
\end{enumerate}
A homotopy sheaf should be thought of as a complex of presheaves which calculates its own cohomology, in the sense that the natural map $H^\bullet\F^\bullet(K)\to\cH^\bullet(K,\F^\bullet)$ is an isomorphism \cite[Proposition A.4.14]{pardonimplicitatlas}.

Recall from \cite[Definition A.5.1]{pardonimplicitatlas} that a homotopy sheaf $\F^\bullet$ is called \emph{pure} iff it satisfies the following two axioms:{
\setlength\abovedisplayskip{3pt}
\setlength\belowdisplayskip{3pt}
\setlength\abovedisplayshortskip{3pt}
\setlength\belowdisplayshortskip{3pt}
\begin{align}
\label{purestalks}&\text{For all $p\in X$ and $i\ne 0$, we have $H^i\F^\bullet(\{p\})=0$.}\\
\label{pureweakvanishing}&\text{For all $p\in X$, there exists a neighborhood $U\subseteq X$ of $p$ and an integer $i_0$}\\
&\hphantom{\text{For all $p\in X$,}}\;\,\text{such that $H^i\F^\bullet(K)=0$ for all $K\subseteq U$ and $i\leq i_0$.}\nonumber
\end{align}
}From a conceptual standpoint, it is condition \eqref{purestalks} which is most significant, and it should be thought of as saying that $\F^\bullet$ is (stalkwise) quasi-isomorphic to a sheaf $H^0\F^\bullet$ \cite[Lemma A.5.3]{pardonimplicitatlas}.
In particular, for any pure homotopy sheaf $\F^\bullet$, there is a canonical isomorphism
\begin{equation}\label{htpysheaffundiso}
H^\bullet\F^\bullet(X)=\cH^\bullet(X;H^0\F^\bullet)
\end{equation}
by \cite[Proposition A.5.4]{pardonimplicitatlas}.

The complex $C_\vir^\bullet(X\rel\partial;A)$ extends naturally to a presheaf of complexes
\begin{equation}\label{virpresheaf}
K\mapsto C_\vir^\bullet(X\rel\partial;A)_K
\end{equation}
on compact subsets $K\subseteq X$, with complex of global sections $C_\vir^\bullet(X\rel\partial;A)=C_\vir^\bullet(X\rel\partial;A)_X$.\footnote{In \cite{pardonimplicitatlas}, the relatively inferior notation $C_\vir^\bullet(K\rel\partial;A)$ is used in place of $C_\vir^\bullet(X\rel\partial;A)_K$.}
By \cite[Proposition 4.3.3]{pardonimplicitatlas}, this presheaf is a pure homotopy sheaf.
In more detail, axiom (hSh1) is trivial, axiom (hSh2) follows from Mayer--Vietoris for singular chains, axiom (hSh3) follows from compactness of simplices, axiom \eqref{purestalks} is a local homology calculation (essentially $H_{\dim M-\bullet}(M,M\setminus p)=\oo_{M,p}$), and axiom \eqref{pureweakvanishing} follows from the finite-dimensionality of the thickened moduli spaces comprising the implicit atlas $A$.

The sheaf $K\mapsto H_\vir^0(X\rel\partial;A)_K$ associated to \eqref{virpresheaf} is canonically isomorphic to $j_!\oo$ on $X$.
Furthermore, this isomorphism has a natural local description in terms of Poincar\'e duality \cite[Lemma 4.3.2]{pardonimplicitatlas}.
The isomorphism \eqref{Cviriscech} is thus a special case of \eqref{htpysheaffundiso}.

\subsubsection{Stratifications}

Let $X$ be a compact Hausdorff space equipped with an implicit atlas $A$ with oriented cell-like stratification in the sense of Definition \ref{implicitatlasdefinition}.
The implicit atlas on $X$ induces implicit atlases on each closed stratum $X_{/\ttt}$ for $\ttt\in\TTT$, simply by defining $(X_{/\ttt})_I:=(X_I)_{/\ttt}$.
To understand how the virtual fundamental cycles of each of the strata $X_{/\ttt}$ fit together, \cite[Definition 6.2.2]{pardonimplicitatlas} defines
\begin{equation}\label{stratifiedcomplex}
C_\vir^\bullet(X,\TTT;A):=\bigoplus_{\ttt\in\TTT}\left[C_\vir^\bullet(X_{/\ttt}\rel\partial;A)\otimes\oo_\ttt^\vee\otimes\oo_{\ttt^\ttop}\right]^{\Aut(\ttt)},
\end{equation}
(the superscript indicates taking $\Aut(\ttt)$-invariants)
equipped with the differential given by (the internal differential plus) pushing forward along all codimension one maps $\ttt\to\ttt'$ in $\TTT$ (and contracting on the left with a chosen orientation of $\RR$).
There is now a canonical isomorphism
\begin{equation}\label{stratcechviriso}
H_\vir^\bullet(X;A)=\cH^\bullet(X;\oo_{\ttt^\ttop}).
\end{equation}
This is shown in \cite[Propositions 6.2.3 and 4.3.3]{pardonimplicitatlas} for stratifications by posets, and the reasoning there applies without modification to the present setting of stratifications by categories in the sense of Definition \ref{stratificationcategorydef}.

To be more precise, the proof of the isomorphism \eqref{stratcechviriso} proceeds by considering the presheaf of complexes on $X$ given by
\begin{equation}\label{stratifiedcomplexpresheaf}
K\mapsto C_\vir^\bullet(X,\TTT;A)_K:=\bigoplus_{\ttt\in\TTT}\left[C_\vir^\bullet(X_{/\ttt}\rel\partial;A)_{K_{/\ttt}}\otimes\oo_\ttt^\vee\otimes\oo_{\ttt^\ttop}\right]^{\Aut(\ttt)},
\end{equation}
with differential as in \eqref{stratifiedcomplex}.
It is easy to see that $K\mapsto C_\vir^\bullet(X\rel\partial;A)_K$ being a homotopy sheaf implies the same for $K\mapsto C_\vir^\bullet(X,\TTT;A)_K$ \cite[Lemma A.2.11]{pardonimplicitatlas}, and it is straightforward to argue that purity also passes.
The sheaf $K\mapsto H^0_\vir(X,\TTT;A)_K$ now has a filtration whose associated graded is the direct sum
\begin{equation}
K\mapsto\bigoplus_{\ttt\in\TTT}\left[H_\vir^0(X_{/\ttt}\rel\partial;A)_{K_{/\ttt}}\otimes\oo_\ttt^\vee\otimes\oo_{\ttt^\ttop}\right]^{\Aut(\ttt)}
\end{equation}
There is thus a natural stalkwise isomorphism of $K\mapsto H_\vir^0(X,\TTT;A)_K$ with the constant sheaf $\oo_{\ttt^\ttop}$ on $X$, and a local construction \cite[Example 6.2.1]{pardonimplicitatlas} along with \cite[Lemma 4.3.2]{pardonimplicitatlas} shows that it comes from an isomorphism of sheaves.

\subsubsection{Products}

For spaces $X$ and $Y$ with implicit atlases $A$ and $B$, by \cite[Definition 6.3.2]{pardonimplicitatlas} there is a canonical product map
\begin{equation}\label{Cvirprod}
C_\vir^\bullet(X\rel\partial;A)\otimes C_\vir^\bullet(Y\rel\partial;B)\to C_\vir^\bullet(X\times Y\rel\partial;A\sqcup B),
\end{equation}
compatible with \eqref{sastvir} and \eqref{Cvirinflate}--\eqref{CEinflate}.
Under the isomorphisms \eqref{Cviriscech}, this map is simply the usual K\"unneth product map.
This is not mentioned explicitly in \cite{pardonimplicitatlas}, rather it follows from Lemma \ref{kunnethforpure} below and recalling from \cite[Definition 6.3.2]{pardonimplicitatlas} that the map \eqref{Cvirprod} extends naturally to a family of maps
\begin{equation}\label{Cvirprodsheafy}
C_\vir^\bullet(X\rel\partial;A)_K\otimes C_\vir^\bullet(Y\rel\partial;B)_{K'}\to C_\vir^\bullet(X\times Y\rel\partial;A\sqcup B)_{K\times K'}
\end{equation}
compatible with restriction.

\begin{lemma}\label{kunnethforpure}
Let $\F^\bullet$, $\G^\bullet$, $\HH^\bullet$ be pure homotopy sheaves on spaces $X$, $Y$, $X\times Y$, respectively.  Let $\F^\bullet(K)\otimes\G^\bullet(K')\to\HH^\bullet(K\times K')$ be a collection of maps compatible with restriction.  Then the following diagram commutes:
\begin{equation}
\begin{tikzcd}
H^\bullet\F^\bullet(X)\otimes H^\bullet\G^\bullet(Y)\ar{r}\ar{d}{\eqref{htpysheaffundiso}}&H^\bullet\HH^\bullet(X\times Y)\ar{d}{\eqref{htpysheaffundiso}}\\
\cH^\bullet(X;H^0\F^\bullet)\otimes\cH^\bullet(Y;H^0\G^\bullet)\ar{r}&\cH^\bullet(X\times Y;H^0\HH^\bullet),
\end{tikzcd}
\end{equation}
where the bottom horizontal arrow is the usual K\"unneth cup product map.
\end{lemma}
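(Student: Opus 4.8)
The plan is to realise both horizontal arrows of the square by an explicit \v{C}ech cup product and then to check that the isomorphism of \cite[Proposition A.5.4]{pardonimplicitatlas} is multiplicative for it. First I would pass to \v{C}ech models. By the homotopy $\K$-sheaf axiom, for a finite cover $\mathcal U$ of $X$ by elements of $\K$ the augmentation $\F^\bullet(X)\to\cC^\bullet(\mathcal U;\F^\bullet)$ into the totalised \v{C}ech complex becomes a quasi-isomorphism in the colimit over refinements, and likewise for $\G^\bullet$ on $Y$ and for $\HH^\bullet$ on $X\times Y$ (using that the products $U\times V$ form a cofinal family of covers of $X\times Y$). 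The given collection of maps $\F^\bullet(K)\otimes\G^\bullet(K')\to\HH^\bullet(K\times K')$, being compatible with restriction, induces a map of complexes $\cC^\bullet(\mathcal U;\F^\bullet)\otimes\cC^\bullet(\mathcal V;\G^\bullet)\to\cC^\bullet(\mathcal U\times\mathcal V;\HH^\bullet)$ by the usual \v{C}ech cup-product formula (multiply the two cochains on the overlap of the index tuples, using the coefficient maps), compatibly with refinement; passing to the colimit, this represents the top arrow of the diagram. The bottom arrow is represented in exactly the same way, starting instead from the induced pairing of zeroth cohomology $\K$-sheaves $H^0\F^\bullet\otimes H^0\G^\bullet\to H^0\HH^\bullet$ (a morphism of $\K$-presheaves, since the original maps are), which gives precisely the \v{C}ech--K\"unneth cup product on $\cH^\bullet$.

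It then remains to verify that the isomorphisms $H^n\F^\bullet(X)\xrightarrow{\sim}\cH^n(X;H^0\F^\bullet)$ of \cite[Proposition A.5.4]{pardonimplicitatlas}, and their analogues for $\G^\bullet$ and $\HH^\bullet$, carry the first cup product to the second. Here I would unwind the construction of that isomorphism: it comes from a filtration of the \v{C}ech bicomplex (equivalently, the canonical filtration of $\F^\bullet$) together with purity, which collapses the associated spectral sequence onto the part where the coefficient cohomology lies in degree zero, so that the surviving edge term is $\cH^\bullet(X;H^0\F^\bullet)$. Since the \v{C}ech cup product above is filtered for this filtration, it induces a pairing of the associated spectral sequences; on the relevant page this pairing is visibly the \v{C}ech--K\"unneth cup product applied to the induced coefficient maps on $H^0$, and the collapse identifies the resulting pairing on the abutment with the cup product on global sections. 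Commutativity of the square follows.

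The step I expect to be the main obstacle is making the comparison of the previous paragraph precise: one must choose the quasi-isomorphisms $\F^\bullet(X)\to\cC^\bullet(\mathcal U;\F^\bullet)$, the filtrations witnessing purity, and the cup products \emph{simultaneously and coherently} for all three sheaves, so that the passage $\F^\bullet\rightsquigarrow H^0\F^\bullet$ is genuinely lax monoidal rather than merely compatible up to an unspecified homotopy, and one must track the Koszul signs introduced by totalisation. This is a bookkeeping argument with no essential new difficulty, since every construction it uses is already present in \cite[\S A]{pardonimplicitatlas}.
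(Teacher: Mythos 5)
Your proposal is correct in substance and rests on the same essential mechanism as the paper, but it is packaged differently, and the difference is worth noting. The paper avoids spectral sequences entirely: it writes a single $3\times 4$ commutative diagram whose columns are the zigzag $\F^\bullet\leftarrow\tau_{\leq 0}\F^\bullet\to H^0\F^\bullet$ (and its analogues for $\G^\bullet$, $\HH^\bullet$) fed through the \v Cech complexes, with the middle column being $H^\bullet$ of the tensor product of the two \v Cech complexes; purity makes all outer vertical maps isomorphisms by the same reasoning as in Proposition A.5.4, and the outer square is the desired one. The crucial point — flagged explicitly in the paper — is the direction of truncation: there is a natural pairing $\tau_{\leq 0}\F^\bullet\otimes\tau_{\leq 0}\G^\bullet\to\tau_{\leq 0}\HH^\bullet$, but no natural pairing for $\tau_{\geq 0}$, so the comparison must be routed through $\tau_{\leq 0}$. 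This is exactly the lax-monoidality issue you identify as ``the main obstacle'': your appeal to ``the canonical filtration'' implicitly makes the right choice, and the multiplicativity of that filtration is literally the map $\tau_{\leq 0}\otimes\tau_{\leq 0}\to\tau_{\leq 0}$ the paper uses, but your spectral-sequence formulation defers the coherence bookkeeping (simultaneous choices of \v Cech models, filtrations, and cup products, plus the edge-map identification with the isomorphism of Proposition A.5.4) that the paper's explicit zigzag diagram resolves in one stroke. So your route would work, but the paper's is shorter and makes the one genuinely delicate point — which truncation is compatible with the product — explicit rather than leaving it inside ``bookkeeping.''
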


\begin{proof}
We consider the following commutative diagram:
\begin{equation}
\hspace{-1in}
\begin{tikzcd}[column sep = tiny]
H^\bullet\F^\bullet(X)\otimes H^\bullet\G^\bullet(Y)\ar{r}\ar{d}{\sim}&H^\bullet\left[\F^\bullet(X)\otimes\G^\bullet(Y)\right]\ar{r}\ar{d}&H^\bullet\HH^\bullet(X\times Y)\ar{d}{\sim}\\
\cH^\bullet(X;\F^\bullet)\otimes\cH^\bullet(Y;\G^\bullet)\ar{r}\ar[leftarrow]{d}{\sim}&H^\bullet\!\left[\cC^\bullet(X;\F^\bullet)\otimes\cC^\bullet(Y;\G^\bullet)\right]\ar{r}\ar[leftarrow]{d}&\cH^\bullet(X\times Y,\HH^\bullet)\ar[leftarrow]{d}{\sim}\\
\cH^\bullet(X;\tau_{\leq 0}\F^\bullet)\otimes\cH^\bullet(Y;\tau_{\leq 0}\G^\bullet)\ar{r}\ar{d}{\sim}&H^\bullet\!\left[\cC^\bullet(X;\tau_{\leq 0}\F^\bullet)\otimes\cC^\bullet(Y;\tau_{\leq 0}\G^\bullet)\right]\ar{r}\ar{d}&\cH^\bullet(X\times Y,\tau_{\leq 0}\HH^\bullet)\ar{d}{\sim}\\
\cH^\bullet(X;H^0\F^\bullet)\otimes\cH^\bullet(Y;H^0\G^\bullet)\ar{r}&H^\bullet\!\left[\cC^\bullet(X;H^0\F^\bullet)\otimes\cC^\bullet(Y;H^0\G^\bullet)\right]\ar{r}&\cH^\bullet(X\times Y,H^0\HH^\bullet).
\end{tikzcd}
\hspace{-1in}
\end{equation}
Note that there is a natural map $\tau_{\leq 0}\F^\bullet\otimes\tau_{\leq 0}\G^\bullet\to\tau_{\leq 0}(\F^\bullet\otimes\G^\bullet)\to\tau_{\leq 0}\HH^\bullet$, but no such natural map with $\tau_{\geq 0}$ in place of $\tau_{\leq 0}$.
It is therefore important that the vertical columns above are of the form $\F^\bullet\leftarrow\tau_{\leq 0}\F^\bullet\to H^0\F^\bullet$ rather than $\F^\bullet\to\tau_{\geq 0}\F^\bullet\leftarrow H^0\F^\bullet$.

Now the far right and far left vertical maps above are all isomorphisms and define \eqref{htpysheaffundiso} (see the proof of \cite[Proposition A.5.4]{pardonimplicitatlas}).
The outer square is exactly the desired diagram, so we are done.
\end{proof}

\subsection{Sketch of the construction}\label{vfcsketch}

We now sketch the construction of virtual fundamental cycles on the moduli spaces $\Mbar(T)$.  The remainder of this section is then devoted to turning this sketch into an actual proof.

The virtual fundamental cycle of a space with an implicit atlas is represented by the pushforward map \eqref{sastvir}.
Specializing this to the case at hand, the virtual fundamental class $[\Mbar(T)]^\vir$ is represented by a canonical map
\begin{equation}\label{sketchvfcpush}
C_\vir^{\vdim(T)+\bullet}(\Mbar(T)\rel\partial;\bar A(T))\to C_{-\bullet}(E;\bar A(T)).
\end{equation}
The domain of this map is quasi-isomorphic to the complex of \v Cech cochains $\cC^{\vdim(T)+\bullet}(\Mbar(T)\rel\partial;\oo_T)$ by \eqref{Cviriscech}, and the target is quasi-isomorphic to $\QQ$.  Thus, up to quasi-isomorphism, \eqref{sketchvfcpush} can be regarded as a map
\begin{equation}
\cC^{\vdim(T)+\bullet}(\Mbar(T)\rel\partial;\oo_T)\to\QQ,
\end{equation}
which is nothing other than a cycle
\begin{equation}\label{sketchvfc}
[\Mbar(T)]^\vir\in\overline C_{\vdim(T)}(\Mbar(T)\rel\partial;\oo_T^\vee).
\end{equation}
We note that the dual of \v Cech cochains $\cC^\bullet$ is Steenrod chains $\overline C_\bullet$ (see \cite[\S A.9]{pardonimplicitatlas} and the references therein), although this level of precision is not really relevant for the present sketch.

Now the functoriality of \eqref{sketchvfcpush} with respect to morphisms and concatenations in $\SSS$ implies that the virtual fundamental cycles \eqref{sketchvfc} satisfy
\begin{align}
\label{vfcidI}\partial[\Mbar(T)]^\vir&=\sum_{\codim(T'/T)=1}\frac 1{\left|\Aut(T'/T)\right|}[\Mbar(T')]^\vir,\\
\label{vfcidII}[\Mbar(\#_iT_i)]^\vir&=\frac 1{\left|\Aut(\{T_i\}_i/\#T_i)\right|}\prod_i[\Mbar(T_i)]^\vir,
\end{align}
which clearly imply the desired master equations.

To turn this sketch into an actual construction, we must carry out the above arguments on the chain level, with all the necessary chain level functoriality with respect to morphisms and concatenations in $\SSS$.  This is not completely trivial, since many of the chain maps we are given go in the wrong direction (basically, we need to ``invert quasi-isomorphisms'').  The rest of this section is devoted to performing the necessary algebraic manipulations.

\subsection{\texorpdfstring{$\SSS$}{S}-modules}

We begin by introducing the notion of an $\SSS$-module.  This notion formalizes the way in which the moduli spaces $\Mbar(T)$ fit together under morphisms and concatenations in $\SSS$ (namely \eqref{Mbarfunct}--\eqref{Mbarprod}) and will be used below to efficiently encode the identities \eqref{vfcidI}--\eqref{vfcidII} satisfied by the virtual fundamental cycles.  In fact, many (or even most) of the objects introduced and studied earlier in this paper are $\SSS$-modules, as we explain in the examples which follow the definition.  The notion of an $\SSS$-module plays a key organizational role in what follows.

\begin{definition}
An object $T\in\SSS$ will be called \emph{effective} iff $\Mbar(T)\ne\varnothing$.
\end{definition}

Note that (1) for any morphism $T\to T'$, if $T$ is effective, then so is $T'$, and (2) for any concatenation $\{T_i\}_i$, every $T_i$ is effective iff $\#_iT_i$ is effective.
\emph{For the remainder of this section, we will use $\SSS$ to denote the full subcategory spanned by effective objects.}

\begin{definition}\label{SImoddef}
An \emph{$\SSS_\I$-module} $X_\I$ valued in a symmetric monoidal category $\C^\otimes$ consists of the following data:
\begin{enumerate}
\item A functor $X_\I:\SSS_\I\to\C$.
\item For every concatenation of $\{T_i\}_i$ in $\SSS_\I$, a morphism
\begin{equation}\label{TImodprod}
\bigotimes_iX_\I(T_i)\to X_\I(\#_iT_i),
\end{equation}
such that the following diagrams commute:
\begin{equation}\label{TImodidentities}
\begin{tikzcd}
\displaystyle\bigotimes_iX_\I(T_i)\ar{r}{\eqref{TImodprod}}\ar{d}&X_\I(\#_iT_i)\ar{d}\\
\displaystyle\bigotimes_iX_\I(T_i')\ar{r}{\eqref{TImodprod}}&X_\I(\#_iT_i'),
\end{tikzcd}
\begin{tikzcd}[column sep = tiny]
{}&\displaystyle\bigotimes_iX_\I(\#_jT_{ij})\ar{dr}{\eqref{TImodprod}}\\
\displaystyle\bigotimes_{i,j}X_\I(T_{ij})\ar{ur}{\bigotimes_i\eqref{TImodprod}}\ar{rr}{\eqref{TImodprod}}&&X_\I(\#_{ij}T_{ij}),
\end{tikzcd}
\end{equation}
for any morphism of concatenations $\{T_i\}_i\to\{T_i'\}_i$ and any composition of concatenations, respectively.
\end{enumerate}
A morphism of $\SSS_\I$-modules is a natural transformation of functors compatible with \eqref{TImodprod}.

An \emph{$\SSS_\I^\op$-module} is defined similarly, except that $X_\I:\SSS_\I^\op\to\C$ and the vertical arrows in the leftmost diagram in \eqref{TImodidentities} are reversed.
Note that an $\SSS_\I^\op$-module valued in $\C$ is \emph{not} the same thing as an $\SSS_\I$-module valued in $\C^\op$ (this is analogous to the difference between a lax monoidal functor and an oplax monoidal functor).
\end{definition}

\begin{example}\label{SIexI}
The functor $\Mbar_\I$ is an $\SSS_\I$-module (valued in the category of compact Hausdorff spaces, with the product symmetric monoidal structure).
\end{example}

\begin{example}
The functor $\oo^\circ$ is an $\SSS_\I$-module (valued in the category of orientation lines and isomorphisms, with the super tensor product symmetric monoidal structure).
\end{example}

\begin{example}
The functor $\bar A_\I$ is an $\SSS_\I^\op$-module (valued in the category of sets, with the disjoint union symmetric monoidal structure).
\end{example}

\begin{example}\label{SIexIV}
The functor $T\mapsto(\SSS_\I)_{/T}$ is an $\SSS_\I$-module (valued in the category of categories, with the product symmetric monoidal structure).
\end{example}

\begin{definition}\label{SIImoddef}
An \emph{$\SSS_\II$-module} $X_\II$ valued in $\C^\otimes$ consists of the following data:
\begin{enumerate}
\item An $\SSS_\I^+$-module $X_\I^+$ valued in $\C^\otimes$.
\item An $\SSS_\I^-$-module $X_\I^-$ valued in $\C^\otimes$.
\item A functor $X_\II:\SSS_\II\to\C$.
\item For every concatenation $\{T_i\}_i$ in $\SSS_\II$, a morphism
\begin{equation}\label{TIImodprod}
\bigotimes_{T_i\in\SSS_\I^+}X_\I^+(T_i)\otimes
\bigotimes_{T_i\in\SSS_\I^-}X_\I^-(T_i)\otimes
\bigotimes_{T_i\in\SSS_\II}X_\II(T_i)
\to X_\II(\#_iT_i),
\end{equation}
such that the following diagrams commute:
\begin{equation}
\hspace{-1in}
\begin{tikzcd}
\displaystyle\bigotimes_iX_{\I/\II}(T_i)\ar{r}\ar{d}&X_{\I/\II}(\#_iT_i)\ar{d}\\
\displaystyle\bigotimes_iX_{\I/\II}(T_i')\ar{r}&X_{\I/\II}(\#_iT_i'),
\end{tikzcd}
\begin{tikzcd}[column sep = tiny]
{}&\displaystyle\bigotimes_iX_{\I/\II}(\#_jT_{ij})\ar{dr}\\
\displaystyle\bigotimes_{i,j}X_{\I/\II}(T_{ij})\ar{ur}\ar{rr}&&X_{\I/\II}(\#_{ij}T_{ij}),
\end{tikzcd}
\hspace{-1in}
\end{equation}
for any morphism or composition of concatenations, respectively.
\end{enumerate}
A morphism of $\SSS_\II$-modules consists of natural transformations of functors compatible with \eqref{TImodprod}, \eqref{TIImodprod}.
\end{definition}

It would perhaps be more proper to speak of ``$(\SSS_\I^\pm,\SSS_\II)$-modules $(X_\I^\pm,X_\II)$'', though such notation rapidly becomes unwieldy.  It will usually be clear from context what $X_\I^\pm$ are once we have specified $X_\II$, though at times we will specify the pair $(X_\I^\pm,X_\II)$ for sake of clarity.

Examples \ref{SIexI}--\ref{SIexIV} all generalize: $(\Mbar{\vphantom{\M}}_\I^\pm,\Mbar_\II)$ is an $(\SSS_\I^\pm,\SSS_\II)$-module, etc.

\begin{definition}\label{SIIImoddef}
An \emph{$\SSS_\III$-module} $X_\III$ valued in $\C^\otimes$ consists of the following data:
\begin{enumerate}
\item An $\SSS_\I^+$-module $X_\I^+$ valued in $\C^\otimes$.
\item An $\SSS_\I^-$-module $X_\I^-$ valued in $\C^\otimes$.
\item An $(\SSS_\I^\pm,\SSS_\II^{t=0})$-module $(X_\I^\pm,X_\II^{t=0})$ valued in $\C^\otimes$.
\item An $(\SSS_\I^\pm,\SSS_\II^{t=1})$-module $(X_\I^\pm,X_\II^{t=1})$ valued in $\C^\otimes$.
\item A functor $X_\III:S_\III\to\C$.
\item For every concatenation $\{T_i\}_i$ in $\SSS_\III$, a morphism
\begin{equation}\label{TIIImodprod}
\bigotimes_iX_{\I/\II/\III}(T_i)\to X_\III(\#T_i),
\end{equation}
satisfying the natural compatibility conditions, as in Definition \ref{SIImoddef}.
\end{enumerate}
A morphism of $\SSS_\III$-modules consists of natural transformations of functors compatible with \eqref{TImodprod}, \eqref{TIImodprod}, \eqref{TIIImodprod}.
\end{definition}

\begin{definition}\label{SIVmoddef}
An \emph{$\SSS_\IV$-module} $X_\IV$ valued in $\C^\otimes$ consists of the following data:
\begin{enumerate}
\item$\SSS_\I^i$-modules $X_\I^i$ valued in $\C^\otimes$ for $0\leq i\leq 2$.
\item$(\SSS_\I^{i,j},\SSS_\II^{ij})$-modules $(X_\I^{i,j},X_\II^{ij})$ valued in $\C^\otimes$ for $0\leq i<j\leq 2$.
\item A functor $X_\IV:\SSS_\IV\to\C$.
\item For every concatenation $\{T_i\}_i$ in $\SSS_\IV$, a morphism
\begin{equation}\label{TIVmodprod}
\bigotimes_iX_{\I/\II/\IV}(T_i)\to X_\IV(\#_iT_i),
\end{equation}
satisfying the natural compatibility conditions, as in Definition \ref{SIImoddef}.
\end{enumerate}
A morphism of $\SSS_\IV$-modules consists of natural transformations of functors compatible with \eqref{TImodprod}, \eqref{TIImodprod}, \eqref{TIVmodprod}.
\end{definition}

\subsection{Sketch of the construction (revisited)}\label{vfcoutline}

We now revisit the sketch from \S\ref{vfcsketch} using the language of $\SSS$-modules.

Recall that the virtual fundamental cycles of the moduli spaces $\Mbar(T)$ come packaged via the maps \eqref{sketchvfcpush}.  Now the key coherence properties of these cycles \eqref{vfcidI}--\eqref{vfcidII} shall be encoded in the fact that \eqref{sketchvfcpush} is a map of $\SSS$-modules.  Thus our first task is to define the $\SSS$-modules $C_\vir^{\bullet+\vdim}(\Mbar\rel\partial)$ and $C_\bullet(E)$ along with the pushforward map
\begin{equation}\label{revisitedsast}
C_\vir^{\bullet+\vdim}(\Mbar\rel\partial)\to C_{-\bullet}(E).
\end{equation}
We then must relate $C_\vir^{\bullet+\vdim}(\Mbar\rel\partial)$ to $\cC^{\bullet+\vdim}(\Mbar\rel\partial;\oo)$ (with an appropriate $\SSS$-module structure), and we must relate $C_\bullet(E)$ to $\QQ$ (with the trivial $\SSS$-module structure).  If all goes well, this will give virtual fundamental cycles \eqref{sketchvfc} satisfying \eqref{vfcidI}--\eqref{vfcidII}.

In reality, it is somewhat cumbersome to make sense out of $\cC^{\bullet+\vdim}(\Mbar\rel\partial;\oo)$ as an $\SSS$-module, so we will take a shortcut by taking advantage of the fact that we are only interested in integrating the constant function $1$ (and not more general cohomology classes) over the virtual fundamental cycles.  We will define an $\SSS$-module $\QQ[\SSS]$, which can be thought of as ``the subcomplex of $\cC^{\bullet+\vdim}(\Mbar\rel\partial;\oo)$ generated by the Poincar\'e duals of the closed strata'', and we will construct a corresponding map of $\SSS$-modules
\begin{equation}
\QQ[\SSS]\to C_\vir^{\bullet+\vdim}(\Mbar\rel\partial).
\end{equation}
Now combining this map with \eqref{revisitedsast} and our understanding of $C_\bullet(E)\cong\QQ$, we obtain a map of $\SSS$-modules
\begin{equation}\label{sssast}
\QQ[\SSS]\to\QQ.
\end{equation}
Morally speaking, this map is the intersection pairing between the closed strata of $\Mbar(T)$ and its virtual fundamental cycle.  From any map \eqref{sssast}, it is straightforward to read off virtual moduli counts satisfying the master equations.

\subsection{\texorpdfstring{$\SSS$}{S}-modules \texorpdfstring{$\QQ$}{Q} and \texorpdfstring{$\QQ[\SSS]$}{Q[S]}}

We now introduce the two basic $\SSS$-modules $\QQ$ and $\QQ[\SSS]$.  We also make the elementary but crucial observation that a map of $\SSS$-modules $\QQ[\SSS]\to\QQ$ is nothing other than a collection of virtual moduli counts satisfying the relevant master equations.

\begin{definition}[$\SSS$-module $\QQ$]
Denote by $\QQ$ the $\SSS$-module defined by $\QQ(T)=\QQ$ for all $T\in\SSS$, where the pushforward maps are the identity and the concatenation maps are multiplication.
\end{definition}

Before defining $\QQ[\SSS]$, we motivate it as follows.  It is not hard to check that $\cC^\bullet(\Mbar(T)\rel\partial;\oo_T)$ is quasi-isomorphic to the total complex
\begin{multline}\label{bigcechcomplex}
\biggl[\cC^\bullet(\Mbar(T);\oo_T)\longrightarrow
\!\!\!\bigoplus_{\codim(T'/T)=1}\!\!\!\cC^{\bullet-1}(\Mbar(T')/\!\Aut(T'/T);\oo_{T'})\\
\longrightarrow\!\!\!\bigoplus_{\codim(T''/T)=2}\!\!\!\cC^{\bullet-2}(\Mbar(T'')/\!\Aut(T''/T);\oo_{T''})\longrightarrow\cdots\biggr].
\end{multline}
This chain model is convenient because it is manifestly an $\SSS$-module: a morphism $T'\to T$ clearly induces a map of complexes \eqref{bigcechcomplex} of degree $\codim(T'/T)$, while it is not so clear how to canonically define a corresponding map $\cC^\bullet(\Mbar(T')\rel\partial;\oo_{T'})\to\cC^{\bullet+\codim(T'/T)}(\Mbar(T)\rel\partial;\oo_T)$.  The complex $\QQ[\SSS](T)$ which we now define should be thought of as the subcomplex of \eqref{bigcechcomplex} spanned by the images of the constant sections
\begin{equation}
\oo_{T'}\to\cC^0(\Mbar(T');\oo_{T'})\to\cC^0(\Mbar(T')/\!\Aut(T'/T);\oo_{T'})\subseteq\cC^{\codim(T'/T)}(\Mbar(T)\rel\partial;\oo_T)
\end{equation}
for $T'\to T$.

\begin{definition}[{$\SSS$-module $\QQ[\SSS]$}]
For $T\in\SSS$, we define
\begin{equation}
\QQ[\SSS](T):=\QQ[\SSS_{/T}]=\bigoplus_{T'\to T}\oo_{T'}[\vdim(T')]
\end{equation}
(note that this definition makes sense since automorphisms of $T'$ over $T$ act trivially on $\oo_{T'}$; though the action is trivial, it may be conceptually helpful to think of $\oo_{T'}$ instead as its $\Aut(T'/T)$-coinvariants).
We equip $\QQ[\SSS](T)$ with the differential given by the sum over all codimension one maps $T''\to T'$ (maps in $\SSS_{/T}$) of
the boundary map $\oo_{T'}\to\oo_{T''}$ induced by \eqref{algebraicorientationgluing} and pairing on the left with a chosen orientation of $\RR$.

A morphism $T'\to T$ induces a map
\begin{equation}
\QQ[\SSS](T')\to\QQ[\SSS](T)
\end{equation}
via pushforward under the functor $\SSS_{/T'}\to\SSS_{/T}$.

A concatenation $\{T_i\}_i$ induces a map
\begin{equation}
\bigotimes_i\QQ[\SSS](T_i)\to\QQ[\SSS](\#_iT_i)
\end{equation}
by consideration of the isomorphism $\SSS_{/\#_iT_i}=\prod_i\SSS_{/T_i}$, covered by the tautological identifications $\oo_{\#_iT_i'}=\bigotimes_i\oo_{T_i'}$ multiplied by the factor $\left|\Aut(\{T_i'\}_i/\#_iT_i')\right|=\left|\Aut(\{T_i\}_i/\#_iT_i)\right|$ (this factor is explained by it being the ``degree'' of the map $\prod_i\Mbar(T_i')\to\Mbar(\#_iT_i')$).

Thus $\QQ[\SSS]$ is an $\SSS$-module (meaning $\QQ[\SSS_\I]$ is an $\SSS_\I$-module, $(\QQ[\SSS_\I^\pm],\QQ[\SSS_\II])$ is an $(\SSS_\I^\pm,\SSS_\II)$-module, etc.).
\end{definition}

\begin{remark}\label{rmkmapofSgivesvmc}
It is easy to check that there is a natural bijection between morphisms of $\SSS$-modules $\QQ[\SSS]\to\QQ$ and collections of virtual moduli counts $\#\Mbar(T)^\vir\in\left(\oo_T^\vee\right)^{\Aut(T)}$ for $\vdim(T)=0$ satisfying
\begin{align}
0&=\sum_{\codim(T'/T)=1}\frac 1{\left|\Aut(T'/T)\right|}\#\Mbar(T')^\vir\\
\#\Mbar(\#_iT_i)^\vir&=\frac 1{\left|\Aut(\{T_i\}_i/\#T_i)\right|}\prod_i\#\Mbar(T_i)^\vir
\end{align}
(where $\Mbar(T)$ is interpreted as zero if $\vdim(T)\ne 0$).  Note that $\Aut(T)$-invariance forces $\#\Mbar(T)^\vir$ to vanish if any of the input/output edges of $T$ is labeled with a bad Reeb orbit.
\end{remark}

\subsection{\texorpdfstring{$\SSS$}{S}-modules \texorpdfstring{$C_\vir^{\bullet+\vdim}(\Mbar\rel\partial)$}{C\_vir\textasciicircum\{*+vdim\}(Mbar rel d)} and \texorpdfstring{$C_\bullet(E)$}{C\_*(E)}}\label{Cvirhocolimsec}

We now define the two $\SSS$-modules $C_\vir^{\bullet+\vdim}(\Mbar\rel\partial)$ and $C_\bullet(E)$.

Defining $C_\vir^{\bullet+\vdim}(\Mbar\rel\partial)$ as an $\SSS$-module is nontrivial for the following reason.  We would like to associate to $T\in\SSS$ the complex $C_\vir^{\bullet+\vdim(T)}(\Mbar(T)\rel\partial;\bar A(T))$.  However, a map $T'\to T$ does not induce a map on such complexes as desired, rather only a diagram
\begin{equation}\label{CvirnotSmoddiagram}
\begin{tikzcd}
C_\vir^{\bullet+\vdim(T')}(\Mbar(T')\rel\partial;\bar A(T))\ar{d}{\sim}\ar{rd}\\
C_\vir^{\bullet+\vdim(T')}(\Mbar(T')\rel\partial;\bar A(T'))&C_\vir^{\bullet+\vdim(T)}(\Mbar(T)\rel\partial;\bar A(T)).
\end{tikzcd}
\end{equation}
Fortunately, this diagram also suggests a solution.  Namely, we instead associate to $T\in\SSS$ a suitable homotopy colimit of $C_\vir^{\vdim(T')+\bullet}(\Mbar(T')\rel\partial,\bar A(T'))$ over $T'\in\SSS_{/T}$.

\begin{definition}[Homotopy diagram]\label{hodiagram}
Let $\TTT$ be a finite category (meaning $\#\left|\TTT\right|<\infty$ and $\#\Hom(\ttt_1,\ttt_2)<\infty$ for $\ttt_1,\ttt_2\in\TTT$).  A \emph{homotopy diagram} over $\TTT$ shall mean the following:
\begin{enumerate}
\item Let $\Delta^p$ denote the \emph{simplex category} (so that a functor $\Delta^p\to\TTT$ is a chain of morphisms $\ttt_0\to\cdots\to \ttt_p$).  For every functor $\sigma:\Delta^p\to\TTT$, we specify a complex $A^\bullet(\sigma)$, and for every map of simplices $r:\Delta^q\to\Delta^p$, we specify a map $A^\bullet(\sigma)\to A^\bullet(\sigma\circ r)$.  In other words, $A^\bullet$ is a functor from the category whose objects are morphisms $\Delta^p\to\TTT$ and whose morphisms $(\Delta^p\to\TTT)\to(\Delta^q\to\TTT)$ are factorizations $\Delta^q\to\Delta^p\to\TTT$.
\item\label{hodiagramqiso}We require that if $r:\Delta^q\to\Delta^p$ satisfies $r(0)=0$, then the induced map $A^\bullet(\sigma)\to A^\bullet(\sigma\circ r)$ is an isomorphism.
\end{enumerate}
For example, an ordinary diagram $B^\bullet$ over $\TTT$ gives rise to a homotopy diagram by setting $A^\bullet(\sigma):=B^\bullet(\sigma(0))$.
\end{definition}

\begin{definition}[Homotopy colimit]
Let $\TTT$ be a finite category, and let $A^\bullet$ be a homotopy diagram over $\TTT$.  We define
\begin{equation}\label{hocolimdefeq}
\hocolim_\TTT A^\bullet:=\bigoplus_{p\geq 0}\bigoplus_{\ttt_0\to\cdots\to\ttt_p}A^\bullet(\ttt_0\to\cdots\to\ttt_p)_{\Aut(\ttt_0\to\cdots\to\ttt_p)}\otimes\oo_{\Delta^p},
\end{equation}
equipped with the differential which arises upon regarding the right hand side as chains on the nerve of $\TTT$ (the subscript indicates taking coinvariants with respect to the natural action of $\Aut(\ttt_0\to\cdots\to\ttt_p)$).
\end{definition}

Given a functor $f:\TTT\to\TTT'$ and a homotopy diagram $A^\bullet$ over $\TTT'$, there is a natural pullback diagram $f^\ast A^\bullet$ over $\TTT$ and a natural map
\begin{equation}\label{hocolimpush}
\hocolim_\TTT f^\ast A^\bullet\to\hocolim_{\TTT'}A^\bullet.
\end{equation}
Given a finite collection $A^\bullet_i$ of homotopy diagrams over $\TTT_i$, there is an Eilenberg--Zilber quasi-isomorphism
\begin{equation}\label{hocolimEZ}
\bigotimes_i\hocolim_{\TTT_i}A^\bullet_i\xrightarrow\sim\hocolim_{\prod_i\TTT_i}\bigotimes_iA^\bullet_i,
\end{equation}
corresponding to the standard simplicial subdivision of $\Delta^p\times\Delta^q$ into $\binom{p+q}p$ copies of $\Delta^{p+q}$.

\begin{lemma}\label{finalhocolim}
If $\TTT$ has a final object $\ttt^\ttop$, then the natural map
\begin{equation}
A^\bullet(\ttt^\ttop)\xrightarrow\sim\hocolim_\TTT A^\bullet
\end{equation}
is a quasi-isomorphism.
\end{lemma}

\begin{proof}
Filter the homotopy colimit by the number of $\ttt_i$ which are not isomorphic to $\ttt^\ttop$.
\end{proof}

\begin{definition}[$\SSS$-module $C_\vir^{\bullet+\vdim}(\Mbar\rel\partial)$]\label{CvirMIdef}
For $T\in\SSS$, we define
\begin{equation}
C_\vir^\bullet(\Mbar\rel\partial)(T):=\hocolim_{T_0\to\cdots\to T_p\to T}C^{\bullet-\codim(T_0/T)}_\vir(\Mbar(T_0)\rel\partial,\bar A(T_p)).
\end{equation}
This homotopy colimit is over the category $\SSS_{/T}$, and the structure maps of the homotopy diagram are those appearing in \eqref{CvirnotSmoddiagram} (note that Definition \ref{hodiagram}\ref{hodiagramqiso} is satisfied as the leftmost map in \eqref{CvirnotSmoddiagram} is a quasi-isomorphism).  The natural inclusion map
\begin{equation}\label{cvirhocoliminclusion}
C_\vir^\bullet(\Mbar(T)\rel\partial;\bar A(T))\hookrightarrow C_\vir^\bullet(\Mbar\rel\partial)(T)
\end{equation}
is a quasi-isomorphism by Lemma \ref{finalhocolim}.

Now $C_\vir^{\bullet+\vdim}(\Mbar\rel\partial)$ naturally has the structure of an $\SSS$-module, as follows.  A morphism $T\to T'$ induces a natural pushforward map \eqref{hocolimpush} induced by the functor $\SSS_{/T}\to\SSS_{/T'}$, which is covered by a natural isomorphism of homotopy diagrams.  Given a concatenation $\{T_i\}_i$, there is a natural Eilenberg--Zilber map \eqref{hocolimEZ} induced by the isomorphism $\SSS_{/\#_iT_i}=\prod_i\SSS_{/T_i}$, which is covered by a morphism of homotopy diagrams coming from the product maps \eqref{Cvirprod}.
\end{definition}

\begin{definition}[$\SSS$-module $C_\bullet(E)$]\label{CEIdef}
For $T\in\SSS$, we define
\begin{equation}
C_\bullet(E)(T):=\hocolim_{T_0\to\cdots\to T_p\to T}C_\bullet(E;\bar A(T_p)).
\end{equation}
Again by Lemma \ref{finalhocolim}, we have
\begin{equation}\label{HEiso}
H_\bullet(E)(T)=H_\bullet(E;\bar A(T))=\QQ.
\end{equation}
As in Definition \ref{CvirMIdef}, $C_\bullet(E)$ naturally has the structure of a $\SSS$-module.  The isomorphism above is in fact an isomorphism of $\SSS$-modules $H_\bullet(E)=\QQ$.
\end{definition}

There is a canonical map of $\SSS$-modules
\begin{equation}\label{sast}
C_\vir^{\bullet+\vdim}(\Mbar\rel\partial)\to C_{-\bullet}(E)
\end{equation}
induced by \eqref{sastvir}.

\begin{definition}[{$\SSS^\op$-module $\Hom_{\SSS_{/T}}(\QQ[\SSS],C_\vir^{\bullet+\vdim}(\Mbar\rel\partial))$}]\label{HomQSCvirdef}
For $T\in\SSS$, we consider
\begin{equation}\label{HomQSCvir}
\Hom_{\SSS_{/T}}(\QQ[\SSS],C_\vir^{\bullet+\vdim}(\Mbar\rel\partial)).
\end{equation}
Namely, an element of this group is a natural transformation of functors from $\SSS_{/T}$ to the category of \emph{graded $\QQ$-vector spaces}, and we equip it with the usual differential $d\circ f-(-1)^{|f|}f\circ d$.

Now \eqref{HomQSCvir} is an $\SSS^\op$-module as follows.  A map $T\to T'$ clearly gives a restriction map from homomorphisms over $\SSS_{/T'}$ to homomorphisms over $\SSS_{/T}$.  A concatenation $\{T_i\}_i$ induces the necessary map by virtue of the fact that $\SSS_{/\#_iT_i}=\prod_i\SSS_{/T_i}$ and the concatenation maps for $\QQ[\SSS]$ are isomorphisms.
\end{definition}

\begin{definition}[$\SSS^\op$-module $\cH^\bullet(\Mbar(T))$]\label{cHnotrelbSmodule}
We equip the functor $T\mapsto\cH^\bullet(\Mbar(T))$ with the structure of an $\SSS^\op$-module as follows.
A map $T\to T'$ clearly gives a pullback map $\cH^\bullet(\Mbar(T'))\to\cH^\bullet(\Mbar(T))$.
A concatenation $\{T_i\}_i$ induces a map
\begin{equation}\label{HMbarTconcatemaps}
\bigotimes_i\cH^\bullet(\Mbar(T_i))\to\cH^\bullet\Bigl(\prod_i\Mbar(T_i)\Bigr)\to\cH^\bullet\Bigl(\prod_i\Mbar(T_i)\Bigr)^{\Aut(\{T_i\}_i/\#_iT_i)}\xleftarrow\sim\cH^\bullet(\Mbar(\#_iT_i)),
\end{equation}
where the middle map is averaging over the group action, and on the right we use \eqref{Mbarprod} together with the general result that the pullback map $\cH^\bullet(X/G)\xrightarrow\sim\cH^\bullet(X)^G$ is an isomorphism for any compact Hausdorff space $X$ with an action by a finite group $G$ \cite[Lemma A.4.9]{pardonimplicitatlas}.
Commutativity of the first diagram in \eqref{TImodidentities} follows from the fact that a morphism of concatenations $\{T_i\}_i\to\{T_i'\}_i$ induces a map of diagrams \eqref{HMbarTconcatemaps}, noting that $\Aut(\{T_i\}_i/\#_iT_i)=\Aut(\{T_i'\}_i/\#_iT_i')$.
Commutativity of the second diagram in \eqref{TImodidentities} follows from a diagram chase.
\end{definition}

\begin{lemma}\label{sheafylemma}
The cohomology of the $\SSS^\op$-module from Definition \ref{HomQSCvirdef} is canonically isomorphic to the $\SSS^\op$-module from Definition \ref{cHnotrelbSmodule}.
\end{lemma}

\begin{proof}
First, observe that
\begin{equation*}
\Hom_{\SSS_{/T}}(\QQ[\SSS],C_\vir^{\bullet+\vdim}(\Mbar\rel\partial))=\prod_{T'\to T}\left[\oo_{T'}^\vee\otimes C_\vir^\bullet(\Mbar\rel\partial)(T')\right]^{\Aut(T'/T)}.
\end{equation*}
Now
\begin{equation}\label{lastcomplexfiltered}
\prod_{T'\to T}\left[\oo_{T'}^\vee\otimes C_\vir^\bullet(\Mbar(T')\rel\partial;\bar A(T))\right]^{\Aut(T'/T)}
\end{equation}
includes quasi-isomorphically into the right hand side above, as the composition of the final object quasi-isomorphism \eqref{cvirhocoliminclusion} and the atlas enlargment quasi-isomorphism \eqref{Cvirinflate}.
Now \eqref{lastcomplexfiltered} is exactly the complex
\begin{equation}
C_\vir^\bullet(\Mbar(T),\SSS_{/T};\bar A(T))\otimes\oo_T^\vee,
\end{equation}
where the left factor is as defined in \eqref{stratifiedcomplex}.
Appealing to \eqref{stratcechviriso}, we deduce the desired isomorphism
\begin{equation}\label{desirediso}
H^\bullet\Hom_{\SSS_{/T}}(\QQ[\SSS],C_\vir^{\bullet+\vdim}(\Mbar\rel\partial))=\cH^\bullet(\Mbar(T)).
\end{equation}
It remains to compare pullback and concatenation maps.

Compatibility of \eqref{desirediso} with pullback maps is shown as follows.
The pullback map on $\Hom_{\SSS_{/T}}(\QQ[\SSS],C_\vir^{\bullet+\vdim}(\Mbar\rel\partial))$ for $T'\to T$ becomes, under the quasi-isomorphisms above, the natural restriction map
\begin{equation}\label{restrafterqiso}
C_\vir^\bullet(\Mbar(T),\SSS_{/T};\bar A(T))\to C_\vir^\bullet(\Mbar(T'),\SSS_{/T'};\bar A(T))\xrightarrow\sim C_\vir^\bullet(\Mbar(T'),\SSS_{/T'};\bar A(T')).
\end{equation}
Since the isomorphism \eqref{htpysheaffundiso} is natural, the map $\cH^\bullet(\Mbar(T))\to\cH^\bullet(\Mbar(T'))$ induced by \eqref{restrafterqiso} is simply the map induced by the corresponding map of $H^0$ sheaves, which by definition is just restriction, as needed.

To show compatibility of \eqref{desirediso} with concatenation maps, again under the quasi-isomorphisms above, the concatenation maps for $\Hom_{\SSS_{/T}}(\QQ[\SSS],C_\vir^{\bullet+\vdim}(\Mbar\rel\partial))$ become the natural maps
\begin{multline}
\bigotimes_iC_\vir^\bullet(\Mbar(T_i),\SSS_{/T_i};\bar A(T_i))\xrightarrow{\eqref{Cvirprod}}C_\vir^\bullet\Bigl(\prod_i\Mbar(T_i),\prod_i\SSS_{/T_i};\coprod_i\bar A(T_i)\Bigr)\\
\to C_\vir^\bullet(\Mbar(\#_iT_i),\SSS_{/\#_iT_i},\bar A(\#_iT_i)).
\end{multline}
Now appealing to Lemma \ref{kunnethforpure}, it is enough to observe that the induced maps on $H^0$ sheaves are the natural ones.
\end{proof}

\subsection{Cofibrant \texorpdfstring{$\SSS$}{S}-modules}\label{cofibrantsec}

We would like to upgrade the isomorphism of $\SSS$-modules $H_\bullet(E)=\QQ$ to a quasi-isomorphism of $\SSS$-modules $C_\bullet(E)\xrightarrow\sim\QQ$.  Unfortunately, the natural strategy to construct such a quasi-isomorphism, namely by induction on $T$, fails.  As a substitute, we introduce the notion of a \emph{cofibrant} $\SSS$-module, and we construct a cofibrant $\SSS$-module $C_\bullet^\cof(E)$ with a natural quasi-isomorphism $C_\bullet^\cof(E)\xrightarrow\sim C_\bullet(E)$.  Cofibrancy is exactly the condition which ensures that the natural inductive construction of a quasi-isomorphism $C_\bullet^\cof(E)\xrightarrow\sim\QQ$ succeeds.  The resulting diagram
\begin{equation}
C_\bullet(E)\xleftarrow\sim C_\bullet^\cof(E)\xrightarrow\sim\QQ
\end{equation}
turns out to be enough for our purposes.
Cofibrancy of the $\SSS$-module $\QQ[\SSS]$ will also end up being important for the inductive construction of virtual fundamental cycles.

\begin{definition}[Cofibrant $\SSS$-module]\label{cofibrantdefI}
An $\SSS$-module $X$ valued in chain complexes shall be called \emph{cofibrant} iff it satisfies the following two properties:
\begin{enumerate}
\item For all concatenations $\{T_i\}_i$ in $\SSS$, the induced map
\begin{equation}
\biggl[\bigotimes_iX(T_i)\biggr]_{\Aut(\{T_i\}_i/\#T_i)}\xrightarrow\sim X(\#_iT_i)
\end{equation}
is an isomorphism (the subscript indicates taking coinvariants with respect to the natural action of $\Aut(\{T_i\}_i/\#T_i)$).  Note that this follows if \eqref{TImodprod}/\eqref{TIImodprod}/\eqref{TIIImodprod}/\eqref{TIVmodprod} is itself an isomorphism.
\item For maximal $T\in\SSS$, the map
\begin{equation}\label{cofibrancyconditionI}
\colim_{\codim(T'/T)\geq 1}X(T')\rightarrowtail X(T)
\end{equation}
is injective.  The left side denotes the colimit over the full subcategory of $\SSS_{/T}$ spanned by objects $T'\to T$ with $\codim(T'/T)\geq 1$.
\end{enumerate}
Note that for $(\SSS_\I^\pm,\SSS_\II)$-modules, the above conditions are imposed over each of $\SSS_\I^\pm$, $\SSS_\II$, and so on.
\end{definition}

Lemma \ref{concatenationmaximal} will be used frequently below.

\begin{lemma}\label{sometoallinjective}
Injectivity of \eqref{cofibrancyconditionI} for maximal $T$ implies injectivity for all $T$.
\end{lemma}

\begin{proof}
Fix $T\in\SSS$, and write $T=\#_iT_i$ for maximal $T_i$.  Now we have $\SSS_{/T}=\prod_i\SSS_{/T_i}$.  Consider the cubical diagram
\begin{equation}\label{cubediagramI}
\bigotimes_i\left[\displaystyle\colim_{\codim(T'_i/T_i)\geq 1}X(T'_i)\rightarrowtail X(T_i)\right].
\end{equation}
Now \eqref{cofibrancyconditionI} for $T$ is precisely the map to the top vertex of the cube \eqref{cubediagramI} from the colimit over its remaining vertices.  This map is clearly injective given that each map in \eqref{cubediagramI} is injective.
\end{proof}

\begin{lemma}\label{QScofibrant}
The $\SSS$-module $\QQ[\SSS]$ is cofibrant.
\end{lemma}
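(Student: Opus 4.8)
The plan is to check the two conditions of Definition~\ref{cofibrantdefI} directly from the explicit formula $\QQ[\SSS_\ast](T)=\bigoplus_{T'\in|(\SSS_\ast)_{/T}|}\oo_{T'}$, viewing $\QQ[\SSS_\ast]$ as the ``free'' (cellular) $\SSS_\ast$-module with one cell $\oo_{T'}$ for each isomorphism class $T'\in|(\SSS_\ast)_{/T}|$, under the normalizing convention that the generator of the $T'$-cell is $\1_{\Mbar(T')/\Aut(T'/T)}=\left|\Aut(T'/T)\right|^{-1}\1_{\Mbar(T')}$.

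For condition (i) I would verify the stronger statement that the concatenation structure map $\bigotimes_i\QQ[\SSS_\ast](T_i)\to\QQ[\SSS_\ast](\#_iT_i)$ is itself an isomorphism, so that the sufficient criterion recorded in Definition~\ref{cofibrantdefI}(i) applies. Using the identification $|(\SSS_\ast)_{/\#_iT_i}|=\prod_i|(\SSS_\ast)_{/T_i}|$, this map decomposes as the direct sum, over tuples $(T_i'\to T_i)_i$, of the map $\bigotimes_i\oo_{T_i'}\to\oo_{\#_iT_i'}$ given by the tautological identification rescaled by the positive integer $\left|\Aut(\{T_i'\}_i/\#_iT_i')\right|$; over $\QQ$ such a rescaled identification is invertible, hence so is the whole map. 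In the cellular language this is just the statement that concatenation carries $\1_{\Mbar(T_1')/\Aut}\otimes\cdots\otimes\1_{\Mbar(T_n')/\Aut}$ to $\1_{\Mbar(\#_iT_i')/\Aut(\#_iT_i'/\#_iT_i)}$, i.e.\ it is a bijection of generating sets.

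For condition (ii) it suffices, by the reduction already carried out in Definition~\ref{cofibrantdefI}(ii), to treat maximal $T$, so the content is to compute $\colim_{\codim(T'/T)\geq 1}\QQ[\SSS_\ast](T')$. Since $\QQ[\SSS_\ast]$ factors through the poset $|(\SSS_\ast)_{/T}|$, and the functor $(\SSS_\ast)_{/T}\to|(\SSS_\ast)_{/T}|$ is final, this colimit may be computed over the poset; and because $\QQ[\SSS_\ast]$ is a direct sum of cells connected only by rescaled identity maps, every generator $\1_{\Mbar(T'')}\in\oo_{T''}\subseteq\QQ[\SSS_\ast](T')$ (any $T''\to T'$) is identified in the colimit, up to a nonzero scalar, with the canonical generator of $\oo_{T''}$ living in $\QQ[\SSS_\ast](T'')$ itself. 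As a morphism $T''\to T'$ with $\codim(T'/T)\geq 1$ forces $\codim(T''/T)\geq 1$ by additivity of codimension, this exhibits $\colim_{\codim(T'/T)\geq 1}\QQ[\SSS_\ast](T')=\bigoplus_{\codim(T''/T)\geq 1}\oo_{T''}$, and the map to $\QQ[\SSS_\ast](T)=\bigoplus_{T''\in|(\SSS_\ast)_{/T}|}\oo_{T''}$ is the inclusion of this sub-direct-sum, omitting exactly the $\codim=0$ cell $\oo_T$. Inclusions of direct summands are injective, so condition (ii) holds and $\QQ[\SSS_\ast]$ is cofibrant.

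The only place requiring care — though I do not expect it to present a genuine obstacle, the lemma being essentially a verification ``by inspection'' — is confirming that the various $\left|\Aut\right|$-factors built into the differential and the structure maps of $\QQ[\SSS_\ast]$ are exactly consistent with the ``$\1_{\Mbar(T'')/\Aut}$'' normalization, so that transporting a generator along any zig-zag of structure maps back to its home complex $\QQ[\SSS_\ast](T'')$ returns it up to the advertised scalar (and in particular the transition maps in the colimit above are rescaled identities, not more complicated). This is precisely the bookkeeping already encoded in the parenthetical remarks defining $\QQ[\SSS_\ast]$, and once it is spelled out both verifications are immediate.
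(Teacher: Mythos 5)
Your verification of condition (i) is exactly the paper's: the concatenation maps are isomorphisms because $(\SSS_\ast)_{/\#_iT_i}=\prod_i(\SSS_\ast)_{/T_i}$, and the positive integer scalar $\left|\Aut(\{T_i'\}_i/\#_iT_i')\right|$ is harmless over $\QQ$. For condition (ii) your underlying picture (all copies of a given cell $\oo_{T''}$ become identified in the colimit with the canonical copy, so the colimit is the codimension $\geq 1$ part of $\QQ[\SSS_\ast](T)$) is also the substance of the paper's proof, but two steps as you wrote them do not hold up. First, $\QQ[\SSS_\ast]$ does \emph{not} factor through the poset $|(\SSS_\ast)_{/T}|$: an element of $\Aut(T'/T)$ acts on $\QQ[\SSS_\ast](T')=\bigoplus_{T''\in|(\SSS_\ast)_{/T'}|}\oo_{T''}$ by pushforward, i.e.\ by permuting summands, and this permutation is in general nontrivial --- for instance if $T'$ has two lower vertices interchanged by an automorphism over $T$ and $T''\to T'$ degenerates only one of them, postcomposition with the swap changes the isomorphism class of $T''\to T'$ in $(\SSS_\ast)_{/T'}$. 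So parallel morphisms in $(\SSS_\ast)_{/T}$ can induce different maps and the diagram is not pulled back from the poset; moreover ``finality'' is the wrong tool in any case, since it lets one restrict a diagram defined on the target of a functor, not descend a diagram defined on its source. Second, even granting a cellwise picture, showing that every generator is identified with the canonical one up to a nonzero scalar only exhibits the colimit as a \emph{quotient} of $\bigoplus_{\codim(T''/T)\geq 1}\oo_{T''}$; to conclude equality (equivalently, injectivity of \eqref{cofibrancyconditionI}) you must rule out further collapsing, which is exactly the scalar-consistency issue you defer to ``bookkeeping.''

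Both problems are closed simultaneously by the paper's argument, which needs no poset descent: all terms and all structure maps preserve the grading by $|(\SSS_\ast)_{/T}|$, so it suffices to fix one graded piece $T'$ of codimension $\geq 1$. The map from the colimit to $\QQ[\SSS_\ast](T)$ is surjective onto $\oo_{T'}$, and the identity summand $\oo_{T'}\subseteq\QQ[\SSS_\ast](T')$ gives a section of it into the colimit; this section is surjective onto the $T'$-graded piece of the colimit because any summand $\oo_{T'}\subseteq\QQ[\SSS_\ast](T'')$ contributing to that piece is, up to a positive scalar, the image of the identity summand along a factorization $T'\to T''\to T$. This pins each graded piece of the colimit down to a single line mapping isomorphically (up to nonzero scalar) onto the corresponding line in $\QQ[\SSS_\ast](T)$, which is the injectivity you need. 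I recommend replacing your poset/finality step by this graded-piece argument; with that substitution your proof coincides with the paper's.
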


\begin{proof}
The concatenation maps are isomorphisms by definition since $\SSS_{/\#_iT_i}=\prod_i\SSS_{/T_i}$.  Let us now show that
\begin{equation}\label{QSSiscofibcheck}
\colim_{\codim(T'/T)\geq 1}\QQ[\SSS](T')\rightarrowtail\QQ[\SSS](T)
\end{equation}
is an isomorphism onto the subspace of $\QQ[\SSS](T)$ generated by those $T'\to T$ of codimension $\geq 1$ (certainly this is sufficient).

Both sides of \eqref{QSSiscofibcheck} are graded by $|\SSS_{/T}|$, so it suffices to fix $T'\to T$ of codimension $\geq 1$ and show that \eqref{QSSiscofibcheck} is an isomorphism on $T'$-graded pieces.
The map \eqref{QSSiscofibcheck} is certainly surjective onto the $T'$-graded piece $\oo_{T'}\subseteq\QQ[\SSS](T)$.  Moreover, there is a section from $\oo_{T'}$ back to the left side of \eqref{QSSiscofibcheck} via the inclusion $\oo_{T'}\subseteq\QQ[\SSS](T')$.  Now it is enough to argue that this section is surjective (onto the $T'$-graded piece), but this is clear since a contribution of $\QQ[\SSS](T'')$ to the $T'$-graded piece is the same thing as a factorization $T'\to T''\to T$.
\end{proof}

\begin{definition}[Partial order on $|\SSS|$]\label{partialorderI}
For $T,T'\in\SSS$, let us write $T'\preceq T$ iff there is a morphism $\#_iT_i\to T$ with some $T_i$ isomorphic to $T'$.
\end{definition}

Since we are restricting to effective objects of $\SSS$, compactness of $\Mbar$ implies that the relation $\preceq$ is \emph{well-founded} (i.e.\ there is no infinite strictly decreasing sequence $T_1\succ T_2\succ\cdots$).
The relation $\preceq$ is also a partial order (reflexivity and transitivity are obvious, and antisymmetry follows from well-foundedness).
It follows that induction on $\left|\SSS\right|$ partially ordered by $\preceq$ is justified.

\begin{definition}[Cofibrant $\SSS$-module $C_\bullet^\cof(E)$]\label{CEtildeIdef}
We now define a cofibrant $\SSS$-module $C_\bullet^\cof(E)$ together with a quasi-isomorphism
\begin{equation}\label{qast}
C_\bullet^\cof(E)\xrightarrow\sim C_\bullet(E)
\end{equation}
which is surjective for maximal $T$.  Furthermore, the action of the ``paths between basepoints'' subgroup of $\Aut(T)$ on $C_\bullet^\cof(E)(T)$ will be trivial for all $T$ (as it is for $C_\bullet(E)(T)$).

We construct $C_\bullet^\cof(E)(T)$ by induction on $T$, partially ordered as in Definition \ref{partialorderI}.  For $T$ non-maximal, write $T=\#_iT_i$ with $T_i$ maximal.  The definition of cofibrancy both forces us to take $C_\bullet^\cof(E)(T):=\bigotimes_iC_\bullet^\cof(E)(T_i)$ and assures that the $\SSS$-module structure maps with target $C_\bullet^\cof(E)(T)$ exist and are unique.  For $T$ maximal, consider the diagram
\begin{equation}
\begin{tikzcd}
\displaystyle\colim_{\codim(T'/T)\geq 1}C_\bullet^\cof(E)(T')\ar{d}{\eqref{qast}}\ar[dashed, tail]{r}&C_\bullet^\cof(E)(T)\ar[dashed, two heads]{d}{\eqref{qast}}[swap]{\sim}\\
\displaystyle\colim_{\codim(T'/T)\geq 1}C_\bullet(E)(T')\ar{r}&C_\bullet(E)(T).
\end{tikzcd}
\end{equation}
We define $C_\bullet^\cof(E)(T)$ to be the mapping cylinder of the composition of the two solid maps, which clearly fits into the diagram as desired.  Now the top horizontal map defines the $\SSS$-module structure maps with target $C_\bullet^\cof(E)(T)$, and the commutativity of the diagram ensures that \eqref{qast} is a map of $\SSS$-modules.
\end{definition}

\begin{lemma}\label{HofcolimitCtilde}
Let $\TTT\subseteq\SSS_{/T_a}$ be a full subcategory which is ``downward closed'' in the sense that $T'\to T$ and $T\in\TTT$ implies $T'\in\TTT$.  Then the natural map
\begin{equation}
C_\bullet N_\bullet\TTT\cong\hocolim_{T\in\TTT}C_\bullet^\cof(E)(T)\xrightarrow\sim\colim_{T\in\TTT}C_\bullet^\cof(E)(T)
\end{equation}
is a quasi-isomorphism.  In particular, the homology of the right side is supported in degrees $\geq 0$ and in degree zero is generated by the images of $H_0^\cof(E)(T)$ for $T\in\TTT$.
\end{lemma}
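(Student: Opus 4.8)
The plan is to deduce the statement from two properties of the $\SSS_\ast$-module $C_\bullet^\cof(E_\ast)$: that it is \emph{cofibrant} in the sense of Definition \ref{cofibrantdefI} (which it is, by its very construction in Definition \ref{CEtildeIdef}), and that it is \emph{pointwise quasi-isomorphic to the constant diagram $\QQ$}, with all structure maps quasi-isomorphisms. The latter holds because \eqref{qast} is a pointwise quasi-isomorphism $C_\bullet^\cof(E_\ast)\to C_\bullet(E_\ast)$ and $H_\bullet(E_\ast)=\QQ$ canonically as an $\SSS_\ast$-module (Definition \ref{CEIdef}); concretely, the fundamental cycles $[E_{\bar A_\ast(T)}]$ assemble into a map of $\SSS_\ast$-modules $\QQ\to C_\bullet(E_\ast)$, strictly compatible with structure maps because the inflation maps \eqref{CEinflate} are normalized to preserve fundamental cycles, and I would lift this through the pointwise surjective quasi-isomorphism \eqref{qast} to a map $\QQ\to C_\bullet^\cof(E_\ast)$ by induction on $T\in\SSS_\ast$ (at maximal $T$ the lift exists and may be chosen compatibly with the maps from $\colim_{\codim(T'/T)\ge 1}$ because $C_\bullet^\cof(E_\ast)(T)$ is a mapping cylinder; at non-maximal $T$ the lift is forced to be the tensor product of the maximal ones). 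Composing with $\hocolim_\TTT\to\colim_\TTT$ then produces the natural map $C_\bullet N_\bullet\TTT=\hocolim_\TTT\QQ\to\hocolim_\TTT C_\bullet^\cof(E_\ast)\to\colim_\TTT C_\bullet^\cof(E_\ast)$ of the statement, and the first arrow is already a quasi-isomorphism since $\hocolim$ is a homotopy functor.

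It thus remains to show $\hocolim_\TTT C_\bullet^\cof(E_\ast)\to\colim_\TTT C_\bullet^\cof(E_\ast)$ is a quasi-isomorphism. I would prove this by induction along a filtration $\varnothing=\TTT_0\subseteq\TTT_1\subseteq\cdots\subseteq\TTT_N=\TTT$ by downward-closed full subcategories, where $\TTT_j$ is obtained from $\TTT_{j-1}$ by adjoining one isomorphism class of object $T$ that is maximal for the well-founded order $\preceq$ of Definition \ref{partialorderI} among the objects of $\TTT_j$ (such an enumeration of all objects of $\TTT$ exists, and preserves downward-closedness, because $\preceq$ is well-founded and $\TTT$ is finite). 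Then $T$ is a terminal object of $(\SSS_\ast)_{/T}$, whose objects all lie in $\TTT_{j-1}\cup\{T\}$ by downward-closedness of $\TTT$, and the only morphisms of $\TTT_j$ not already present in $\TTT_{j-1}$ are those with target isomorphic to $T$. Consequently $\colim_{\TTT_j}C_\bullet^\cof(E_\ast)$ is the pushout of $\colim_{\TTT_{j-1}}C_\bullet^\cof(E_\ast)\leftarrow\colim_{\codim(T'/T)\ge 1}C_\bullet^\cof(E_\ast)(T')\rightarrowtail C_\bullet^\cof(E_\ast)(T)$; the right-hand leg is a levelwise injection by cofibrancy (this is \eqref{cofibrancyconditionI}, via \eqref{cubediagramI} for non-maximal $T$), hence split over $\QQ$, so this pushout is a homotopy pushout. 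On the other side, $N_\bullet\TTT_j$ is $N_\bullet\TTT_{j-1}$ with a cone on $N_\bullet\{T'\to T\mid\codim(T'/T)\ge 1\}$ attached, so $\hocolim_{\TTT_j}C_\bullet^\cof(E_\ast)$ is the corresponding homotopy pushout. The inductive hypothesis matches the two ``attaching'' legs up to quasi-isomorphism — here one uses that the structure maps of $C_\bullet^\cof(E_\ast)$ are quasi-isomorphisms, so that $\colim_{\codim(T'/T)\ge 1}C_\bullet^\cof(E_\ast)(T')\simeq C_\bullet N_\bullet\{T'\to T\mid\codim\ge 1\}$ maps into $C_\bullet^\cof(E_\ast)(T)\simeq\QQ$ by the collapse map $\QQ[\pi_0]\to\QQ$, exactly as for the cone — and the gluing lemma for homotopy pushouts closes the induction. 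The ``in particular'' statement is then immediate: $C_\bullet N_\bullet\TTT$ has homology supported in degrees $\ge 0$ with $H_0=\QQ[\pi_0\TTT]$, and the generator corresponding to the component of an object $T$ is the image of the generator of $H_0^\cof(E_\ast)(T)=\QQ$ under $C_\bullet^\cof(E_\ast)(T)\to\colim_\TTT C_\bullet^\cof(E_\ast)$.

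The step I expect to be the main obstacle is the bookkeeping forced by the fact that $(\SSS_\ast)_{/T_a}$ is not a poset: objects carry the finite automorphism groups $\Aut(T)$ (containing the ``paths between basepoints'' subgroup $\prod_e\ZZ/d_{\gamma_e}$), so the homotopy-colimit formula \eqref{hocolimdefeq} involves $\Aut$-coinvariants and a sum over isomorphism classes of chains, and the structure maps of $C_\bullet^\cof(E_\ast)$ (inherited from $C_\bullet(E_\ast)$) carry the factors $\left|\Aut(T'/T)\right|$. Reconciling this with the cell-attachment picture above — in particular justifying $\hocolim_\TTT\QQ=C_\bullet N_\bullet\TTT$ and the identification of the attaching legs — is the one genuinely delicate point; it is handled exactly as in \cite[\S A]{pardonimplicitatlas} (cf.\ the proof of Lemma \ref{QScofibrant}), using that over $\QQ$ the comparison $N_\bullet\TTT\to N_\bullet\left|\TTT\right|$ to the nerve of the poset of isomorphism classes is a homology isomorphism (its fibers being nerves of finite groupoids, hence $\QQ$-acyclic), which lets one run the entire induction over the poset $\left|\TTT\right|$. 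An alternative route, sidestepping a direct treatment of the mapping-cylinder pushouts, is to apply \cite[Lemma A.7.3]{pardonimplicitatlas} to each over-category $(\SSS_\ast)_{/T}$ with $T$ maximal (for $\preceq$) in $\TTT$ — whose hypothesis that the relevant ``leftmost maps'' be quasi-isomorphisms is again the statement that the structure maps of $C_\bullet^\cof(E_\ast)$ are quasi-isomorphisms — and then to glue by a Mayer--Vietoris argument along the downward-closed intersections; the combinatorics of those intersections then becomes the analogous obstacle.
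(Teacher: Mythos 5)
Your second and third paragraphs are essentially the paper's proof of Lemma \ref{HofcolimitCtilde}: the paper inducts on $\#\TTT$, strips off one maximal object $T_0$, and encodes your pushout as the short exact sequence \eqref{colimspresentation}, whose left-exactness rests on exactly the two points you identify --- injectivity of \eqref{cofibrancyconditionI} (cofibrancy of $C_\bullet^\cof(E_\ast)$) and exactness of $\Aut(T_0)$-coinvariants over $\QQ$, the latter being precisely how the groupoid-versus-poset bookkeeping you flag as delicate is disposed of.  The comparison of the two sides is then a map of long exact sequences plus the five lemma, which is your ``gluing lemma for homotopy pushouts'' in other words.  So the engine of your argument coincides with the paper's.

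The step that fails as written is the chain-level construction in your first paragraph.  The fundamental cycles do not assemble into a \emph{strict} map of $\SSS_\ast$-modules $\QQ\to C_\bullet(E_\ast)$: under the pushforward induced by a morphism $T\to T'$, the chosen cycle (sitting in the $p=0$, $T$-indexed summand of the homotopy colimit \eqref{hocolimdefeq} defining $C_\bullet(E_\ast)(T)$) lands in a summand other than the one carrying the chosen cycle for $T'$, and products of the fixed chains $[E_\alpha]$ agree with the chosen fundamental chain of a disjoint union only up to boundary; naturality holds only up to homotopy.  The proposed strict lift $\QQ\to C_\bullet^\cof(E_\ast)$ is then overdetermined at a maximal $T$: naturality forces the value $z_T$ to equal the image of $z_{T'}$ under \emph{every} nontrivial morphism $T'\to T$, distinct such $T'$ need not admit a common refinement in $(\SSS_\ast)_{/T}$, so these images are merely homologous rather than equal, and the structure maps into the mapping cylinder are injective, leaving no room to correct the discrepancy.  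Put differently, the constant module $\QQ$ is \emph{not} cofibrant (\eqref{cofibrancyconditionI} fails for it whenever the codimension $\geq 1$ part of $(\SSS_\ast)_{/T}$ is disconnected), and cofibrancy of the \emph{source} is exactly what licenses such inductive constructions in the paper (Lemmas \ref{qisoCcof} and \ref{thetanonempty}).  Fortunately this paragraph is dispensable: the identification of $\hocolim_\TTT C_\bullet^\cof(E_\ast)$ with $C_\bullet N_\bullet\TTT$, the fact that the structure maps of $C_\bullet^\cof(E_\ast)$ are quasi-isomorphisms, and the ``in particular'' clause all follow from the pointwise quasi-isomorphism \eqref{qast}, the canonical isomorphism $H_\bullet(E_\ast)=\QQ$ of $\SSS_\ast$-modules from Definition \ref{CEIdef}, and the filtration of \eqref{hocolimdefeq} by simplicial degree (exact over $\QQ$), with no chain-level natural transformation $\QQ\to C_\bullet^\cof(E_\ast)$ needed.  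With that substitution your proof is the paper's.
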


\begin{proof}
We argue by induction on $\#\TTT$ (note that $|\SSS_{/T}|$ is finite by our restriction to effective objects), the case $\#\TTT=0$ being trivial.

Pick a maximal object $T_0\in\TTT$.
If $\TTT=\TTT^{\leq T_0}$, then the desired conclusion follows from Lemma \ref{finalhocolim}; otherwise argue as follows.
Let us abbreviate $X:=C_\bullet^\cof(E)$.
We claim that there is a short exact sequence
\begin{equation}\label{colimspresentation}
0\to\colim_{T\in\TTT^{<T_0}}X(T)\to\colim_{T\in\TTT^{\leq T_0}}X(T)\oplus\colim_{T\in\TTT\setminus\{T_0\}}X(T)\to\colim_{T\in\TTT}X(T)\to 0.
\end{equation}
(Note that the expressions $\TTT^{<T_0}$, $\TTT^{\leq T_0}$, $\TTT\setminus\{T_0\}$ refer to \emph{full subcategories} of $\TTT$, not any sort of over-categories.)
Exactness on the right and in the middle follows from the universal property of $\colim$.
To show exactness on the left, we show the stronger statement that
\begin{equation}\label{wantinjective}
\colim_{T\in\TTT^{<T_0}}X(T)\hookrightarrow\colim_{T\in\TTT^{\leq T_0}}X(T)
\end{equation}
is injective.
To see this, note that cofibrancy of $X$ (and Lemma \ref{sometoallinjective}) and exactness of $\Aut_\TTT(T_0)$-coinvariants imply that the map
\begin{equation}\label{knowinjective}
\Bigl(\colim_{T\in\TTT_{/T_0}}X(T)\Bigr)_{\Aut_\TTT(T_0)}\hookrightarrow X(T_0)_{\Aut_\TTT(T_0)}
\end{equation}
is injective.
The right hand sides of \eqref{knowinjective} and \eqref{wantinjective} coincide, and the left hand side of \eqref{knowinjective} surjects onto the left hand side of \eqref{wantinjective}, so we conclude the desired injectivity of \eqref{wantinjective}.

Now \eqref{colimspresentation} induces a long exact sequence on homology.
There is a similar long exact sequence with $\hocolim$ in place of $\colim$, together with a map from the sequence of $\hocolim$'s to the sequence of $\colim$'s.
Now the five lemma and the induction hypothesis (together with the fact that $\TTT^{\leq T_0}\subsetneqq\TTT$) give the desired result.
\end{proof}

\begin{lemma}\label{qisoCcof}
There exists a quasi-isomorphism of $\SSS$-modules
\begin{equation}
C_\bullet^\cof(E)\xrightarrow\sim\QQ
\end{equation}
inducing the canonical isomorphism $H_\bullet^\cof(E)=H_\bullet(E)=\QQ$ from \eqref{HEiso}.
\end{lemma}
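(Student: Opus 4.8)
The plan is to construct the quasi-isomorphism $\epsilon\colon C_\bullet^\cof(E_\ast)\to\QQ$ by induction on $T\in|\SSS_\ast|$ with respect to the well-founded partial order $\preceq$ of Definition \ref{partialorderI}, mirroring the inductive construction of $C_\bullet^\cof(E_\ast)$ itself in Definition \ref{CEtildeIdef}. For non-maximal $T$, write $T=\#_iT_i$ with $T_i$ maximal; since the paths-between-basepoints subgroup $\Aut(\{T_i\}_i/\#_iT_i)$ acts trivially one has $C_\bullet^\cof(E_\ast)(T)=\bigotimes_iC_\bullet^\cof(E_\ast)(T_i)$, and cofibrancy (property (i) of Definition \ref{cofibrantdefI}) forces $\epsilon_T:=\bigotimes_i\epsilon_{T_i}$. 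This is a quasi-isomorphism because a tensor product of quasi-isomorphisms of complexes of $\QQ$-vector spaces is one, and it induces the canonical isomorphism on $H_0$ because $H_0(E_\ast)=\QQ$ is multiplicative under concatenation; all $\SSS_\ast$-module compatibilities with target a non-maximal $T$ hold by construction.

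The content is the case of maximal $T$. Since $T$ is maximal it is not a nontrivial concatenation, and every nontrivial morphism into $T$ has codimension $\geq 1$; hence the already-constructed maps $\epsilon_{T'}$ with $\codim(T'/T)\geq 1$ are mutually compatible by the inductive hypothesis (that $\epsilon$ is a map of $\SSS_\ast$-modules on $\prec T$) and assemble into a chain map $\epsilon_\partial\colon A_T\to\QQ$, where $A_T:=\colim_{\codim(T'/T)\geq 1}C_\bullet^\cof(E_\ast)(T')$; by cofibrancy (property (ii), i.e.\ injectivity of \eqref{cofibrancyconditionI}) the canonical map $A_T\rightarrowtail C_\bullet^\cof(E_\ast)(T)$ is injective with quotient complex $Q_T$. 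I would then extend $\epsilon_\partial$ to a chain map $\epsilon_T\colon C_\bullet^\cof(E_\ast)(T)\to\QQ$. Since $A_T$ is a degreewise-split subcomplex of complexes of $\QQ$-vector spaces, the obstruction to extending the degree-zero cocycle $\epsilon_\partial\in\Hom(A_T,\QQ)$ lies in $H^1\Hom(Q_T,\QQ)=\Hom(H_{-1}(Q_T),\QQ)$. Now $H_\bullet(C_\bullet^\cof(E_\ast)(T))=H_\bullet(C_\bullet(E_\ast)(T))=\QQ$ is concentrated in degree $0$ (Definition \ref{CEIdef} and \cite[Lemma A.7.3]{pardonimplicitatlas}), while by Lemma \ref{HofcolimitCtilde} the homology of $A_T$ is supported in degrees $\geq 0$; feeding this into the long exact sequence of $0\to A_T\to C_\bullet^\cof(E_\ast)(T)\to Q_T\to 0$ gives $H_{-1}(Q_T)=0$, so the obstruction vanishes and $\epsilon_T$ exists. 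Averaging over the finite group $\Aut(T)$ (legitimate over $\QQ$, and harmless since $\epsilon_\partial$ is already equivariant and the paths-between-basepoints subgroup acts trivially on $C_\bullet^\cof(E_\ast)(T)$) makes $\epsilon_T$ equivariant while preserving its restriction to $A_T$, hence its compatibility with all $\SSS_\ast$-module structure maps. Finally $\epsilon_T$ is a quasi-isomorphism: homology vanishes off degree $0$ on both sides, and on $H_0$ a nonzero class of $H_0(A_T)$ (nonzero once $\{T'\to T:\codim\geq1\}\neq\varnothing$, by Lemma \ref{HofcolimitCtilde}) cannot map to a boundary in $C_\bullet^\cof(E_\ast)(T)$, as $\epsilon_T$ would then kill it, contradicting that $\epsilon_\partial$ is the canonical (in particular nonzero) isomorphism on $H_0(A_T)$; so $\epsilon_T$ is nonzero, hence an isomorphism, on $H_0=\QQ$, and it realizes the canonical isomorphism by the inductive compatibility. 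The degenerate sub-case in which there are no morphisms $T'\to T$ of codimension $\geq 1$ is handled by choosing any quasi-isomorphism $C_\bullet^\cof(E_\ast)(T)\to\QQ$ realizing the canonical isomorphism on $H_0$.

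I expect the main obstacle to be the extension step for maximal $T$: organizing the obstruction-theoretic argument so that vanishing of $H_{-1}(Q_T)$ genuinely follows from the connectivity statement of Lemma \ref{HofcolimitCtilde} together with the degree-zero concentration of $H_\bullet(C_\bullet(E_\ast)(T))$, and verifying that the resulting $\epsilon_T$ can be chosen simultaneously $\Aut(T)$-equivariant and \emph{strictly} compatible with the $\SSS_\ast$-module structure maps rather than merely up to homotopy. The remaining steps are routine bookkeeping with the partial order $\preceq$ and with coinvariants over $\QQ$.
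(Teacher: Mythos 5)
Your proposal is correct and follows essentially the same route as the paper: induction over the partial order of Definition \ref{partialorderI}, with cofibrancy forcing the non-maximal case and supplying the injectivity \eqref{cofibrancyconditionI} in the maximal case, Lemma \ref{HofcolimitCtilde} providing the homological input, and $\Aut(T)$-averaging at the end. The only difference is organizational: you extend $\epsilon_\partial$ directly, placing the obstruction in $\Hom(H_{-1}(Q_T),\QQ)$ (killed via the long exact sequence, $H_{-1}$ of the total complex vanishing, and Lemma \ref{HofcolimitCtilde}) and verifying the homotopy class on $H_0$ afterwards, whereas the paper first fixes a representative of the correct homotopy class and uses Lemma \ref{HofcolimitCtilde} together with $H_0\Hom(C_\bullet,\QQ)=\Hom(H_0C_\bullet,\QQ)$ over $\QQ$ to see the triangle commutes up to chain homotopy before correcting it strictly---two equivalent phrasings of the same extension step.
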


\begin{proof}
We argue by induction on $T\in\SSS$, partially ordered as in Definition \ref{partialorderI}.

For $T$ non-maximal, write $T=\#_iT_i$ for $T_i$ maximal.  Then cofibrancy of $C_\bullet^\cof(E)$ both forces us to take $p_\ast(T):=\bigotimes_ip_\ast(T_i)$ and assures that this choice is compatible with the maps defined thus far.

For maximal $T$, we would like to fill in the diagram
\begin{equation}\label{crucialinjection}
\begin{tikzcd}
\displaystyle\colim_{\codim(T'/T)\geq 1}C_\bullet^\cof(E)(T')\ar[tail]{r}\ar{d}{p_\ast}&C_\bullet^\cof(E)(T)\ar[dashed]{dl}{p_\ast}\\
\QQ
\end{tikzcd}
\end{equation}
with a map $C_\bullet^\cof(E)(T)\to\QQ$ in a particular chain homotopy class.  The horizontal map is injective since $C_\bullet^\cof(E)$ is cofibrant; it follows that it is enough to show that \eqref{crucialinjection} commutes up to chain homotopy.
For chain complexes $C_\bullet$ over $\QQ$, we have an isomorphism $H_0\Hom(C_\bullet,\QQ)\xrightarrow\sim\Hom(H_0C_\bullet,\QQ)$, so it suffices to show that \eqref{crucialinjection} commutes on homology, which follows from Lemma \ref{HofcolimitCtilde}.
Finally, we may ensure $p_\ast$ is $\Aut(T)$-invariant by averaging (this is necessary for $p_\ast$ to be a natural transformation of functors).
\end{proof}

\begin{lemma}\label{QSStocvir}
There exists a map of $\SSS$-modules
\begin{equation}\label{keyvfcinductivemap}
\QQ[\SSS]\to C_\vir^{\bullet+\vdim}(\Mbar\rel\partial)
\end{equation}
with the following property.  Such a map determines for all $T\in\SSS$ a cycle in \eqref{HomQSCvir}, which by Lemma \ref{sheafylemma} determines an element of $\cH^0(\Mbar(T))$.  We require these elements coincide with the class of the constant function $1\in\cH^0(\Mbar(T))$ for all $T\in\SSS$.
\end{lemma}

\begin{proof}
We argue by induction on $T$.

For $T$ non-maximal, write $T=\#_iT_i$ with $T_i$ maximal; now cofibrancy of $\QQ[\SSS]$ (Lemma \ref{QScofibrant}) determines the map completely on $\QQ[\SSS](T)$, and the map has the desired property by Lemma \ref{sheafylemma}.

For maximal $T$, consider the following restriction map:
\begin{equation}\label{restrictionmapI}
\prod_{T'\to T}\left[\oo_{T'}^\vee\otimes C_\vir^\bullet(\Mbar\rel\partial)(T')\right]^{\Aut(T'/T)}\to\prod_{\codim(T'/T)\geq 1}\left[\oo_{T'}^\vee\otimes C_\vir^\bullet(\Mbar\rel\partial)(T')\right]^{\Aut(T'/T)}.
\end{equation}
The part of the map \eqref{keyvfcinductivemap} defined thus far determines a cycle on the right, which we must lift to a cycle on the left in a particular cohomology class.  Since \eqref{restrictionmapI} is surjective, it suffices to show that the desired cohomology class on the left is mapped to the given cohomology class on the right.
Identifying \eqref{restrictionmapI} with the map on global sections of a corresponding map of pure homotopy sheaves as in Lemma \ref{sheafylemma}, we see that on cohomology this map is simply the restriction map $\cH^\bullet(\Mbar(T))\to\cH^\bullet(\partial\Mbar(T))$.
It thus suffices to show that the cycle on the right equals the constant function $1\in\cH^0(\partial\Mbar(T))$, which can be checked on each $\Mbar(T')$ for $\codim(T'/T)\geq 1$, where it holds by the induction hypothesis.
\end{proof}

\subsection{Sets \texorpdfstring{$\Theta$}{Theta}}\label{setthetaIsec}

We now conclude by defining the sets $\Theta$ which index all possible choices of the extra data necessary to fix coherent virtual fundamental cycles on the moduli spaces $\Mbar$.

\begin{definition}[Sets $\Theta(\D)$]\label{thetadefs}
Given a datum $\D$ as in any of Setups \ref{setupI}--\ref{setupIV}, an element $\theta\in\Theta(\D)$ consists of a commuting diagram\footnote{Note that the categories $\SSS$ are essentially small, so the collection of such diagrams forms a set.} of $\SSS$-modules
\begin{equation}\label{keydiagramI}
\begin{tikzcd}
\QQ[\SSS]\ar{r}{\tilde w_\ast}\ar{d}{w_\ast}&C_{-\bullet}^\cof(E)\ar{r}{p_\ast}\ar[two heads]{d}{\eqref{qast}}[swap]{\sim}&\QQ\\
C_\vir^{\bullet+\vdim}(\Mbar\rel\partial)\ar{r}{\eqref{sast}}&C_{-\bullet}(E)
\end{tikzcd}
\end{equation}
satisfying the following properties:
\begin{itemize}
\item We require that $p_\ast$ induce the canonical isomorphism $H_\bullet^\cof(E)=H_\bullet(E)=\QQ$ from Definition \ref{CEIdef}.
\item We require that $w_\ast$ satisfy the conclusion of Lemma \ref{QSStocvir}.
\end{itemize}
\end{definition}

Note that there are natural forgetful maps
\begin{align}
\label{thetaIIforget}\Theta_\II&\to\Theta_\I^+\times\Theta_\I^-,\\
\label{thetaIIIforget}\Theta_\III&\to\Theta_\II^{t=0}\times_{\Theta_\I^+\times\Theta_\I^-}\Theta_\II^{t=1},\\
\label{thetaIVforget}\Theta_\IV&\to\Theta_\II^{02}\times_{\Theta_\I^0\times\Theta_\I^2}(\Theta_\II^{01}\times_{\Theta_\I^1}\Theta_\II^{12}).
\end{align}

An element $\theta\in\Theta$ evidently gives rise to a morphism of $\SSS$-modules $p_\ast\circ\tilde w_\ast:\QQ[\SSS]\to\QQ$.  Such a morphism corresponds to virtual moduli counts $\#\Mbar(T)^\vir_\theta$ satisfying the relevant master equation by Remark \ref{rmkmapofSgivesvmc}.

\begin{proposition}\label{vmctransverse}
If $\Mbar(T)$ is weakly regular and $\vdim(T)=0$, then $\Mbar(T)=\M(T)$ and $\#\Mbar(T)^\vir_\theta=\#\M(T)$.
\end{proposition}

\begin{proof}
There are no nontrivial (effective) $T'\to T$ for dimension reasons, so $\Mbar(T)=\M(T)$.
Evaluating the diagram \eqref{keydiagramI} at $T$ and taking cohomology gives
\begin{equation}
\begin{tikzcd}[column sep = large]
\oo_T\ar{r}{\tilde w_\ast}\ar{d}{w_\ast}&\QQ\ar{r}{\id}\ar{d}{\id}&\QQ\\
\cH^\bullet(\Mbar(T);\oo_T)\ar{r}{[\Mbar(T)]^\vir}&\QQ.
\end{tikzcd}
\end{equation}
Unravelling the isomorphism in Lemma \ref{QSStocvir}, we see that the left vertical map is just the tautological map to $\cH^0$.  The bottom horizontal map is by definition the virtual fundamental class $[\Mbar(T)]^\vir$ from \eqref{sastvir}.
Thus we would like to appeal to Lemma \ref{transversevfc} to conclude that the virtual fundamental class coincides with the ordinary fundamental class.

For cases (I) and (II), our regularity assumption on $\Mbar(T)$ ensures that the hypothesis $\Mbar(T)=\Mbar(T)^\reg$ of Lemma \ref{transversevfc} is satisfied (one should observe here that for moduli spaces of virtual dimension zero, the regular locus automatically has trivial isotropy, due to our use of asymptotic markers and the fact that nodes appear only in codimension two).
For cases (III) and (IV), there is a gap between our assumption that $\Mbar(T)$ is weakly regular and the hypothesis $\Mbar(T)=\Mbar(T)^\reg$ of Lemma \ref{transversevfc} (see Definition \ref{regularitydef}).  It suffices, though, to observe that the implicit atlas on $\Mbar(T)$ remains an implicit atlas if we enlarge $\Mbar(T)_I^\reg$ to include all points which are weakly regular and have trivial isotropy.  The only thing to check is that the manifold/openness/submersion axioms still hold, but this is easy since the thickened moduli spaces have only the top stratum $\M(T)_I$ and hence can be described by the usual Banach manifold Fredholm setup.
\end{proof}

\begin{proposition}\label{thetanonempty}
The set $\Theta_\I$ is non-empty (resp.\ \eqref{thetaIIforget}--\eqref{thetaIVforget} are surjective).
\end{proposition}

\begin{proof}
Concretely, we must construct $p_\ast$, $w_\ast$, $\tilde w_\ast$ as in Definition \ref{thetadefs}.  We will in fact give a construction of such data by \emph{induction} on $T\in\SSS$ with respect to the partial order from Definition \ref{partialorderI}.  The inductive nature of our proof (in addition to being the only reasonable approach) implies surjectivity of \eqref{thetaIIforget}--\eqref{thetaIVforget} (as opposed to mere non-emptiness of $\Theta_\II$, $\Theta_\III$, $\Theta_\IV$).

The existence of $p_\ast$ and $w_\ast$ follow from Lemmas \ref{qisoCcof} and \ref{QSStocvir} respectively.  Note that both are proved by induction on $T$, so their use here is permissible.

To show the existence of $\tilde w_\ast$, we argue by induction on $T$.  Note that $\QQ[\SSS]$ is cofibrant by Lemma \ref{QScofibrant}.  For non-maximal $T$, cofibrancy of $\QQ[\SSS]$ determines $\tilde w_\ast(T)$ uniquely and implies that the diagram still commutes.  For maximal $T$, we are faced with the following lifting problem:
\begin{equation}
\begin{tikzcd}
\displaystyle\colim_{\codim(T'/T)\geq 1}\QQ[\SSS](T')\ar[tail]{d}\ar{r}{\tilde w_\ast}&C_\bullet^\cof(E)(T)\ar[two heads]{d}{\eqref{qast}}[swap]{\sim}\\
\QQ[\SSS](T)\ar{r}{\eqref{sast}\circ w_\ast}\ar[dashed]{ru}{\tilde w_\ast}&C_\bullet(E)(T).
\end{tikzcd}
\end{equation}
The left vertical map is injective by Lemma \ref{QScofibrant}, and the right vertical map is a surjective quasi-isomorphism by Definition \ref{CEtildeIdef}.  All four complexes are bounded below.  Now Lemma \ref{liftingproperty} ensures that a lift exists, and we make it $\Aut(T)$-equivariant by averaging.
\end{proof}

\begin{lemma}\label{liftingproperty}
Consider a diagram of chain complexes bounded below over a ring $R$:
\begin{equation}
\begin{tikzcd}
A_\bullet\ar[tail]{d}\ar{r}&X_\bullet\ar[two heads]{d}{\sim}\\
B_\bullet\ar{r}\ar[dashed]{ru}&Y_\bullet
\end{tikzcd}
\end{equation}
where the right vertical map is a surjective quasi-isomorphism and the left vertical map is an injection whose cokernel is degreewise projective.  Then there exists a lift as illustrated.
\end{lemma}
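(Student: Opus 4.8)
The plan is to prove Lemma \ref{liftingproperty} as a routine instance of the standard lifting axiom for bounded-below chain complexes (the one underlying the projective model structure on chain complexes of $R$-modules, in which the hypothesis on $A_\bullet\to B_\bullet$ says it is a cofibration and the hypothesis on $X_\bullet\to Y_\bullet$ says it is an acyclic fibration). Rather than invoke that formalism, I would give the direct proof by induction on homological degree. First I would reduce, by a shift, to the case where all four complexes vanish in negative degrees; set $Q_\bullet:=\coker(A_\bullet\to B_\bullet)$, which by hypothesis is a bounded-below complex of projective $R$-modules; choose $R$-linear sections $s_n\colon Q_n\to B_n$ of the surjections $B_n\twoheadrightarrow Q_n$ (using projectivity of $Q_n$), giving splittings $B_n=A_n\oplus s_n(Q_n)$; and write $K_\bullet:=\ker(\pi)$ for $\pi\colon X_\bullet\to Y_\bullet$, which is a bounded-below acyclic complex since $\pi$ is a surjective quasi-isomorphism. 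Denote the given maps by $f^A\colon A_\bullet\to X_\bullet$ and $g\colon B_\bullet\to Y_\bullet$.

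The lift $f\colon B_\bullet\to X_\bullet$ is then built one degree at a time. Assuming $f_k$ is defined for $k<n$ with $f_k|_{A_k}=f^A_k$, $df_k=f_{k-1}d$, and $\pi f_k=g_k$, I would extend over $B_n=A_n\oplus s_n(Q_n)$ by keeping $f_n|_{A_n}=f^A_n$ and defining $f_n$ on $s_n(Q_n)$ via an $R$-linear map $\psi_n\colon Q_n\to X_n$. The three requirements on $f_n$ translate into the two conditions $d\psi_n=\theta_n$ and $\pi\psi_n=\eta_n$, where $\theta_n(q):=f_{n-1}(d\,s_n(q))\in X_{n-1}$ and $\eta_n(q):=g_n(s_n(q))\in Y_n$; the induction hypothesis gives $d\theta_n=0$ and $d\eta_n=\pi\theta_n$. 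Consider the $R$-module $W_n:=\{(c,y)\in X_{n-1}\times Y_n\,:\,dc=0,\ dy=\pi(c)\}$ and the map $\rho_n\colon X_n\to W_n$, $x\mapsto(dx,\pi x)$; then $(\theta_n,\eta_n)$ is a well-defined map $Q_n\to W_n$, and since $Q_n$ is projective it suffices to prove $\rho_n$ surjective. Given $(c,y)\in W_n$, lift $y$ to some $\tilde y\in X_n$ (surjectivity of $\pi$); then $d\tilde y-c$ is an $(n-1)$-cycle lying in $K_{n-1}$, hence (acyclicity of $K_\bullet$, which is bounded below) equals $d\kappa$ for some $\kappa\in K_n$, and $x:=\tilde y-\kappa$ satisfies $\rho_n(x)=(c,y)$.

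This produces the desired chain map $f$ after finitely many steps in each degree, and all the verifications involved (that $\theta_n$ is a cycle, that $d\eta_n=\pi\theta_n$, and that the assembled $f_n$ commutes with $d$ and lifts $g_n$) are immediate from the chain-map identities in the induction hypothesis and the commutativity of the given square. I do not expect any genuine obstacle: the only non-formal inputs are the projectivity of $\coker(A_\bullet\to B_\bullet)$ — used both to split off $Q_n$ and to lift maps out of $Q_n$ — and the acyclicity of $\ker(\pi)$ together with the bounded-below hypothesis — used for the surjectivity of $\rho_n$ — and both are assumed. The main care needed is simply to keep the three compatibility conditions straight through the induction; everything else is bookkeeping.
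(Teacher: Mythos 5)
Your proof is correct, and it is essentially the paper's argument: the paper simply cites the fact that cofibrations lift against acyclic fibrations in the projective model structure on bounded-below complexes and remarks that it follows from a straightforward diagram chase, which is precisely the degree-by-degree chase (splitting off the projective cokernel and using acyclicity of $\ker\pi$) that you carry out explicitly.
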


\begin{proof}
This is the fact that ``cofibrations have the left lifting property with respect to acyclic fibrations'' in the projective model structure on $\Ch_{\bullet\geq 0}(R)$.  It can be proved by a straightforward diagram chase.
\end{proof}

\subsection{Symmetric monoidal structure on \texorpdfstring{$\Theta$}{Theta}}

We now show that the sets $\Theta$ in cases (I) and (II) are naturally symmetric monoidal with respect to disjoint union of data $\D$.
Concretely, this means we are relating the virtual moduli counts associated to a disjoint union of contact manifolds or symplectic cobordisms with those associated to each piece (compare \S\ref{wrapup}).
In fact, to do this we will consider not $\Theta$ as defined in Definition \ref{thetadefs} but rather an (equivalent) enlargement thereof (for which all of the results proven earlier remain valid), explained during the course of the proof.

\begin{proposition}\label{symmetricmonoidal}
There are functorial maps associated to finite disjoint unions (indexed by $i\in I$)
\begin{align}
\label{symmodThetainclusionI}\Theta_\I(Y,\lambda,J)&\leftarrow\prod_{i\in I}\Theta_\I(Y_i,\lambda_i,J_i)&\text{for }(Y,\lambda,J)&=\bigsqcup_{i\in I}\,(Y_i,\lambda_i,J_i)\\
\label{symmodThetainclusionII}\Theta_\II(\hat X,\hat\omega,\hat J)&\leftarrow\prod_{i\in I}\Theta_\II(\hat X_i,\hat\omega_i,\hat J_i)&\text{for }(\hat X,\hat\omega,\hat J)&=\bigsqcup_{i\in I}\,(\hat X_i,\hat\omega_i,\hat J_i)
\end{align}
preserving the virtual moduli counts and turning $\Theta_\I$ and $\Theta_\II$ into weak symmetric monoidal functors.
\end{proposition}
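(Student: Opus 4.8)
The plan is to show that disjoint union decomposes everything relevant into a product over the summands, so that a choice of $\theta_i$ on each $(Y_i,\lambda_i,J_i)$ assembles tautologically to a choice of $\theta$ on the disjoint union, preserving virtual moduli counts.

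First I would record the basic observation that the categories of strata are additive under disjoint union: if $(Y,\lambda,J)=\bigsqcup_{i\in I}(Y_i,\lambda_i,J_i)$ then $\SSS_\I(Y,\lambda,J)=\bigsqcup_{i\in I}\SSS_\I(Y_i,\lambda_i,J_i)$, compatibly with morphisms, concatenations, and the notion of effectiveness, and similarly $\SSS_\II(\hat X)=\bigsqcup_{i\in I}\SSS_\II(\hat X_i)$ (using that the ends $Y^\pm$ and the cobordism $\hat X$ all decompose). The point is that a Reeb orbit of $(Y,\lambda)$ lies in a unique $Y_i$, that $\pi_2(Y,\gamma^+\sqcup\Gamma^-)$ is empty unless all orbits lie in the same $Y_i$ (a connected genus-zero surface with boundary maps into a single component of $Y$), and hence that a connected decorated tree $T\in\SSS_\I$ has all of its decorations supported in one $Y_i$; effectiveness is the same since $\Mbar_\I(T)$ only sees curves in $\RR\times Y_i$, and morphisms and concatenations never mix components. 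Consequently an $\SSS_\ast$-module over $(Y,\lambda,J)$ valued in any symmetric monoidal category is exactly a tuple of $\SSS_\ast(Y_i,\lambda_i,J_i)$-modules, and likewise for morphisms of such modules.

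Next I would check the ``locality'' of the data appearing in Definition \ref{thetadefs}: the restriction of each relevant $\SSS_\ast$-module to the $i$-th summand is canonically identified with the corresponding object built from $(Y_i,\ldots)$. For the moduli spaces $\Mbar_\ast(T)$ and for $\QQ[\SSS_\ast]$ this is immediate from the previous paragraph. For the atlas-dependent complexes $C_\vir^{\bullet+\vdim}(\Mbar_\ast\rel\partial)$, $C_\bullet(E_\ast)$, $C_\bullet^\cof(E_\ast)$ and the maps \eqref{sast}, \eqref{qast} one must deal with the fact that an index set $\bar A_\ast(T)$ does not literally decompose: a thickening datum for $(Y,\lambda,J)$ involves a divisor $D_\alpha\subseteq Y$ and a perturbation term on all of $\hat Y$, not supported in any one $Y_i$. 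The remedy is in the choice of subatlases: for maximal $T\in\SSS_\ast$ lying in the $i$-th component, take $B_\ast(T)$ to be the image, under ``extension by the empty divisor on the other components'', of the subatlas chosen by $\theta_i$ (this is a legitimate thickening datum in the sense of Definition \ref{AIdef}, the extra components of the divisor being trivially intersected). With this choice, $\bar B_\ast(T)$ restricted to the $i$-th component is identified with the analogous index set for $(Y_i,\ldots)$: a curve supported in $\RR\times Y_i$ never meets the part of a divisor lying over the other $Y_j$'s, so adjoining such a divisor changes neither the thickened moduli spaces, nor their regular loci, nor the associated virtual cochain complexes and their homotopy-colimit models over the slice categories. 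Granting this, all four complexes in \eqref{keydiagramI}, together with \eqref{sast} and \eqref{qast}, decompose as ``disjoint unions'' of the corresponding data for the $(Y_i,\ldots)$.

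The construction of \eqref{symmodThetainclusionI}--\eqref{symmodThetainclusionII} is then formal: given $\theta_i=(B_\ast^{(i)},p_\ast^{(i)},w_\ast^{(i)},\tilde w_\ast^{(i)})$, let $B_\ast$ be as above and let $p_\ast,w_\ast,\tilde w_\ast$ be, on the $i$-th component, the maps $p_\ast^{(i)},w_\ast^{(i)},\tilde w_\ast^{(i)}$ transported across the identifications of the previous step. Since distinct components admit no morphisms or concatenations between them, this defines a map of $\SSS_\ast$-modules; the diagram \eqref{keydiagramI} commutes and the two required conditions ($p_\ast$ inducing the canonical isomorphism on $H_\bullet$, and $w_\ast$ satisfying the conclusion of Lemma \ref{QSStocvir}, which is a statement about the resulting class in $\cH^0(\Mbar_\ast(T))$) hold because they hold on each component. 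For $\Theta_\II$ this manifestly lies over the corresponding map for $\Theta_\I^\pm$, so it is compatible with \eqref{thetaIIforget}; functoriality under isomorphisms of the input data and compatibility with the associativity and commutativity constraints of $\sqcup$ are routine. From the component-wise description one reads off $\#\Mbar_\ast(T)^\vir_\theta=\#\Mbar_\ast(T)^\vir_{\theta_i}$ for each $T$ in component $i$, so the virtual moduli counts are preserved. The main obstacle is the locality statement of the third paragraph: everything combinatorial decomposes trivially, but one must genuinely verify that the VFC-package data attached to the moduli spaces is unaffected by the ``extend a divisor by the empty set'' operation, i.e.\ that enlarging the ambient contact manifold by components disjoint from the one carrying the curves changes none of the output of \S\ref{thickenedmodulisec}. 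This is intuitively clear, but it is the one point requiring more than unwinding notation. (As reflected in the word ``almost'' in \S\ref{contactfunctorsec}, the maps obtained point only one way and are not canonically invertible — because of the rigidification $E_\alpha\xrightarrow\sim\RR^{\dim E_\alpha}$ in the definition of a thickening datum — but this suffices to endow $CH_\bullet$ with its symmetric monoidal structure.)
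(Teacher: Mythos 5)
Your proposal is correct and follows essentially the same route as the paper: decompose $\SSS_\ast$ (restricted to effective objects) as a disjoint union over components, include thickening datums for $(Y_i,\lambda_i,J_i)$ into those for $(Y,\lambda,J)$ by keeping $r_\alpha$, $E_\alpha$, $D_\alpha$ and extending $\lambda_\alpha$ by zero, and transport the diagrams of $\SSS_\ast$-modules across the resulting identifications. The ``locality'' point you flag as the main obstacle is exactly what this extension-by-zero makes tautological (a curve in $\RR\times Y_i$ sees the same divisor and the same perturbation, so the thickened moduli spaces and all VFC data are literally unchanged), which is why the paper's proof treats it as holding by definition.
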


\begin{proof}
Note that (restricting to effective objects), we have
\begin{align}
\SSS_\I(Y,\lambda,J)&=\bigsqcup_{i\in I}\SSS_\I(Y_i,\lambda_i,J_i)&\text{for }(Y,\lambda,J)&=\bigsqcup_{i\in I}\,(Y_i,\lambda_i,J_i),\\
\SSS_\II(\hat X,\hat\omega,\hat J)&=\bigsqcup_{i\in I}\SSS_\II(\hat X_i,\hat\omega_i,\hat J_i)&\text{for }(\hat X,\hat\omega,\hat J)&=\bigsqcup_{i\in I}\,(\hat X_i,\hat\omega_i,\hat J_i),
\end{align}
so an $\SSS_\I(Y,\lambda,J)$-module is the same as a tuple of $\SSS_\I(Y_i,\lambda_i,J_i)$-modules for $i\in I$ (and the same for $\SSS_\II$-modules).  There are also natural inclusions
\begin{align}
\label{symmodAinclusionI}A_\I(Y,\lambda,J)(T)&\hookleftarrow A_\I(Y_i,\lambda_i,J_i)(T)&\text{for }T&\in\SSS_\I(Y_i,\lambda_i,J_i),\\
A_\II(\hat X,\hat\omega,\hat J)(T)&\hookleftarrow A_\II(\hat X_i,\hat\omega_i,\hat J_i)(T)&\text{for }T&\in\SSS_\II(\hat X_i,\hat\omega_i,\hat J_i).
\end{align}
Precisely, these maps preserve $r$, $E$, $D$, and extend $\lambda$ as zero on $Y\setminus Y_i$ (resp.\ $\hat X\setminus\hat X_i$).

Now we modify the definition of $\Theta$ to include a choice of $\Aut(T)$-invariant subatlases $B(T)\subseteq A(T)$ on $\Mbar(T)$ for all maximal $T\in\SSS$ (e.g.\ in case (II), we choose $B_\I^\pm(T)\subseteq A_\I^\pm(T)$ and $B_\II(T)\subseteq A_\II(T)$).
We define $\bar B(T)$ as in \eqref{barAIcoproduct}, and we use $\bar B$ in place of $\bar A$ in the definitions from \S\S\ref{Cvirhocolimsec}--\ref{cofibrantsec}.

With this modification done, the map \eqref{symmodThetainclusionI} is defined by taking the images of the sets $B_\I(T)$ under \eqref{symmodAinclusionI} and using the ``same'' diagrams of $\SSS_\I$-modules.  The map \eqref{symmodThetainclusionII} is defined similarly.  It follows by definition that these maps are functorial and preserve the virtual moduli counts.
\end{proof}

\section{Gluing}\label{gluingsec}

This section is devoted to the proof of Theorems \ref{localgluing} and \ref{localorientations} (of which Theorems \ref{localgluingnonthickened} and \ref{localorientationsnonthickened} are special cases).  Namely, we prove that the regular loci in the thickened moduli spaces $\Mbar(T)_I$ admit the expected local topological description in terms of the spaces $G_{T'//T}$ from \S\ref{localmodelsection}, and we verify that the natural ``geometric'' and ``analytic'' maps between orientation lines agree.  We will give the argument for all cases (I), (II), (III), (IV) simultaneously.

The gluing techniques we use in this section are, to the best of our knowledge, standard in the field (perhaps the most similar published result is Hutchings--Taubes \cite{hutchingstaubesI,hutchingstaubesII}).
We learned these methods (while writing the gluing theorems in \cite[\S\S B--C]{pardonimplicitatlas}) from Abouzaid \cite{abouzaidexotic}, McDuff--Salamon \cite{mcduffsalamonJholsymp,mcduffsalamonJholquant}, and discussions with Mohammed Abouzaid, Tobias Ekholm, Helmut Hofer, and Rafe Mazzeo.

\subsection{Gluing setup}\label{localgluingproofI}

\begin{proof}[Proof of Theorem \ref{localgluing}]
Fix $T\in\SSS$, $I\subseteq J\subseteq\bar A(T)$, and $x_0\in\M(T)_J$ of type $T'\to T$ with $s_{J\setminus I}(x_0)=0$ and $\psi_{IJ}(x_0)\in\M(T)_I^\reg$.  A neighborhood of $x_0\in\Mbar(T)_J$ is stratified by $\SSS_{T'//T}$.

Our goal is to construct a local homeomorphism
\begin{equation}\label{gluinggoalA}
\bigl(G_{T'//T}\times E_{J\setminus I}\times\RR^{\mu(T')-\#V_s(T')+\dim E_I},(0,0,0)\bigr)\to\bigl(\Mbar(T)_J,x_0\bigr)
\end{equation}
which lands in $\Mbar(T)_J^\reg$ and which commutes with the maps from both sides to $\SSS_{T'//T}\times\overline{\s(T)}\times E_{J\setminus I}$.  We denote by $0\in G_{T'//T}$ the basepoint corresponding to all gluing parameters equal to $\infty$ (i.e.\ corresponding to no gluing at all) and $t=t(x_0)$.

The basepoint $x_0$ corresponds to a map $u_0:C_0\to\hat X_0$ and an element $e_0\in E_I\subseteq E_J$, along with ``discrete data'' consisting of asymptotic markers, matching isomorphisms, and markings $\phi_\alpha:(C_0)_\alpha\to\Cbar_{0,E^\eext(T_\alpha)\cup\{1,\ldots,r_\alpha\}}$ for $\alpha\in J$.

\subsubsection{Domain stabilization via submanifolds \texorpdfstring{$\hat D_{v,i}$}{D\_v,i} and points \texorpdfstring{$q_{v,i}$}{q\_v,i}}

We first stabilize the domain $C_0$ by adding marked points $q_{v,i}$ where it intersects certain codimension two submanifolds $\hat D_{v,i}$, arguing much the same as we did in \S\ref{stabilizationsec}.  Such $\hat D_{v,i}$ automatically stabilize the domains of all maps in a neighborhood of $x_0\in\Mbar(T)_J$, and thus we need only consider stable domain curves in the main gluing argument which follows.

For every vertex $v\in V(T')$, we add marked points $q_{v,i}\in(C_0)_v$ and choose local codimension two submanifolds $\hat D_{v,i}\subseteq(\hat X_0)_v$ (required to be $\RR$-invariant if $v$ is a symplectization vertex) with $u_0(q_{v,i})\in\hat D_{v,i}$ intersecting transversally, such that $(C_0)_v$ equipped with the marked points $\{p_{v,e}\}_e$ and $\{q_{v,i}\}_i$ is stable.
These new marked points $q_{v,i}$ bear no logical relation to the marked points labeled by $\{1,\ldots,r_\alpha\}$ for $\alpha\in J$ (in particular, there is no need to require that the $q_{v,i}$ be disjoint from these other marked points).

To show the existence of such points, it suffices to show that each unstable irreducible component of $(C_0)_v$ (equipped with just $\{p_{v,e}\}_e$ as marked points) has a point (and hence a non-empty open set) where $du_0$ (resp.\ $\pi_\xi du_0$ if $v$ is a symplectization vertex) is injective.  If $(C_0)_v\nsubseteq(C_0)_\alpha$ for all $\alpha\in I$, then $u_0|(C_0)_v$ is $\hat J_v$-holomorphic and the existence of such points follows from the arguments given in the proofs of Lemmas \ref{coveringI} and \ref{coveringII}.  For components $(C_0)_v\subseteq(C_0)_\alpha$ for some $\alpha\in J$, such points exist since $u_v|(C_0)_v$ satisfies Definition \ref{MIthickdef}\ref{MIthickdeftr}.

\subsubsection{Glued cobordisms \texorpdfstring{$\hat X_\g$}{X\_g} and points \texorpdfstring{$q_v'$}{q\_v'}}\label{qvprimesec}

Given any gluing parameter $\g\in G_{T'//T}$, we may form the glued cobordism $\hat X_\g$ as follows (all statements involving a choice of $\g\in G_{T'//T}$ carry the (often tacit) assumption that $\g$ lies in a sufficiently small neighborhood of $0$).  Note that $\hat X_0$ is equipped with cylindrical coordinates \eqref{endmarkingsI}--\eqref{endmarkingsII} in each end.  Now we truncate each positive (resp.\ negative) end $[0,\infty)\times Y_e$ (resp.\ $(-\infty,0]\times Y_e$) to $[0,\g_e]\times Y_e$ (resp.\ $[-\g_e,0]\times Y_e$) and identify truncated ends by translation by $\g_e$ (if $\g_e=\infty$ we do nothing).  By construction, $\hat X_\g$ is the target for pseudo-holomorphic buildings of type $T'_\g$ (denoting by $T'\to T'_\g\to T$ the image of $\g$ under the map $G_{T'//T}\to\SSS_{T'//T}$).

For every symplectization vertex $v\in V_s(T')$, fix a section $q_v'$ of the universal curve over a neighborhood of $C_0\in\Mbar_{0,E^\eext(T')\cup\{q_{v,i}\}_{v,i}}$ such that $q_v'(C_0)\in(C_0)_v$.  Now over a neighborhood of $x_0\in\Mbar(T)_J$, the sections $q_v'$ determine points $q_v'$ in the domain of each map.  Over this neighborhood, we can identify the target with $\hat X_\g$ for a unique $\g\in G_{T'//T}$ by requiring that each $q_v'$ be mapped to the corresponding ``zero level'' $0_v\subseteq\hat X_\g$ (i.e.\ the descent of $0_v:=\{0\}\times Y_v\subseteq\hat X_0$).

\subsubsection{Family of almost complex structures \texorpdfstring{$j_y$}{j\_y} on \texorpdfstring{$C_0$}{C\_0}}\label{jysec}

We now proceed to fix a family of almost complex structures on $C_0$ inducing a local biholomorphism onto the relevant moduli space of marked Riemann surfaces.  Let $N\subseteq C_0$ denote the set of nodes (note that $\{p_{v,e}\}_{v,e}$ are not nodes), and let $N_v:=N\cap(C_0)_v$.

We denote by $\Mbar_{0,n+m_{(2)}}\to\Mbar_{0,n+m}$ the total space of the bundle over $\Mbar_{0,n+m}$ with fiber $\prod_{i=1}^m(T_{q_i}C\setminus 0)$ (thought of as the space of markings $\CC\xrightarrow\sim T_{q_i}C$ at the last $m$ marked points).
As in \S\S\ref{rsdeformations}--\ref{rsmoduli}, the tangent space to the nodal stratum $\Mbar{}_{0,n+m_{(2)}}^{\#\nodes=r}$ at a given curve $C$ with marked points $P$ and doubly marked points $Q$ is canonically isomorphic to $H^1(\tilde C,T\tilde C(-\tilde P-2\tilde Q-\tilde N))$.

For every vertex $v\in V(T')$, fix a local biholomorphism
\begin{equation}\label{Jvmapping}
\J_v:=\CC^{2\#\{p_{v,e}\}_e+\#\{q_{v,i}\}_i-\#N_v-3}\to\Mbar_{0,\{q_{v,i}\}_i+(\{p_{v,e}\}_e)_{(2)}}^{\#\nodes=\#N_v}
\end{equation}
sending $0$ to $(C_0)_v$ equipped with its doubly marked points $\{p_{v,e}\}_e$ and marked points $\{q_{v,i}\}_i$.
Further fix a smooth trivialization of the universal curve over these spaces (over a neighborhood of $(C_0)_v$, that is the origin in $\J_v$), which near the nodes and marked points of the fibers is holomorphic (on total spaces) and preserves the tangent space markings.

We thus obtain a description of the universal curve over $\J_v$ as the curve $(C_0)_v$ with its doubly marked points $\{p_{v,e}\}_e$, its marked points $\{q_{v,i}\}_i$, and a modified almost complex structure $j_y$ varying smoothly in $y\in\J_v$ and fixed in a neighborhood of $\{p_{v,e}\}_e$, $\{q_{v,i}\}_i$, and $N_v$.
Since \eqref{Jvmapping} is a local biholomorphism, the derivative
\begin{equation}
\frac{dj_y}{dy}:T_0\J_v=\CC^{2\#\{p_{v,e}\}_e+\#\{q_{v,i}\}_i-\#N_v-3}\to C^\infty_c((C_0)_v\setminus(\{q_{v,i}\}_i\cup\{p_{v,e}\}_e\cup N_v),\End^{0,1}(T(C_0)_v))
\end{equation}
induces an isomorphism between its domain and the deformation space of $(C_0)_v$ equipped with its doubly marked points $\{p_{v,e}\}_e$ and marked points $\{q_{v,i}\}_i$ as recalled above.
We set
\begin{equation}
\J:=\prod_{v\in V(T')}\J_v.
\end{equation}
All statements involving a choice of $y\in\J_v$ or $y\in\J$ carry the (often tacit) assumption that $y$ lies in a sufficiently small neighborhood of $0$.

\subsubsection{Cylindrical coordinates on \texorpdfstring{$C_0$}{C\_0} and \texorpdfstring{$\hat X_0$}{X\_0}}

We now fix positive (resp.\ negative) holomorphic cylindrical coordinates
\begin{align}
\label{cylI}[0,\infty)\times S^1&\to C_0\\
\label{cylII}(-\infty,0]\times S^1&\to C_0
\end{align}
near each positive (resp.\ negative) puncture.  We also fix such cylindrical coordinates on either side of each node $n\in N$ (choosing which side is positive/negative arbitrarily).

We assume that with respect to these fixed cylindrical coordinates, we have
\begin{equation}\label{uzeroexponentialdecay}
u_0(s,t)=(Ls,\tilde\gamma(t))+o(1)\quad\text{as }\left|s\right|\to\infty
\end{equation}
(i.e.\ the constant $b$ in \eqref{asymptoticend} vanishes) with respect to the cylindrical coordinates \eqref{endmarkingsI}--\eqref{endmarkingsII} on $\hat X_0$.  To ensure that we can achieve \eqref{uzeroexponentialdecay}, we allow the possibility that \eqref{cylI}--\eqref{cylII} are only defined for $\left|s\right|$ sufficiently large.
Recall from Proposition \ref{hwzcylinderestimate} due to Hofer--Wysocki--Zehnder \cite[Theorems 1.1, 1.2, and 1.3]{hwzsmallarea} that the convergence \eqref{uzeroexponentialdecay} holds in $C^\infty$ with error $O(e^{-\delta\left|s\right|})$ for every $\delta<\delta_\gamma$.

\subsubsection{Glued curves \texorpdfstring{$C_\g$}{C\_g} and points \texorpdfstring{$q_e''$}{q\_e''}}\label{Cgdefsec}

Given any gluing parameter $\beta\in\CC^{E^\eint(T')}\times\CC^N$ (i.e.\ one for each interior edge of $T'$ and one for each node $n\in N$), we may form the glued curve $C_\beta$ as follows (all statements involving a choice of $\beta\in\CC^{E^\eint(T')}\times\CC^N$ carry the (often tacit) assumption that $\beta$ lies in a sufficiently small neighborhood of $0$).  For each interior edge $v\xrightarrow ev'$ (or node $n\in N$), we truncate the positive (resp.\ negative) end $[0,\infty)\times S^1$ (resp.\ $(-\infty,0]\times S^1$) to $[0,S]\times S^1$ (resp.\ $[-S,0]\times S^1$) and identify them by $s=s'+S$ and $t=t'+\theta$ where $\beta=e^{-S-i\theta}$ (if $\beta=0$ we do nothing).  Note that the points $q_{v,i}$ and the complex structures $j_y$ both descend naturally to $C_\beta$.  This operation gives analytic families of curves over $\J_v\times\CC^{N_v}$ and $\J\times\CC^{E^\eint(T')}\times\CC^N$ inducing local biholomorphisms
\begin{align}
\J_v\times\CC^{N_v}&\to\Mbar_{0,\{q_{v,i}\}_i+(\{p_{v,e}\}_e)_{(2)}},\\
\label{JfullCnodesbiholo}\J\times\CC^N&\to\prod_{v\in V(T')}\Mbar_{0,\{q_{v,i}\}_i+(\{p_{v,e}\}_e)_{(2)}}.
\end{align}
The glued curve $C_\beta$ is actually a bit too general for our purposes: in the current setting the gluing parameters at interior edges must come from the same gluing parameters $\g\in G_{T'//T}$ used to glue the target cobordism.

Namely, to any $\g\in G_{T'//T}$, we associate the gluing parameter $\beta=\beta(\g)\in\CC^{E^\eint(T')}$ given by $S_e:=L_e^{-1}\g_e$ and the unique $\theta_e$ corresponding to the matching isomorphism $S_{p_{v,e}}C_v\to S_{p_{v',e}}C_{v'}$.
For $\g\in G_{T'//T}\times\CC^N$, we thus have a glued curve $C_\g$.
For $e$ with $\g_e<\infty$, let $q_e''\in C_\g$ denote the point in the middle of the corresponding neck, namely $s=\pm\frac 12S$ with respect to the coordinates \eqref{cylI}--\eqref{cylII} (the angular coordinate $t\in S^1$ of $q_e''$ is irrelevant as long as it is fixed).  Now for any fixed $\g\in G_{T'//T}$, this construction gives a map
\begin{equation}\label{gluingdiffeo}
\J\times\CC^N\to\prod_{v\in V(T'_\g)}\Mbar_{0,\{q_{v',i}\}_{v'\mapsto v,i}+\{q_e''\}_{e\mapsto v}+(\{p_{v,e}\}_e)_{(2)}}.
\end{equation}
This map is a local biholomorphism near $0\in\J\times\CC^N$ uniformly in $\g$, in the sense that there is a neighborhood of $0\in\J\times\CC^N$ over which \eqref{gluingdiffeo} is a biholomorphism onto its image for all $\g\in G_{T'//T}$ near zero (we will prove this just below).

The purpose of including the points $q_e''$ as part of the structure of the glued curve is exactly to ensure that \eqref{gluingdiffeo} is a local biholomorphism.
Later, we will impose a codimension two condition at $q_e''$ on our glued maps, so that over a neighborhood of $x_0\in\Mbar(T)_J$, the points $q_e''$ in the domain are determined uniquely, and hence the parameters $(y,\beta)\in\J\times\CC^N$ are determined uniquely.
We denote by $q_v'\in(C_\g,j_y)$ the value of the section $q_v'$ at $(C_\g,j_y)$ for $\g\in G_{T'//T}\times\CC^N$ and $y\in\J$ (note that this $q_v'$ may \emph{not} coincide with the descent of $q_v'\in C_0$, even for $\g=0$).

To conclude, let us give the proof that \eqref{gluingdiffeo} is a local biholomorphism, as claimed above.
In fact, to make the proof work we impose the following additional compatibility condition between the cylindrical coordinates \eqref{cylI}--\eqref{cylII} and the smooth trivialization of the universal curve over $\Mbar_{0,\{q_{v,i}\}_i+(\{p_{v,e}\}_e)_{(2)}}^{\#\nodes=\#N_v}$ near $(C_0)_v$ fixed in \S\ref{jysec}.
The universal curve is pulled back under the forgetful map $\Mbar_{0,\{q_{v,i}\}_i+(\{p_{v,e}\}_e)_{(2)}}\to\Mbar_{0,\{q_{v,i}\}_i+\{p_{v,e}\}_e}$ which is a principal $(\CC\setminus 0)^{\{p_{v,e}\}_e}$-bundle (acting via changing the tangent space markings at $\{p_{v,e}\}_e$).
We require that the induced action of $(\CC\setminus 0)^{\{p_{v,e}\}_e}$ on the total space, when pulled back to $(C_0)_v$ under the chosen trivialization and then expressed in the cylindrical coordinates \eqref{cylI}--\eqref{cylII} near each $p_{v,e}$, must coincide with the tautological action of $\CC\setminus 0$ on the standard end $[0,\infty)\times S^1$ (resp.\ $(-\infty,0]\times S^1$).
To construct such a smooth trivialization compatible with a given choice of cylindrical coordinates \eqref{cylI}--\eqref{cylII}, first fix the trivialization over the image of a section of the forgetful map, then extend to a neighborhood of $\{p_{v,e}\}_e$ as dictated by the desired compatibility property, and then finally extend to the rest.

Given this compatibility property, the local biholomorphism property for \eqref{gluingdiffeo} now follows easily from the standard local biholomorphism property for the usual `local gluing charts' on Deligne--Mumford space.
Namely, for any open subset $U\subseteq\Mbar_{0,n}^{\#\nodes=r}\subseteq\Mbar_{0,n}$ (or $\Mbar_{0,n+m_{(2)}}$) and any choice of local holomorphic (on the total space) cylindrical coordinates
\begin{equation}\label{stdgluingholcoords}
[0,\infty)\times S^1\times\Mbar_{0,n}^{\#\nodes=r}\to\Cbar_{0,n}^{\#\nodes=r}
\end{equation}
(defined over $U$) on each side of each of the $r$ nodes, there is an induced holomorphic map
\begin{equation}\label{stdgluingDM}
U\times\CC^r\to\Mbar_{0,n}
\end{equation}
given by the usual gluing construction (via $e^{-s_1-it_1}e^{-s_2-it_2}=\beta$).
It is well-known that \eqref{stdgluingDM} is a local biholomorphism near $U$ (indeed, it is enough to verify that for every curve $C$ (a point of $U$) and every node $a\in C$, the map $\CC\to\Mbar_{0,n}$ given by gluing $C$ at $a$ with respect to \eqref{stdgluingholcoords} with gluing parameter $\beta\in\CC$ induces an isomorphism between $T_0\CC$ and the tangent space to the local branch of $\Mbar_{0,n}^{\#\nodes=r+1}$ corresponding to $a$, and this can be shown by a standard calculation).
Now to deduce the local biholomorphism property for \eqref{gluingdiffeo}, it suffices to apply the local biholomorphism property for \eqref{stdgluingDM} in a setup which depends only on $T_\g'$ (and hence is uniform in $\g$).

\subsubsection{Preglued maps \texorpdfstring{$u_\g$}{u\_g}}

We now define a ``preglued'' map $u_\g:C_\g\to\hat X_\g$ close to $u_0:C_0\to\hat X_0$.  As we shall see later, this preglued map is very close to solving the relevant (thickened) pseudo-holomorphic curve equation.  Our goal will then be to understand the true solutions near $u_\g$ and to show that this construction gives a local parameterization of the moduli space near $u_0$.

Fix a smooth (cutoff) function $\chi:\RR\to[0,1]$ satisfying
\begin{equation}
\chi(x)=\begin{cases}1&x\leq 0\\0&x\geq 1.\end{cases}
\end{equation}

\begin{definition}[Flattening]
For $\g\in G_{T'//T}\times\CC^N$, we define the ``flattened'' map
\begin{equation}
u_{0|\g}:C_0\to\hat X_0
\end{equation}
as follows.  Away from the ends, $u_{0|\g}$ coincides with $u_0$.  Over a positive end asymptotic to a parameterized Reeb orbit $\tilde\gamma(t):=u_0(\infty,t)$, we define $u_{0|\g}$ by
\begin{equation*}
u_{0|\g}(s,t):=\begin{cases}
u_0(s,t)&s\leq\frac 16S\\
\exp_{(Ls,\tilde\gamma(t))}\left[\chi\bigl(s-\frac 16S\bigr)\cdot\exp_{(Ls,\tilde\gamma(t))}^{-1}u_0(s,t)\right]&\frac 16S\leq s\leq\frac 16S+1\\
(Ls,\tilde\gamma(t))&\frac 16S+1\leq s.
\end{cases}
\end{equation*}
Over a positive end at a node $n\in N$, we define $u_{0|\g}$ by
\begin{equation*}
u_{0|\g}(s,t):=\begin{cases}
u_0(s,t)&s\leq\frac 16S\\
\exp_{u_0(n)}\left[\chi\bigl(s-\frac 16S\bigr)\cdot\exp_{u_0(n)}^{-1}u_0(s,t)\right]&\frac 16S\leq s\leq\frac 16S+1\\
u_0(n)&\frac 16S+1\leq s.
\end{cases}
\end{equation*}
An analogous definition applies over negative ends.  Here $\exp:T\hat X_0\to\hat X_0$ denotes any fixed exponential map (i.e.\ a smooth map defined in a neighborhood of the zero section satisfying $\exp(p,0)=p$ and $d\exp(p,\cdot)=\id_{T_p\hat X_0}$) which is $\RR$-equivariant in any end.
\end{definition}

\begin{definition}[Pregluing]
For $\g\in G_{T'//T}\times\CC^N$, we define the ``preglued'' map
\begin{equation}
u_\g:C_\g\to\hat X_\g
\end{equation}
as the natural ``descent'' of $u_{0|\g}$ from $C_0$ to $C_\g$.
\end{definition}

\subsection{Gluing estimates}

With the above setup understood, our aim is now to describe the ``true solutions'' close to the ``approximate solution'' $u_\g:C_\g\to\hat X_\g$.  This forms the core part of the gluing argument.

\subsubsection{Weighted Sobolev norms}\label{gluingsobolev}

Our first step is simply to define the weighted Sobolev spaces $W^{k,2,\delta}$ relevant for the gluing argument and to specify norms on these spaces, up to commensurability uniform in $\g$ near zero (uniformity in $\g$ is crucial for the key gluing estimates to have any meaning or utility at all).
The weighted Sobolev spaces we define here differ from those of \S\ref{linearizedopsec} in that we put weights not just over the ends asymptotic to Reeb orbits but also near the nodes (this necessitates a comparison of linearized operators in the two contexts, which we do immediately after the definition of Sobolev norms below).
We also put weights over the necks (which has meaning since we work up to commensurability uniform in $\g$).

Fix metrics and connections as in Conventions \ref{metricconnectionchoicedomain}--\ref{metricconnectionchoicetarget} on $C_0$ and $\hat X_0$ with respect to the cylindrical coordinates fixed above and which (for convenience) agree across the parts to be glued (thus descending to $C_\g$ and $\hat X_\g$).
We suppose throughout that $\delta>0$ satisfies $\delta<1$ and $\delta<\delta_\gamma$ for every asymptotic Reeb orbit $\gamma$ of $u_0$.
As in Remark \ref{kdeltacomment}, it is sufficient to fix any $k\geq 4$, though for the moment we only need to assume $k\geq 0$ ($k\geq 2$ if $C_0$ has nodes).

\begin{remark}
Although these choices of metrics, connections, $k$, and $\delta$ affect the constants appearing in the gluing estimates, they do \emph{not} affect either the final gluing map or (the restriction to smooth functions/sections of) any of the intermediate maps we study and/or construct.
\end{remark}

Away from the ends/necks of $C_\g$, we use the usual $W^{k,2}$-norm.  In an end asymptotic to a Reeb orbit, the contribution to the norm squared is
\begin{equation}\label{endnormdecaytozero}
\int_{[0,\infty)\times S^1}\sum_{j=0}^k\left|D^jf\right|^2e^{2\delta s}\;ds\,dt.
\end{equation}
In an end asymptotic to a node $n\in N$, we distinguish two cases: for Sobolev spaces of $(0,1)$-forms on $C_\g$, we use \eqref{endnormdecaytozero}, and for spaces of functions we allow decay to a constant, i.e.\ the contribution to the norm squared is
\begin{equation}
\left|f(n)\right|^2+\int_{[0,\infty)\times S^1}\sum_{j=0}^k\left|D^j[f-f(n)]\right|^2e^{2\delta s}\;ds\,dt
\end{equation}
The contribution of a neck is given as follows (first is for necks over Reeb orbits and necks over nodes for spaces of $(0,1)$-forms; second is for necks over nodes for spaces of functions):
\begin{align}
&\int_{[0,S]\times S^1}\sum_{j=0}^k\left|D^jf\right|^2e^{2\delta\min(s,S-s)}\;ds\,dt,\\
&\int_{[0,S]\times S^1}\sum_{j=0}^k\left|D^j\left[f-\frac 1{2\pi}\int_{S^1}f\left(\textstyle\frac 12S,t\right)dt\right]\right|^2e^{2\delta\min(s,S-s)}\;ds\,dt+\left|\frac 1{2\pi}\int_{S^1}f\left(\textstyle\frac 12S,t\right)dt\right|^2.
\end{align}
We use parallel transport to make sense of the differences $f-f(n)$, etc.  To meausre/differentiate $(0,1)$-forms, we use the usual Riemannian metric on $[0,\infty)\times S^1$ (resp.\ $[0,S]\times S^1$) and usual flat connection on forms.
Different choices of metrics and connections yield norms which are equivalent, uniformly in $\g$, for any fixed $k$ and $\delta$ as above.

\begin{remark}
We emphasize that, although we refer to the weighted Sobolev norms in \S\ref{linearizedopsec} informally as ``without weights at the nodes'', they are \emph{not} the same as the weighted Sobolev norms above specialized to $\delta=0$ at the nodes.
\end{remark}

We now provide the promised comparison of linearized operators defined using the weighted Sobolev norms above (with weights near the nodes) and using the weighted Sobolev norms from \S\ref{linearizedopsec} (without weights near the nodes).
That is, we argue that both linearized operators are Fredholm, with the same kernel and cokernel.
Let us temporarily write $W^{k,2,\delta,\delta}$ for the weighted Sobolev spaces defined above and $W^{k,2,\delta}$ for the weighted Sobolev spaces from \S\ref{linearizedopsec}.
We suppose that $k$ is large enough so that the linearized operators are defined (see Definitions \ref{linearizedbasicdef} and \ref{linopthickened}).
We have already seen in Proposition \ref{linearizedfredholm} that the linearized operator acting on $W^{k,2,\delta}$ is Fredholm for such $\delta$.
The same argument applies to the linearized operator acting on $W^{k,2,\delta,\delta}$, using the fact that $\delta\in(0,1)$ is not an eigenvalue of the asymptotic operators at the nodes (all of which have spectrum $\ZZ$).
There are inclusions $W^{k,2,\delta}\subseteq W^{k,2,\delta,\delta}$ (since $\delta<1$) which are dense by Lemma \ref{smoothweighteddense}.
We get induced maps from the kernel and cokernel of the linearized operator acting on $W^{k,2,\delta}$ to those of the operator acting on $W^{k,2,\delta,\delta}$.
The map on kernels is obviously injective, and the map on cokernels is obviously surjective.  To show surjectivity on kernels and injectivity on cokernels, it suffices to show that if $D\xi=\eta$ with $\xi\in W^{k,2,\delta,\delta}$ and $\eta\in W^{k-1,2,\delta}$, then $\xi\in W^{k,2,\delta}$.  Now as distributions, we have $D\xi=\eta+\varepsilon$, for some distribution $\varepsilon$ supported over the inverse images of the nodes $\tilde N_v\subseteq\tilde C_v$.  By elliptic regularity, it suffices to show that $\varepsilon=0$.  But now for any smooth test function $\varphi$ supported over a small neighborhood of $\tilde N_v$, we have
\begin{equation}
\langle\varepsilon,\varphi\rangle=\langle D\xi-\eta,\varphi\rangle=\langle\xi,D^\ast\varphi\rangle-\langle\eta,\varphi\rangle.
\end{equation}
The first term is bounded by a constant times $\|\varphi\|_{1,1}$ since $D^\ast$ (the formal adjoint) is first order and $\xi\in W^{k,2,\delta,\delta}\subseteq C^0$.  The second term is bounded by a constant times $\|\varphi\|_2$ since $\eta\in W^{k-1,2,\delta}\subseteq L^2$.  Now $\varepsilon$ is supported at a finite number of points, so must be a linear combination of $\delta$-functions and their derivatives (see Lemma \ref{distrdelta}), but these do not satisfy such bounds (recall that $W^{1,1}\nsubseteq C^0$ since we are in two dimensions \cite[Example 5.25]{adamssobolev}).

\subsubsection{Nonlinear Fredholm setup for fixed \texorpdfstring{$\g$}{g}}

We now formulate precisely what we mean by ``solutions close to $u_\g:C_\g\to\hat X_\g$''.  What we mean is ``small zeroes of $\F_\g$'', where $\F_\g$ is the map
\begin{multline}
\F_\g:W^{k,2,\delta}(C_\g,u_\g^\ast T\hat X_\g)_{\begin{smallmatrix}\xi(q_{v,i})\in T\hat D_{v,i}\hfill\\\pi_{\RR\partial_s\oplus\RR R_\lambda}\xi(q_e'')=0\hfill\end{smallmatrix}}\oplus\J\oplus E_J\\
\to W^{k-1,2,\delta}(\tilde C_\g,u_\g^\ast(T\hat X_\g)_{\hat J_{\g_t}}\otimes_\CC\Omega^{0,1}_{\tilde C_\g})\oplus\RR^{V_s(T')}
\end{multline}
defined by
\begin{multline}\label{Fgdef}
\F_\g(\xi,y,e):=\\
\biggl[(\PT_{\exp_{u_\g}\xi\to u_\g}^{\g_t}\otimes I_y)\Bigl(d(\exp_{u_\g}\xi)+\sum_{\alpha\in J}\nu_\alpha((e_0+e)_\alpha)(\phi_\alpha^{\g,\xi},\exp_{u_\g}\xi)\Bigr)^{0,1}_{j_y,\hat J_{\g_t}}\\
\oplus\bigoplus_{v\in V_s(T')}\pi_\RR(\exp_{u_\g}\xi)(q_v'(y))\biggr].
\end{multline}
We explain the notation.  We denote by $\exp:T\hat X_0\to\hat X_0$ a fixed exponential map which is $\RR$-equivariant in ends and over symplectizations and agrees across the parts to be glued, thus descending to $\hat X_\g$.  We fix a $\hat J_0$-linear connection on $T\hat X_0$ which is $\RR$-equivariant in ends and over symplectizations and agrees across the parts to be glued, thus descending to $\hat X_\g$.  We denote by $\PT^{\g_t}$ parallel transport with respect to the $\hat J_{\g_t}$-linear part of this fixed connection.  The map $I_y:\Omega^{0,1}_{C_\g,j_y}\to\Omega^{0,1}_{C_\g,j_0}$ denotes the composition $\Omega^{0,1}_{C_\g,j_y}\to\Omega^1_{C_\g}\otimes_\RR\CC\to\Omega^{0,1}_{C_\g,j_0}$ (which is $\CC$-linear).

The map $\F_\g$ is defined over the ball of some fixed radius $c_{k,\delta}'>0$ uniformly in $\g$ near zero, for any $k\geq 3$.  The constraint $k\geq 3$ ensures that $W^{k,2}\subseteq C^1$ (see \cite[Lemma 5.17]{adamssobolev}), which is needed so that $\phi_\alpha^{\g,\xi}$ is defined for $\|\xi\|_{k,2,\delta}$ small.

\subsubsection{Estimate for \texorpdfstring{$\|\F_\g(0)\|$}{|F\_g(0)|}}

We now show that $\F_\g(0)$ is very small (i.e.\ the preglued map $u_\g:C_\g\to\hat X_\g$ is very close to being a true solution).

\begin{lemma}
We have
\begin{equation}\label{Fgzerosmall}
\|\F_\g(0)\|_{k-1,2,\delta}\to 0\quad\text{as }\g\to 0
\end{equation}
for all $k\geq 1$.
\end{lemma}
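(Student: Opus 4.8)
The plan is to estimate $\F_g(0)$ separately on the three regions of $C_g$: the ``thick'' part away from all necks and ends, the Reeb necks and ends, and the nodal necks and ends. Away from the necks and ends, the preglued map $u_g$ literally coincides with $u_0$, which is a genuine solution of the $J$-thickened equation, so $\F_g(0)$ vanishes identically there. The point is therefore entirely about the neck/end regions, where $u_g$ is a cut-off of $u_0$ against a trivial cylinder (at Reeb ends) or against a constant map (at nodal ends). Since each component $u_0$ is asymptotic to a Reeb orbit with the exponential decay $u_0(s,t)=(Ls,\tilde\gamma(t))+O(e^{-\delta'|s|})$ in all derivatives (using the normalization \eqref{uzeroexponentialdecay} that the additive constant $b$ vanishes), and since $\delta<\delta'$ is admissible, the cut-off introduces an error supported in the annulus $\frac16 S\le s\le\frac16 S+1$, where both $u_0-(\text{model})$ and its derivatives are of size $O(e^{-\delta' S/6})$.

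Concretely, I would first record that on the cut-off annulus the $(0,1)$-part of $d u_g$ is, up to a fixed Taylor-expansion constant, bounded pointwise by $|\exp^{-1}_{(\text{model})}u_0|$ together with its first derivative, hence by $c\,e^{-\delta' s}$ there. Integrating against the weight $e^{2\delta\min(s,S-s)}$ from \eqref{necknorm}, and using $\delta<\delta'$, gives a contribution to $\|\F_g(0)\|_{k-1,2,\delta}^2$ bounded by $c\,e^{-2(\delta'-\delta)S/6}$, which tends to $0$ as $g_e\to\infty$ (i.e.\ $S_e\to\infty$); the nodal ends are handled identically, comparing to the constant $u_0(n)$ instead of to a trivial cylinder. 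Next I would observe that the second component of $\F_g(0)$, namely $\bigoplus_{v}\pi_\RR u_g(q_v'(0))$, vanishes by construction: the points $q_v'$ were chosen (see \S\ref{qvprimesec}) precisely so that $u_g$ maps them to the zero level $0_v\subseteq\hat X_g$, and $u_g=u_0$ near $q_v'$ when $g$ is small, so $\pi_\RR$ of this value is $0$. (If one instead normalizes so that $q_v'$ sits at $0_v$ only asymptotically, the same exponential-decay estimate bounds this term.) Finally I would assemble the finitely many regions: there are boundedly many necks and ends, each contributing an exponentially small amount in the corresponding gluing length, so the total $\|\F_g(0)\|_{k-1,2,\delta}\to0$ as $g\to0$, for every $k\ge1$ and admissible $\delta$.

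The only mild subtlety — and the step I would be most careful about — is bookkeeping the weight functions correctly across necks versus semi-infinite ends, so that the weight on the cut-off annulus is genuinely $e^{2\delta s}$ with $s\approx\frac16 S$ (and not, say, $e^{2\delta(S-s)}$ near the far end of a neck), and likewise making sure the metric/connection used to define $\PT$ and $I_y$ at $g=0$ is the flat/product one so that no extra curvature terms enter the estimate on the annulus. Once the weights are matched to the decay rate $\delta<\delta_\gamma$, the estimate is a routine consequence of the exponential convergence \eqref{asymptoticend} in all derivatives and the fact that $\chi$ and its derivatives are bounded. I do not expect any genuine obstacle here; this lemma is the easy ``$\F_g(0)$ is small'' half of the standard gluing package, and the real work is deferred to the quadratic estimate and the uniform bound on the right inverse of $D\F_g(0)$, which are treated subsequently.
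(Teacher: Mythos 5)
Your central estimate --- that the cut-off error is supported on the annuli near $s=\tfrac16S$ (and $\tfrac56S$) in each neck, is pointwise exponentially small by \eqref{uzeroexponentialdecay}, and is beaten by the weight \eqref{necknorm} because $\delta<\delta_\gamma$ --- is exactly the paper's argument, and your bookkeeping of the weights is correct. However, two of your exact-vanishing claims fail in the paper's setup, and they are precisely the remaining content of the paper's proof. First, away from the necks and ends $\F_g(0)$ is \emph{not} identically zero when $\ast\in\{\III,\IV\}$: the operator \eqref{Fgdef} takes the $(0,1)$-part with respect to $\hat J_{g_t}$, where $g_t\in\s(T)$ is the parameter component of the gluing parameter, whereas $u_0$ solves the thickened equation only for $\hat J_0$, i.e.\ at $t=t(x_0)$ (in Setup \ref{setupIV} the thickening data $\lambda_\alpha^{02,t}$ varies with $t$ as well). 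So on the thick part there is a genuine error term, controlled by $\|\hat J_{g_t}-\hat J_0\|$, which tends to zero as $g\to0$ but does not vanish. Moreover, over the nodal ends and necks this same $\hat J_{g_t}$-versus-$\hat J_0$ term is paired with the weight $e^{2\delta s}$ against only the $O(e^{-s})$ decay of $du_0$ into the node, which is where the requirement $\delta<1$ (part of admissibility) is actually used --- a point absent from your write-up, since your nodal estimate only compares $u_0$ to the constant $u_0(n)$.

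Second, the $\RR^{V_s(T')}$ component does not vanish by construction. The point $q_v'\in(C_g,j_0)$ appearing in \eqref{Fgdef} is the value of the fixed section of the universal curve at the \emph{glued} curve, and, as the paper warns at the end of \S\ref{Cgdefsec}, this need not coincide with the descent of $q_v'\in C_0$ (even for $g=0$, because of the smoothing parameters at the nodes). The prescription of \S\ref{qvprimesec} that $q_v'$ be sent to the zero level $0_v$ is how the target of a \emph{nearby element of the moduli space} is identified with $\hat X_g$; it is not a property enjoyed by the preglued map $u_g$. The correct statement, and the paper's, is that $\pi_\RR u_g(q_v'(0))\to0$ because $q_v'\in(C_g,j_0)$ converges to the descent of $q_v'\in(C_0,j_0)$ as $g\to0$. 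Both repairs are short, but as written your proof covers only $\ast\in\{\I,\II\}$ and mis-states why the $\RR^{V_s(T')}$ term is small.
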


\begin{proof}
Away from the necks and ends, the $1$-form part of $\F_\g(0)$ is only nonzero because of using $\hat J_{\g_t}$ in place of $\hat J_0$ and because of using $\phi_\alpha^{\g,0}$ in place of $\phi_\alpha^{0,0}=\phi_\alpha$.  This difference clearly goes to zero as $\g\to 0$.

Over Reeb ends, the $1$-form part of $\F_\g(0)$ is identically zero.  Over Reeb necks, the $1$-form part of $\F_\g(0)$ is supported near $\frac 16S$ and $\frac 56S$, and the desired estimate follows from the exponential convergence of \eqref{uzeroexponentialdecay} and the fact that $\delta<\delta_\gamma$.

The same applies to nodal ends/necks, except that in addition there is a term coming from using $\hat J_{\g_t}$ in place of $\hat J_0$.  This again is bounded as desired since $\delta<1$.

The $\RR^{V_s(T')}$ part of $\F_\g(0)$ satisfies the desired estimate since $q_v'\in(C_\g,j_0)$ approaches the descent of $q_v'\in(C_0,j_0)$ as $\g\to 0$.
\end{proof}

\subsubsection{Regularity of the map \texorpdfstring{$\F_\g$}{F\_g} (quadratic estimate)}

We now prove a ``quadratic estimate'' quantifying the regularity of $\F_\g$ near zero, i.e.\ giving a uniform upper bound on certain of its derivatives near zero.  This estimate is used when we apply the (Banach space) inverse function theorem to understand $\F_\g^{-1}(0)$ near zero.
We begin with a general discussion of the smoothness properties of $\F_\g$ before specializing to the specific estimate we need.

The first term in $\F_\g$ (the usual pseudo-holomorphic curve equation) is smooth and local.  The second term (the ``thickening'' terms $\nu_\alpha$) is non-local; its only non-smoothness comes from the association $\xi\mapsto\phi_\alpha^{\g,\xi}$.  It thus is $C^\ell$ as long as the function which assigns to $\xi$ the set $(\exp_{u_\g}\xi)^{-1}(\hat D_\alpha)$ is $C^\ell$.  By the inverse function theorem, this is the case whenever $W^{k,2}\subseteq C^\ell$, which in turn holds whenever $k\geq\ell+2$ \cite[Lemma 5.17]{adamssobolev}.  The third term is also $C^\ell$ whenever $W^{k,2}\subseteq C^\ell$ (these both come down to the fact that the evaluation map $W^{k,2}(C,X)\times C\to X$ is of class $C^\ell$ whenever $W^{k,2}\subseteq C^\ell$).

The specific sort of estimate we need to apply the inverse function theorem to understand $\F_\g^{-1}(0)$ is a ``quadratic estimate'' quantifying, uniformly in $\g$, the fact that the derivative of $\F_\g$ is Lipschitz.
It is thus sufficient to assume that $k\geq 4$, so that the above discussion shows that $\F_\g$ is of class $C^2$.

\begin{proposition}\label{quadestimate}
For $\left\|\zeta\right\|_{k,2,\delta},\left\|\xi\right\|_{k,2,\delta}\leq c_{k,\delta}'$, we have
\begin{equation}\label{derivislip}
\left\|\F_\g'(0,\xi)-\F_\g'(\zeta,\xi)\right\|_{k-1,2,\delta}\leq c_{k,\delta}\left\|\zeta\right\|_{k,2,\delta}\left\|\xi\right\|_{k,2,\delta}
\end{equation}
for constants $c_{k,\delta}<\infty$ and $c_{k,\delta}'>0$ uniformly in $\g$ near $0$, for all $k\geq 4$.
\end{proposition}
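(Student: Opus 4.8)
The quadratic estimate \eqref{derivislip} is a statement about the Lipschitz dependence of the derivative $\F_g'$ on the first argument, uniform in $g$ near zero. The plan is to decompose $\F_g$ into its three constituent pieces --- the honest $\bar\partial$-operator, the thickening terms $\sum_{\alpha\in J}\lambda_\alpha$, and the $\RR^{V_s(T')}$-valued evaluation term --- and bound each piece separately. For the first term, which is pointwise (a composition of smooth bundle maps applied to $d(\exp_{u_g}\xi)$ together with parallel transport and the change-of-complex-structure map $I_y$), I would write $\F_g'(0,\xi)-\F_g'(\zeta,\xi)$ using the integral form of the mean value theorem, $\F_g'(0,\xi)-\F_g'(\zeta,\xi)=-\int_0^1\F_g''(t\zeta)(\zeta,\xi)\,dt$, and estimate $\F_g''$ pointwise in terms of $C^1$-bounds on the relevant smooth structure maps. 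The key inputs are: (i) $W^{k,2,\delta}\hookrightarrow C^0$ (and $\hookrightarrow C^1$ for $k\geq 3$) with embedding constant uniform in $g$ near zero --- this is where the $k\geq 4$ hypothesis and the careful choice of weighted norms in \S\ref{gluingsobolev} are used, since the exponential weights $e^{2\delta\min(s,S-s)}$ on the necks are exactly designed so that these Sobolev constants do not degenerate as the neck length $S\to\infty$; and (ii) the fact that $u_g$ has uniformly bounded geometry (it is a preglued map built from the fixed $u_0$ by cutting off against trivial cylinders), so all background metric/connection data and their derivatives are uniformly bounded on the image.

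The second term requires more care because it is non-local: $\lambda_\alpha((e_0+e)_\alpha)$ is evaluated at $(\phi_\alpha^\xi,\exp_{u_g}\xi)$, where $\phi_\alpha^\xi$ depends on $\xi$ through the intersection locus $(\exp_{u_g}\xi)^{-1}(\hat D_\alpha)$. As noted in the paragraph preceding the proposition, the assignment $\xi\mapsto\phi_\alpha^\xi$ is $C^\ell$ whenever $W^{k,2}\hookrightarrow C^\ell$, i.e.\ for $k\geq\ell+2$; taking $\ell=2$ gives the $C^2$-regularity (hence the needed second-derivative bound) precisely when $k\geq 4$. I would therefore expand the $\alpha$-term by the chain rule, separating the dependence through $\phi_\alpha^\xi$ from the dependence through $\exp_{u_g}\xi$, and bound each using the implicit function theorem applied to the transversality condition defining the intersection points (this is where Definition \ref{MIthickdef}(\ref{MIthickdeftr}) enters, ensuring the intersections are transverse and thus vary smoothly). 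Since $\lambda_\alpha$ vanishes near the nodes and $S$-marked points, its contribution is supported away from the necks and ends, so no uniformity-in-$g$ subtlety arises there beyond the Sobolev embedding constants already controlled. The third term, $\bigoplus_v\pi_\RR(\exp_{u_g}\xi)(q_v'(y))$, is a composition of the evaluation map with $\exp$, and the same $W^{k,2}\hookrightarrow C^1$ reasoning bounds its second derivative uniformly.

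Assembling the three bounds gives \eqref{derivislip} with constants $c_{k,\delta}<\infty$ and $c_{k,\delta}'>0$ that are uniform in $g$ near zero, for all $k\geq 4$ and admissible $\delta\geq 0$. The main obstacle --- and the step that deserves the most attention --- is establishing the uniformity in $g$ as $g\to 0$ of all the constants: the Sobolev embedding constants on the lengthening necks, the bounds on the geometry of $u_g$ and the ambient structure maps, and the implicit-function-theorem constants governing $\xi\mapsto\phi_\alpha^\xi$. The first of these is the genuinely delicate point, and it is handled entirely by the specific form of the weighted norms chosen in \S\ref{gluingsobolev} (the weight $e^{2\delta\min(s,S-s)}$ rather than $e^{2\delta s}$ on a neck of length $S$), together with the admissibility $\delta<\min(\delta_\gamma,1)$; the remaining pieces are then routine, being supported in regions of bounded geometry. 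I would remark that one could in fact prove the much stronger statement that $\F_g$ is uniformly $C^\ell$ for $k\geq\ell+2$, but since only \eqref{derivislip} is needed downstream (in the inverse function theorem argument of the next subsection), it suffices to record this one consequence.
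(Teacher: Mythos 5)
Your proposal is correct and follows essentially the same route as the paper, which proves the proposition by treating the three terms of \eqref{Fgdef} separately, invoking the $C^2$-regularity argument ($W^{k,2}\hookrightarrow C^\ell$ for $k\geq\ell+2$, hence $\ell=2$ for $k\geq 4$), and checking that all constants are uniform in $g$ near $0$; the paper simply omits the details you spell out. Your added attention to the uniformity of the Sobolev constants on the necks via the weights $e^{2\delta\min(s,S-s)}$ is exactly the point the paper's sketch leaves implicit.
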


\begin{proof}
We have already observed above that $\F_\g$ is of class $C^2$ for $k\geq 4$, which implies that \eqref{derivislip} holds for some $c_{k,\delta}<\infty$ and $c_{k,\delta}'>0$ possibly depending on $\g$.
Our present task is thus to make the above reasoning precise enough to extract bounds which are uniform in $\g$.
In a word, the point of the proof is simply that the constants $c_{k,\delta}<\infty$ and $c_{k,\delta}'>0$ in question depend only on the $W^{k,\infty}$-norms of $u_\g$ and the various operations comprising $\F_\g$ (in particular, the total area of $C_\g$ is irrelevant).

Choose a covering $\{U_\alpha\}_\alpha$ of $C_\g$ of locally bounded geometry by open sets $U_\alpha$ of bounded diameter (e.g.\ over ends/necks, we can use the covering by sets of the form $[s,s+2]\times S^1$ for integers $s$).
The local geometry of the cover can be bounded uniformly in $\g$, and thus we can write any $\xi\in W^{k,2,\delta}$ as a sum $\xi=\sum_\alpha\xi_\alpha$ where $\xi_\alpha$ is supported inside $U_\alpha$ and
\begin{equation}
\sum_\alpha\left\|\xi_\alpha\right\|_{k,2,\delta}\leq c_{k,\delta}\left\|\xi\right\|_{k,2,\delta}
\end{equation}
for some constant $c_{k,\delta}<\infty$ uniform in $\g$.
Hence it is enough to show the desired estimate \eqref{derivislip} for $\xi$ whose support has bounded diameter.
We know from our earlier reasoning that the estimate \eqref{derivislip} holds for all $\xi$, and it is clear that the resulting constants $c_{k,\delta}<\infty$ and $c_{k,\delta}'>0$ depend only on the derivatives of $u_\g$ over the support of $\xi$ (which we now assume has bounded diameter, and thus bounded geometry).
The derivatives of $u_\g$ are bounded uniformly in $\g$, so we conclude that the constants $c_{k,\delta}<\infty$ and $c_{k,\delta}'>0$ are as well.
\end{proof}

Note that integrating \eqref{derivislip} from $\xi_1$ to $\xi_2$ gives
\begin{equation}\label{quadestgoal}
\bigl\|D_\g(\xi_1-\xi_2)-(\F_\g\xi_1-\F_\g\xi_2)\bigr\|_{k-1,2,\delta}\leq c_{k,\delta}\left\|\xi_1-\xi_2\right\|_{k,2,\delta}\max(\left\|\xi_1\right\|_{k,2,\delta},\left\|\xi_2\right\|_{k,2,\delta})
\end{equation}
for $\left\|\xi_1\right\|_{k,2,\delta},\left\|\xi_2\right\|_{k,2,\delta}\leq c_{k,\delta}'$.

\subsubsection{Bounded right inverses and kernel gluing I: relating \texorpdfstring{$D_0$}{D\_0} and \texorpdfstring{$D_\g$}{D\_g}}

The final step in understanding $\F_\g^{-1}(0)$ is to construct a sufficiently nice bounded right inverse $Q_\g$ for $D_\g:=\F_\g'(0,\cdot)$.  In particular, we will need to show that $\left\|Q_\g\right\|$ is bounded uniformly for $\g$ near $0$, and that $\im Q_\g$ ``varies continuously'' (in a sense which we will make precise) as $\g$ varies.  We will also construct a natural ``kernel gluing'' isomorphism $\ker D_0\to\ker D_\g$.

To study the linearized operator $D_\g$, and in particular to construct $Q_\g$, we consider the following diagram, which allows us to relate $D_\g$ to $D_0$.
\begin{equation}\label{RIdiagram}
\begin{tikzcd}
W^{k,2,\delta}(C_\g,u_\g^\ast T\hat X_\g)_{\begin{smallmatrix}\xi(q_{v,i})\in T\hat D_{v,i}\hfill\\\pi_{\RR\partial_s\oplus\RR R_\lambda}\xi(q_e'')=0\hfill\end{smallmatrix}}\oplus\J\oplus E_J\ar{r}{D_\g}\ar[leftarrow]{d}{\calib}&W^{k-1,2,\delta}(\tilde C_\g,u_\g^\ast(T\hat X_\g)_{\hat J_{\g_t}}\otimes_\CC\Omega^{0,1}_{\tilde C_\g})\oplus\RR^{V_s(T')}\ar[equals]{d}\\
W^{k,2,\delta}(C_\g,u_\g^\ast T\hat X_\g)_{\xi(q_{v,i})\in T\hat D_{v,i}}\oplus\J\oplus E_J\ar{r}{D_\g}\ar[leftarrow]{d}{\glue}&W^{k-1,2,\delta}(\tilde C_\g,u_\g^\ast(T\hat X_\g)_{\hat J_{\g_t}}\otimes_\CC\Omega^{0,1}_{\tilde C_\g})\oplus\RR^{V_s(T')}\ar{d}{\brk}\\
W^{k,2,\delta}(C_0,u_{0|\g}^\ast T\hat X_0)_{\xi(q_{v,i})\in T\hat D_{v,i}}\oplus\J\oplus E_J\ar{r}{D_{0|\g}}\ar[leftrightarrow]{d}{\PT}&W^{k-1,2,\delta}(\tilde C_0,u_{0|\g}^\ast (T\hat X_0)_{\hat J_{\g_t}}\otimes_\CC\Omega^{0,1}_{\tilde C_0})\oplus\RR^{V_s(T')}\ar[leftrightarrow]{d}{{\PT}\circ{\id^{1,0}}}\\
W^{k,2,\delta}(C_0,u_0^\ast T\hat X_0)_{\xi(q_{v,i})\in T\hat D_{v,i}}\oplus\J\oplus E_J\ar{r}{D_0}&W^{k-1,2,\delta}(\tilde C_0,u_0^\ast(T\hat X_0)_{\hat J_0}\otimes_\CC\Omega^{0,1}_{\tilde C_0})\oplus\RR^{V_s(T')}
\end{tikzcd}
\end{equation}
The maps in this diagram are defined as follows.

The horizontal maps are the linearized operators at the maps $u_0$, $u_{0|\g}$, $u_\g$.

The maps $\PT$ are parallel transport with respect to the fixed connection on $T\hat X_0$.

The map $\id^{1,0}:(T\hat X_0)_{\hat J_0}\to(T\hat X_0)_{\hat J_{\g_t}}$ is the $(1,0)$-component of the identity map.

Let us define the $\brk$ map from \eqref{RIdiagram}.  Fix a smooth function $\bar\chi:\RR\to[0,1]$ such that
\begin{equation}
\bar\chi(x)=\begin{cases}1&x\leq-1\\0&x\geq+1\end{cases}\qquad\bar\chi(x)+\bar\chi(-x)=1.
\end{equation}
Now $\brk(\eta)$ is simply $\eta$ except over the ends of $C_0$, where we define it to be
\begin{equation}
\brk(\eta)(s,t):=\begin{cases}
\eta(s,t)&s\leq \frac 12S-1\\
\bar\chi(s-\frac 12S)\cdot\eta(s,t)&\frac 12S-1\leq s\leq\frac 12S+1\\
0&\frac 12S+1\leq s.
\end{cases}
\end{equation}
Thus the ``trace'' of $\brk(\eta)$ from $C_0$ to $C_\g$ (adding along fibers) is precisely $\eta$.

Let us define the $\glue$ map from \eqref{RIdiagram}.  The map $\glue$ acts only on the vector field component (it acts identically on the other components).  Away from the necks, we set $\glue(\xi):=\xi$, and in any particular neck $[0,S]\times S^1\subseteq C_\g$, we define (respectively for necks near Reeb orbits and necks near nodes $n\in N$)
\begin{align*}
\glue(\xi)(s,t):=&\begin{cases}
\xi(s,t)&s\leq\frac 13S-1\\
\chi(s-\frac 23S)\xi(s,t)+\chi(\frac 23S-s')\xi(s',t')&\frac 13S-1\leq s\leq\frac 23S+1\\
\xi(s',t')&\frac 23S+1\leq s,
\end{cases}\\
\glue(\xi)(s,t):=&\begin{cases}
\xi(s,t)&s\leq\frac 13S-1\\
\xi(n)+\chi(s-\frac 23S)[\xi(s,t)-\xi(n)]\\
\phantom{\xi(n)}+\chi(\frac 23S-s')[\xi(s',t')-\xi(n)]&\frac 13S-1\leq s\leq\frac 23S+1\\
\xi(s',t')&\frac 23S+1\leq s,
\end{cases}
\end{align*}
(noting the corresponding ends $(s,t)\in[0,\infty)\times S^1\subseteq C_0$ and $(s',t')\in(-\infty,0]\times S^1\subseteq C_0$, glued via $s=s'+S$ and $t=t'+\theta$).

Let us define the $\calib$ map from \eqref{RIdiagram}.  For every edge $e\in E^{\eint}(T')$ with $\g_e<\infty$, we consider the vector field $C_\g\to u_\g^\ast T\hat X_\g$ given in this neck by
\begin{equation}\label{xcutoff}
\chi(s-{\textstyle\frac 23S})\chi({\textstyle\frac 23S}-s')\cdot\partial_su_\g.
\end{equation}
We denote by $\X$ the $\CC$-span of these vector fields.  Now we have
\begin{equation}\label{calibsplitting}
W^{k,2,\delta}(C_\g,u_\g^\ast T\hat X_\g)_{\xi(q_{v,i})\in T\hat D_{v,i}}=W^{k,2,\delta}(C_\g,u_\g^\ast T\hat X_\g)_{\begin{smallmatrix}\xi(q_{v,i})\in T\hat D_{v,i}\hfill\\\pi_{\RR\partial_s\oplus\RR R_\lambda}\xi(q_e'')=0\hfill\end{smallmatrix}}\oplus\X,
\end{equation}
and the map $\calib$ is simply the associated projection onto the first factor.

This completes the definition of the maps in \eqref{RIdiagram}.

\begin{lemma}
The maps in \eqref{RIdiagram} are all bounded uniformly in $\g$.
\end{lemma}

\begin{proof}
To show uniform boundedness of the linearized operators $D_0$, $D_{0|\g}$, $D_\g$, it is enough to cover the domain curve $C_0$ or $C_\g$ with small open sets of bounded geometry and to prove a uniform bound on $D_0$, $D_{0|\g}$, $D_\g$ restricted to sections supported in each such small open set (compare the proof of Proposition \ref{quadestimate}).
For sake of concreteness, we can take these ``small open sets of bounded geometry'' to be of the form $[n,n+2]\times S^1$ in the ends/necks (note that the total number of these open sets does not matter).
The maps $u_0$, $u_{0|\g}$, $u_\g$ are very well behaved in the ends/necks, uniformly in $\g$, which is enough.
More succinctly, the point is simply that the norms of the linearized operators $D_0$, $D_{0|\g}$, $D_\g$ depend only on the $W^{k,\infty}$-norms of their ``coefficients'' and of $u_0$, $u_{0|\g}$, $u_\g$ (so, in particular, the total area of $C_\g$ is irrelevant).

The maps $\brk$ and $\glue$ are uniformly bounded because the derivatives of $\chi$ and $\bar\chi$ are bounded and the weights agree (up to a constant factor) at the relevant corresponding points of the domains.

To show that the map $\calib$ is uniformly bounded, it is enough to show that the projection onto the second factor in \eqref{calibsplitting} is uniformly bounded.
This projection is uniformly bounded since the norm of \eqref{xcutoff} is uniformly commensurable with its value at $q_e''\in[0,S]\times S^1$ (which holds because $\delta>0$ and $q_e''$ is given the maximum weight).

The maps $\PT$ and $\id^{1,0}$ are bounded uniformly by the same argument used for the linearized operators $D_0$, $D_{0|\g}$, $D_\g$.
\end{proof}

\subsubsection{Bounded right inverses and kernel gluing II: estimates}

The diagram \eqref{RIdiagram} does not commute, but is very close to commuting for $\g$ close to zero, as the following estimates make precise.

\begin{lemma}\label{RIestimates}
We have the following estimates
\begin{align}
\label{flatteningdifference}\|{\PT}\circ{D_0}-{D_{0|\g}}\circ{\PT}\|&\to 0\\
\label{gluebreakdifference}\|({D_\g}\circ{\glue})(\xi)-\eta\|&=o(1)\cdot\left\|\xi\right\|\quad\text{for }\brk(\eta)=D_{0|\g}\xi\\
\label{calibdifference}\|{D_\g}\circ{\calib}-D_\g\|&\to 0
\end{align}
as $\g\to 0$, for any fixed $k\geq 2$.
\end{lemma}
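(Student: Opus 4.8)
\textbf{Proof proposal for Lemma \ref{RIestimates}.}

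The plan is to prove each of the three estimates \eqref{flatteningdifference}--\eqref{calibdifference} by localizing to the necks and ends of $C_g$, where all the discrepancy between the operators lives, and then exploiting the exponential decay \eqref{uzeroexponentialdecay} of $u_0$ at the punctures together with admissibility of $\delta$. Away from the necks and ends, the maps $\PT$, $\glue$, $\calib$ are all the identity and the operators $D_0$, $D_{0|g}$, $D_g$ literally agree (they are the same local differential operator built from the same metric, connection, and almost complex structure, since these were chosen to agree across the parts to be glued); so in all three cases the left-hand side is supported in the necks and ends, and it suffices to estimate each neck/end contribution.

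For \eqref{flatteningdifference}: on a neck $[0,S]\times S^1$ the maps $\PT$ and $\id^{1,0}$ are parallel transport along short paths whose length is controlled by $|u_{0|g}-u_0|$; by construction $u_{0|g}$ differs from $u_0$ only on the region $\frac 16S\le s\le\frac 16S+1$ (and its mirror), where the difference is bounded by $\exp_{(Ls,\tilde\gamma(t))}^{-1}u_0(s,t) = O(e^{-\delta' s})$ for $\delta'<\delta_\gamma$ thanks to \eqref{uzeroexponentialdecay}, together with the fact that $\hat J_{g_t}\to\hat J_0$. The commutator $\PT\circ D_0 - D_{0|g}\circ\PT$ thus picks up a factor $O(e^{-\frac 16\delta' S}) + o(1)$ as $S\to\infty$ (i.e.\ $g\to 0$), which goes to zero; the weight $e^{\delta\min(s,S-s)}$ in \eqref{necknorm} is harmless here since the support is at $s\approx\frac 16 S$ where $\min(s,S-s)=\frac 16 S$ and $\delta<\delta_\gamma$ is admissible, so $e^{\delta\cdot\frac 16 S}\cdot O(e^{-\delta'\cdot\frac 16 S})\to 0$. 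For \eqref{calibdifference}: the vector fields spanning $\X$ are $\chi(s-\frac 23S)\chi(\frac 23S-s')\cdot\partial_s u_g$, whose $W^{k,2,\delta}$-norm is controlled since $\partial_s u_g$ decays exponentially away from the zero-level and the cutoffs localize it near $s\approx\frac 23 S$; applying $D_g$ to such a vector field gives something supported near $s\approx\frac 23S$ of size $o(1)$ in the weighted norm (again because the weight at $q_e''\approx\frac 12 S$ dominates and $\partial_s u_g$ is nearly $\hat J_{g_t}$-holomorphic there, so $D_g(\partial_s u_g)$ is small), and $\calib$ is the projection killing exactly this piece, so $D_g\circ\calib - D_g$ has norm $\to 0$.

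The estimate \eqref{gluebreakdifference} is the main obstacle and requires the most care. Here we are given $\eta$ with $\brk(\eta) = D_{0|g}\xi$, i.e.\ $\xi$ is (roughly) $D_{0|g}$-preimage of the ``un-broken'' form, and we must show $(D_g\circ\glue)(\xi)$ differs from $\eta$ by $o(1)\cdot\|\xi\|$. The point is that $\glue$ interpolates between the two branches of $\xi$ across each neck using cutoffs supported on $\frac 13S-1\le s\le\frac 23S+1$; applying $D_g$ (a first-order operator) produces error terms of two kinds: (a) the branches $\xi(s,t)$ and $\xi(s',t')$ being interpolated are only approximately equal on the overlap region — their difference is controlled because both solve $D_{0|g}\xi = \brk(\eta)$-type equations and the overlap sits in the region where $\brk$ has already cut $\eta$ off, so elliptic estimates on the long cylinder give exponential decay of $\xi$ (more precisely of $\xi$ minus its asymptotic limit along the Reeb/node direction) toward the middle of the neck, at rate $e^{-\delta\cdot(\text{dist to ends})}$; and (b) the derivative of the cutoff $\chi'$, which contributes $|\chi'|\cdot|\xi(s,t)-\xi(s',t')|$, again exponentially small in $S$ by the same decay, against the weight $e^{\delta\min(s,S-s)}$ which at $s\approx\frac 23 S$ equals $e^{\delta\cdot\frac 13 S}$ — so we need the decay rate of $\xi$ to beat $\delta/3$, which holds since the genuine decay rate is the spectral gap $\delta_\gamma>\delta$ and the cutoffs are placed at distance $\sim\frac 13 S$ from both ends. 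The nodal case is handled identically after subtracting $\xi(n)$ and using the function-norm with a weight allowing decay to a constant. Assembling: summing the neck contributions and using that there are boundedly many necks, we get the claimed $o(1)\cdot\|\xi\|$ bound. I would present the Reeb-neck case in detail and remark that the node case and the ends are the same computation with the obvious modifications.
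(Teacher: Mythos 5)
The main gap is in your treatment of \eqref{gluebreakdifference}, precisely the estimate you identify as the crux. You describe $\glue$ as an \emph{interpolation} of the two branches and list as error terms a branch-mismatch plus $|\chi'|\cdot|\xi(s,t)-\xi(s',t')|$, with the mismatch claimed small because ``both solve $D_{0|g}\xi=\brk(\eta)$-type equations.'' That claim is unjustified (two preimages of the two pieces of $\brk(\eta)$ differ by an arbitrary kernel element, and elliptic estimates on the long cylinder give no smallness of $\xi(s,t)-\xi(s',t')$), and the error structure itself does not match the construction: the two broken pieces of $\eta$ \emph{add} back to $\eta$, and correspondingly $\glue$ \emph{adds} the two branches with cutoffs that are both equal to $1$ on the overlap, so the discrepancy $D_g(\glue\xi)-\eta$ consists solely of the cutoff-derivative terms $D_{0|g}(\chi(s-\frac23S)\xi)$ and its mirror, supported on $[\frac23S,\frac23S+1]\cup[\frac13S-1,\frac13S]$. (A genuine convex interpolation would not even reproduce $\eta$ in the middle of the neck at zeroth order.) The estimate then requires no decay of $\xi$, no spectral gap $\delta_\gamma$, and no use of the equation: this term has $W^{k-1,2,\delta}(\tilde C_0)$-norm bounded by $\|\xi\|_{k,2,\delta}$, and on its support the $C_g$-weight $e^{\delta\min(s,S-s)}$ is smaller than the $C_0$-weight $e^{\delta s}$ by the factor $e^{-\frac13\delta S}$; that weight mismatch is the entire mechanism, and is exactly where $\delta>0$ enters. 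Your constants are also off: the cutoff sits at distance $\frac23S$ (not $\frac13S$) from the end of the branch being cut, and the $\delta$-decay already built into the norm suffices.

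Your argument for \eqref{calibdifference} misidentifies the mechanism as well. On the neck $u_g$ \emph{is} a trivial cylinder, so $\partial_su_g$ does not decay (it has unit size), and $D_g$ applied to the spanning field of $\X$ is not $o(1)$ in the weighted norm: it is supported where the cutoff derivatives live, $s\approx\frac13S,\frac23S$, where the weight is $e^{\frac13\delta S}$, so its norm grows with $S$. The smallness of $D_g\circ\calib-D_g$ comes from the factor you only mention in passing: the $\X$-component of $\xi$ has coefficient determined by $\pi_{\RR\partial_s\oplus\RR R_\lambda}\xi(q_e'')$, and since $q_e''$ lies at the middle of the neck where the weight is $e^{\frac12\delta S}$, this coefficient is $O(e^{-\frac12\delta S})\|\xi\|_{k,2,\delta}$; the product $e^{(\frac13-\frac12)\delta S}\|\xi\|$ then gives the claim. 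For \eqref{flatteningdifference} your account is essentially right, with two small corrections: $u_{0|g}$ differs from $u_0$ on the entire tail $s\geq\frac16S$ (it equals the trivial cylinder, resp.\ constant, for $s\geq\frac16S+1$), not just on a width-one strip — harmless, since the coefficient difference is exponentially small there in $C^{k-1}$ (near nodes at rate $e^{-|s|}$, which is where $\delta<1$ in admissibility is used) — and no weight bookkeeping is needed for this estimate at all, because domain and target carry the same weight, so a $C^{k-1}$-small coefficient perturbation is automatically small in operator norm.
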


\begin{proof}
To prove \eqref{flatteningdifference}, argue as follows.  The first difference between the two operators is over the $[\frac 16S,\infty)\times S^1$ subset of some ends.  In this region, both are linear differential operators, which we may write in local coordinates $(s,t)$ on $C_0$ and exponential coordinates on the target near the asymptotic orbit or point.  The desired bound then follows from the exponential convergence of \eqref{uzeroexponentialdecay} (near $n\in N$, observe that smoothness of $u_0$ implies decay of all derivatives like $e^{-|s|}$ in cylindrical coordinates).  The second difference between the two operators is $\hat J_0$ vs $\hat J_{\g_t}$, and this is also bounded as desired, since $\hat J_{\g_t}\to\hat J_0$ in $C^\infty$.

To prove \eqref{gluebreakdifference}, argue as follows.
Away from ends/necks, the difference is only nonzero due to using $\phi_\alpha^{\g,\xi}$ in place of $\phi_\alpha^{0,\xi}$, and it is straightforward to see that this is bounded as desired.
In the ends/necks, the difference is only nonzero over the $([\frac 13S-1,\frac 13S]\cup[\frac 23S,\frac 23S+1])\times S^1$ subsets of each neck.  By symmetry, we discuss only the $[\frac 23S,\frac 23S+1]\times S^1$ part, where it equals $D_{0|\g}(\chi(s-\frac 23S)\xi(s,t))$.  Now we note that $D_{0|\g}(\chi(s-\frac 23S)\xi(s,t))$ has $W^{k-1,2,\delta}(\tilde C_0)$-norm bounded by $\|\xi\|_{k,2,\delta}$.  But we are interested in the $W^{k-1,2,\delta}(\tilde C_\g)$-norm, where the weight is smaller by a factor of $e^{-\frac 13\delta S}$, giving the desired estimate since $\delta>0$.

To prove \eqref{calibdifference}, argue as follows.  It suffices to show that $\|D_\g(X(\xi))\|=o(1)\|\xi\|$, where $X(\xi)\in\X$ denotes $\xi-\calib(\xi)$, i.e.\ the second projection in \eqref{calibsplitting}.  Note that the norm of $\xi(q_e'')$ (which determines $X(\xi)$) is bounded by a constant times $e^{-\frac 12\delta S}\|\xi\|_{k,2,\delta}$ (i.e.\ $\|\xi\|_{k,2,\delta}$ divided by the weight in the middle of the neck).  Now $D_\g(X(\xi))$ is only nonzero over the $([\frac 13S-1,\frac 13S]\cup[\frac 23S,\frac 23S+1])\times S^1$ subsets of each neck, where the weight is $e^{\frac 13\delta S}$.  Its norm is thus bounded by $e^{(\frac 13-\frac 12)\delta S}\|\xi\|_{k,2,\delta}$, giving the desired result since $\delta>0$.
\end{proof}

\subsubsection{Bounded right inverses and kernel gluing III: goal}\label{rightinversegoal}

Recall that by assumption, $D_0$ is surjective and the natural projection $\ker D_0\to E_{J\setminus I}$ is surjective; indeed, this is what it means for $\psi_{IJ}(x_0)$ to lie in $\M(T')_I^\reg$.  Let $Q_0$ denote any bounded right inverse for $D_0$, meaning $D_0Q_0=\1$.  Then we have a direct sum decomposition
\begin{equation}
W^{k,2,\delta}(C_0,u_0^\ast T\hat X_0)_{\xi(q_{v,i})\in T\hat D_{v,i}}\oplus\J\oplus E_J=\ker D_0\oplus\im Q_0.
\end{equation}
In fact, choosing a bounded right inverse $Q_0$ is equivalent to choosing a closed complement $\im Q_0$ of $\ker D_0$.  The classical Banach space implicit function theorem (taking as input $Q_0$ and the quadratic estimate \eqref{derivislip}) then implies that the map from $\F_0^{-1}(0)$ to $\ker D_0$ by projection along $\im Q_0$ is a local diffeomorphism near zero.

Our goal is to generalize this setup to $\g$ in a neighborhood of zero (using \eqref{RIdiagram} and Lemma \ref{RIestimates}).  Namely, we will construct a right inverse $Q_\g$ for $D_\g$ (equivalently, we will choose a complement $\im Q_\g$ for $\ker D_\g$), so we that have a direct sum decomposition
\begin{equation}\label{nonzerogdirectsumdecomp}
W^{k,2,\delta}(C_\g,u_\g^\ast T\hat X_\g)_{\begin{smallmatrix}\xi(q_{v,i})\in T\hat D_{v,i}\hfill\\\pi_{\RR\partial_s\oplus\RR R_\lambda}\xi(q_e'')=0\hfill\end{smallmatrix}}\oplus\J\oplus E_J=\ker D_\g\oplus\im Q_\g.
\end{equation}
The same implicit function theorem argument applies as long as $\|Q_\g\|$ is bounded uniformly for $\g$ near zero.  Note also that uniform boundedness of $Q_\g$ implies that both projections in \eqref{nonzerogdirectsumdecomp} are uniformly bounded (since they are given by $\1-Q_\g D_\g$ and $Q_\g D_\g$ respectively).

Now to ensure that the individual parameterizations of $\F_\g^{-1}(0)$ by $K_\g$ near zero fit together continuously as $\g$ varies, we also need to show that the direct sum decomposition \eqref{nonzerogdirectsumdecomp} is ``continuous in $\g$'' in some sense.  Let us now describe more precisely the sense we mean.  For some points $w_i\in C_0\setminus(\{p_{v,e}\}_{v,e}\cup N)$, consider the linear functional
\begin{equation*}
L_0:W^{k,2,\delta}(C_0,u_0^\ast T\hat X_0)_{\xi(q_{v,i})\in T\hat D_{v,i}}\oplus\J\oplus E_J\to\biggl(\bigoplus_iT_{u_0(w_i)}\hat X_0\oplus\J\oplus E_J\biggr)\Bigm/B
\end{equation*}
for some subspace $B$ projecting trivially onto $E_{J\setminus I}$.  Fix $B$ so that $L_0|_{\ker D_0}$ is an isomorphism; this is possible since $\ker D_0\to E_{J\setminus I}$ is surjective.  Since $L_0|_{\ker D_0}$ is an isomorphism, we have a direct sum decomposition
\begin{equation}
W^{k,2,\delta}(C_0,u_0^\ast T\hat X_0)_{\xi(q_{v,i})\in T\hat D_{v,i}}\oplus\J\oplus E_J=\ker D_0\oplus\ker L_0.
\end{equation}
Now denote by
\begin{equation*}
L_\g:W^{k,2,\delta}(C_\g,u_\g^\ast T\hat X_\g)_{\begin{smallmatrix}\xi(q_{v,i})\in T\hat D_{v,i}\hfill\\\pi_{\RR\partial_s\oplus\RR R_\lambda}\xi(q_e'')=0\hfill\end{smallmatrix}}\oplus\J\oplus E_J\to\biggl(\bigoplus_iT_{u_\g(w_i)}\hat X_\g\oplus\J\oplus E_J\biggr)\Bigm/B
\end{equation*}
the ``same'' linear functional, where $w_i\in C_\g$ denote the descents of $w_i\in C_0$, so that there is a natural identification $T_{u_\g(w_i)}\hat X_\g=T_{u_0(w_i)}\hat X_0$.  We will show that $L_\g|_{\ker D_\g}$ is still an isomorphism, and hence there is a direct sum decomposition
\begin{equation}\label{nonzerogdirectsumdecompwithL}
W^{k,2,\delta}(C_\g,u_\g^\ast T\hat X_\g)_{\begin{smallmatrix}\xi(q_{v,i})\in T\hat D_{v,i}\hfill\\\pi_{\RR\partial_s\oplus\RR R_\lambda}\xi(q_e'')=0\hfill\end{smallmatrix}}\oplus\J\oplus E_J=\ker D_\g\oplus\ker L_\g.
\end{equation}
We will construct $Q_\g$ with $\im Q_\g=\ker L_\g$, i.e.\ the direct sum decompositions \eqref{nonzerogdirectsumdecomp} and \eqref{nonzerogdirectsumdecompwithL} coincide.  We will also define natural ``kernel gluing'' isomorphisms $\ker D_0\xrightarrow\sim\ker D_\g$ which agree with $L_\g^{-1}\circ L_0$.

\subsubsection{Bounded right inverses and kernel gluing IV: construction}

We now construct the right inverses $Q_\g$ and the kernel gluing isomorphisms $\ker D_0\xrightarrow\sim\ker D_\g$ satisfying the desired properties discussed above.

We first recall the following general construction, which allows one to upgrade an ``approximate right inverse'' into a (true) right inverse.

\begin{definition}
Let $D:X\to Y$ be a bounded linear map between Banach spaces, and let $T:Y\to X$ be an \emph{approximate right inverse}, meaning that $\left\|\1-DT\right\|<1$.  Then there is a (necessarily unique) associated right inverse $Q:Y\to X$ with the same image $\im Q=\im T$, namely $Q:=T(DT)^{-1}$, where $DT:Y\to Y$ is invertible by the geometric series $\sum_{k=0}^\infty(\1-DT)^k$.  Moreover, we have (trivially) that $\left\|Q\right\|\leq\left\|T\right\|(1-\left\|\1-DT\right\|)^{-1}$.
\end{definition}

To define the right inverse $Q_\g$, first define an approximate right inverse $T_\g$ of $D_\g$ as the following composition of maps in \eqref{RIdiagram}:
\begin{equation}
T_\g:={\calib}\circ{\glue}\circ{\PT}\circ{Q_0}\circ{\PT}\circ{\id^{1,0}}\circ{\brk},
\end{equation}
where $Q_0$ denotes the fixed right inverse of $D_0$ defined by the property that $\im Q_0=\ker L_0$.  A consequence of the estimates \eqref{flatteningdifference}--\eqref{calibdifference} (expressing the fact that \eqref{RIdiagram} almost commutes) is that $\|\1-D_\g T_\g\|\to 0$ as $\g\to 0$ (see \cite[Lemma B.7.6]{pardonimplicitatlas}).  Let $Q_\g$ denote the associated true right inverse, which is uniformly bounded for $\g$ near zero (since all the maps in \eqref{RIdiagram} are uniformly bounded).  Note that ${L_\g}\circ{\calib}\circ{\glue}\circ{\PT}=L_0$ by inspection, so $\im Q_0=\ker L_0$ implies that $\im Q_\g=\im T_\g\subseteq\ker L_\g$.

We define the kernel gluing isomorphism $\ker D_0\xrightarrow\sim\ker D_\g$ as the composition
\begin{equation}\label{kerDgluingduringproof}
{(\1-Q_\g D_\g)}\circ{\calib}\circ{\glue}\circ{\PT}:\ker D_0\to\ker D_\g.
\end{equation}
Note that $L_\g\circ{(1-Q_\g D_\g)}\circ{\calib}\circ{\glue}\circ{\PT}=L_0$ by inspection, and hence \eqref{kerDgluingduringproof} is injective.  Now we have by definition that $\ind D_0=\mu(T')-\#V_s(T')-\#N+\dim E_J$ and $\ind D_\g=\mu(T'_\g)-\#V_s(T')-\#N+\dim E_J$, where $T'\to T'_\g$ denotes the image of $\g$ under the map $G_{T'//T}\to\SSS_{T'//T}$.  These indices coincide as remarked below Definition \ref{indexdefn}, so \eqref{kerDgluingduringproof} is an isomorphism as both $D_0$ and $D_\g$ are surjective.  Since $\eqref{kerDgluingduringproof}$ is an isomorphism, so is $L_\g|_{\ker D_\g}$, and it thus follows that the inclusion $\im Q_\g\subseteq\ker L_\g$ is in fact an equality $\im Q_\g=\ker L_\g$.

\begin{remark}
It is possible to prove that \eqref{kerDgluingduringproof} is surjective directly at the cost of proving a few more estimates (this is similar to \cite[Proposition 9]{floerhofer}).  This thus gives an \emph{a priori} proof that $\mu(T)=\mu(T')$ for $T'\to T$.

Let us sketch the argument.  Let $\ell\in\ker D_\g$.  The \emph{a priori} estimate of exponential convergence to a trivial cylinder due to Hofer--Wysocki--Zehnder \cite[Theorems 1.1, 1.2, and 1.3]{hwzsmallarea} (restated here as Proposition \ref{hwzcylinderestimate}) has an easier linear analogue which says that in any neck, $\ell$ decays rapidly to a constant vector field tangent to the trivial cylinder; moreover, this constant vanishes in Reeb necks since $\pi_{\RR\partial_s\oplus\RR R_\lambda}\ell(q_e'')=0$.  It follows that we can apply an ``ungluing'' operation to produce a $\kappa$ of commensurable norm $\|\kappa\|\asymp\|\ell\|$ with $\|D_0\kappa\|=o(1)\cdot\|\kappa\|$ and $L_0\kappa=L_\g\ell$.  Now we have $\|\ell-({\calib}\circ{\glue}\circ{\PT})(\kappa)\|=o(1)\cdot\|\ell\|$ by explicit calculation, and it follows that the image of $(\1-Q_0D_0)\kappa\in\ker D_0$ under \eqref{kerDgluingduringproof} is within distance $o(1)\cdot\|\ell\|$ of $\ell$.  Since this holds for all $\ell\in\ker D_\g$, it follows that \eqref{kerDgluingduringproof} is surjective.
\end{remark}

\subsection{Gluing map}\label{localgluingproofIII}

We now define the gluing map and show that it is a local homeomorphism.  This is the ``endgame'' of the gluing argument, where we deduce the desired results from the technical work performed above.

\subsubsection{Definition of the gluing map}

We first recall (following our sketch in \S\ref{rightinversegoal}) how our work above implies that $\F_\g^{-1}(0)$ is a manifold near zero and that projection along $\im Q_\g$ provides a diffeomorphism between it and $\ker D_\g$ near zero.

We have fixed a right inverse $Q_\g$ for $D_\g=\F_\g'(0,\cdot)$ with $\|Q_\g\|$ bounded uniformly for $\g$ near zero.  Now it follows from \eqref{derivislip} that $Q_\g$ is an approximate right inverse to $\F_\g'$ over the ball of some radius $c_{k,\delta}'>0$ (uniform in $\g$).  Hence over this ball of radius $c_{k,\delta}'>0$, the operator $\F_\g'$ is surjective, i.e.\ $\F_\g$ is transverse to zero.  By the Banach space implicit function theorem, it thus follows that $\F_\g^{-1}(0)$ is a $C^\ell$-submanifold (for $k\geq\ell+2$) which is transverse to $\im Q_\g$.

Let us now show that map $\ker D_\g\to\F_\g^{-1}(0)$ given by projection along $\im Q_\g$ (is well-defined and) is a diffeomorphism near zero.  The key point is that the map $\1-Q_\g\F_\g$ is a contraction mapping when restricted to any slice $(\xi+\im Q_\g)\cap B(c_{k,\delta}')$ with $\xi$ sufficiently small in terms of $c_{k,\delta}'$.  This follows from \eqref{quadestgoal} and \eqref{Fgzerosmall}, which moreover imply that the contraction constant approaches zero (uniformly in $\g$) as $c_{k,\delta}'\to 0$.  This gives the desired result, and moreover shows that the projection along $\im Q_\g$ to $\F_\g^{-1}(0)$ is given (over the whole ball $B(c_{k,\delta}')$) by the limit of the Newton--Picard iteration $\xi\mapsto\xi-Q_\g\F_\g\xi$.

We can now define the gluing map, by precomposing the above local diffeomorphisms $\ker D_\g\to\F_\g^{-1}(0)$ with the kernel gluing isomorphisms $\ker D_0\xrightarrow\sim\ker D_\g$ and letting $\g$ vary.  In other words, the gluing map
\begin{equation}\label{gluingmap}
\Bigl(G_{T'//T}\times\CC^N\times\ker D_0,(0,0)\Bigr)\to\Bigl(\Mbar(T)_J,x_0\Bigr)
\end{equation}
sends $(\g,\kappa)$ to the map $\exp_{u_\g}\kappa_\g^\infty:C_\g\to\hat X_\g$, where $\kappa_\g^\infty\in\F_\g^{-1}(0)$ is the unique intersection point $\F_\g^{-1}(0)\cap(({(\1-Q_\g D_\g)}\circ{\calib}\circ{\glue}\circ{\PT})(\kappa)+\im Q_\g)$ in $B(c_{k,\delta}')$.  The discrete data for $\exp_{u_\g}\xi:C_\g\to\hat X_\g$ is naturally inherited from that for $u_0:C_0\to\hat X_0$.  Since $\F_\g$ is transverse to zero at $\kappa_\g^\infty$, it follows that the image of the gluing map is contained in $\Mbar(T)_J^\reg$.

The gluing map evidently commutes with the maps from both sides to $\SSS_{T'//T}\times\overline{\s(T)}\times E_{J\setminus I}$ (recall that $\im Q_\g=\ker L_\g$ projects trivially onto $E_{J\setminus I}$ by definition).

Let us also note here that the inequality $\mu(T')-\#V_s(T')-2\#N+\dim E_I\geq 0$ follows from the fact that $D_0$ is surjective, $\ker D_0\to E_{J\setminus I}$ is surjective, and $\ind D_0=\mu(T')-\#V_s(T')-2\#N+\dim E_J$.

\subsubsection{Properties of the gluing map}

We now show that the gluing map is a local homeomorphism.  More precisely, we show that the gluing map is continuous and that it is a local bijection, for which the \emph{a priori} estimates recalled in \S\ref{ellipticestimates} will be crucial.  We then appeal to some point set topology to see that the gluing map is a local homeomorphism (though continuity of the inverse could also be proven directly).

\begin{lemma}\label{gluingcontinuous}
The gluing map is continuous.
\end{lemma}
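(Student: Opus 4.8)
The plan is to verify continuity of the gluing map \eqref{gluingmap} in two stages: first at the ``interior'' (where all gluing parameters are finite, so there is no change of domain combinatorial type), and then at the boundary points where some $g_e$ or $g_v$ hit $\infty$. In the interior, continuity is essentially formal. Fix $g_0$ with all relevant parameters finite; then for $g$ near $g_0$ the domains $C_g$ and targets $\hat X_g$ vary smoothly (in fact the construction of $\hat X_g$, $C_g$, the preglued map $u_g$, the operator $D_g$, the right inverse $Q_g$, and the Newton--Picard iteration $\xi\mapsto\xi-Q_g\F_g\xi$ all depend smoothly on $g$ in a neighborhood of $g_0$, once the combinatorial type is fixed), so $\kappa_g^\infty$ depends continuously on $(g,\kappa)$, and hence so does $\exp_{u_g}\kappa_g^\infty$ together with its inherited discrete data. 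Continuity in the Gromov topology then follows because near such a point the Gromov topology is just the $C^0$ (or $C^\infty_{\mathrm{loc}}$) topology on maps with fixed domain combinatorial type, after adding the stabilizing marked points $q_{v,i}$ (which we arranged to cut $\hat D_{v,i}$ transversally, hence persist).

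The substantive case is continuity at a limit point where some subset of the gluing parameters tends to $\infty$, i.e.\ approaching a deeper stratum $\M_\ast(T''/T)$. Here one must show: given $(g^{(m)},\kappa^{(m)})\to(g^\infty,\kappa^\infty)$ in $(G_\ast)_{T'//T}\times\CC^N\times\ker D_0$, the glued curves $\exp_{u_{g^{(m)}}}\kappa_{g^{(m)}}^\infty$ converge in the Gromov topology to $\exp_{u_{g^\infty}}\kappa_{g^\infty}^\infty$. The key quantitative input is the exponential decay built into the weighted norms: the solution $\kappa_g^\infty$, being a small solution of $\F_g=0$, decays like $e^{-\delta\min(s,S-s)}$ in each neck, with constants uniform in $g$ near $0$ (this is exactly what the uniform bounds on $\|Q_g\|$, the quadratic estimate \eqref{quadestgoal}, and \eqref{Fgzerosmall} give us, via the contraction-mapping description of the Newton--Picard limit). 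Consequently, as $g_e\to\infty$, the restriction of the glued curve to the part of $C_g$ lying on either side of the $e$-neck converges in $C^\infty_{\mathrm{loc}}$ to the corresponding component of the broken curve, and the ``middle'' of the neck converges to the trivial cylinder over $\gamma_e$ — which is precisely the content of convergence in the Gromov topology to a point of the deeper stratum. One also checks the asymptotic markers and matching isomorphisms converge, which is immediate since they are inherited from the fixed base building $u_0$ and vary continuously with $g$. I would organize this as: (1) reduce to sequential continuity; (2) use the uniform estimates to extract $C^\infty_{\mathrm{loc}}$-limits on each piece of the domain; (3) identify these limits with $\exp_{u_{g^\infty}}\kappa_{g^\infty}^\infty$ on the appropriate strata, using uniqueness of the Newton--Picard solution in the small ball $B(c'_{k,\delta})$; (4) invoke the definition of the Gromov topology (add marked points, take $C^0$-neighborhood, forget) to conclude convergence there.

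The main obstacle will be step (3): matching the naive ``piecewise'' limits of the glued solutions with the solution $\kappa^\infty_{g^\infty}$ produced by the gluing construction at the limiting parameter, uniformly as the combinatorial type degenerates. Concretely, one must know that the right inverses $Q_g$ and the decompositions \eqref{nonzerogdirectsumdecomp} behave continuously not only within a fixed stratum but also as one passes to a boundary stratum — i.e.\ that $\im Q_g = \ker L_g$ and the kernel-gluing isomorphism $\ker D_0\xrightarrow{\sim}\ker D_g$ vary continuously in $g$ all the way to $g_e=\infty$. This is where the careful choice of the functional $L_g$ (supported at fixed points $w_i$ away from all necks and nodes, with $B$ projecting trivially onto $E_{J\setminus I}$) pays off: $L_g$ is literally ``the same'' functional for all $g$ under the natural identifications $T_{u_g(w_i)}\hat X_g = T_{u_0(w_i)}\hat X_0$, so the decomposition \eqref{nonzerogdirectsumdecompwithL} is described by a $g$-independent target and varies continuously, including in the limit. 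Given this, the limit of $\kappa^\infty_{g^{(m)}}$ lies in the prescribed slice at parameter $g^\infty$ and solves $\F_{g^\infty}=0$ in $B(c'_{k,\delta})$, hence equals $\kappa^\infty_{g^\infty}$ by uniqueness. I expect the remaining point-set topology — upgrading ``continuous bijection of germs between locally compact Hausdorff spaces'' to ``homeomorphism of germs'' — to be routine and can be deferred to the subsequent lemmas, exactly as the excerpt indicates.
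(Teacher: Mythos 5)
Your proposal is correct in substance and shares the paper's crucial mechanism --- the slice functional $L_g$, which is ``the same'' functional for every $g$ under the identifications $T_{u_g(w_i)}\hat X_g=T_{u_0(w_i)}\hat X_0$, together with $\im Q_g=\ker L_g$ and uniqueness of zeros of $\F_g$ on affine slices inside $B(c_{k,\delta}')$ --- but it runs the comparison in the opposite direction from the paper. The paper fixes the limiting datum $(g,\kappa)$ and, for nearby $g_i$, \emph{re-preglues} the limiting exact solution $\kappa^\infty_g$ from $C_g$ into $C_{g_i}$ via ${\calib}\circ{\glue}\circ{\PT}$ (plus parallel transport across persisting necks), proves an estimate of the same shape as \eqref{Fgzerosmall} for this pregluing, corrects it to an exact solution by Newton--Picard, and then identifies that solution with $\kappa^\infty_{g_i}$ by the $L$-argument; Gromov convergence is then immediate because the exact solutions are $o(1)$-close in the weighted norm to curves manifestly converging to the limit. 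Your route instead extracts $C^\infty_{\mathrm{loc}}$ subsequential limits of $\kappa^\infty_{g^{(m)}}$ on pieces of the broken domain and identifies every such limit with $\kappa^\infty_{g^\infty}$; this works, but it commits you to several checks you only gesture at: passing to the limit in the non-local thickened equation (the terms $\lambda_\alpha(e_\alpha)(\phi_\alpha^\xi,\cdot)$ and the finite-dimensional components in $\J\oplus E_J$) on degenerating domains, and a Fatou-type argument showing the limit lies in $W^{k,2,\delta}$ on the broken domain, inside the small ball and in the correct slice, before slice-uniqueness applies; the uniform pointwise decay $e^{-\delta\min(s,S-s)}$ you invoke is also precisely what excludes extra breaking in the necks, so it should be stated as such. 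Two further differences: the paper does not split into interior and boundary cases (the re-gluing argument is uniform in the stratum of the limit, and even within a stratum the domains and targets vary, so the interior case is not purely formal), and it handles joint continuity in $\kappa$ by a separate first step, using the uniform contraction and \eqref{derivislip} (via \eqref{quadestgoal}) to show $\|(\kappa_i)^\infty_{g_i}-\kappa^\infty_{g_i}\|\to 0$ and thereby reduce to fixed $\kappa$; in your scheme the varying $\kappa^{(m)}$ is absorbed by $L_0(\kappa^{(m)})\to L_0(\kappa)$, which is fine but should be said explicitly. On balance, the paper's approach buys quantitative closeness and avoids any Arzel\`a--Ascoli or bootstrapping input, reusing estimates already established, while yours has a more standard compactness flavor; both hinge on the same identification step, so I would count your argument as a viable alternative rather than a gap.
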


\begin{proof}
Recall that $\kappa_\g^\infty$ may be described via the Newton--Picard iteration as follows.  Namely, $\kappa_\g^\infty=\lim_{i\to\infty}\kappa_\g^i$, where
\begin{align}
\kappa_\g^{i+1}&=\kappa_\g^i-Q_\g\F_\g\kappa_\g^i,\\
\kappa_\g^0&=({\calib}\circ{\glue}\circ{\PT})(\kappa).
\end{align}
Note that there is no $\1-Q_\g D_\g$ in the definition of $\kappa_\g^0$ (this is ok since $Q_\g D_\g\kappa_\g^0\in\im Q_\g$).

Now suppose $(\g_i,\kappa_i)\to(\g,\kappa)$ (a convergent net), and let us show that the net $\exp_{u_{\g_i}}(\kappa_i)_{\g_i}^\infty:C_{\g_i}\to\hat X_{\g_i}$ approaches $\exp_{u_\g}\kappa_\g^\infty:C_\g\to\hat X_\g$ in the Gromov topology.

First, we claim that $\|(\kappa_i)^\infty_{\g_i}-\kappa^\infty_{\g_i}\|_{k,2,\delta}\to 0$.  By uniform convergence of the Newton--Picard iteration, it suffices to show that $\|(\kappa_i)^n_{\g_i}-\kappa^n_{\g_i}\|_{k,2,\delta}\to 0$ for all $n$.  The case $n=0$ follows from uniform boundedness of ${\calib}\circ{\glue}\circ{\PT}$.  The desired claim then follows by induction on $n$ using \eqref{derivislip}.  Now the claim implies that $\|(\kappa_i)^\infty_{\g_i}-\kappa^\infty_{\g_i}\|_\infty\to 0$, and thus it suffices to show that
\begin{equation}
\exp_{u_{\g_i}}\kappa_{\g_i}^\infty:C_{\g_i}\to\hat X_{\g_i}\text{ approaches }\exp_{u_\g}\kappa_\g^\infty:C_\g\to\hat X_\g.
\end{equation}

Define $(\kappa_\g^\infty)_{\g_i}^0$ by (as the notation suggests) pregluing $\kappa_\g^\infty$ from $C_\g$ to $C_{\g_i}$ as follows.  In any neck of $C_{\g_i}$ corresponding to a pair of ends of $C_\g$, we preglue via ${\calib}\circ{\glue}\circ{\PT}$ as before (this operation is local to the ends/neck).  In any neck of $C_{\g_i}$ corresponding to a neck of $C_\g$, we simply use parallel transport and a nice diffeomorphism between the two necks (say, converging to the identity map in the $C^\infty$ topology as $\g_i\to\g$).  We may assume without loss of generality that there are no pairs of ends of $C_{\g_i}$ corresponding to a neck of $C_\g$.

Now we claim that
\begin{equation}\label{gtogipregluingest}
\|\F_{\g_i}((\kappa_\g^\infty)_{\g_i}^0)\|_{k-1,2,\delta}\to 0.
\end{equation}
Away from the necks/ends, the $1$-form part of $\F_{\g_i}((\kappa_\g^\infty)_{\g_i}^0)$ is nonzero only because of using $\hat J_{(\g_i)_t}$ in place of $\hat J_{\g_t}$ and because of using $\phi_\alpha^{\g_i,\xi}$ in place of $\phi_\alpha^{\g,\xi}$.  We have $\hat J_{(\g_i)_t}\to\hat J_{\g_t}$ and $\phi_\alpha^{\g_i,\xi}\to\phi_\alpha^{\g,\xi}$, so the desired estimate follows since $(\kappa_\g^\infty)_{\g_i}^0=\kappa_\g^\infty$ away from the ends/necks.  Over the Reeb ends of $C_{\g_i}$, the $1$-form part vanishes.  Over the Reeb necks of $C_{\g_i}$ corresponding to necks of $C_\g$, the $1$-form part approaches zero.  Over the Reeb necks of $C_{\g_i}$ corresponding to pairs of ends of $C_\g$, the estimate follows from the exponential decay of $\kappa_\g^\infty$ and $u_0$ and the fact that $\delta<1$ and $\delta<\delta_\gamma$ for all asymptotic Reeb orbits $\gamma$.  The same applies to nodal necks, in addition considering the convergence $\hat J_{(\g_i)_t}\to\hat J_{\g_t}$. The $\RR^{V_s(T')}$ part clearly approaches zero.  This proves \eqref{gtogipregluingest}.

Now we consider the Newton--Picard iteration starting at $(\kappa_\g^\infty)_{\g_i}^0$, with limit $(\kappa_\g^\infty)_{\g_i}^\infty\in\F_{\g_i}^{-1}(0)$.  By uniform contraction of the iteration and \eqref{gtogipregluingest}, we conclude that $\|(\kappa_\g^\infty)_{\g_i}^0-(\kappa_\g^\infty)_{\g_i}^\infty\|_{k,2,\delta}\to 0$.  It thus follows that
\begin{equation}
\exp_{u_{\g_i}}(\kappa_\g^\infty)_{\g_i}^\infty:C_{\g_i}\to\hat X_{\g_i}\text{ approaches }\exp_{u_\g}\kappa_\g^\infty:C_\g\to\hat X_\g.
\end{equation}
Now we claim that $(\kappa_\g^\infty)_{\g_i}^\infty=\kappa_{\g_i}^\infty$, which is clearly sufficient to conclude the proof.

By construction, we have $L_{\g_i}((\kappa_\g^\infty)_{\g_i}^\infty)=L_{\g_i}((\kappa_\g^\infty)_{\g_i}^0)=L_\g(\kappa_\g^\infty)=L_\g(\kappa_\g^0)=L_0(\kappa)$ and similarly $L_{\g_i}(\kappa_{\g_i}^\infty)=L_{\g_i}(\kappa_{\g_i}^0)=L_0(\kappa)$.  Thus $(\kappa_\g^\infty)_{\g_i}^\infty$ and $\kappa_{\g_i}^\infty$ differ by an element of $\ker L_{\g_i}=\im Q_{\g_i}$, which is enough.
\end{proof}

\begin{lemma}\label{gluingbijective}
The gluing map \eqref{gluingmap} is a local bijection.  That is, for every sufficiently small neighborhood $U$ of the basepoint in the domain, there exists an open neighborhood $V$ of the basepoint in the target such that every $v\in V$ has a unique inverse image $u\in U$.
\end{lemma}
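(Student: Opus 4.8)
The plan is to prove that \eqref{gluingmap} is injective and surjective near the basepoint separately, shrinking neighborhoods as needed; the surjectivity half is the substantive one. For injectivity, I would show that the pair $(g,\kappa)$ is recovered canonically from the isomorphism class of the glued curve $\exp_{u_g}\kappa_g^\infty\colon C_g\to\hat X_g$, provided $(g,\kappa)$ lies in a sufficiently small neighborhood of $(0,0)$. The gluing map commutes with the projections to $(\SSS_\ast)_{T'//T}\times\s(T)\times E_{J\setminus I}$, so the combinatorial type $T'_g$, the parameter $t$, and the $E_{J\setminus I}$–component of $e$ are pinned down. The remaining parameters are recovered from the discrete data arranged in \S\S\ref{qvprimesec}--\ref{Cgdefsec}: the divisors $\hat D_{v,i}$ and the markings $\phi_\alpha$ stabilize the domain; the positions of the sections $q_v'$ relative to the zero levels $0_v\subseteq\hat X_g$ determine $g\in(G_\ast)_{T'//T}$ (\S\ref{qvprimesec}); and then the local diffeomorphism \eqref{gluingdiffeo} together with the codimension–two conditions at the $q_e''$ determine the remaining domain parameters $(y,\alpha)$, hence the identification with $C_g$. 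Since the curve is $C^0$–close to $u_g$ and $W^{k,2}\hookrightarrow C^1$, it equals $\exp_{u_g}\xi$ for a unique $\xi\in\F_g^{-1}(0)$ of small norm, and the implicit function theorem description of $\F_g^{-1}(0)\cap B(c_{k,\delta}')$ from \S\ref{localgluingproofIII} shows $\xi=\kappa_g^\infty$ where $\kappa$ is obtained by projecting $\xi$ along $\im Q_g$ and applying the inverse of the kernel–gluing isomorphism $\ker D_0\xrightarrow\sim\ker D_g$. Thus the gluing map is injective on a small enough neighborhood $U$ of $(0,0)$.

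For surjectivity, I would prove that any net $x_i\to x_0$ in $\Mbar_\ast(T)_J$ eventually lies in the image of the gluing map restricted to an arbitrarily small neighborhood of $(0,0)$; combined with injectivity and an elementary point–set argument (were no neighborhood of $x_0$ contained in the image of a fixed $U$, one could extract such a net landing outside the image), this gives the germ of bijection. For $i$ large, $x_i$ is a stable $J$–thickened building of a type $T_i'$ with $T'\to T_i'\to T$. Using the fixed stabilizing divisors and the compactness theorem of \cite{sftcompactness}, the components of $x_i$ assemble into a glued domain $C_{g_i}$ for a unique $g_i\to 0$ (the $(G_\ast)$–part determined by placing the $q_v'$ at the zero levels, the rest by \eqref{gluingdiffeo}), and the map $u_{x_i}$, written on $C_{g_i}$, converges to $u_0$ — crucially with exponential decay on the degenerating necks and ends, by the no–energy–loss and exponential–convergence statements underlying \cite{sftcompactness}, upgraded to $W^{k,2,\delta}$ via interior elliptic estimates. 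Hence $u_{x_i}=\exp_{u_{g_i}}\xi_i$ with $\|\xi_i\|_{k,2,\delta}\to 0$, and the thickened equation satisfied by $x_i$ becomes $\F_{g_i}(\xi_i)=0$. Since $\|\xi_i\|$ is small, the description of $\F_{g_i}^{-1}(0)$ near zero gives $\xi_i=\kappa_{g_i}^\infty$ with $\kappa\in\ker D_0$ obtained as above, and $\|\kappa\|\to 0$ by the uniform bounds on the projection and kernel–gluing maps; thus $x_i$ is the image of $(g_i,\kappa)\to(0,0)$.

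The main obstacle I anticipate is the analytic input to the surjectivity step: realizing an abstract Gromov–convergent net in the concrete coordinates $u_{x_i}=\exp_{u_{g_i}}\xi_i$ with $\xi_i\to 0$ in the weighted norm. This means reconciling the Gromov topology of Definition \ref{Mbardef} (defined via stabilization and $C^0$) with the analytic setup of \S\ref{localgluingproofI}, and in particular controlling $u_{x_i}$ on the collapsing necks: one must invoke the exponential–convergence–to–a–Reeb–orbit estimates behind the compactness theorem \cite{sftcompactness} and then bootstrap elliptically from $C^0$ (or $C^\infty_{\mathrm{loc}}$) convergence to convergence in $W^{k,2,\delta}$ with the correct weights. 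The injectivity bookkeeping and the point–set topology are routine given the machinery already assembled in \S\S\ref{localgluingproofI}--\ref{localgluingproofIII}.
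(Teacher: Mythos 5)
Your proposal is correct and follows essentially the same route as the paper: recover $g$ and the domain identification uniquely from the stabilizing divisors, the sections $q_v'$ mapped to the zero levels, and the points $q_e''$ via \eqref{gluingdiffeo}, then use the a priori exponential-decay and interior elliptic estimates (Lemmas \ref{apriorijhol}, \ref{apriorinode} and the Hofer--Wysocki--Zehnder long-cylinder estimate) to write any nearby point as $\exp_{u_g}\xi$ with $\xi$ small in $W^{k,2,\delta}$, and conclude by the uniform contraction-mapping parameterization of $\F_g^{-1}(0)$ by $\ker D_0$. The only difference is organizational: the paper proves directly that each point in a small Gromov neighborhood has a unique preimage, rather than splitting into injectivity plus a net-based surjectivity argument, and the "main obstacle" you flag is exactly what those a priori estimates settle.
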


\begin{proof}
Let $x\in\Mbar(T)_J$, and denote the corresponding map by $u:C\to\hat X$.  We assume that $x$ is sufficiently close to $x_0$, and we will show that $x$ has a unique inverse image under the gluing map \eqref{gluingmap}.

Concretely, $x$ close to $x_0$ in the Gromov topology means the following.  We may identify $C$ with $(C_\beta,j_w)$, for arbitrarily small gluing parameters $\beta\in\CC^{E^\eint(T')}\times\CC^N$ and an almost complex structure $j_w$ which agrees with $j_0$ except over a compact set (away from the ends/necks) where it is arbitrarily $C^\infty$ close to $j_0$.  Furthermore, the map $u:(C_\beta,j_w)\to\hat X$ is arbitrarily $C^0$-close to $u_0$ away from the Reeb ends/necks (which by Lemma \ref{apriorijhol} implies arbitrarily $C^\infty$ close, at least away from the nodes of $C_0$ which are resolved in $C$; the behavior of $u$ in these neighborhoods is controlled by Lemma \ref{apriorinode}).  We will discuss the situation over the Reeb ends/necks shortly.

Now observe that there are unique points $q_{v,i}\in C$ close to $q_{v,i}\in C_0$ where $u$ intersects $D_{v,i}$ transversally.  These points give rise to unique points $q_v'\in C$ according to the sections chosen in \S\ref{qvprimesec}.  Now regarding $u(q_v')\in\hat X$ as lying on the ``zero section'' determines (uniquely) a gluing parameter $\g\in G_{T'//T}$ and an isomorphism $\hat X=\hat X_\g$.  Thus $x$ corresponds to a map $u:(C_\beta,j_w)\to\hat X_\g$.

Now in any Reeb end $[0,\infty)\times S^1\subseteq C_\beta$ or Reeb neck $[0,S]\times S^1\subseteq C_\beta$, we apply \cite[Theorems 1.1, 1.2, and 1.3]{hwzsmallarea} (restated here as Proposition \ref{hwzcylinderestimate}) to see that $u$ decays exponentially to a trivial cylinder.  For unglued edges, we combine this with the fact that $u$ is $C^\infty$-close to $u_0$ over an arbitrarily compact set (which we can choose to go very deep into the end) to conclude that the tangent space marking of $C_\beta$ at $\{p_{v,e}\}$ induced by the cylindrical coordinates on $\hat X_\g$ is arbitrarily close to the tangent space marking descended from $C_0$.
The same reasoning implies that the gluing parameters $\beta\in\CC^{E^\eint(T')}$ (for the glued edges) are given by $L_e^{-1}g_e+o(1)$ and $\theta_e+o(1)$ (i.e.\ are very close to those coming from $\g$), and we recover the point $q_e''\in C_\beta$ as the inverse image of $\exp_{u_{0|\g}(q_e'')}(\ker\pi_{\RR\partial_s\oplus\RR R_\lambda})$.  From these estimates, we conclude that $x$ corresponds to a map $u:(C_\g,j_y)\to\hat X_\g$ for $\g\in G_{T'//T}\times\CC^N$ which is arbitrarily $C^\infty$-close to $u_\g$ (including over the ends/necks, with respect to the cylindrical coordinates) respecting the tangent markings at $p_{v,e}$ and sending $q_e''$ to $\exp_{u_{0|\g}(q_e'')}(\ker\pi_{\RR\partial_s\oplus\RR R_\lambda})$.  Here $\g$ and $y$ can be assumed arbitrarily small, and we use the fact that \eqref{gluingdiffeo} is a local diffeomorphism uniformly in $\g\in G_{T'//T}$.

The injectivity radius of our fixed exponential map on $\hat X_\g$ is bounded below uniformly, so $x$ corresponds uniquely to some pair $(u=\exp_{u_\g}\xi:(C_\g,j_y)\to\hat X_\g,e)$, where
\begin{equation}
(\xi,y,e-e_0)\in W^{k,2,\delta}(C_\g,u_\g^\ast T\hat X_\g)_{\begin{smallmatrix}\xi(q_{v,i})\in T\hat D_{v,i}\hfill\\\pi_{\RR\partial_s\oplus\RR R_\lambda}\xi(q_e'')=0\hfill\end{smallmatrix}}\oplus\J\oplus E_J
\end{equation}
has arbitrarily small norm due to the \emph{a priori} estimates on $u$ from Lemmas \ref{apriorijhol} and \ref{apriorinode} and Proposition \ref{hwzcylinderestimate}.  Now we observed in the gluing construction that for any fixed sufficiently small $\g$, the gluing map gives a local diffeomorphism between $\ker D_0$ and $\F_\g^{-1}(0)$, over a ball of size uniformly bounded below.  Thus our map is uniquely in the image of the gluing map.
\end{proof}

Since the target of the gluing map is Hausdorff and the domain locally compact Hausdorff, it follows from continuity (Lemma \ref{gluingcontinuous}) and bijectivity in a small neighborhood (Lemma \ref{gluingbijective}) that the gluing map \eqref{gluingmap} is in fact a local homeomorphism, thus completing the proof of Theorem \ref{localgluing}.
\end{proof}

\subsection{Orientations}\label{localorientationsproof}

We now prove the compatibility of the geometric and analytic maps on orientation lines, namely that \eqref{gluingorientations} commutes.  We rely heavily on the gluing construction in \S\S\ref{localgluingproofI}--\ref{localgluingproofIII}.

\begin{proof}[Proof of Theorem \ref{localorientations}]
The gluing map \eqref{gluingmap} (in the case $I=J$) allows us to describe the left vertical ``geometric'' map in \eqref{gluingorientations} (i.e.\ the map induced by the topological structure of $\Mbar(T)_I^\reg$) near the basepoint $x_0$ as follows.  Note that we may assume (for convenience) that $N=\varnothing$, since this locus is dense.  Recall that there is a canonical identification
\begin{equation}
\oo_{\ker D_0}=\oo^\circ_{T'}\otimes(\oo_\RR^\vee)^{\otimes V_s(T')}\otimes\oo_{E_I}.
\end{equation}
Now consider sufficiently small $(\g,\kappa)$, where $\g\in G_{T'//T}$ lies in the top stratum $(T'\to T)$.  Now $\F_\g^{-1}(0)$ is a submanifold with tangent space $\ker\F_\g'(\kappa_\g^\infty,\cdot)$, and there is a canonical identification
\begin{equation}
\oo_{\ker\F_\g'(\kappa_\g^\infty,\cdot)}=\oo^\circ_T\otimes(\oo_\RR^\vee)^{\otimes V_s(T')}\otimes\oo_{E_I}.
\end{equation}
The gluing map is differentiable with respect to $\kappa$ since $\F_\g^{-1}(0)$ is a submanifold transverse to $\im Q_\g$.  Its derivative is clearly given by the composition of $\ker D_0\to\ker D_\g$ and the map $\ker D_\g\to\ker\F_\g'(\kappa_\g^\infty,\cdot)$ given by projecting off $\im Q_\g$.  This map $\ker D_0\to\ker\F_\g'(\kappa_\g^\infty,\cdot)$ thus gives the ``geometric'' map $\oo^\circ_{T'}\to\oo^\circ_T$ when combined with the isomorphisms above.

Now the right inverse $Q_\g$ to $D_\g=\F_\g'(0,\cdot)$ is an approximate right inverse to $\F_\g'(\xi,\cdot)$ for all $\xi\in B(c_{k,\delta}')$ by \eqref{derivislip}.  Hence the kernel $\ker\F_\g'(\xi,\cdot)$ forms a vector bundle over $B(c_{k,\delta}')$ which is canonically oriented by $\oo^\circ_T\otimes(\oo_\RR^\vee)^{\otimes V_s(T')}\otimes\oo_{E_I}$, and the map $\ker D_0\to\ker\F_\g'(\xi,\cdot)$ makes sense for all such $\xi$.  Thus the geometric map on orientations is also given by the simpler map at $\xi=0$
\begin{equation}\label{kernelpregluinggeo}
\ker D_0\xrightarrow{{(1-Q_\g D_\g)}\circ{\calib}\circ{\glue}\circ{\PT}}\ker D_\g
\end{equation}
combined with the canonical identifications
\begin{align}
\oo_{\ker D_0}&=\oo^\circ_{T'}\otimes(\oo_\RR^\vee)^{\otimes V_s(T')}\otimes\oo_{E_I},\\
\oo_{\ker D_\g}&=\oo^\circ_T\otimes(\oo_\RR^\vee)^{\otimes V_s(T')}\otimes\oo_{E_I}.
\end{align}
Now this map \eqref{kernelpregluinggeo} is precisely the sort of kernel pregluing map which defines the ``analytic'' map on orientations.

Strictly speaking, the analytic map $\oo^\circ_{T'}\to\oo^\circ_T$ is defined using a slightly different linearized operator (no $\J$, $E_I$, or point conditions), but this is only a ``finite-dimensional'' difference (note also that $\J$ is canonically oriented since it is a complex vector space).  It is thus straightforward to relate them and see that they give rise to the same analytic map on orientations.
\end{proof}

\bibliographystyle{alpha-a}
\bibliography{contact}
\addcontentsline{toc}{section}{References}

\end{document}